\definecolor{myred}{HTML}{ae1908}
\definecolor{myblue}{HTML}{05348b}
\definecolor{myorange}{HTML}{ec813b}
\definecolor{mylightblue}{HTML}{9acdc4}
\definecolor{mypurple}{HTML}{743096}
\numberwithin{equation}{section}
\newcommand*{\supp}{\mathrm{supp}}
\newcommand{\Rom}[1]{\text{\uppercase\expandafter{\romannumeral #1\relax}}}
\begin{document}

\title{Factor Augmented Sparse Throughput Deep ReLU Neural Networks for High Dimensional Regression\thanks{Supported by NSF grants DMS-2210833, DMS-2053832, DMS-2052926 and ONR grant N00014-22-1-2340}}
\author{Jianqing Fan and Yihong Gu\\
Department of Operations Research and Financial Engineering\\
Princeton University}
\date{}
\maketitle

\begin{abstract}
This paper introduces a Factor Augmented Sparse Throughput (FAST) model that utilizes both latent factors and sparse idiosyncratic components for nonparametric regression.  The FAST model bridges factor models on one end and sparse nonparametric models on the other end.  It encompasses structured nonparametric models such as factor augmented additive models and sparse low-dimensional nonparametric interaction models and covers the cases where the covariates do not admit factor structures.  Via diversified projections as estimation of latent factor space, we employ truncated deep ReLU networks to nonparametric factor regression without regularization and to a more general FAST model using nonconvex regularization, resulting in factor augmented regression using neural network (FAR-NN) and FAST-NN estimators respectively. We show that FAR-NN and FAST-NN estimators adapt to the unknown low-dimensional structure using hierarchical composition models in nonasymptotic minimax rates. We also study statistical learning for the factor augmented sparse additive model using a more specific neural network architecture.  Our results are applicable to the weak dependent cases without factor structures. In proving the main technical result for FAST-NN, we establish a new deep ReLU network approximation result that contributes to the foundation of neural network theory.  Our theory and methods are further supported by simulation studies and an application to macroeconomic data.
\end{abstract} 
\noindent{\bf Keywords}:  High-Dimensional Nonparametric Regression,
Approximability of ReLU network, Factor Model, Minimax Optimal Rates,
Sparse Additive Model, Hierarchical Composition Model.

\section{Introduction}

Deep learning \citep{goodfellow2016deep} has achieved tremendous empirical success in computer vision \citep{voulodimos2018deep}, natural language processing \citep{otter2020survey} 
and other statistical prediction tasks \citep{lecun2015deep} due to its representation power. 
It has also been widely applied to cell subpopulation learning in single-cell RNA-seq \citep{tian2019clustering,li2020deep}, genetic association studies \citep{wang2019high, lin2018deep}, genomics \cite{zhu2021deeplink} and protein folding  \citep{senior2020improved} that has proven to be a remarkable success.   It is a scalable nonparametric technique with a great ability to balance bias and variance, adapting to unknown low-dimensional structure. See \cite{fan2020selective} for a selective overview.  Despite numerous high-profile empirical successes, there are limited theoretical understandings on neural networks.  This paper contributes to high-dimensional nonparametric model selection for both strong dependent and weak dependent covariates and hence to interpretable machine learning, particularly to the feature selections for neural networks, via introducing a FAST model and its associated new neural network architecture and algorithms. As a result, we provide additional understanding on the performance of neural networks for algorithmic nonparametric regression modeling, which learn low-dimensional structures with no or little supervision on the forms of functions.   

\subsection{Nonparametric regression}

Given covariate vector $\bx \in \mathbb{R}^p$ and the response variable $y$ with $(\bx, y)\sim\mu$, we are interested in estimating the regression function $m^*(\bx) = \mathbb{E}[y|\bx]$, which minimizes the \emph{population $L_2$ risk}: 
\begin{align*}
    \mathsf{R}(m) = \int |y - m(\bx)|^2\mu(d\bx, dy),
\end{align*}
based on an i.i.d. sample $\{(\bx_i,y_i)\}_{i=1}^n$ from $\mu$. There is a considerable literature on nonparametric regression methods. 
We refer the readers to \cite{fan1996local, gyorfi2002distribution, tsybakov2009nonparametric} for a comprehensive account on nonparametric regression. When the regression function $m^*$ is a $p$-variate $(\beta,C)$-smooth function, \cite{stone1982optimal} shows the minimax optimal convergence rate for $m^*(\bx)$ is of order $n^{-\frac{2\beta}{2\beta+p}}$, which does not converge to zero if $p \asymp \log n$. This implies when the dimension $p$ is relatively large compared with the degree of smoothness $\beta$, it requires a large amount of data to estimate the regression function well, which has been referred to as the ``curse of dimensionality''.

To alleviate the ``curse of dimensionality'', it is natural to impose some low-dimension structures on the regression function $m^*$. For example, \citep{stone1985additive} imposes an additive structure $m^*(\bx) = \sum_{j=1}^p m^*_j(x_j)$ and shows a faster convergence rate $n^{-2\beta/(2\beta+1)}$ is obtainable when the univariate functions are $(\beta,C)$-smooth. There is also a considerable literature on characterizing the intrinsic low-dimension structures and developing efficient statistical methods to achieve a faster convergence rate, including interaction models \citep{stone1994use}, single-index models \citep{hardle1989investigating}
, projection pursuit \citep{friedman1981projection}, to name a few. However, these methods are structural rather than algorithmic in that a structure of regression functions needs to be imposed before designing statistical methods.  This is where neural networks come to play, which can adapt well to unknown low-dimensional structures through algorithmic optimization.

\subsection{Neural networks}

The recent decades have witnessed the great success of deep learning \citep{lecun2015deep}. The key driving force behind such success is the use of neural networks. A neural network is a composition of simple functions parameterized by its \emph{weights}. For example, the fully connected deep ReLU neural network is a composition of linear transformation $\mathcal{L}(\bx) = \bW \bx + \bb$ with weights $(\bW, \bb)$ followed by elementwise ReLU nonlinear transformation $\sigma(x) = \max\{x, 0\}$. Intuitively, such composition nature not only allows it to approximate complex functions well but also endows it with the capability to automatically extract hierarchical features from raw data, for example, text or images.

From a statistical viewpoint, the success of neural networks can be attributed to their ability to approximate complex nonlinear functions effectively. The analysis of neural networks' approximation ability can date back to around 1990s. \cite{cybenko1989approximation, hornik1991approximation, barron1993universal} showed that a one-hidden-layer neural network can approximate any continuous function to an arbitrary degree of accuracy, which has been referred to as the \emph{universal approximation theorem}. Afterward, \cite{telgarsky2016benefits} illustrated the benefits of using a deep neural network compared to a shallow counterpart. There is a considerable literature on investigating the nonasymptotic approximation error of fully-connected deep ReLU neural networks over some specific function classes \citep{yarotsky2017error, approxyarotsky2018optimal, shen2019nonlinear, kohler2021rate, hanin2019universal, lu2021deep}. These explicit characterizations of the dependency of a neural network's approximation error on depth $L$ and width $N$ are instrumental to the study of the statistical properties of neural networks.

\cite{bauer2019deep, schmidt2020nonparametric, kohler2021rate} further demonstrated the benefits of using deep neural networks in nonparametric regression. In particular, \cite{schmidt2020nonparametric, kohler2021rate} showed that a deep ReLU neural network can be adaptive to the intrinsic low-dimension structure of the regression function, which empowers it to circumvent the curse of dimensionality. To be specific, if the regression function is a composition of $t$-variate $(\beta, C)$-smooth functions with $(\beta, t) \in \mathcal{P}$, then the deep ReLU neural network least squares estimator $\hat{m}$ with an appropriate choice of depth and width  will achieve a convergence rate of
\begin{align}
\label{eq:roc-nn-low-dim}
    \mathbb{E} \left[\int |\hat{m}(\bx) - m^*(\bx)|^2 \mu(d\bx) \right] \le C \cdot (\log n)^3 \cdot n^{-\frac{2\gamma^*}{2\gamma^*+1}} ~~~~~~ \text{with} ~~~~ \gamma^* = \inf_{(\beta,t)\in \mathcal{P}} \frac{\beta}{t}
\end{align} 
where $C$ is a constant independent of $n$. When all the dimension-adjusted smoothnesses $(\beta/t)$ are not small, i.e., each composition in $m^*$ either has a small dimension $t$ or has a large degree of smoothness $\beta$, the rate of convergence will be fast even when $p$ is large. Moreover, unlike the previous estimators that need to be aware of the hierarchical composition structure beforehand, such a neural network estimator only needs to know the constant $\gamma^*$ in advance. These results explain why neural network can outperform many other methods in a wide range of real-world applications because many laws in nature and human societies admit certain low-dimensional composition structures \citep{dahmen2022compositional}, for example, natural language \citep{partee1984compositionality}. 

\subsection{The problem under study}

Exciting though the above results for neural network, it is only applicable in the regime that the ambient dimension $p$ is fixed.
Things might be different in the regime that $p \gg \log n$ since the constant $C$ in \eqref{eq:roc-nn-low-dim} may depend polynomially or even exponentially on $p$. In the era of big data, there are more and more data with high dimensionality available. Moreover, there is also a surge in demand for making predictions based on a large number of variables \citep{fan2014challenges, wainwright2019high}. For example, the ImageNet dataset \citep{deng2009imagenet} contains $n\approx 1.5 \times 10^7$ high-resolution labelled images with size around $p\approx 2\times 10^5$, which can not be treated as a fixed constant. These facts indicate that it is a necessity to adapt the neural network estimator to the relative high dimension regime $p \gg \log n$.

Associated with high-dimensional features  is the dependence among variables \citep{fan2014challenges}.  We address this issue through by a linear factor model on covariate $\bx$, which admits
\begin{align}
\label{eq:intro-factor-model}
    \bx = \bB \bbf + \bu,
\end{align} 
where the latent factor $\bbf \in \mathbb{R}^r$ and the idiosyncratic component $\bu \in \mathbb{R}^p$ is unobserved, the factor loading matrix $\bB\in \mathbb{R}^{p\times r}$ is fixed but unknown. Our goal is to use neural network to estimate the Factor Augmented Sparse Throughput (FAST) regression function
\begin{align*}
    \mathbb{E}[y|\bbf, \bu] = m^*(\bbf, \bu_\mathcal{J}) ~~~~~~ \text{with} ~~~~ \mathcal{J} \subset \{1,\ldots, p\}
\end{align*} using i.i.d. sample $\{(\bx_i,y_i)\}_{i=1}^n$, where $\bx_i$ follows the model \eqref{eq:intro-factor-model}, and 
\begin{align} \label{eq1.3}
y_i = m^*(\bbf_i, \bu_{i, \mathcal{J}}) + \varepsilon_i, \qquad \mathbb{E}[\varepsilon_i|\bbf_i, \bu_i]=0
\end{align}
with i.i.d. noises $\varepsilon_1,\ldots, \varepsilon_n$.
We wish to estimate $m^*$ well in the regime that  $p$ is relatively high dimension  and the latent factor dimension $r$ and the number of important variables $|\mathcal{J}|$ is small. We refer to model \eqref{eq1.3} as the Factor Augmented Sparse Throughput (FAST) model. 
\begin{figure}[!t]
\centering
\begin{tikzpicture}[->, auto]
	\node[shape=rectangle,draw] (A) {FAST model};
	\node[shape=rectangle,draw] (B) [below left=of A]{Sparse model};
        \node[shape=rectangle,draw] (H) [below =of B, align=center]{High-dimensional\\additive models};
	\node[shape=rectangle,draw] (C) [below left=of B, align=center]{Sparse\\linear model};
	\node[shape=rectangle,draw] (D) [below =of A]{Factor regression};
	\node[shape=rectangle,draw] (E) [below =of D]{PCR};
	\node[shape=rectangle,draw] (F) [below right=of A]{Sparse model w/ dep};
	\node[shape=rectangle,draw] (G) [below =of F]{FARM};
	
	\path (A) edge    node[above=3pt]{$r=0$} (B)
		  (B) edge    node[above=3pt]{linear} (C)
		  (A) edge    node{$|\mathcal{J}|=0$} (D)
		  (D) edge    node{linear} (E)
		  (A) edge    node{$r\neq 0$} (F)
		  (F) edge    node{linear} (G)
            (B) edge    node{additive} (H);
\end{tikzpicture}
\caption{The versatility of FAST model. The arrow from model $x$ to model $y$ means $x$ includes $y$.}
\label{fig:versatility}
\end{figure}
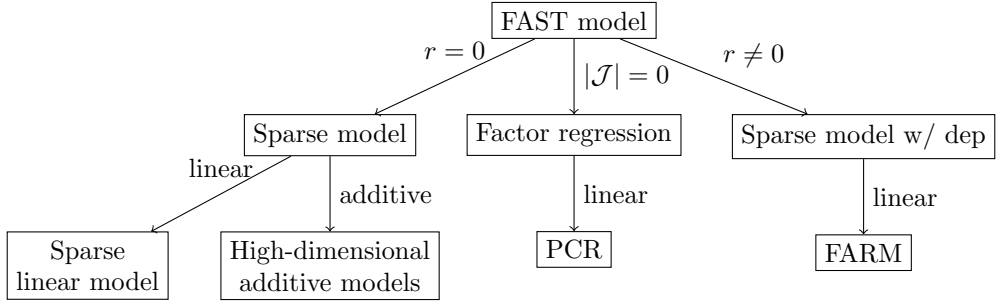

Model \eqref{eq1.3} encompasses many useful statistical models.  When $|\mathcal{J}|=0$, it reduces to nonparametric factor regression models with function form $m^*(\bbf)$.  One can specify further the structure on $m^*(\bbf)$ such as additive models.  On the other hand, when there is no factor structure ($\bx = \bu$), model \eqref{eq1.3} reduces to nonparametric sparse regression model $\mathbb{E}[y|\bx] = m^*(\bx_{\cJ})$.  Therefore, we bridge nonparametrically two seemingly unrelated models through a unified framework.  Further specification includes factor-augmented sparse additive models and sparse additive models. In particular, if the nonparametric function is specified as linear function, they are referred to as principal component regression and sparse linear regression in the literature.  The versatility of the FAST model is clear; refer to Figure \ref{fig:versatility} for particular instances of the FAST model.  In particular, strong dependence is allowed in both nonparametric model selection and sparse linear model selection (referred to as factor-augmented regularized model by \cite{fan2020factor}).

\subsection{Contributions of the paper}

In this work, we propose a Factor Augmented Sparse Throughput Neural Network (FAST-NN) estimator, which allows the neural network to be adaptive to the intrinsic low-dimension structure as described by FAST model \eqref{eq1.3}.  The estimator is designed to capture the latent factor information from the covariate $\bx$ and use only a small set of idiosyncratic component $\bu$ to explain further additional contributions. Specifically, we use a diversified projection matrix and a variable selection matrix ahead of the neural network's input layer. The diversified projection matrix is pre-trained and fixed, which is necessary for optimal learning, while the variable selection matrix is trained jointly with neural network weights via a penalized least squares objective to select variables. This result in a new neural network architecture for high-dimensional data. We now highlight our main contributions.

We introduce the FAST model, a versatile high-dimensional nonparametric regression model encompassing many useful statistical models.  It overcomes the unrealistic assumptions on finite dimensionality $p$ in the previous literature \citep{kohler2016nonparametric, bauer2019deep, schmidt2020nonparametric, kohler2021rate} about nonparametric regression using neural networks.  The model addresses the information loss in high-dimensional principal component regression via using sparse idiosyncratic components and at the same time deals with model selection inconsistency issue even for linear sparse model due to high correlation among covariates \citep{zhao2006model,fan2020factor}.   It allows us to conduct nonparametric model selection using neural networks under both strong and weak dependence covariates and hence permits us to select features for neural network prediction and contributes to the interpretable machine learning.

We propose a FAST-NN estimator, a structured neural network with provable pre-trained diversified projection (DP) matrix and penalized objective function that bridges neural network and high-dimensional statistics. The implementation of the DP matrix within the neural network handles the high dependence in high-dimensional data and contributes to nonparametric factor modeling. The idea of DP matrix is also applicable to other factor-augmented models in econometrics and statistics \citep{fan2022learning}.  Furthermore, the FAST-NN architecture contributes to feature selection in deep neural networks \citep{ismael2021lassonet, feng2017sparse} in a dual way: firstly, it leverages the sample efficiency of neural networks to facilitate nonparametric model selection; secondly, it offers an interpretable neural network prediction model for high-dimensional data. 

We develop theoretical justifications showing that our proposed FAST-NN estimator achieves the minimax optimal $L_2$ error and can avoid the curse of dimensionality in high-dimensional nonparametric regression under mild model structure. In particular, our method incurs minimal additional costs if the regression function is moderately over-parameterized (e.g. $r=0$ or $\mathcal{J}=\emptyset$, but we use more factors or attempts to select variables). This highlights our method's potential to algorithmically exploit the low-dimensional (latent) structure from high-dimensional covariate. We support the theoretical findings in numerical studies and show that stochastic gradient descent continues to apply to our specifically designed neural network estimators with no curse of dimension in implementation.

We also offer an in-depth analysis of the training protocol for the FAST model. A fundamental question is whether the weights for learning latent factors $\bbf$ should be jointly trained with the neural network weights for learning $m^*$ in \eqref{eq1.3}.  To address this issue, we look at the simpler model with $|\mathcal{J}|=0$ so that the problem is isolated from the task of model selection. We unveil, both theoretically and empirically,  that a joint training of the DP matrix and neural network weights will result in a sub-optimal performance. Conversely, we show that the DP matrix can be efficiently estimated using a tiny fraction of data without precisely determining the exact number of factors. This affirms the necessity of employing pre-trained weights in FAST-NN.

\noindent \textbf{Organization.} In Section \ref{sec:model} we introduce the Factor Augmented Sparse Throughput (FAST) model considered in this paper and the ReLU neural network class. We briefly review the idea of diversified projection matrix and propose the two estimators, the Factor Augmented Regression Neural Network (FAR-NN) estimator in the case where $\mathcal{J}=\emptyset$ and the Factor Augmented Sparse Throughput Neural Network (FAST-NN) estimator in the case where $|\mathcal{J}|=\mathcal{O}(1)$ in Section \ref{sec:method}. The theoretical analyses of the proposed two estimators are included in Section \ref{sec:theory:fa} and Section \ref{sec:theory:fast} respectively, followed by the minimax optimal lower bound for FAST model in Section \ref{sec:theory:lb}. The simulation studies are presented in Section \ref{sec:experiments}. The FANAM estimator, a more computationally efficient estimator when the regression function further admits a sparse additive structure, and its theory, an empirical application, some discussions and related works, and all the proofs are collected in the supplemental material.

\noindent \textbf{Notations.} Denote by $[m] = \{1, \cdots, m\}$. For $f(n)$ and $g(n)$, we use $f(n) \lesssim g(n)$ or $f(n) = \mathcal{O}(g(n))$ to represent that  $f(n) \le C\cdot g(n)$ for a constant $C$ independent of $n$; similarly, we use $f(n) \gtrsim g(n)$ or $f(n) = \Omega(g(n))$ to represent that there exists a constant $C>0$ independent of $n$ such that $f(n) \ge C \cdot g(n)$ for all the $n \ge 1$. We use $f(n) \asymp g(n)$ if $f(n) \lesssim g(n)$ and $f(n) \gtrsim g(n)$. We denote $f(n) \ll g(n)$ or $f(n) = o(g(n))$ or $g(n) \gg f(n)$ if $\limsup_{n\to\infty} [f(n)/g(n)] = 0$. Moreover, we let $a \lor b = \max\{a, b\}$ and $a \land b = \min\{a, b\}$. We let $\mathrm{Pdim}(\mathcal{H})$ be the Pseudo dimension \citep{pollard1990empirical} of the function class $\mathcal{H}$. We use bold lower case letter $\bx = (x_1,\ldots, x_d)^\top$ to represent a $d$-dimension vector, let $\|\bx\|_q = (\sum_{i=1}^d |x_i|^q)^{1/q}$ be its $\ell_q$ norm, and let $\|\bx\|_\infty = \max_{1\le i\le d} |x_i|$ be its $\ell_\infty$ norm. We use bold upper case $\bA = [A_{i,j}]_{i\in [n], j\in [m]}$ to denote a matrix. We define $\|\bA\| = \sup_{\bx \in \mathbb{R}^m, \|\bx\|_2=1} \| \bA \bx\|_2$, $\|\bA\|_{F} = \sqrt{\sum_{i,j} A_{i,j}^2}$, and $\|\bA\|_{\max}=\max_{i\in [n],j\in [m]} |A_{i,j}|$. Moreover, we use $\lambda_{\min}(\bA)$ and $\nu_{\min}(\bA)$ to denote its minimum eigenvalue and singular value, respectively.

\section{Model}
\label{sec:model}

\subsection{High-dimensional augmented sparse nonparametric regression}

Suppose we observe $n$ an i.i.d. sample $\{(\bx_i, y_i)\}_{i=1}^n$ from $(\bx,y)\sim \mu$, where $\bx \in \mathbb{R}^p$ is the $p$-dimensional covariate vector and $y \in \mathbb{R}$ is the response variable. Our target is to find a function $m: \mathbb{R}^p \to \mathbb{R}$ to minimize the following \emph{population $L_2$ risk}:
\begin{align*}
    \mathsf{R}(m) = \mathbb{E}(y - m(\bx))^2 = \int \left| m(\bx) - y\right|^2 \mu(d\bx, dy).
\end{align*}
Standard nonparametric regression focuses on the regime where $p$ is fixed and does not grow with $n$.  Let $m^*(\bx) = \mathbb{E}[y|\bx]$ be the population risk minimizer, the regression function. It is natural to evaluate the accuracy of the predictor $m$ via the \emph{excess risk}:
\begin{align*}
    \mathsf{R}(m) - \mathsf{R}(m^*) = \mathbb{E} [|m(\bx)-m^*(\bx)|^2]= \int \left|m(\bx)-m^*(\bx)\right|^2 \mu(d\bx),
\end{align*} 
which is mean squared error of $m(\bx)$.
The statistical rate of convergence depends on the class of functions $m^*$ lies in. 
\begin{definition}[$(\beta,C)$-smooth function]
    Let $\beta = r+s$ for some nonnegative integer $r$ and $0<s\le 1$, and $C>0$. A $d$-variate function $f$ is $(\beta, C)$-smooth if for every non-negative sequence $\balpha \in \mathbb{N}^d$ such that $\sum_{j=1}^d \alpha_j = r$, the partial derivative $(\partial f)/(\partial x_1^{\alpha_1}\cdots x_d^{\alpha_d})$ exists and satisfies
    \begin{align}
        \left|\frac{\partial^r f}{ \partial x_1^{\alpha_1}\cdots \partial x_d^{\alpha_d}}(\bx) - \frac{\partial^r f}{\partial x_1^{\alpha_1}\cdots \partial x_d^{\alpha_d}}(\bz) \right| \le C \|\bx-\bz\|_2^s.
    \end{align} 
    We use $\mathcal{F}_{d,\beta, C}$ to denote the set of all the $d$-variate $(\beta, C)$-smooth functions.
\end{definition}
It is well known the minimax optimal rate of convergence when $m^* \in \mathcal{F}_{p,\beta, C}$ is
\begin{align*}
    \inf_{\hat{m}} \sup_{m^*\in \mathcal{F}_{d,\beta,C}}  \mathbb{E} \left[\int \left|\hat{m}(\bx)-m^*(\bx)\right|^2 \mu(d\bx)\right] \asymp  C^{\frac{p}{2\beta+p}} n^{-\frac{2\beta}{2\beta+p}}
\end{align*} for all the $n \ge N_0$.
The result indicates that the rate of convergence is slow if $p$ is large relative to $\beta$, which is referred to as `\emph{curse of dimensionality}'. Such problem can be even more severe in mildly high dimensional regime that $p \gg \log n$. In this case, the MSE does not converge to zero for any finite $\beta$.  We therefore consider dimensionality reduction in two directions: factor structure in covariates and low-dimensional structure in  regression function.

Consider the factor model where  the covariate $\bx$ can be decomposed as
\begin{align}
\label{eq:dgp-lfm}
    \bx = \bB \bbf + \bu,
\end{align} where $\bB\in \mathbb{R}^{p\times r}$ is an unknown  loading matrix, $\bbf \in \mathbb{R}^r$ is the vector of latent factors, $\bu \in \mathbb{R}^p$ is the vector of the idiosyncratic component or throughput. Throughout the paper, we assume the covariates $\bx_1, \ldots, \bx_n$ are observable and their associated latent vectors $\{(\bbf_i, \bu_i)\}_{i=1}^n$ are i.i.d. copies of $(\bbf, \bu)$.

Given such latent factor structure, it is naturally to consider the factor augmented regression problem in which we use $\bbf$ and $\bu$, or, more precise, an estimate of $\bbf$ and $\bu$ from the observation $\bx$, as the regressor. This is the same as using $\bx$ and $\bbf$ as regressor, and hence the factor-augmented regression model, but the former representation makes variables much  weakly dependent. 
The regression function in the factor augmented space is $m^*(\bbf,\bu)=\mathbb{E}[y|\bbf,\bu]$. We assume further that 
\begin{align*}
    \mathbb{E}[y|\bbf,\bu] = m^*(\bbf, \bu_{\mathcal{J}}),
\end{align*} where $\mathcal{J} \subset \{1, 2, \ldots, p\}$ is an unknown subset of indexes. Defining the noise to be $\varepsilon = y-\mathbb{E}[y|\bbf,\bu]$, the data generating process can be summarized as
\begin{align}
\label{eq:dgp-samples}
    \bx_i = \bB \bbf_i + \bu_i \qquad \text{and} \qquad y_i = m^*(\bbf_i, \bu_{i,\mathcal{J}}) + \varepsilon_i.
\end{align}
This model will be referred to as the Factor Augmented Sparse Throughput (FAST) model.
The above framework bridges two specific nonparametric regression models.  When $\cJ = \emptyset$, it is a nonparametric factor regression model, whereas when there is no factor ($r=0$, $\bx_i = \bu_i$), it reduces to sparse nonparametric regression model. In particular, it includes sparse high dimensional additive modeling as a special case:
\begin{align*}
    y_i = \sum_{j\in \mathcal{J}} m_j^*(x_{i,j}) + \varepsilon_i \qquad \text{for} \qquad i=1,\ldots, n.
\end{align*}

\subsection{ReLU Neural Networks}

We  build our model using a fully-connected deep neural network with ReLU activation $\sigma(\cdot) = \max\{\cdot, 0\}$ due to its great empirical success, and we call it \emph{deep ReLU network} for short. Let $L$ be any positive integer and $\bd=(d_1,\ldots, d_{L+1}) \in \mathbb{N}^{L+1}$. A deep ReLU network is a function mapping from $\mathbb{R}^{d_{0}}$ to $\mathbb{R}^{d_{L+1}}$ which takes the form of
\begin{align}
\label{eq:nn-def}
    g(x) = \mathcal{L}_{L+1} \circ \bar{\sigma} \circ \mathcal{L}_{L} \circ \bar{\sigma} \circ \cdots \circ \mathcal{L}_2 \circ \bar{\sigma} \circ \mathcal{L}_1(x),
\end{align} 
where $\mathcal{L}_\ell(\bz) = \bW_\ell \bz + \bb_\ell$ is an affine transformation with the weight matrix $\bW_\ell \in \mathbb{R}^{d_\ell \times d_{\ell-1}}$ and bias vector $\bb_\ell \in \mathbb{R}^{d_\ell}$, and $\bar{\sigma}: \mathbb{R}^{d_\ell} \to \mathbb{R}^{d_\ell}$ applies the ReLU activation function  to each entry of a $d_\ell$-dimension vector. For simplicity, we refer to both $\bW_\ell$ and $\bb_\ell$ as \emph{weights of a deep ReLU network}. 

\begin{definition}[Deep ReLU network class]
\label{def:nn}
    For any $L\in \mathbb{N}$, $\bd \in \mathbb{N}^{L+1}$, $B, M\in \mathbb{R}^+ \cup \{\infty\}$, define the family of deep ReLU network truncated by $M$ with depth $L$, width parameter $\bd$, and weights bounded by $B$ as
    \begin{align*}
        \mathcal{G}(L, \bd, M, B) = \Big\{\tilde{g}(\bx) = \bar{T}_M(g(\bx)): g \text{ of form \eqref{eq:nn-def}} &\text{ with } \|\bW_\ell\|_{\max} \le B, \|\bb_\ell\|_{\max} \le B  \Big\}
    \end{align*} where $\bar{T}_M(\cdot)$ applies truncation operator at level $M$ to each entry of a $d_{L+1}$ dimension vector, i.e., $[\bar{T}_M(z)]_i = \text{sgn}(z_i) (|z_i| \land M)$. We denote it as $\mathcal{G}(L, d_{in}, d_{out}, N, M, B)$ if the width parameter $\bd=(d_{in}, N, N, \ldots, N, d_{out})$, which we referred as {deep ReLU network with depth $L$ and width $N$} for brevity.
\end{definition}

\section{Methodology}
\label{sec:method}

In this section, we describe our method of estimation: Factor Augmented Sparse Throughput Neural Network (FAST-NN). Compared with the standard neural network that directly uses the covariate $\bx$ as input, our method introduces two additional modules ahead of the input layer of a deep ReLU network: (1) a pre-defined \emph{diversified projection matrix} $\bW$ to estimate the factor $\bbf$, and (2) a sparse-constrained \emph{variable selection matrix} $\bTheta$ to extract information from important variables $\bx_\mathcal{J}$ (or $\bu_\mathcal{J}$).

\subsection{Factor estimation via diversified projection matrix}
\label{sec:method:diversified-weight}

Let $\overline{r}$ be an integer satisfying $\overline{r}\ge r$. We first introduce the idea of \emph{diversified projection matrix} $\bW \in \mathbb{R}^{p\times \overline{r}}$ proposed by \cite{fan2022learning}.
\begin{definition}[Diversified projection matrix]
\label{def:dpm}
    Let $\overline{r} \ge r$, and $c_1$ be a universal positive constant. A $p\times \overline{r}$ matrix $\bW$ is said a diversified projection matrix if it satisfies \\
    \noindent $~~~~$ (Boundedness) $\|\bW\|_{\max} \leq c_1$; ~~~~~~ (Exogeneity) $\bW$ is independent of $\bx_1,\ldots, \bx_n$ in \eqref{eq:dgp-samples}; \\
    \noindent $~~~~$ (Significance) The matrix $\bH=p^{-1} \bW^\top \bB\in \mathbb{R}^{\overline{r}\times r}$ satisfies $\nu_{\min}(\bH) \gg p^{-1/2}$. \\
    \noindent Each column of $\bW$ is called as \emph{diversified weight}, and  $\overline{r}$ is the number of diversified weights.
\end{definition}
The key idea of the \emph{diversified projection matrix} is that we can use 
\begin{align}
\label{eq:est-factor}
    \tilde{\bbf} = p^{-1} \bW^\top \bx
\end{align} as a surrogate of the factor $\bbf$ in the downstream prediction.  Throughout the paper, we assume that  $\overline{r} \geq r$, an over-parameterized regime. 
To appreciate this,  we substitute \eqref{eq:dgp-samples}  into \eqref{eq:est-factor} and obtain 
\begin{align}
\label{eq:est-factor-decomposition}
    \tilde{\bbf} = \bH \bbf + \bxi,  \quad \mbox{with } \bxi = p^{-1} \bW^\top \bu.
\end{align}
Decomposition \eqref{eq:est-factor-decomposition} reveals that $\tilde{\bbf}$ should estimate well an affine transformation of the latent factor $\bbf$ under mild conditions. The intuition is that when idiosyncratic components $\bu$ are weakly dependent with uniformly bounded second moments,  $\|\bxi\|=\mathcal{O}_\mathbb{P}(p^{-1/2})$ due to its order of variance (e.g. if components of $\bu$ are uncorrelated, $\Var(\bW_j^T \bu) = \mathcal{O}(1)$, where $\bW_j$ is the $j^{th}$ column of $\bW$). On the other hand, owing to the significance condition and non-degenerate factors, the signal
\begin{align*}
    \|\bH\bbf\|_2 \ge \nu_{\min}(\bH) \|\bbf\|_2 \gg \|\bxi\|_2, 
\end{align*} 
which means the first term $\bH \bbf$ will be the dominating term among the above decomposition \eqref{eq:est-factor-decomposition}.

As for the diversified projection matrix $\bW$, an intuitive explanation is that $\bW$ can be treated as an over-estimate of the factor loading $\bB$. The prefix `over-' indicates there is no need to accurately determine the number of factors $r$ in our framework; we can loosely choose some large $\overline{r}$ instead. Meanwhile, the  `significance' condition  requires that $\bW$ should be better than a random guess. For example, if $\bW$ is a $\bar{r}\times p$ matrix with i.i.d. standard Gaussian entries, and $\bar{r}\asymp 1$, it is easy to verify that $\nu_{\min}(\bH) \le \|\bH\| = \mathcal{O}_{\mathbb{P}}(p^{-1/2})$, which does not meet the ``significance'' condition in Definition \ref{def:dpm}. 

A natural question then is how to determine the diversified projection matrix in practice. Here we briefly illustrate two ways. The first is to use domain knowledge to construct each column of $\bW$. Examples for financial data are given in \cite{fan2022learning}, other examples in deep learning literature includes the usage of word embedding, e.g., word2vec \citep{mikolov2013efficient}, in natural language process. Another is a data-driven method that uses another $n'$ pre-trained $\bx_1,\ldots, \bx_{n'}$. To be specific, we can apply principal component analysis to get top-$\overline{r}$ eigenvectors $\hat{\bv}_1, \ldots, \hat{\bv}_{\overline{r}} \in \mathbb{R}^p$ of sample covariance matrix $\hat{\bSigma} = (n')^{-1} \sum_{i=1}^{n'} \bx_i \bx_i^\top$, and take $\bW$ as $\bW=\sqrt{p} \left[\hat{\bv}_1,\ldots, \hat{\bv}_{\overline{r}}\right]$. When only $n$ labelled samples are available, it reduces to standard sample splitting. However, in many applications, there are a lot of unlabelled data available, in which the second method is related to the semi-supervised learning \citep{van2020survey} in practice.

\subsection{Factor Augmented Regression using Neural Networks}

To understand why the pre-training works and is necessary, let us consider the specific case in which $|\cJ|= 0$, namely FAR-model.  Based on the above facts about the diversified projection matrix, we can run nonparametric factor regression model $y = g(\bbf) + \varepsilon$ by using its proxy $\tilde{\bbf}$ and deep ReLU networks.   Letting $\tilde{\bbf}_i = p^{-1} \bW^\top \bx_i$, run the following least squares
\begin{align}
\label{eq:fann-lse}
    \hat{g}(\cdot) 
    = \argmin_{g\in \mathcal{G}(L, \overline{r}, 1, N, M, \infty)} \frac{1}{n} \sum_{i=1}^n \left\{y_i - g(\tilde{\bbf}_i)\right\}^2.
\end{align} 
Then, the factor augmented regression using neural network (FAR-NN) is defined by
\begin{align}
\label{eq:fann-estimator}
    \hat{m}_{\mathtt{FAR}}(x) = \hat{g}\left(p^{-1} \bW^\top \bx\right).
\end{align} 
A visualization of the FAR-NN estimator is shown in Fig \ref{fig:method}(a).

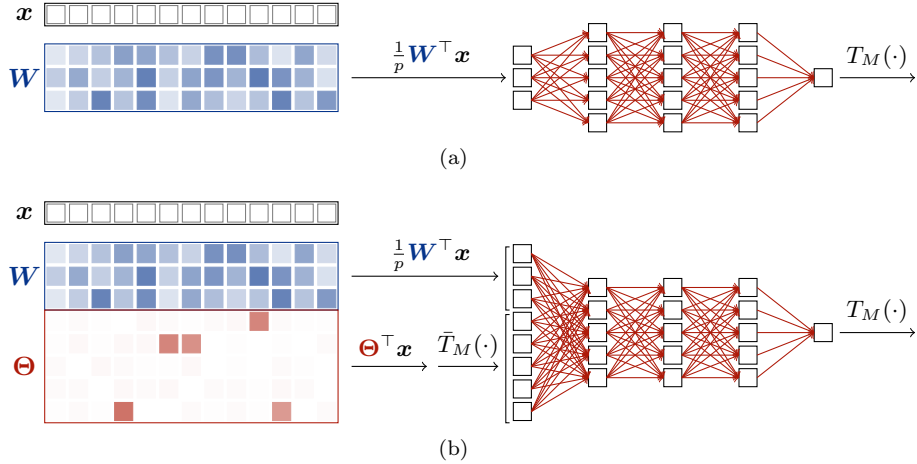
\begin{figure}
\centering

\subfigure[]{
\begin{tikzpicture}[scale=1]
\pgfmathsetseed{2}
\definecolor{myred}{HTML}{ae1908}
\definecolor{myblue}{HTML}{05348b}
\foreach \x in {-1, 0, 1}
{
	\draw[black] (0, \x*0.3) rectangle +(0.24, 0.24);
}
\foreach \l in {1, 2, 3}
\foreach \x in {-2, -1, 0, 1, 2}
{
	\draw[black] (\l, \x*0.3) rectangle +(0.24, 0.24);
}
\draw[black] (4, 0) rectangle+(0.24, 0.24);
\foreach \x in {-1, 0, 1}
	\foreach \y in {-2, -1, 0, 1, 2}
{
	\draw[->, myred] (0+0.24, \x*0.3+0.12) -- (1, \y * 0.3+0.12);
}

\foreach \l in {2,3}
	\foreach \x in {-2, -1, 0, 1, 2}
		\foreach \y in {-2, -1, 0, 1, 2}
		{
			\draw[->, myred] (\l-1+0.24, \x*0.3+0.12) -- (\l, \y * 0.3+0.12);
		}
		
\foreach \x in {-2, -1, 0, 1, 2}
	{
		\draw[->, myred] (4-1+0.24, \x*0.3+0.12) -- (4, 0 * 0.3+0.12);
	}
\draw[black] (-2.3 - 13 * 0.3 - 0.3, 0.3 + 0.24 + 0.3 - 0.06 + 0.18) node{$\bm{x}$};
\draw[black] (-2.3 - 13 * 0.3 - 0.03, 0.3 + 0.24 + 0.3 - 0.03) rectangle+(0.3 * 13, 0.3);
\foreach \y in {1, 2, 3, 4, 5, 6, 7, 8, 9, 10, 11, 12, 13}
	\draw[gray] (-2.3 - \y * 0.3, 0.3+0.24+0.3) rectangle+(0.24, 0.24);
\draw[myblue] (-2.3 - 13 * 0.3 - 0.3, -0.3 - 0.06 + 0.48) node{$\bm{W}$};
\draw[myblue] (-2.3 - 13 * 0.3 - 0.03, -0.3 - 0.03) rectangle+(0.3 * 13, 0.3*3);
\foreach \x in {-1, 0, 1}
	\foreach \y in {1, 2, 3, 4, 5, 6, 7, 8, 9, 10, 11, 12, 13}
	{
		\pgfmathrandominteger{\a}{10}{70}
		\filldraw[myblue!\a] (-2.3 -\y * 0.3, \x * 0.3) rectangle+ (0.24, 0.24);
	}
\draw[black] (-1.1, 0.4) node{\small $\frac{1}{p} {\color{myblue} \bm{W}}^\top \bm{x}$};
\draw[->] (-2.15, 0.12) -- (-0.1, 0.12);

\draw[black] (4+0.24+0.6, 0.4) node{\small $T_M(\cdot)$};
\draw[->] (4+0.24 + 0.1, 0.12) -- (4+0.24+1.1, 0.12);
\end{tikzpicture} 
}

\subfigure[]{
\begin{tikzpicture}[scale=1]
\pgfmathsetseed{2}
\definecolor{myred}{HTML}{ae1908}
\definecolor{myblue}{HTML}{05348b}
\foreach \x in {-4, -3, -2, -1, 0, 1, 2, 3}
{
	\draw[black] (0, \x*0.3 + 0.15) rectangle +(0.24, 0.24);
}
\foreach \l in {1, 2, 3}
\foreach \x in {-2, -1, 0, 1, 2}
{
	\draw[black] (\l, \x*0.3) rectangle +(0.24, 0.24);
}
\draw[black] (4, 0) rectangle+(0.24, 0.24);
\foreach \x in {-4, -3, -2, -1, 0, 1, 2, 3}
	\foreach \y in {-2, -1, 0, 1, 2}
{
	\draw[->, myred] (0+0.24, \x*0.3+0.12+0.15) -- (1, \y * 0.3+0.12);
}

\foreach \l in {2,3}
	\foreach \x in {-2, -1, 0, 1, 2}
		\foreach \y in {-2, -1, 0, 1, 2}
		{
			\draw[->, myred] (\l-1+0.24, \x*0.3+0.12) -- (\l, \y * 0.3+0.12);
		}
		
\foreach \x in {-2, -1, 0, 1, 2}
	{
		\draw[->, myred] (4-1+0.24, \x*0.3+0.12) -- (4, 0 * 0.3+0.12);
	}
\draw[black] (-2.3 - 13 * 0.3 - 0.3, 0.3*3 + 0.15 + 0.24 + 0.3 - 0.06 + 0.18) node{$\bm{x}$};
\draw[black] (-2.3 - 13 * 0.3 - 0.03, 0.3*3 + 0.15 + 0.24 + 0.3-0.03) rectangle+(0.3 * 13, 0.3);
\foreach \y in {1, 2, 3, 4, 5, 6, 7, 8, 9, 10, 11, 12, 13}
	\draw[gray] (-2.3 - \y * 0.3, 0.3*3+0.15+0.24+0.3) rectangle+(0.24, 0.24);
\draw[myblue] (-2.3 - 13 * 0.3 - 0.3, 0.3*2+0.15 -0.3 - 0.06 + 0.48) node{$\bm{W}$};
\draw[myblue] (-2.3 - 13 * 0.3 - 0.03, 0.3*2+0.15 -0.3 - 0.03) rectangle+(0.3 * 13, 0.3*3);
\foreach \x in {1, 2, 3}
	\foreach \y in {1, 2, 3, 4, 5, 6, 7, 8, 9, 10, 11, 12, 13}
	{
		\pgfmathrandominteger{\a}{10}{70}
		\filldraw[myblue!\a] (-2.3 -\y * 0.3, \x * 0.3 +0.15) rectangle+ (0.24, 0.24);
	}
\draw[black] (-1.1, 0.4+0.3*2+0.15) node{\small $\frac{1}{p} {\color{myblue} \bm{W}}^\top \bm{x}$};
\draw[->] (-2.15, 0.3*2+0.15+0.12) -- (-0.2, 0.3*2+0.15+0.12);
\draw[black] (-0.05, 1* 0.3+0.12) -- (-0.1, 1*0.3+0.12) -- (-0.1, 3*0.3+0.24+0.12) -- (-0.05, 3*0.3+0.24+0.12);

\draw[myred] (-2.3 - 13 * 0.3 - 0.3, -0.3 - 0.15 -0.3 - 0.06 + 0.48) node{$\bm{\Theta}$};
\draw[myred] (-2.3 - 13 * 0.3 - 0.03, -0.3*2 - 0.15 -0.3 - 0.03) rectangle+(0.3 * 13, 0.3*5);
\foreach \x in {-4, -3, -2, -1, 0}
	\foreach \y in {1, 2, 3, 4, 5, 6, 7, 8, 9, 10, 11, 12, 13}
	{
		\pgfmathrandominteger{\a}{0}{3};
		\filldraw[myred!\a] (-2.3 -\y * 0.3, \x * 0.3 +0.15) rectangle+ (0.24, 0.24);
	}
\foreach \nn in {1, 2, 3, 4, 5}
{
	\pgfmathrandominteger{\a}{40}{70};
	\pgfmathrandominteger{\x}{-4}{0};
	\pgfmathrandominteger{\y}{1}{13};
	\filldraw[myred!\a] (-2.3 -\y * 0.3, \x * 0.3 +0.15) rectangle+ (0.24, 0.24);
}
\draw[black] (-1.7, -2*0.3+0.4+0.15) node{\small ${\color{myred} \bm{\Theta}}^\top \bm{x}$};
\draw[black] (-0.6, -2*0.3+0.4+0.15) node{\small $\bar{T}_M(\cdot)$};
\draw[->] (-2.15, -2*0.3+0.15+0.12) -- (-1.15, -2*0.3+0.15+0.12);
\draw[->] (-1, -2*0.3+0.15+0.12) -- (-0.2, -2*0.3+0.15+0.12);
\draw[black] (-0.05, -4 * 0.3+0.12) -- (-0.1, -4*0.3+0.12) -- (-0.1, 0.24+0.12) -- (-0.05, 0.24+0.12); 
\draw[black] (4+0.24+0.6, 0.4) node{\small $T_M(\cdot)$};
\draw[->] (4+0.24 + 0.1, 0.12) -- (4+0.24+1.1, 0.12);
\end{tikzpicture} 
}
\caption{Visualization of (a) the FAR-NN estimator and (b) the FAST-NN estimator. The red color represents weights to be learned from data $(\bx_1,y_1),\ldots, (\bx_n, y_n)$, while blue color represents pre-defined fixed weights.}
\label{fig:method}

\end{figure}

\subsection{Fitting Factor Augmented Sparse Throughput Models}
\label{sec:method:full}

Define the following clipped-$L_1$ function with the clipping threshold $\tau>0$,
\begin{align*}
    \psi_{\tau}(x) = \frac{|x|}{\tau} \land 1.
\end{align*} This function can be seen as a continuous relaxation of the indicator function $1_{\{x\neq 0\}}(x)$ as $\tau$ is very small in our application. It was proposed in \cite{fan2001variable,zhang2010analysis} for high-dimensional linear regression, and was also used by \cite{ohn2022nonconvex} to learn neural network with sparse weights. One can also take any rescaled folded concave penalty \citep{fan2001variable}.  For example, let $p_\tau(\cdot)$ be the SCAD penalty and take $\psi_\tau(x) = \tau^{-1} p_\tau(x)$.

Now consider the FAST model \eqref{eq:dgp-samples}.  One natural method to estimate the throughput $\bu_i$ is the residuals of fitting $\{\bx_i\}_{i=1}^n$ on $\{\tilde \bbf_i\}_{i=1}^n$ via linear regression.  However, the residuals are estimated with errors in high dimension and is challenging to analyze the error propagation through the neural networks.  To avoid these technical challenges, we create sparse linear combinations to select a subset of idiosyncratic throughputs.
This leads us to the consideration of the following penalized least squares
\begin{align}
\label{eq:fastnn-lse}
    \hat{g}(\cdot), \hat{\bTheta} \in \argmin_{g\in \mathcal{G}(L, \overline{r}+N, 1, N, M, B), \Theta\in \mathbb{R}^{p\times N}} \frac{1}{n} \sum_{i=1}^n \left\{y_i - g\left(\left[\tilde{\bbf}_i, \bar{T}_M(\bTheta^\top \bx_i)\right]\right)\right\}^2 + \lambda \sum_{i,j} \psi_\tau(\bTheta_{i,j}).
\end{align} where $[\bx, \by]$ concatenate two vectors $\bx\in \mathbb{R}^{d_1}$ and $\by\in \mathbb{R}^{d_2}$ together to form a $(d_1+d_2)$-dimensional vector, $\bar{T}_M(\cdot)$ is the truncation operator defined in Definition \ref{def:nn}, $\lambda$ and $\tau$ are tuning hyper-parameters. The final FAST-NN estimator is 
\begin{align}
\label{eq:fastnn-estimator}
    \hat{m}_{\mathtt{FAST}}(\bx) = \hat{g}\left(\left[p^{-1}\bW^\top \bx, \bar{T}_M(\hat{\bTheta}^\top \bx)\right]\right)
\end{align}
Fig \ref{fig:method}(b) visualizes the network architecture of the FAST-NN estimator.

\section{Theory}

Before presenting our theoretical results, we first impose some regularity conditions.

\begin{condition}[Boundedness] \label{cond1}
For factor model \eqref{eq:dgp-lfm}, there exists universal constants $c_1$ and $b$ such that
\begin{itemize}[noitemsep, topsep=0pt]
    \item[1.] The factor loading matrix satisfies $\|\bB\|_{\max}  \le c_1$.
    \item[2.] For each factor, $f_k$ is zero-mean and bounded in $[-b, b]$ for all the $k\in [r]$.
    \item[3.] For each idiosyncratic component, $u_j$ is zero-mean and bounded in $[-b, b]$ for all the $j\in [p]$.
\end{itemize}
\end{condition}

\begin{condition}[Weak dependence]\label{cond3}
$\sum_{j,k\in \{1,\ldots, p\}, j\neq k} \left|\mathbb{E}[u_j u_k]\right| \le c_1 \cdot p$ for some universal constant $c_1$.
\end{condition}

These conditions are standard, except we replace the sub-Gaussian condition of  $\bbf$ and  $\bu$ by the uniform boundedness. We make such modifications to adapt to the setting of non-parametric regression, in which the covariate is usually assumed to be bounded in order to have sufficient local data. For the theories developed in this section, we only impose weak dependence conditions on the idiosyncratic component (or covariates when $r=0$). Such an assumption is weaker than what is imposed in some high-dimensional nonparametric regression literature, for example, the restricted strong convexity condition \eqref{eq:high-dim-rsc}.
As for the non-parametric regression, we impose the following standard assumptions.

\begin{condition}[Sub-Gaussian noise]
\label{cond4}
    There exist a universal constant $c_1$ such that $\mathbb{P}(|\varepsilon| \ge t|\bbf,\bu) \le 2 e^{-c_1 t^2}$ for all the $t> 0$ almost surely.
\end{condition}

\begin{condition}[Regression function]
\label{cond5}
    The regression function $m^*$ satisfies $\|m^*\|_\infty \le M^*$ and $m^*$ is $c_1$-Lipschitz for some universal constants $M^*$ and $c_1$. We further assume that $1\le M^* \le M \le c_2 M^*$ for some universal constant $c_2>1$.
\end{condition}

In this section, we wish to establish high probability bounds on the following out-of-sample mean squared error and in-sample mean squared error
\begin{align*}
    \|\hat{m} - m^*\|_2^2 = \int \left|\hat{m}(\bx)-m^*(\bbf, \bu)\right|^2 \mu(d\bbf,d\bu) \qquad \|\hat{m} - m^*\|_n^2 = \frac{1}{n}\sum_{i=1}^n \left|\hat{m}(\bx_i)-m^*(\bbf_i, \bu_i)\right|^2 
\end{align*} for some estimator $\hat{m}(\bx)$ that can only get access to the covariate $\bx$. See the justifications for the importance of deriving high probability error bound on both in-sample and out-of-sample $L_2$ error bound in \cite{farrell2021deep}.

\subsection{Factor Augmented Regression Neural Network Estimator}
\label{sec:theory:fa}

Let $\bW$ be the diversified projection matrix according to Definition \ref{def:dpm}. Define the empirical $L_2$ loss:
\begin{align*}
    \hat{\mathsf{R}}_{\mathtt{FAR}}(g) = \frac{1}{n} \sum_{i=1}^n \left\{y_i - g\left(p^{-1} \bW^\top \bx_i\right) \right\}^2.
\end{align*} 
For arbitrary given neural network hyper-parameters $L$ and $N$, we suppose that our FAR-NN estimator is an approximate empirical loss minimizer, i.e., 
\begin{align}
\label{eq:thm-fa:opt-error}
    \hat{m}_{\mathtt{FAR}}(\bx) = \hat{g}(p^{-1}\bW^\top \bx) ~~~~ \text{with} ~~~~ \hat{\mathsf{R}}_{\mathtt{FAR}}(\hat{g}) \le \inf_{g\in \mathcal{G}(L, \overline{r}, 1, N, M, \infty)}\hat{\mathsf{R}}_{\mathtt{FAR}}(g) + \delta_{\mathtt{opt}}
\end{align} with some optimization error $\delta_{\mathtt{opt}}$. We first present an oracle-type inequality for the error bound on the mean squared error of the FAR-NN estimator.

\begin{theorem}[Oracle-type inequality for FAR-NN estimator]
\label{thm:fa-oracle}
Assume Conditions \ref{cond1}--\ref{cond5} hold with $\|\bu\|_\infty \le b$ being replaced by $\max_{j\in [p]} \mathbb{E}|u_j|^2 \le c_1$ for a universal constant $c_1$. 
Consider the FAST model \eqref{eq:dgp-samples} with $\mathcal{J}=\emptyset$ (the FAR model) and FAR-NN estimator $\hat{m}_{\mathtt{FAR}}(\bx)$ in \eqref{eq:thm-fa:opt-error} with number of projections $\bar r \geq r$. 
Define 
\begin{align*}
    \delta_{\mathtt{a}} = \inf_{g\in \mathcal{G}(L, r, 1, N, M, \infty)} \|g - m^*\|_{\infty}^2, ~~~ \delta_{\mathtt{s}} = (N^2L^2+\overline{r}NL) \log\left(NL\overline{r}\right) \frac{\log n}{n}, ~~~ \delta_{\mathtt{f}} = \frac{\overline{r}}{p \cdot \nu^2_{\min}(\bH)}.
\end{align*}
Then with probability at least $1-3e^{-t}$, for $n$ large enough,
\begin{align*}
    \|\hat{m}_{\mathtt{FAR}}-m^*\|^2_2 + \|\hat{m}_{\mathtt{FAR}}-m^*\|^2_n \le c_2 \left\{\delta_{\mathtt{opt}} + \delta_{\mathtt{a}} + \delta_{\mathtt{s}} + \delta_{\mathtt{f}} + \frac{t}{n}\right\}
\end{align*} for a universal constant $c_2$ that only depends on $c_1$ and constants in Condition \ref{cond1}--\ref{cond5}.
\end{theorem}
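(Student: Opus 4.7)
My plan is to decompose the excess risk into the four pieces $\delta_{\mathtt{opt}},\delta_{\mathtt{a}},\delta_{\mathtt{s}},\delta_{\mathtt{f}}$ via the standard nonparametric least-squares oracle-inequality template, with two extra ingredients tailored to the factor-augmented setting: a linear \emph{lift} of the oracle approximator from $r$ to $\overline r$ input dimensions, and a Lipschitz-perturbation argument that absorbs the factor estimation error incurred by using $\tilde\bbf$ in place of the unobservable $\bbf$.

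For the approximation and factor-error pieces, since $\nu_{\min}(\bH)>0$ the tall matrix $\bH\in\mathbb{R}^{\overline r\times r}$ has full column rank, so the left inverse $\bH^{\dagger}=(\bH^\top\bH)^{-1}\bH^\top$ obeys $\bH^\dagger\bH=I_r$ and $\|\bH^\dagger\|=\nu_{\min}(\bH)^{-1}$. I would pick a near-minimizer $g^\star\in\mathcal{G}(L,r,1,N,M,\infty)$ of $\|g-m^*\|_\infty$ and define $\tilde g(\bz):=g^\star(\bH^\dagger\bz)$, obtaining a legitimate element of $\mathcal{G}(L,\overline r,1,N,M,\infty)$ by absorbing $\bH^\dagger$ into the first affine layer (the unbounded $B=\infty$ is used here). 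Since $\tilde\bbf_i=\bH\bbf_i+\bxi_i$ with $\bxi_i=p^{-1}\bW^\top\bu_i$, we have $\tilde g(\tilde\bbf_i)=g^\star(\bbf_i+\bH^\dagger\bxi_i)$, and combining $\|g^\star-m^*\|_\infty^2\lesssim\delta_{\mathtt a}$ with the Lipschitzness of $m^*$ from Condition~\ref{cond5} gives
\begin{align*}
\bigl|\tilde g(\tilde\bbf_i)-m^*(\bbf_i)\bigr|^2 \lesssim \delta_{\mathtt a}+\frac{\|\bxi_i\|_2^2}{\nu_{\min}^2(\bH)}.
\end{align*}
Condition~\ref{cond3} with $\|\bW\|_{\max}\le c_1$ and $\max_j \mathbb{E}u_j^2\le c_1$ yields $\mathbb{E}\|\bxi_i\|^2\lesssim \overline r/p$, and boundedness of $\bu_i$ supplies Bernstein concentration of the sample mean about its expectation, producing the $\delta_{\mathtt f}$ term in both empirical and population norms.

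For the stochastic and optimization pieces, the approximate minimization \eqref{eq:thm-fa:opt-error} together with $y_i=m^*(\bbf_i)+\varepsilon_i$ yields the standard basic inequality
\begin{align*}
\|\hat g(\tilde\bbf_\cdot)-m^*(\bbf_\cdot)\|_n^2 \le \|\tilde g(\tilde\bbf_\cdot)-m^*(\bbf_\cdot)\|_n^2 + \frac{2}{n}\sum_{i=1}^n\varepsilon_i\bigl[\hat g(\tilde\bbf_i)-\tilde g(\tilde\bbf_i)\bigr] + \delta_{\mathtt{opt}}.
\end{align*}
Conditioning on $\{(\bbf_i,\bu_i)\}_{i=1}^n$ turns the noise cross-term into a sub-Gaussian empirical process (Condition~\ref{cond4}) indexed by the truncated network class, and a peeling/Dudley-chaining argument driven by the estimate $\mathrm{Pdim}(\mathcal G(L,\overline r,1,N,M,\infty))\lesssim (N^2L^2+\overline r NL)\log(NL\overline r)$ (Bartlett--Harvey--Liaw--Mehrabian) delivers a localized bound of the form $\tfrac12\|\hat g-\tilde g\|_n^2+C\delta_{\mathtt s}+Ct/n$, the quadratic term being absorbed on the left. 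A companion ratio-type uniform deviation inequality over the same class (of the Gy\"orfi--Kohler--Krzy\.zak--Walk flavor) then transfers the empirical to the population norm at cost $C\delta_{\mathtt s}+Ct/n$, yielding the two-sided bound stated in the theorem after a union bound.

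\emph{Main obstacle.} The hardest step will be the empirical-process control just described, because the design points $\tilde\bbf_i=\bH\bbf_i+\bxi_i$ entangle the factor and idiosyncratic components through the $p$-dimensional weight $\bW$, raising the worry that the effective hypothesis class $\{g\circ(p^{-1}\bW^\top(\cdot)):g\in\mathcal G\}$ has complexity depending on $p$. Fortunately $\bW$ is exogenous (Definition~\ref{def:dpm}) and independent of the data, so conditioning on it is legitimate: composition with a fixed linear map cannot increase the Pseudo-dimension, and the complexity is still governed by $\mathrm{Pdim}(\mathcal G)$ rather than by any quantity growing in $p$. With that reduction in hand, the remaining work is the now-standard machinery of ReLU-network Pseudo-dimension bounds coupled with local Rademacher/Talagrand-type inequalities.
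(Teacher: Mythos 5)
Your proposal follows essentially the same route as the paper's proof: you lift a near-minimizer $g^\star$ of $\|g-m^*\|_\infty$ to $\overline r$ dimensions via the left inverse $\bH^\dagger$ (the paper's $\bH^+$), exploit $\tilde\bbf_i=\bH\bbf_i+\bxi_i$ together with $\nu_{\min}(\bH)$ to get the $\delta_{\mathtt f}$ term, concentrate the fixed-function error via Bernstein, derive the basic inequality from approximate minimization, control the noise cross-term by a peeling/chaining argument over a class whose Pseudo-dimension is bounded by $(N^2L^2+\overline r NL)\log(NL\overline r)$, and pass from the empirical to the population norm via a Gy\"orfi--Kohler--Krzy\.zak--Walk-style ratio inequality. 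These are exactly the five steps in the paper, relying on the same three technical lemmas (factor approximation, local weighted empirical process, empirical-vs-population $L_2$ equivalence).

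One small gap worth flagging: your pointwise estimate $\bigl|\tilde g(\tilde\bbf_i)-m^*(\bbf_i)\bigr|^2\lesssim\delta_{\mathtt a}+\|\bxi_i\|_2^2/\nu_{\min}^2(\bH)$ implicitly assumes that the perturbed point $\bbf_i+\bH^\dagger\bxi_i$ stays inside the region where the sup-norm approximation $\|g^\star-m^*\|_\infty\lesssim\sqrt{\delta_{\mathtt a}}$ is valid. Since $g^\star$ is not Lipschitz (only $m^*$ is), you cannot absorb the perturbation by Lipschitzness alone; you must first split on whether $\bbf_i+\bH^\dagger\bxi_i$ lands in, say, $[-2b,2b]^r$. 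The paper handles the complementary event by bounding $|\tilde g-\tilde m|$ by $(M+M^*)$ there and controlling its probability by Markov's inequality on $\|\bH^\dagger\bxi_i\|_2$, which again yields a $\delta_{\mathtt f}$-order contribution. Adding this truncation argument closes the gap, and the rest of your sketch goes through.
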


Theorem \ref{thm:fa-oracle} establishes a high probability bound on both the out-of-sample mean squared error and the in-sample mean squared error, we use `$+$' here for a clear presentation. From Theorem \ref{thm:fa-oracle}, the error bound is composed of four terms: the optimization error $\delta_{\mathrm{opt}}$, the neural network approximation error $\delta_{\mathrm{a}}$ to the underlying function $m^*$, the stochastic error $\delta_{\mathrm{s}}$ scales linearly with $n^{-1} \log n$ and $(N^2L^2+\overline{r}NL) \log\left(NL\overline{r}\right)$, which is proportional to the Pseudo-dimension of the neural network class we used, and the error $\delta_{\mathrm{f}}$ related to inferring the latent factors $\bbf$ from the observations $\bx$ and it scales linearly with $\overline{r}$. Such an error bound is not applicable without specifying the network hyper-parameters $N$ and $L$. An optimal rate can be further obtained by choosing $N$ and $L$ to trade off the approximation error $\delta_{\mathtt{a}}$ and the stochastic error $\delta_{\mathtt{s}}$. Moreover, it should be noted that Theorem \ref{thm:fa-oracle} provides a generic result that
also works for other non-parametric estimators that use function class with finite Pseudo-dimension, for example, the spline method. To adapt to the error bound for those methods, the only change is to replace the stochastic error term by $n^{-1} \{\mathrm{Pdim}(\mathcal{G}) \log n\}$ for the function class $\mathcal{G}$ used in estimation.

It is known from \cite{schmidt2020nonparametric} and \cite{kohler2021rate} that deep ReLU networks can be adaptive to unknown hierarchical composition structures. The following corollary claims that in the high-dimensional nonparametric regression model \eqref{eq:dgp-samples}, our proposed FAR-NN estimator can be efficiently adaptive to the hierarchical composition structure of $m^*$ in the same way. We first introduce the concept of the hierarchical composition model, which is a composition of 
$(\beta, C)$-smooth functions for $(\beta,t)$ in a given finite set $\mathcal{P}$. 

\begin{definition}[Hierarchical composition model]
\label{hcm} The function class of hierarchical composition model $\mathcal{H}(d, l, \mathcal{P})$ \citep{kohler2021rate}, with $l, d \in \mathbb{N}^+$ and $\cP$, a subset of $[1,\infty) \times \mathbb{N}^+$ satisfying $\sup_{(\beta, t)\in \mathcal{P}} (\beta \lor t) < \infty$, is defined as follows. 
For $l=1$, 
\begin{align*}
\mathcal{H}(d, 1,\mathcal{P}) = \big\{  &h: \mathbb{R}^d \to \mathbb{R}: h(\bx) = g(x_{\pi(1)},...,x_{\pi(t)})\text{, where} \\
&~~~~~~~~g:\mathbb{R}^t \to \mathbb{R}\text{ is }(\beta, C)\text{-smooth for some } (\beta,t)\in \mathcal{P}  \text{ and }\pi: [t]\to [d] \big\}.
\end{align*} 
It consists of all $t$-variate functions with $(\beta, C)$ smoothness with a positive constant $C$.
For $l>1$, $\mathcal{H}(d, l,\mathcal{P})$ is defined recursively as
\begin{align*}
\mathcal{H}(d, l,\mathcal{P}) = \big\{&h: \mathbb{R}^d \to \mathbb{R}: h(\bx) = g(f_1(\bx),...,f_t(\bx))\text{, where} \\
&~~~~~g:\mathbb{R}^t \to \mathbb{R}\text{ is }(\beta, C)\text{-smooth for some } (\beta,t)\in \mathcal{P}  ~~\text{and}~~  f_i \in \mathcal{H}(d, l-1,\mathcal{P})  \big\}.
\end{align*}
\end{definition}

The minimax optimal estimation risk over $\mathcal{H}(d, l,\mathcal{P})$ is determined by the hardest component in the composition, which can be characterized via the following quantity
\begin{align}
    \label{eq:hcm:gamma-def}
    \gamma^* = \frac{\beta^*}{d^*} ~~~~\text{with}~~~~ (\beta^*,d^*)=\argmin_{(\beta, t)\in \mathcal{P}} \frac{\beta}{t}.
\end{align} Following \cite{kohler2021rate}, we restrict to the case where all the compositions has smoothness parameter $\beta\ge 1$ to simplify the presentation. The optimal rate for the FAR-NN estimator when $m^* \in \mathcal{H}(r, l,\mathcal{P})$ is stated as follows.

\begin{corollary}[Optimal rate for FAR-NN estimator]
\label{coro:roc-fa-nn}
Let $\delta_{n} = n^{-\frac{2\gamma^*}{2\gamma^*+1}} (\log n)^{\frac{12\gamma^*}{2\gamma^*+1}}$ with $\gamma^* = \frac{\beta^*}{d^*}$.  If we choose $r \le \bar{r} \lesssim 1$ and $NL \asymp n^{\frac{1}{4\gamma^*+2}} (\log n)^{\frac{4\gamma^*-1}{2\gamma^*+1}}$, then under the conditions in Theorem \ref{thm:fa-oracle}, the FAR-NN estimator with $\delta_{\mathtt{opt}} \lesssim \delta_n$ satisfies, with probability at least $1-3e^{-t}$, for $n$ large enough, 
    \begin{align*}
        \sup_{m^* \in \mathcal{H}(r,l,\mathcal{P})} \|\hat{m}_{\mathtt{FAR}}-m^*\|^2_2 + \|\hat{m}_{\mathtt{FAR}}-m^*\|^2_n \le c_1 \left(\delta_n + \delta_{\mathtt{f}} + \frac{t}{n}\right),
    \end{align*}
for some universal constant $c_1$ independent of $n, p, t$ and choice of $\bW$.
\end{corollary}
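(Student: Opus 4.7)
The plan is to derive this corollary as a direct consequence of the oracle-type inequality in Theorem \ref{thm:fa-oracle} by selecting the network hyperparameters $N$ and $L$ to balance the approximation error $\delta_{\mathtt{a}}$ against the stochastic error $\delta_{\mathtt{s}}$, and by verifying that the factor error $\delta_{\mathtt{f}}$ and optimization error $\delta_{\mathtt{opt}}$ are negligible relative to $\delta_n$.

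First, I would invoke an existing deep ReLU approximation theorem for hierarchical composition models (in the spirit of \cite{schmidt2020nonparametric, kohler2021rate}): for any $m^* \in \mathcal{H}(r, l, \mathcal{P})$ and any network depth $L$ and width $N$, one can construct $g \in \mathcal{G}(L, r, 1, N, M, \infty)$ such that $\|g - m^*\|_\infty \lesssim (NL)^{-\gamma^*} (\log(NL))^{\kappa}$ for some $\kappa$ depending on $l$ and $\mathcal{P}$. Squaring, this yields $\delta_{\mathtt{a}} \lesssim (NL)^{-2\gamma^*} (\log(NL))^{2\kappa}$, and this is the crucial input beyond the contents of Theorem \ref{thm:fa-oracle}. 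Since $\bar{r}\lesssim 1$, the stochastic term simplifies to $\delta_{\mathtt{s}} \asymp N^2 L^2 \log(NL) \cdot \frac{\log n}{n}$.

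Next, with the choice $NL \asymp n^{1/(4\gamma^*+2)} (\log n)^{(4\gamma^*-1)/(2\gamma^*+1)}$, both $\delta_{\mathtt{a}}$ and $\delta_{\mathtt{s}}$ end up of order $n^{-2\gamma^*/(2\gamma^*+1)} (\log n)^{(8\gamma^*+1)/(2\gamma^*+1)} = \delta_n$, after carefully accounting for the polylogarithmic contributions from both the approximation side and the $\log(NL)$ factor in $\delta_{\mathtt{s}}$. This balancing is where the slightly unusual power $(4\gamma^*-1)/(2\gamma^*+1)$ of $\log n$ in the hyperparameter choice comes from: it is tuned so that the two competing terms are of matching order, including logarithmic factors.

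Finally, assembling these pieces into the bound of Theorem \ref{thm:fa-oracle} gives, on an event of probability at least $1-3e^{-t}$,
\[
\frac{1}{n}\sum_{i=1}^n |\hat{m}_{\mathtt{FAR}}(\bx_i)-m^*(\bbf_i)|^2 + \int |\hat{m}_{\mathtt{FAR}}(\bx)-m^*(\bbf)|^2\,\mu(d\bbf,d\bu) \lesssim \delta_{\mathtt{opt}} + \delta_n + \delta_{\mathtt{f}} + \frac{t}{n}.
\]
Under the implicit assumption that $\delta_{\mathtt{opt}} = O(\delta_n)$, the optimization term can be absorbed into $\delta_n$, yielding the claimed bound $c_1(\delta_n + \delta_{\mathtt{f}} + t/n)$. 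The supremum over $m^* \in \mathcal{H}(r,l,\mathcal{P})$ is preserved because the approximation result is uniform over that class (the implicit constant depends only on $l, \mathcal{P}$), and the remaining stochastic and factor bounds from Theorem \ref{thm:fa-oracle} are independent of the particular $m^*$.

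The main obstacle is the bookkeeping of log factors: one must invoke a version of the ReLU approximation theorem for hierarchical composition models whose polylogarithmic dependence, when combined with the Pseudo-dimension-driven logarithm in $\delta_{\mathtt{s}}$, produces exactly the stated exponent $(8\gamma^*+1)/(2\gamma^*+1)$ on $\log n$. A secondary concern is ensuring that the constant in the approximation result depends only on $l, \mathcal{P}, C$ (and not on $n$ or $p$), so that the implicit constant $c_1$ in the final bound is genuinely independent of $n, p$, and the diversified projection matrix $\bW$ — the latter independence follows because $\bW$ enters Theorem \ref{thm:fa-oracle} only through $\delta_{\mathtt{f}}$, which is displayed separately.
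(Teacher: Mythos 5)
Your approach is conceptually identical to the paper's: the paper's proof of this corollary consists entirely of "It follows directly from Theorem \ref{thm:fa-oracle} and Proposition 3.1 in \cite{fan2022noise}," i.e., plug an approximation result for $\mathcal{H}(r,l,\mathcal{P})$ into the oracle inequality and balance $\delta_{\mathtt{a}}$ against $\delta_{\mathtt{s}}$. Your remark that $\delta_{\mathtt{opt}}$ must be implicitly assumed to be $O(\delta_n)$ is a fair reading (the corollary drops that term silently), and your observation that $\bW$-dependence is isolated inside $\delta_{\mathtt{f}}$ is correct.

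However, the one step you assert without verifying — that "both $\delta_{\mathtt{a}}$ and $\delta_{\mathtt{s}}$ end up of order $\delta_n$" — is exactly where the content lies, and it does not check out under the natural reading of Theorem \ref{thm:fa-oracle}. With $\bar{r}\lesssim 1$ and $NL\asymp n^{1/(4\gamma^*+2)}(\log n)^{(4\gamma^*-1)/(2\gamma^*+1)}$, the stochastic term is
\[
\delta_{\mathtt{s}} \asymp (NL)^2\,\log(NL)\,\frac{\log n}{n}
\asymp n^{-\frac{2\gamma^*}{2\gamma^*+1}}(\log n)^{\frac{12\gamma^*}{2\gamma^*+1}},
\]
since $\log(NL)\asymp\log n$ and $(NL)^2 = n^{1/(2\gamma^*+1)}(\log n)^{(8\gamma^*-2)/(2\gamma^*+1)}$. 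The exponent $\frac{12\gamma^*}{2\gamma^*+1}$ exceeds the claimed $\frac{8\gamma^*+1}{2\gamma^*+1}$ whenever $\gamma^*>1/4$; e.g.\ at $\gamma^*=1$ this gives $(\log n)^4$ against the claimed $(\log n)^3$. So the bound $\delta_{\mathtt{s}}\lesssim\delta_n$ fails as written. If instead one takes $NL\asymp n^{1/(4\gamma^*+2)}(\log n)^{(4\gamma^*-1)/(4\gamma^*+2)}$ (denominator $4\gamma^*+2$ rather than $2\gamma^*+1$, i.e.\ half the stated log exponent), one does recover $\delta_{\mathtt{s}}\asymp\delta_n$. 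You should carry out the log-factor arithmetic explicitly rather than asserting it — the reconciliation requires either a sharper approximation bound from \cite{fan2022noise} than the $(NL/\log N\log L)^{-2\gamma^*}$ form used elsewhere in the paper, or a corrected statement of $NL$ (or of $\delta_n$) in the corollary.
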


Note that $\delta_n$ is the optimal excess risk (except a poly-logarithm term) associated with the hardest component in $\mathcal{H}(d, l,\mathcal{P})$ when the latent factor $\bbf$ is observable and used as the input of the deep ReLU network. Therefore, Corollary \ref{coro:roc-fa-nn} asserts that if $\bx$ indeed admits linear factor model structure \eqref{eq:dgp-lfm}, and our choice of diversified projection matrix is near-optimal such that $\nu_{\min}(\bH) \asymp 1$, then
\begin{align}
\label{eq:roc-fa-nn-final}
    \int \left|\hat{m}_{\mathtt{FAR}}(\bx)-m^*(\bbf)\right|^2 \mu(d\bbf, d\bu) \lesssim \delta_n + \frac{1}{p} + \frac{t}{n},
\end{align} with probability at least $1-3e^{-t}$ (with $t=10 \log n$, say, the probability is at least $1-3n^{-10}$ and error contribution is only of order $(\log n)/n$ in \eqref{eq:roc-fa-nn-final}). We can see that the error bound is dominated by $\delta_n$ in the high-dimensional regime $p\asymp  n$. Hence, it's evident that the FAR-NN estimator can achieve an oracle convergence rate as if the factor $\bbf$ is observable and the composition structure is known. 

A natural question is whether we can choose the diversified projection matrix such that the condition $\nu_{\min}(\bH) \asymp 1$ is satisfied. The following proposition gives an affirmative answer under the additional assumptions on the distribution of $(\bbf, \bu)$.

\begin{condition}[Pervasiveness]
\label{cond:pervasiveness}
There exists a universal constant $c_1\ge 1$ such that $p / c_1< \lambda_{\min}(\bB^\top \bB) \le \lambda_{\max}(\bB^\top \bB) \le c_1 p$.
\end{condition}

\begin{condition}[Weak dependence between $\bbf$ and $\bu$]
\label{cond:dist-fu}
There exists a universal constant $c_1$ such that $\|\bB \bSigma_{\bbf, \bu}\|_{F} \le c_1 \sqrt{p}$ where $\bSigma_{\bbf, \bu} = \mathbb{E} [\bbf \bu^T] \in \mathbb{R}^{r\times p}$ is the covariance matrix between $\bbf$ and $\bu$.
\end{condition}

\begin{proposition}
\label{prop:choice-dpm}
Suppose $\bx_1,\ldots, \bx_n$ are i.i.d. copies of $\bx$ in model \eqref{eq:dgp-lfm}. Let $\hat{\bSigma}=\frac{1}{n} \sum_{i=1}^n \bx_i \bx_i^\top$, $\overline{r} \ge r$, and $\hat{\bv}_1,\ldots, \hat{\bv}_{\overline{r}}$ be the top-$\overline{r}$ eigenvectors of $\hat{\bSigma}$.
Then, under Conditions \ref{cond1}, \ref{cond3}, \ref{cond:pervasiveness} and \ref{cond:dist-fu},
with probability at least $1-3e^{-t}$, the matrix $\tilde{\bW}=\sqrt{p} \left[\hat{\bv}_1, \ldots, \hat{\bv}_{\overline{r}}\right] \in \mathbb{R}^{p\times \overline{r}}$ satisfies
\begin{align}
\label{eq:prop:choice-dpm:bd}
    c_1 - c_2 \left(r\sqrt{\frac{\log p + t}{n}} + r^2\sqrt{\frac{\log r + t}{n}} + \frac{1}{\sqrt{p}}\right)\le \nu_{\min}(p^{-1}\tilde{\bW}^\top \bB) \le \nu_{\max}(p^{-1}\tilde{\bW}^\top \bB) \le c_3 
\end{align} for some universal constants $c_1$--$c_3$ that independent of $n, p, t, r, \overline{r}$.
\end{proposition}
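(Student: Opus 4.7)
The plan is to reduce the claim to a Davis--Kahan/Wedin sin-$\Theta$ bound that quantifies how well the empirical eigenspace range$(\hat{\bv}_1,\ldots,\hat{\bv}_{\overline{r}})$ captures the column space of $\bB$, and then translate that bound into control of $\nu_{\min}(p^{-1}\tilde\bW^\top\bB)=\nu_{\min}(p^{-1/2}\hat\bV^\top\bB)$, where $\hat\bV=[\hat\bv_1,\ldots,\hat\bv_{\overline r}]$. Let $\bSigma^\ast=\bB\bSigma_{\bbf}\bB^\top$ with $\bSigma_{\bbf}=\mathbb E[\bbf\bbf^\top]$, and let $\bV\in\mathbb R^{p\times r}$ be the top-$r$ orthonormal eigenvectors of $\bSigma^\ast$. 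Condition \ref{cond:pervasiveness} combined with nondegeneracy of $\bSigma_\bbf$ (implicit in the factor setup) gives $\lambda_r(\bSigma^\ast)\asymp p$ and $\lambda_{r+1}(\bSigma^\ast)=0$, and range$(\bV)=\text{range}(\bB)$.

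First I would deduce a deterministic inequality: for any unit $\bv\in\mathbb R^r$, writing $\bB\bv=\bV\bxi$ with $\|\bxi\|=\|\bB\bv\|$,
\[
\|p^{-1/2}\hat\bV^\top\bB\bv\|^2\;\ge\;\nu_{\min}(\hat\bV^\top\bV)^2\cdot \frac{\|\bB\bv\|^2}{p}\;\ge\;\frac{\lambda_{\min}(\bB^\top\bB)}{p}\bigl(1-\|(\bI-\hat\bV\hat\bV^\top)\bV\|^2\bigr).
\]
Since $\overline r\ge r$, range$(\hat\bV_\perp)\subset\text{range}(\hat\bV_{(r),\perp})$ where $\hat\bV_{(r)}$ are the top-$r$ eigenvectors of $\hat\bSigma$, so the Davis--Kahan sin-$\Theta$ theorem yields
\[
\|(\bI-\hat\bV\hat\bV^\top)\bV\|\;\le\;\|(\bI-\hat\bV_{(r)}\hat\bV_{(r)}^\top)\bV\|\;\lesssim\;\frac{\|\hat\bSigma-\bSigma^\ast\|}{\lambda_r(\bSigma^\ast)-\lambda_{r+1}(\bSigma^\ast)}\;\asymp\;\frac{\|\hat\bSigma-\bSigma^\ast\|}{p}.
\]
Coupled with $\lambda_{\min}(\bB^\top\bB)/p\ge 1/c_1$ and $\sqrt{1-\epsilon^2}\ge 1-\epsilon$, this reduces the task to bounding $\|\hat\bSigma-\bSigma^\ast\|/p$; the upper-bound assertion $\nu_{\max}(p^{-1/2}\hat\bV^\top\bB)\le c_3$ is immediate from $\|\bB\|\le\sqrt{\lambda_{\max}(\bB^\top\bB)}\lesssim\sqrt{p}$.

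Next I would split the covariance error into the four natural pieces
\[
\hat\bSigma-\bSigma^\ast = \bB(\hat\bSigma_{\bbf}-\bSigma_{\bbf})\bB^\top + \bB(\widehat{\bSigma}_{\bbf,\bu}-\bSigma_{\bbf,\bu}) + (\widehat{\bSigma}_{\bbf,\bu}-\bSigma_{\bbf,\bu})^\top\bB^\top + (\hat\bSigma_{\bu}-\bSigma_{\bu}) + \bE,
\]
with $\bE=\bB\bSigma_{\bbf,\bu}+\bSigma_{\bbf,\bu}^\top\bB^\top+\bSigma_{\bu}$. Bernstein plus union bounds on bounded random variables (Conditions \ref{cond1} and \ref{cond3}) give entrywise concentrations $\|\hat\bSigma_{\bbf}-\bSigma_{\bbf}\|_{\max}$, $\|\widehat\bSigma_{\bbf,\bu}-\bSigma_{\bbf,\bu}\|_{\max}$, $\|\hat\bSigma_{\bu}-\bSigma_{\bu}\|_{\max}\lesssim\sqrt{(\log p+t)/n}$ (with $\log r$ in place of $\log p$ for the first). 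Converting entrywise to spectral norm via $\|\cdot\|\le\|\cdot\|_F$ and using $\|\bB\|^2\lesssim pr$ then yields
\[
\frac{\|\bB(\hat\bSigma_{\bbf}-\bSigma_{\bbf})\bB^\top\|}{p}\lesssim r^2\sqrt{\tfrac{\log r+t}{n}}, \qquad \frac{\|\bB(\widehat\bSigma_{\bbf,\bu}-\bSigma_{\bbf,\bu})\|}{p}\lesssim r\sqrt{\tfrac{\log p+t}{n}},
\]
and $\|\hat\bSigma_{\bu}-\bSigma_{\bu}\|/p\lesssim\sqrt{(\log p+t)/n}$, which is absorbed into the previous term. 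For the deterministic remainder, Condition \ref{cond:dist-fu} gives $\|\bB\bSigma_{\bbf,\bu}\|_F\lesssim\sqrt p$, while Condition \ref{cond3} together with boundedness yields $\|\bSigma_{\bu}\|_F^2\le p b^4+b^2\sum_{j\ne k}|\bSigma_{u,jk}|\lesssim p$, hence $\|\bE\|/p\lesssim 1/\sqrt p$. Summing the four bounds produces
\[
\frac{\|\hat\bSigma-\bSigma^\ast\|}{p}\lesssim r^2\sqrt{\tfrac{\log r+t}{n}}+r\sqrt{\tfrac{\log p+t}{n}}+\frac{1}{\sqrt p},
\]
which, plugged into the Davis--Kahan reduction above, delivers \eqref{eq:prop:choice-dpm:bd}.

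The main obstacle is the mismatched-dimension application of Davis--Kahan, since $\hat\bV$ has $\overline r\ge r$ columns while $\bV$ has $r$: one must use the nested-range observation $\text{range}(\hat\bV_\perp)\subset\text{range}(\hat\bV_{(r),\perp})$ so that only the original $r$-th eigengap $\lambda_r(\bSigma^\ast)-\lambda_{r+1}(\bSigma^\ast)\asymp p$ is needed. A secondary subtlety is that the crude Bernstein-to-spectral-norm conversion $\|\cdot\|\le\|\cdot\|_F$ must be orchestrated carefully so that the $\|\bB\|$ factors reproduce exactly the $r^2$ and $r$ multipliers appearing in the statement, rather than yielding a weaker dependence on $r$.
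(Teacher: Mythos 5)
Your proposal is correct and follows essentially the same route as the paper: reduce via a sin-$\Theta$/Davis–Kahan argument (using the nested-complement observation to handle $\overline r\ge r$), decompose $\hat\bSigma-\bSigma^*$ into the factor, cross, idiosyncratic and deterministic remainder terms, control each by Bernstein plus union bounds and the Frobenius norm, and invoke pervasiveness for the eigengap and the trivial upper bound on $\nu_{\max}$. The only cosmetic differences are that you keep $\bSigma_{\bbf}$ explicit rather than normalizing it to $\bI$, and you center $\hat\bSigma_{\bbf,\bu}$ before isolating the deterministic remainder, neither of which changes the estimates.
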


Combining Proposition \ref{prop:choice-dpm} and Corollary \ref{coro:roc-fa-nn}, when $r\asymp \overline {r}\asymp 1$, we can build a FAR-NN estimator from scratch which is capable of achieving the convergence rate \eqref{eq:roc-fa-nn-final} if $n \gg (\log p + t)$ and $p \gg 1$.  As noted before, $t$ is usually of order $\log n$, and hence a small fraction of the training sample suffice to learn good diversified weights with $\nu_{\min}(\bH) \asymp 1$.  More specifically, given the observations $(\bx_1,y_1), (\bx_2,y_2),\ldots, (\bx_n, y_n)$, we first divide the whole dataset into two subsets $\mathcal{D}_1 = \{(\bx_i,y_i)\}_{i=1}^{n_1}$ and $\mathcal{D}_2 = \{(\bx_i, y_i)\}_{i=n_1+1}^n$. We use a small set $\mathcal{D}_1$ to learn the diversified projection matrix $\bW=\tilde{\bW}$ using PCA as described in Proposition \ref{prop:choice-dpm} and use the big set $\mathcal{D}_2$ together with the diversified projection matrix $\bW$ to learn our final estimator $\hat{m}_{\mathtt{FAR}}$. By using Proposition \ref{prop:choice-dpm} and the fact $n_1 \gg (\log p + t)$ and $p\gg 1$, with probability at least $1-3e^{-t}$, the diversified projection matrix $\bW$ satisfies $\nu_{\min}(p^{-1} \bW^\top \bB) \asymp 1$. Having such a good diversified projection matrix in hand, Corollary \ref{coro:roc-fa-nn} claims that the FAR-NN estimator $\hat{m}_{\mathtt{FAR}}$ built from pre-trained $\bW$ with tolerable optimization error has an error bound of \eqref{eq:roc-fa-nn-final} with probability at least $1-3e^{-t}$. From the above discussion, we can assign $n_1\asymp \sqrt{n}$ because the requirement of diversified projection matrix is milder and we can use almost all the data to learn the regression function.

How about jointly estimating the diversified projection matrix and the neural network weights using a unified objective function? For example, consider the following optimization problem
\begin{align}
\label{eq:far-joint-opt}
    \hat{\bW}, \hat{g}(\cdot) = \argmin_{\bW\in \mathbb{R}^{p\times \overline{r}}, g\in \mathcal{G}(L, \overline{r}, 1, N, M, \infty)} \frac{1}{n} \sum_{i=1}^n \left\{y_i - g\left(p^{-1} \bW^\top \bx_i\right) \right\}^2,
\end{align}
can we achieve a comparable rate of convergence if we use $\hat{m}(\bx) = \hat{g}(p^{-1} \hat{\bW}^\top \bx)$? Now it is an under-determined system that has the number of freedoms (number of parameters) more than the number of constraints (number of data). It is not surprising that there might exist some unreliable solution $(\hat{\bW}, \hat{g})$ that can perfectly fit all the data. We claim that even with the help of some implicit regularizations such as the minimum $\ell_2$ norm solution, the above estimate will achieve, in some setups, a slower convergence rate compared with \eqref{eq:roc-fa-nn-final} for the FAR-NN estimator even for the most simple `linear neural network' class. To formally present the idea, we consider the following special case
\begin{align}
\label{eq:sub-optimality-ls-model}
    \bx_i = \bB \bbf_i + \bu_i ~~~~ \text{and} ~~~~ y_i = \varepsilon_i
\end{align} such that the factor $\bbf$ has i.i.d. $\text{Unif}[-\sqrt{3}, \sqrt{3}]$ entries, the idiosyncratic component vector $\bu$ has i.i.d. standard normal entries, and the noise $\varepsilon$ can be arbitrary bounded distribution satisfying $\mathbb{E}[\varepsilon|\bbf,\bu]=0$ and $\mathbb{E}[|\varepsilon|^2|\bbf,\bu] = \sigma^2$. The distribution of $(\bbf, \bu, \varepsilon)$ constructed above lies in the regime where Theorem \ref{thm:fa-oracle} is applicable, which allows us to compare the theoretical performance of FAR-NN estimator and the estimator \eqref{eq:far-joint-opt} under a same scenario. We consider the minimum $\ell_2$ norm estimator over the linear function class $\mathcal{F}_{\mathrm{linear}} = \{g(\bx) = \bbeta^\top \bx: \bbeta \in \mathbb{R}^p\}$, a special case of \eqref{eq:far-joint-opt} by observing that 
\begin{align*}
    \mathcal{F}_{\mathrm{linear}} = \left\{m(\bx) = g(p^{-1}\bW^\top \bx): \bW \in \mathbb{R}^{p\times \overline{r}}, g\in \mathcal{G}(0, \overline{r}, 1, N, \infty, \infty)\right\}
\end{align*} for any $\overline{r} \in \mathbb{N}^+$. We have the following proposition characterizing the lower bound on the excess risk of the minimum $\ell_2$ norm least squares estimator, which is defined as
\begin{align}
\label{eq:sub-optimality-ls-estimator}
    \hat{\bbeta}_{\mathtt{LS}} = \argmin\left\{\|\bbeta\|_2: \bx_i^\top \bbeta = y_i,~~ \forall i \in \{1,\ldots,n\}\right\}.
\end{align}

\begin{proposition}[Sub-optimality of minimum $\ell_2$ norm least squares in null case]
\label{thm:sub-optimality-least-squares}
Consider the model \eqref{eq:sub-optimality-ls-model} and $y_i = {\bm{0}}^T \bbf_i + \varepsilon_i$, and the the minimum $\ell_2$ norm least squares estimator $ \hat{\bbeta}_{\mathtt{LS}}$ in \eqref{eq:sub-optimality-ls-estimator}. There exist a universal constant $c_1$ such that if $p > n+r$ and $r<n/c_1$, then for $n$ large enough,
\begin{align*}
    \mathbb{E}_{(\bx_1,y_1),\ldots,(\bx_n,y_n)} \left[\int \left| \hat{\bbeta}_{\mathtt{LS}}^\top \bx - 0\right|^2 d\mu(\bbf,\bu)\right] \gtrsim \sigma^2\left(\frac{r}{n} + \frac{n}{p}\right).
\end{align*}
\end{proposition}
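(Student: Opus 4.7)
The plan is to obtain a closed form for the conditional expected excess risk given $\bX := [\bx_1,\ldots,\bx_n]^\T$, split it cleanly into two positive pieces using $\bSigma := \mathbb{E}[\bx\bx^\T] = \bB\bB^\T + \bI_p$, and then lower-bound each piece on a high-probability event driven by standard random-matrix estimates for the Gaussian design $\bU := [\bu_1,\ldots,\bu_n]^\T$ and the bounded design $\bF := [\bbf_1,\ldots,\bbf_n]^\T$. Passing from this event to the expectation is then routine because both pieces are nonnegative.

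Since $y_i=\varepsilon_i$, the minimum-$\ell_2$-norm interpolator is $\hat{\bbeta}_{\mathtt{LS}} = \bX^\T(\bX\bX^\T)^{-1}\bfsym{\varepsilon}$ with $\bfsym{\varepsilon}=(\varepsilon_1,\ldots,\varepsilon_n)^\T$; using the thin SVD $\bX=\bU_X\bD_X\bV_X^\T$ and integrating out the mean-zero, $\sigma^2$-variance noise yields
\begin{align*}
\mathbb{E}_{\varepsilon}\!\left[\hat{\bbeta}_{\mathtt{LS}}^{\T}\bSigma\,\hat{\bbeta}_{\mathtt{LS}}\,\Big|\,\bX\right] \;=\; \sigma^2\,\bigl\|\bD_X^{-1}\bV_X^\T\bB\bigr\|_F^2 \;+\; \sigma^2\,\mathrm{tr}\bigl((\bX\bX^\T)^{-1}\bigr) \;=:\; T_1(\bX)+T_2(\bX).
\end{align*}
For $T_2$, write $\bX\bX^\T = \bU\bU^\T + \bE$ with $\bE := \bF\bB^\T\bX^\T + \bU\bB\bF^\T$ of rank at most $2r$. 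Weyl's inequality gives $\lambda_{i+2r}(\bX\bX^\T)\le\lambda_i(\bU\bU^\T)$, and combined with the Gaussian concentration estimate $\lambda_{\max}(\bU\bU^\T)\le(\sqrt p+\sqrt n)^2\le 4p$ w.h.p.\ (valid since $p>n+r$), this produces $T_2\ge\sigma^2(n-2r)/(4p)\gtrsim\sigma^2 n/p$ once $c_1$ is chosen so that $r<n/c_1$ enforces $n-2r\ge n/2$.

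For $T_1$ I use the crude bound $T_1\ge\sigma^2\,\|\bV_X^\T\bB\|_F^2/\lambda_{\max}(\bX\bX^\T)$. Operator-norm sub-multiplicativity together with $\|\bF\|\lesssim\sqrt n$, $\|\bB\|\lesssim\sqrt p$ (pervasiveness), and the Gaussian estimate $\|\bU\|\lesssim\sqrt p$ gives $\lambda_{\max}(\bX\bX^\T)\lesssim pn$. For the numerator, set $\bP := \bV_X\bV_X^\T = \bX^\T(\bX\bX^\T)^{-1}\bX$ and $\bB_\perp := (\bI_p-\bP)\bB$, so that $\bX\bB_\perp=\mathbf{0}$. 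Since $\bP\bB_\perp=\mathbf{0}$ forces $\bB^\T\bB_\perp = \bB_\perp^\T\bB_\perp$, and $\bX\bB_\perp=\mathbf{0}$ gives $\bF\bB^\T\bB_\perp = -\bU\bB_\perp$, we obtain
\begin{align*}
\|\bB_\perp^\T\bB_\perp\|_F \;=\; \bigl\|(\bF^\T\bF)^{-1}\bF^\T\bU\,\bB_\perp\bigr\|_F \;\lesssim\; \sqrt{p/n}\,\|\bB_\perp\|_F,
\end{align*}
using $\lambda_{\min}(\bF^\T\bF)\gtrsim n$ w.h.p.\ (since $n\gg r$) and $\|\bF^\T\bU\|\le\|\bF\|\|\bU\|\lesssim\sqrt{np}$. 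Combining with the PSD inequality $\|\bB_\perp^\T\bB_\perp\|_F\ge\|\bB_\perp\|_F^2/\sqrt r$ yields $\|\bB_\perp\|_F^2\lesssim pr/n$, and pervasiveness then gives $\|\bV_X^\T\bB\|_F^2 = \|\bB\|_F^2-\|\bB_\perp\|_F^2 \gtrsim pr$, whence $T_1\gtrsim\sigma^2 r/n$.

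The hardest step is this null-space argument controlling $\|\bB_\perp\|_F$: one must show quantitatively that the $r$-dimensional column span of $\bB$ lies essentially in the row space of $\bX$ up to a squared residual of order $pr/n$, and this is where the independence of $\bF$ and $\bU$ is indispensable, entering via the quantitative invertibility of $\bF^\T\bF$ and a crude operator-norm control on $\bU$. All remaining estimates are textbook random-matrix bounds, and summing $T_1+T_2$ on the intersection of the high-probability events and taking expectations produces the claimed lower bound $\sigma^2(r/n+n/p)$.
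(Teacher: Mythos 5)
Your proof is correct, and it follows a genuinely different path than the paper's. The paper adopts the ``benign overfitting'' machinery of Bartlett et al.: it whitens the design via $\bZ = \bX\bV(\bLambda+\bI_p)^{-1/2}$, uses the fact that the resulting columns $\bz_1,\ldots,\bz_p$ are \emph{independent} (this hinges on the components of $\bbf$ and $\bu$ being independent and the Gaussianity/uniformity yielding independence from orthogonality), and then applies a per-coordinate eigenvalue-ratio concentration bound (the paper's restatement of Lemma~14 from Bartlett et al.) to each term $(\lambda_j+1)^2\bz_j^\top(\cdot)^{-2}\bz_j$ in the trace. Summing the $j\le r$ terms gives $r/n$ and summing the $j>r$ terms gives $n/p$. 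You instead split $\bSigma = \bB\bB^\top+\bI_p$ \emph{before} diagonalizing, yielding two manifestly nonnegative pieces $T_1$ and $T_2$, and bound each by elementary random-matrix estimates: for $T_2$, a rank-$2r$ Weyl perturbation reduces $\tr((\bX\bX^\top)^{-1})$ to $\tr$-control of $\bU\bU^\top$ alone; for $T_1$, a projection/null-space argument shows quantitatively that the row space of $\bX$ nearly contains $\mathrm{col}(\bB)$, via the identity $\bF\bB_\perp^\top\bB_\perp = -\bU\bB_\perp$ and the Cauchy--Schwarz step $\|\bB_\perp^\top\bB_\perp\|_F\ge\|\bB_\perp\|_F^2/\sqrt r$. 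Both routes require the same structural inputs (pervasiveness so that $\|\bB\|\lesssim\sqrt p$ and $\|\bB\|_F^2\asymp rp$, the restriction $r<n/c_1$, Gaussianity of $\bu$ for the operator-norm bounds on $\bU$), but your route bypasses the whitened-design independence argument entirely and gives a more transparent separation of where the two terms in the lower bound come from. The one place to be slightly careful is the passage from high-probability bounds to expectations; since $T_1,T_2\ge 0$, restricting the expectation to the intersection event (of probability at least $1/2$) is indeed enough, as you note. The paper's Lemma-14 route is sharper and scales more gracefully to more refined spectra, but for this lower bound your elementary argument suffices and is self-contained.
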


If $r\asymp 1$, the convergence rate for $\hat{\bbeta}_{\mathtt{LS}}$ in \eqref{eq:sub-optimality-ls-estimator} is lower bounded by $n^{-1} + p^{-1}n$, and the convergence rate for the FAR-NN estimator is upper bounded by $n^{-1} + p^{-1}$. We can see that when $n,p \to \infty$ and $n/p \to \gamma \in (0,1)$, the former is even not consistent, while the FAR-NN estimator can achieve an $n^{-1}$ rate. This demonstrates the necessity of including a pre-trained diversified projection matrix.

\subsection{Factor Augmented Sparse Throughput Neural Network Estimator}
\label{sec:theory:fast}

For a given diversified projection matrix $\bW$, a deep ReLU network $g(\cdot)\in \mathcal{G}(L, \overline{r}+N, 1, N, M, B)$, a variable selection matrix $\bTheta\in \mathbb{R}^{p\times N}$, let the prediction of the FAST-NN model be $m(\bx) = m(\bx;\bW,g,\bTheta) = g\left([p^{-1}\bW^\top \bx, \bar{T}_M(\bTheta^\top \bx)]\right)$, and define the associated the empirical loss as
\begin{align*}
    \hat{\mathsf{R}}_{\mathtt{FAST}}(m) = \frac{1}{n} \sum_{i=1}^n (m(\bx_i) - y_i)^2 + \lambda \sum_{i,j} \psi_\tau(\Theta_{i,j}).
\end{align*}

We suppose that our FAST-NN estimator is an approximate empirical loss minimizer, that is, $\hat{m}_{\mathtt{FAST}}(\bx) = m(\bx;\bW,\hat{g}, \hat{\bTheta})$, where $\hat{g}$ and $\hat{\bTheta}$ satisfies
\begin{align}
\label{eq:fast-nn-estimator}
    \hat{\mathsf{R}}_{\mathtt{FAST}}\Big(\hat{m}_{\mathtt{FAST}}(\cdot;\bW,\hat{g}, \hat{\bTheta})\Big) \le \inf_{\substack{\bTheta \in \mathbb{R}^{p\times N} \\ g\in \mathcal{G}(L, \overline{r}+N, 1, N, M, B) }} \hat{\mathsf{R}}_{\mathtt{FAST}}\Big(m(\cdot;\bW,g,\bTheta)\Big) + \delta_{\mathtt{opt}}
\end{align} for some optimization error $\delta_{\mathtt{opt}}$. The following theorem presents an oracle-type inequality for the error bound for the FAST-NN estimator.

\begin{theorem}[Oracle-type inequality for FAST-NN estimator]
\label{thm:fast-oracle} 
Assume Conditions \ref{cond1}--\ref{cond5} hold. Consider the FAST model \eqref{eq:dgp-samples} and the FAST-NN estimator solving \eqref{eq:fast-nn-estimator} with $N \ge 2(r+|\mathcal{J}|)$, $B \ge c_1 [\nu_{\min}(\bH)]^{-1} |\mathcal{J}|r$, 
    \begin{align}
    \label{eq:choice-lambda-tau}
        \lambda \ge c_2 \frac{\log (np(N+\overline{r})) + L\log (BN)}{n}, \qquad \tau^{-1} \ge c_3 (r+1) (BN)^{L+1} (N+\overline{r}) p n
    \end{align} for some universal constants $c_1$--$c_3$, and the number of diversified projections $\bar r \geq r$. Define $\delta_{\mathtt{a}} = \inf_{g\in \mathcal{G}(L-1, r+|\mathcal{J}|, 1, N, M, B)} \|g - m^*\|_\infty^2$,  $\delta_{\mathtt{s}} = (N^2L + N\overline{r}) \{L\log (BNn)\}/n + \lambda|\mathcal{J}|$, and $\delta_{\mathtt{f}} = |\mathcal{J}|r\cdot \overline{r} / \{\nu^2_{\min}(H) \cdot p\}$. Then, with probability at least $1-3e^{-t}$, the following holds, for $n$ large enough,
    \begin{align*}
        \|\hat{m}_{\mathtt{FAST}}-m^*\|^2_2 + \|\hat{m}_{\mathtt{FAST}}-m^*\|^2_n \le c_4 \left\{ \delta_{\mathtt{opt}} + \delta_{\mathtt{a}} + \delta_{\mathtt{s}} + \delta_{\mathtt{f}} + \frac{t}{n} \right\}
    \end{align*} where $c_4$ is a universal constant that depends only on the constants in Condition \ref{cond1}--\ref{cond5}.
\end{theorem}

The optimal rate for the FAST-NN estimator when the regression function $m^*(\bbf, \bu_\mathcal{J})$ admits a hierarchical composition structure, i.e., $m^*(\bbf, \bu_\mathcal{J}) \in \mathcal{H}(r+|\mathcal{J}|, l, \mathcal{P})$ can be then obtained under a careful choice of hyper-parameters $(N,L,\bar{r},B)$ summarized below.

\begin{condition}
\label{cond:fast-nn}
The following conditions on the deep ReLU network architecture hyper-parameters hold

\noindent  (1) $c_1 \le L \lesssim 1$; ~~~~ (2) $c_2 \left(n/\log n\right)^{\frac{1}{4\gamma^*+2}} \le N \lesssim \left(n/\log n\right)^{\frac{1}{4\gamma^*+2}}$ where $\gamma^*$ is given by \eqref{eq:hcm:gamma-def};
    
\noindent (3) $c_3 \{\log n \lor \log [\nu_{\min}(\bH)]^{-1}\}\le \log B \lesssim \log n$; ~~~~ (4) $r\le \overline{r} \lesssim r + 1$

\noindent for some universal constant $c_1$--$c_3$ which only depends on $l$ and $\mathcal{P}$ of $\mathcal{H}(r+|\mathcal{J}|, l,\mathcal{P})$.
\end{condition}

These requirements are mild. In conditions (1)-(3), the constants $c_1$--$c_3$ in the lower bound part are specified by our neural network approximation result depicted in Theorem \ref{thm:approx-smooth}. This ensures that as $n\to \infty$, the approximation error decays at the rate of\begin{align*}
    \sup_{m^* \in \mathcal{H}(r+|\mathcal{J}|, l,\mathcal{P})} \inf_{g\in \mathcal{G}} \|g - m^*\|_\infty^2 \lesssim (NL)^{-4\gamma^*} \lesssim \left(\frac{\log n}{n}\right)^{\frac{2\gamma^*}{2\gamma^*+1}},
\end{align*} while the upper bound of the hyper-parameters controls the stochastic error and allow it to decay at the same rate as $n\to \infty$. It is worth pointing out that because of our new, tighter neural network approximation result depicted in Theorem \ref{thm:approx-smooth}, we adopt $\mathcal{O}(1)$ depth architecture. We can obtain a faster convergence rate and milder choice of $\tau$ with an $\mathcal{O}(1)$ depth ReLU neural network from a theoretical perspective. One can also choose diverging depth and get a similar result to Theorem~\ref{thm:fast-roc} below by Theorem \ref{thm:fast-oracle}, but that would lead to a slower convergence rate (comparing the logarithmic factors) and sharper choice of $\tau$ according to \eqref{eq:choice-lambda-tau}. Moreover, condition (4) implies that we do not need to precisely determine the number of latent factors. We can incorporate more diversified projections instead. That will not affect the error bound if $\overline{r}$ has the same order as $r+1$, which includes the case where $r=0$ and we use constant order $\overline{r}$.

\begin{theorem}[Optimal rate for FAST-NN estimator]
\label{thm:fast-roc}
Assume Conditions \ref{cond1}--\ref{cond5} hold.
Consider the FAST model \eqref{eq:dgp-samples} with $r+ |\mathcal{J}|\le c_1$ for some universal constant $c_1$, and the FAST-NN estimator $\hat{m}_{\mathtt{FAST}}(\bx) = m(\bx;\bW,\hat{g}, \hat{\bTheta})$ that satisfies Condition \ref{cond:fast-nn} and solves \eqref{eq:fast-nn-estimator} with 
\begin{align*}
    \lambda \ge c_2 \frac{\log (pn)}{n} \qquad \text{and} \qquad \tau^{-1} \ge n^{c_3} p
\end{align*}
for some universal constants $c_2$--$c_3$ independent of $n$ and $p$.
Then with probability at least $1-3e^{-t}$, for large enough $n$ and any $m^* \in \mathcal{H}(r+|\mathcal{J}|,l,\mathcal{P})$,
    \begin{align*}
        \|\hat{m}_{\mathtt{FAST}}-m^*\|^2_2 + \|\hat{m}_{\mathtt{FAST}}-m^*\|^2_n \le c_4 \left\{ \delta_{\mathtt{opt}} + \left(\frac{\log n}{n}\right)^{\frac{2\gamma^*}{2\gamma^*+1}} + \lambda + \frac{1 \land r}{\nu^2_{\min}(\bH) \cdot p} + \frac{t}{n} \right\},
    \end{align*} 
for some constant $c_4$ only depending on $c_1$ and constants in Condition \ref{cond1}--\ref{cond5} and \ref{cond:fast-nn}.
\end{theorem}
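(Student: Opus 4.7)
The plan is to mimic the four-term decomposition used for Theorem~\ref{thm:fa-oracle} (optimization $+$ approximation $+$ stochastic $+$ factor), but now with the extra variable-selection matrix $\bTheta$ and the folded-concave penalty folded into both the oracle construction and the complexity bound. I would first write
\[
\frac{1}{n}\sum_{i=1}^n|\hat m_{\mathtt{FAST}}(\bx_i)-y_i|^2+\lambda\sum_{i,j}\psi_\tau(\hat\Theta_{i,j})
\le
\frac{1}{n}\sum_{i=1}^n|m^\circ(\bx_i)-y_i|^2+\lambda\sum_{i,j}\psi_\tau(\Theta^\circ_{i,j})+\delta_{\mathtt{opt}}
\]
for any ``oracle'' choice $m^\circ(\bx)=g^\circ([p^{-1}\bW^\top\bx,\bar T_M(\Theta^{\circ\top}\bx)])$ built from $(g^\circ,\Theta^\circ)$, and then convert this into a population-excess-risk bound by a standard peeling/Bernstein argument for uniformly bounded, conditionally sub-Gaussian targets, exactly as in Theorem~\ref{thm:fa-oracle}. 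The deterministic noise and contrast terms then contribute the $\delta_{\mathtt{opt}}$ and $t/n$ summands, and everything else hinges on (i)~the approximation quality of a well-chosen oracle $(g^\circ,\Theta^\circ)$ and (ii)~a uniform concentration inequality whose rate is driven by the effective complexity of the class after the penalty is taken into account.

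For the oracle, the key algebraic identity is $\bu_\mathcal{J}=\bx_\mathcal{J}-\bB_{\mathcal{J},\cdot}\bbf$ together with $\bbf=\bH^{+}(\tilde\bbf-\bxi)$, where $\bH^{+}=(\bH^\top\bH)^{-1}\bH^\top$ exists because of the significance part of Definition~\ref{def:dpm}. I would take $\Theta^\circ\in\mathbb{R}^{p\times N}$ to have $|\mathcal{J}|\le N$ unit entries placing the coordinates in $\mathcal{J}$ into the first $|\mathcal{J}|$ output slots (and zero elsewhere), so that $\Theta^{\circ\top}\bx$ equals $\bx_\mathcal{J}$ padded with zeros and the penalty contribution is at most $(r+|\mathcal{J}|)\lambda\lesssim\lambda$. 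Then define the target composite function
\[
\tilde m(\bz_1,\bz_2)\;=\;m^*\bigl(\bH^{+}\bz_1,\;\bz_2-\bB_{\mathcal{J},\cdot}\bH^{+}\bz_1\bigr),
\]
which, since $\bH^{+}$ and $\bB_{\mathcal{J},\cdot}\bH^{+}$ are linear and bounded (with entries at most $\nu_{\min}^{-1}(\bH)$ up to constants), still belongs to $\mathcal{H}(\overline r+|\mathcal{J}|,l+1,\mathcal{P}')$ for an enlarged $\mathcal{P}'$ with the same $\gamma^*$. By hypothesis the constants in Condition~\ref{cond:fast-nn} are tuned so that the approximation result on hierarchical composition models (Theorem~\ref{thm:approx-smooth}) produces $g^\circ\in\mathcal{G}(L,\overline r+N,1,N,M,B)$ with $\|g^\circ-\tilde m\|_\infty^2\lesssim (NL)^{-4\gamma^*}\lesssim(\log n/n)^{2\gamma^*/(2\gamma^*+1)}$; the weight bound $B$ chosen to scale like $n^{c_3}\lor\nu_{\min}^{-1}(\bH)$ accommodates the affine pre-composition. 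The residual error $\tilde\bbf-\bH\bbf=p^{-1}\bW^\top\bu$ propagates through the Lipschitz $\tilde m$ and, after taking expectations and using the weak-dependence bound in Condition~\ref{cond3}, contributes the term $(1\land r)/(\nu_{\min}^2(\bH)\cdot p)$ (the $1\land r$ factor comes from the fact that no such propagation occurs when $r=0$ so $\tilde m$ does not depend on $\bz_1$).

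For the stochastic error the main obstacle is that the nominal parameter count $pN+N^2L$ of the class is far too large: a direct $L_\infty$ covering would give a $p/n$ term. This is where the clipped-$L_1$ penalty pays off. I would run the argument used for folded-concave penalized ReLU nets (in the spirit of the cited \cite{ohn2022nonconvex}): condition on the event that $\hat\Theta$ has at most $s$ entries of magnitude $\ge\tau$; with $\tau^{-1}\ge n^{c_3}p$ chosen as in the statement, all sub-$\tau$ entries can be zeroed at total cost $O(pN\tau\cdot B)$ which is negligible against $n^{-1}$; with $\lambda\ge c_2\log(pn)/n$, the extra price of any additional large entry is $\lambda$, which is absorbed into the bound. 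Combined with a Dudley-style chaining over the restricted class of $s$-sparse $\bTheta$'s, the pseudo-dimension reduces to $O\!\bigl((N^2L+sN)\log(pNL)\bigr)$ plus a $\log\binom{pN}{s}\lesssim s\log(pn)$ selection term; choosing $N$ as in Condition~\ref{cond:fast-nn}(2) and absorbing $s\cdot\lambda$ into the displayed $\lambda$ term gives the claimed stochastic rate $(\log n/n)^{2\gamma^*/(2\gamma^*+1)}+\lambda$.

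Putting the four pieces together yields the stated bound. The hardest step, and the one I would spend most care on, is the complexity reduction via the non-convex $\psi_\tau$: one must simultaneously (a)~argue that at any near-minimizer $\hat\Theta$ the number of ``large'' entries is $O(1)$ (using $\lambda\ge c_2\log(pn)/n$ and the fact that $\tilde m$ is well-approximable with only $|\mathcal{J}|\lesssim 1$ columns active), and (b)~control the contribution of the many ``small'' entries through the truncation $\bar T_M$ and the Lipschitzness of $g^\circ$ (here the exact $n^{c_3}p\cdot\tau\le 1$ choice plays an essential role). Everything else—Bernstein concentration, truncation of the response, handling $\delta_{\mathtt{opt}}$, and bounding the factor residual $\bxi$—is parallel to the arguments underpinning Theorem~\ref{thm:fa-oracle} and Proposition~\ref{prop:choice-dpm}.
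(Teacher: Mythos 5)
Your proposal follows the same blueprint as the paper, which packages the heavy lifting into Theorem~\ref{thm:fast-oracle} (an oracle-type inequality for FAST-NN whose error is $\delta_{\mathtt{opt}}+\delta_{\mathtt{a}}+\delta_{\mathtt{s}}+\delta_{\mathtt{f}}+t/n$) and then reads Theorem~\ref{thm:fast-roc} off by inserting the network sizes from Condition~\ref{cond:fast-nn} together with the new approximation bound (Theorem~\ref{thm:approx-smooth}). What you write is essentially a re-derivation of Theorem~\ref{thm:fast-oracle}'s proof: same oracle $\tilde{\bTheta}$ with $\|\tilde{\bTheta}\|_0=|\mathcal{J}|$, same basic inequality, same factor-error propagation via the Lipschitz structure and the weak-dependence bound, and a complexity reduction for the $\psi_\tau$-penalized class that matches the paper's covering-number lemma (Lemma~\ref{lemma:g-infty-cover-number}) and the two empirical-process lemmas (Lemmas~\ref{lemma:equivalence-population-empirical-l2-regularized} and \ref{lemma:weighted-empirical-process-regularized}).

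One step in your approximation argument does not quite line up with the paper and needs care. You assert that $\tilde m(\bz_1,\bz_2)=m^*(\bH^{+}\bz_1,\bz_2-\bB_{\mathcal{J},\cdot}\bH^{+}\bz_1)$ ``still belongs to $\mathcal{H}(\overline r+|\mathcal{J}|,l+1,\mathcal{P}')$ for an enlarged $\mathcal{P}'$ with the same $\gamma^*$,'' so that Theorem~\ref{thm:approx-smooth} can be applied directly to $\tilde m$. But $\mathcal{H}(d,1,\mathcal{P})$ is defined via coordinate selections $\bx\mapsto g(x_{\pi(1)},\dots,x_{\pi(t)})$, not general linear pre-maps, and a dense affine layer needs to be smuggled in as an extra $(\beta',\overline r+|\mathcal{J}|)$ composition with a carefully chosen finite $\beta'$ (under the constraint $\sup_{(\beta,t)\in\mathcal{P}'}(\beta\lor t)<\infty$) large enough not to reduce $\gamma^*$; this also changes the constants $c_7$--$c_{11}$ in Theorem~\ref{thm:approx-smooth}, which otherwise are supposed to be tied to the fixed $(l,\mathcal{P})$ used in Condition~\ref{cond:fast-nn}. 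The paper sidesteps this entirely: it approximates $m^*\in\mathcal{H}(r+|\mathcal{J}|,l,\mathcal{P})$ directly by a network $h\in\mathcal{G}(L-1,r+|\mathcal{J}|,1,N,M,B)$, and then implements the affine pre-map $\bz\mapsto(\bH^{+}\bz_1,\bz_2-\bB_{\mathcal{J},\cdot}\bH^{+}\bz_1)$ as an explicit additional ReLU layer (splitting each coordinate into positive and negative parts) with weights bounded by a constant times $|\mathcal{J}|\,r\,\nu_{\min}(\bH)^{-1}$; that is precisely why Condition~\ref{cond:fast-nn}(3) forces $\log B\gtrsim\log n\lor\log\nu_{\min}(\bH)^{-1}$. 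Replacing your ``$\tilde m\in\mathcal{H}$ with the same $\gamma^*$'' step with the paper's explicit post-composition construction makes the approximation and weight-bound book-keeping airtight; the rest of your argument is sound.
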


With the optimal choice of the hyper-parameters $\lambda$ and $\tau$, the convergence rate for the FAST-NN estimator is determined by
\begin{align}
\label{eq:roc-fast}
    \delta_{\mathtt{FAST}} \asymp \left(\frac{\log n}{n}\right)^{\frac{2\gamma^*}{2\gamma^*+1}} + \frac{\log p}{n} + \frac{1 \land r}{\nu^2_{\min}(\bH) \cdot p},
\end{align} if the optimization error $\delta_{\mathtt{opt}} \lesssim \delta_{\mathtt{FAST}}$ and $t \asymp \log p$. In the next subsection, we will show $\delta_{\mathtt{FAST}}$ is also minimax optimal up to logarithmic factors of $n$. It consists of three terms in \eqref{eq:roc-fast}. The first one is the minimax risk of estimating a function with hierarchical composition structure $\mathcal{H}(r+|\mathcal{J}|,l,\mathcal{P})$ in a low dimension regime where both $r+|\mathcal{J}|$ and $l$ are fixed, i.e., do not grow with $n$, and the set $\mathcal J$ is known. The second term is a typical risk related to variable selection uncertainty, which cannot be improved even in a linear regression model. The third term is related to the factor estimation error and it is $0$ if there is no latent factor ($r=0$). Its minimax optimality is deferred to the next subsection.

When $r=0$, the FAST model is reduced to a nonparametric sparse regression model, we have the following corollary, in which the third term of the convergence rate disappears.
\begin{corollary} 
\label{coro:fast-r0}
Consider the FAST model \eqref{eq:dgp-samples} with $r=0$ and the FAST-NN estimator solving \eqref{eq:fast-nn-estimator}. Under the settings of Theorem \ref{thm:fast-roc}, with probability at least $1-3e^{-t}$, the following holds
\begin{align*}
        \|\hat{m}_{\mathtt{FAST}}-m^*\|^2_2 + \|\hat{m}_{\mathtt{FAST}}-m^*\|^2_n \lesssim \left\{ \delta_{\mathtt{opt}} + \left(\frac{\log n}{n}\right)^{\frac{2\gamma^*}{2\gamma^*+1}} + \lambda + \frac{t}{n} \right\}.
\end{align*}
\end{corollary}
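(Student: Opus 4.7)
The corollary is a direct specialization of Theorem \ref{thm:fast-roc} to the degenerate regime $r=0$. First, I would observe that when $r=0$ the factor decomposition \eqref{eq:dgp-lfm} collapses to $\bx_i=\bu_i$, so the regression target becomes $m^*(\bbf_i,\bu_{i,\mathcal{J}})=m^*(\bu_{i,\mathcal{J}})=m^*(\bx_{i,\mathcal{J}})$ after identifying the empty factor component. The hierarchical composition class $\mathcal{H}(r+|\mathcal{J}|,l,\mathcal{P})=\mathcal{H}(|\mathcal{J}|,l,\mathcal{P})$, the sparsity pattern $\mathcal{J}$, and the dimension-adjusted smoothness $\gamma^*$ are all inherited verbatim from the statement of Theorem \ref{thm:fast-roc}.

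Second, I would verify that every hypothesis invoked by Theorem \ref{thm:fast-roc} continues to hold under this degeneracy. Conditions \ref{cond1}--\ref{cond5} are stated componentwise on $\bB$, $\bbf$, $\bu$, $\varepsilon$, and $m^*$, so their factor-side content becomes vacuous at $r=0$, while the idiosyncratic-side requirements transfer directly to $\bx_i=\bu_i$. Condition \ref{cond:fast-nn}(4) permits $\overline{r}\lesssim r+1=1$, so one may retain any constant-size bounded diversified projection block $\bW$ (whose output $p^{-1}\bW^\top\bx$ is then merely an $O(p^{-1/2})$-small noise input to the network) or omit it. The quantity $\nu_{\min}(\bH)$ in Condition \ref{cond:fast-nn}(3) is vacuous because $\bH=p^{-1}\bW^\top\bB$ has no columns; interpreting $\log[\nu_{\min}(\bH)]^{-1}$ as $0$ reduces the constraint to $\log B\asymp\log n$, which is compatible with the rest of Condition \ref{cond:fast-nn}.

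Third, applying Theorem \ref{thm:fast-roc} yields, with probability at least $1-3e^{-t}$, the bound
\begin{align*}
\frac{1}{n}\sum_{i=1}^n |\hat{m}_{\mathtt{FAST}}(\bx_i)-m^*(\bx_{i,\mathcal{J}})|^2 + \int |\hat{m}_{\mathtt{FAST}}(\bx)-m^*(\bx_\mathcal{J})|^2\,\mu(d\bx) \lesssim \delta_{\mathtt{opt}} + \left(\frac{\log n}{n}\right)^{\frac{2\gamma^*}{2\gamma^*+1}} + \lambda + \frac{1\land r}{\nu^2_{\min}(\bH)\cdot p} + \frac{t}{n}.
\end{align*}
Since $r=0$ gives $1\land r=0$, the factor-inference term $\frac{1\land r}{\nu^2_{\min}(\bH)\cdot p}$ equals $0$ and drops out, leaving precisely the bound asserted by the corollary. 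There is no serious obstacle in this argument; the only point requiring care is the notational degeneracy of the diversified-projection apparatus at $r=0$, which is resolved by the trivial product-with-zero observation above, and the recognition that the original theorem was proved with the factor-estimation error appearing only through the multiplicative prefactor $1\land r$.
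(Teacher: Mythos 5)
Your proof is correct and takes essentially the same route the paper intends: the corollary is stated immediately after Theorem \ref{thm:fast-roc} with no separate proof, precisely because it is the direct $r=0$ specialization, in which $1\land r = 0$ kills the factor-inference term and the remaining terms are unchanged. Your care with the degeneracy of $\bH$ in Condition \ref{cond:fast-nn}(3) (and the observation that $B\ge c_1[\nu_{\min}(\bH)]^{-1}|\mathcal{J}|r$ in Theorem \ref{thm:fast-oracle} is vacuously satisfied when $r=0$) matches the convention the paper relies on.
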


It is worth comparing the first term of $\delta_{\mathtt{FAST}}$, i.e., $\left(\frac{\log n}{n}\right)^{\frac{2\gamma^*}{2\gamma^*+1}}$, with previous non-parametric regression rate using ReLU neural networks. We have the fastest rate regarding the logarithmic factor $\log n$. This is attributed to the finer neural network approximation capacity presented Theorem \ref{thm:approx-smooth}. 

The use of bounded weights is to control the sparsity. Here we allow the magnitude of weights to grow polynomially with $n$, which matches what it is in practice, for example, gradient descent with polynomial iterations. From a theoretical view, with a close inspection of the following Theorem \ref{thm:fast-oracle} and its proof, the choice of $\tau$ is to make sure
\begin{align}
\label{eq:reg-effect-tau}
    \left| \hat{m}(\bx;\bW,\bTheta,g) - \hat{m}(\bx;\bW,\bTheta+\bDelta,g) \right| \lesssim n^{-1} ~~~~~ \text{for any } \bx\text{ and } \|\bDelta\|_{\max} \le \tau,
\end{align} in other words, a perturbation of the variable selection matrix $\bTheta$ within the range of $\tau$ can only lead to a change of output prediction no more than $n^{-1}$. This explains why we need to use deep ReLU networks with bounded weights because it is impossible to control the change in prediction for deep ReLU networks with no constraints on their weights. When $p\asymp n^C$ for some constant $C$, we can choose $\log (\tau^{-1}) \asymp \log n$.  This is an improvement over previous results for clipped-$L_1$ penalty which requires  $\log(\tau^{-1}) \asymp (\log n)^2$ \citep{ohn2022nonconvex}.

The proof of Theorem \ref{thm:fast-roc} is based on Theorem \ref{thm:fast-oracle} and the following neural network approximation result Theorem \ref{thm:approx-smooth}; the novelties and improvements of Theorem \ref{thm:approx-smooth} are discussed in Section 
\ref{dis:approx}.

\begin{theorem}
\label{thm:approx-smooth}
Let $g$ be a $d$-variate, $(\beta, C)$-smooth function. There exists some universal constants $c_1$--$c_5$ depending only on $d, \beta, C$, such that for arbitrary $N\in \mathbb{N}^+ \setminus \{1\}$, there exists a deep ReLU network $g^\dagger \in \mathcal{G}(c_1, d, 1, c_2 N, \infty, c_3 N^{c_4})$ satisfying
\begin{align*}
    \|g^\dagger-g\|_{\infty,[0,1]^d} \le c_5 N^{-2\beta/d}.
\end{align*}
Furthermore, if $g\in \mathcal{H}(d, l,\mathcal{P})$ with $\sup_{(\beta, t)\in \mathcal{P}} (\beta \lor t) <\infty$ and $g$ is supported on $[-c_6, c_6]^d$ for some constant $c_6$. There also exists some universal constants $c_{7}$--$c_{11}$ such that for arbitrary $N\in \mathbb{N}^+ \setminus \{1\}$, there exists a deep ReLU network $g^\dagger \in \mathcal{G}(c_7, d, 1, c_8 N, \infty, c_9 N^{c_{10}})$ satisfying
\begin{align*}
    \|g^\dagger-g\|_{\infty,[-c_{6},c_6]^d} \le c_{11} N^{-\inf_{(\beta, t) \in \mathcal{P}}(2\beta/t)}.
\end{align*}
\end{theorem}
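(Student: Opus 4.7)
\textbf{Proof plan for Theorem \ref{thm:approx-smooth}.}

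The plan is to follow the classical Yarotsky-type construction based on localized Taylor expansion, but to balance width against depth so that the depth stays bounded by a universal constant while the width grows like $N$ and the weights grow polynomially in $N$. First, I would cover $[0,1]^d$ by a uniform grid of $K^d$ cells with $K \asymp N^{1/d}$, and on each cell approximate $g$ by its Taylor polynomial of order $\lfloor \beta \rfloor$ centered at the cell midpoint; standard $(\beta,C)$-smoothness gives pointwise remainder of order $K^{-\beta} \asymp N^{-\beta/d}$. The network output will then be a sum, over grid cells, of cell-indicator times local Taylor polynomial. The total number of cells is $K^d \asymp N$, which is why width rather than depth must carry the combinatorial load.

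Next I would construct the three basic ReLU sub-modules in parallel, each of constant depth. The first is a \emph{squaring/multiplication gate}: using the identity $xy = \tfrac14((x+y)^2 - (x-y)^2)$, it suffices to approximate $x^2$ on a bounded interval with ReLU, which can be done with accuracy $N^{-c}$ in constant depth provided weights are allowed to be of size polynomial in $N$ (taking linear combinations of hat functions at dyadic scales, but terminated at depth $O(1)$ by absorbing refinements into width and weight magnitude). The second is a \emph{polynomial fitting module}, built by iterating the multiplication gate a constant number of times to produce monomials of bounded total degree $\lfloor\beta\rfloor$; this gives constant depth because $\beta$ is treated as a fixed constant. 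The third is a \emph{point-fitting/indicator module} that, for every grid point $\bx_k$, outputs a quantity close to $\mathbf{1}_{\bx \in \text{cell}(k)}$; this is assembled from products of univariate ``trapezoidal'' ReLU bumps, one per coordinate, and again has constant depth and width $O(N)$ overall. Summing the products of indicators with the corresponding Taylor polynomials yields a network in $\mathcal{G}(c_1, d, 1, c_2 N, \infty, c_3 N^{c_4})$. The approximation error is controlled by the Taylor remainder on each cell plus a negligible error from the approximate indicator and the approximate multiplication, each of which can be tuned to order $N^{-2\beta/d}$ by choosing the polynomial weight bound large enough — this is where the exponent $c_4$ is absorbed.

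For the hierarchical composition statement, I would proceed by induction on the level $l$. Given $g(\bx) = g_0(f_1(\bx), \ldots, f_t(\bx))$ with $(\beta,t)\in\mathcal{P}$ and $f_i \in \mathcal{H}(d, l-1, \mathcal{P})$, apply the single-function bound to obtain a network $g_0^\dagger$ of constant depth, width $c_2 N$, and error $N^{-2\beta/t}$, and recursively build $f_i^\dagger$ with the same architecture and error bounded by $N^{-\inf(2\beta/t)}$. Stacking by composition gives a network of depth $c_7 \cdot l = O(1)$ since $l$ is fixed. Error propagation uses the Lipschitz property of $(\beta,C)$-smooth functions on a bounded domain: the total error is dominated by the worst layer, yielding $N^{-\inf_{(\beta,t)\in\mathcal{P}}(2\beta/t)}$. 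Support on $[-c_6,c_6]^d$ is handled by an affine reparametrization to $[0,1]^d$, which only inflates constants.

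The main obstacle will be the simultaneous control of depth and weight bound. Classical Yarotsky-style constructions pay either an extra $\log N$ factor in the depth (from iterated refinement of the squaring gate) or an uncontrolled weight magnitude. The delicate step is to trade depth for width and weight size in the multiplication/point-fitting modules: one must show that the approximate multiplication gate producing error $N^{-c}$ admits a representation of depth $O(1)$, width $O(N)$, and weights of size $N^{O(1)}$, and then verify that propagating these errors through the sum of $N$ indicator–polynomial products does not compound beyond the target rate. Once this is done, the rest of the argument consists of standard bookkeeping on constants depending on $d$, $\beta$, and $C$ (or on $l$ and $\mathcal{P}$ in the composite case).
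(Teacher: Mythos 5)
Your plan misses the key quantitative ingredient and consequently cannot deliver the stated rate. You set $K \asymp N^{1/d}$, so $K^d \asymp N$ cells, and you correctly observe that the Taylor remainder is then of order $K^{-\beta} \asymp N^{-\beta/d}$. That is already the ceiling of your construction: the theorem asserts $N^{-2\beta/d}$, a full factor of $2$ better in the exponent. Note that you cannot ``tune'' the Taylor remainder by enlarging the weight bound; the remainder is set by the grid resolution alone. To reach $N^{-2\beta/d}$ one must take $K \asymp N^{2/d}$, hence $K^d \asymp N^2$ cells, and this is incompatible with your ``sum over cells of indicator times local polynomial'' architecture: that partition-of-unity design costs $\Theta(1)$ width per cell, so $K^d \asymp N^2$ cells would force width $\Theta(N^2)$, not $c_2 N$.

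The missing idea is the one the paper builds in Lemmas \ref{lemma:point-fitting-dim-1} and \ref{lemma:point-fitting-dim-d}: a point-fitting module that interpolates $N_1 N_2$ prescribed points with a fixed-depth network of width $O(N_1 + N_2)$ rather than $O(N_1 N_2)$, and -- this is the technical contribution of the theorem -- does so with weights bounded by $O(\delta^{-2})$, hence only polynomially in $N$ once the grid spacing $\delta \asymp K^{-2}$ is substituted. Taking $N_1 = N_2 \asymp N$ lets one encode $\asymp N^2$ Taylor coefficients with width $O(N)$ and depth $O(1)$. This is a bit-encoding style construction (as in \cite{shen2019nonlinear, lu2021deep}), not a partition of unity; the paper replaces your sum of indicator--polynomial products by a single ``cell-indexing'' network $\bphi^\dagger$ that maps $\bx$ to its cell corner, composed with point-fitting networks $v^\dagger_{\balpha}$ that read off the Taylor coefficients at that corner, and multiplication gates to form the Taylor polynomial. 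It then handles the thin ``bad regions'' near cell boundaries (where the indexing is ambiguous) with the mid-function trick of Lemmas \ref{lemma:midfunc} and \ref{lemma:approx-extend}, rather than your approximate indicators. Your outline of the multiplication/polynomial modules and the induction over the hierarchical composition levels is reasonable; but as written, your construction proves at best $c_5 N^{-\beta/d}$, and the gap is precisely the two-level point-fitting device with polynomially bounded weights.
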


\subsection{Minimax optimal lower bound}
\label{sec:theory:lb}

The optimal rate of the FAST-NN estimator contains a $p^{-1}$ term when $\nu_{\min}(\bH) \asymp 1$. This term may be a dominating term when $p \ll (n/\log n)^{\frac{2\gamma^*}{2\gamma^*+1}}$. Such error term $p^{-1}$ arises from the fact that we use an estimate of $(\bbf, \bu)$ from the observation $\bx$ instead of directly getting access to the latent variables $(\bbf, \bu)$, and can be intuitively interpreted as the ``error of estimating the factors $\bbf$''. Is it possible to achieve a faster convergence rate by using a more sophisticated algorithm when $p^{-1}$ is the dominating term? The following Lemma provides an answer.

\begin{lemma}
\label{lemma:error-factor-lb}
For any given $\lambda \in \mathbb{R}^+$, $p, r\in \mathbb{N}^+$ with $p \ge (r\lor \lambda)$, we have
\begin{align}
\label{eq:error-factor-lb}
    \inf_{m: \mathbb{R}^p \to \mathbb{R}} \sup_{\substack{\mu \in \mathcal{P}(p,r,\lambda)\\ m^* \text{linear, } 1\text{-Lipschitz}} } \int |m(\bx) - m^*(\bbf)|^2 \mu(d\bbf,d\bu) \gtrsim \frac{1}{\lambda} 
\end{align} where $\mathcal{P}(p,r,\lambda)$ is a family of distributions of $(\bbf, \bu, \bx)$ defined as
\begin{align}
\label{eq:lb-factor-dist}
\begin{split}
    \mathcal{P}(p,r,\lambda) = \Big\{\mu(\bbf, \bu, \bx): &\supp(\bbf) \subset [-1,1]^r, \supp(\bu) \subset [-1,1]^p, \mathbb{E}[\bbf] = \bm{0}, \mathbb{E}[\bu] = \bm{0},\\
    &~~~~ \bbf \text{ and } \bu \text{ independent, both have independent components,}\\
    &~~~~ \bx = \bB \bbf + \bu \text{ with } \|\bB\|_{\max} \le 1 \text{ and }\lambda_{\min}(\bB^\top \bB) \ge \lambda \Big\}.
\end{split}
\end{align} 
\end{lemma}

Under the regime in which Conditions \ref{cond1}--\ref{cond5} hold, Lemma \ref{lemma:error-factor-lb} affirms that any estimator getting access to $\bx$ is unable to achieve an error rate faster than $\lambda_{\min}(\bB^\top \bB)^{-1}$. Such a lower bound matches the error term $\delta_{\mathrm{f}}$ in our analysis of the FAR-NN estimator and the FAST-NN estimator under the pervasiveness condition $\lambda_{\min}(\bB^\top \bB) \asymp p$. With the help of Lemma \ref{lemma:error-factor-lb}, we next show that $\delta_{\mathtt{FAST}}$ in \eqref{eq:roc-fast} is the minimax optimal lower bound up to logarithmic factors of $n$. 

\begin{theorem}
\label{thm:fast-lb}
Consider i.i.d. samples $(\bx_1,y_1),\ldots, (\bx_n, y_n)$ from the FAST model \eqref{eq:dgp-samples} with $\varepsilon_1,\ldots, \varepsilon_n \overset{i.i.d.}{\sim} \mathcal{N}(0,1)$. Suppose further that $d^* \le r+1$ with $d^*$ given by \eqref{eq:hcm:gamma-def}. Then, for $n$ large enough, 
\begin{align*}
    \inf_{\hat{m}} \sup_{\substack{\mu \in \mathcal{P}(p,r,\lambda), \mathcal{J} \subset [p], |\mathcal{J}| = 1 \\ m^*(\bbf, \bu_{\mathcal{J}}) \in \mathcal{H}(r+1,l,\mathcal{P})}} \mathbb{E} \left[ \int \big|\hat{m}(\bx) - m^*(\bbf, \bu_{\mathcal{J}})\big|^2 \mu(d\bbf, d\bu)\right] \ge c_1 \left\{n^{-\frac{2\gamma^*}{2\gamma^*+1}} + \frac{\log p}{n} + \frac{1}{\lambda} \right\},
\end{align*}
for a universal constant $c_1$ independent of $n, p$ and $\lambda$, where the infimum is taken over all possible estimators based on $n$ i.i.d. observations $(\bx_1,y_1),\ldots, (\bx_n,y_n)$ and $\gamma^* =  \beta^*/d^*$.
\end{theorem}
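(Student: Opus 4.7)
The plan is to prove each of the three summands of the lower bound separately and then combine them via $A+B+C\le 3\max\{A,B,C\}$. For each term I restrict the supremum in the theorem to a sub-family of $(\mu,m^*)$ that satisfies all the stated constraints and on which a classical minimax argument gives the desired rate.

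For the factor-inference term $1/\lambda$, I restrict to regression functions of the form $m^*(\bbf,u_\mathcal{J})=\bm{\theta}^\top\bbf$, which are linear (hence $(\beta,C)$-smooth for every $\beta$) and therefore lie in $\mathcal{H}(r+1,l,\mathcal{P})$ via a top-level smooth function that ignores its $u_\mathcal{J}$ input. For any data-dependent $\hat m$, conditioning on the training data turns $\hat m$ into a fixed function $\bx\mapsto\hat m(\bx)$; the $L^2$-optimal such function is the Bayes predictor $\mathbb{E}[\bm{\theta}^\top\bbf\mid\bx]$, so
\[
\mathbb{E}\!\left[\int|\hat m(\bx)-m^*(\bbf)|^2\,\mu(d\bbf,d\bu)\right]\;\ge\;\mathbb{E}_\mu\!\left[\mathrm{Var}(\bm{\theta}^\top\bbf\mid\bx)\right].
\]
Because $(\bbf,\bu,\bx)$ is i.i.d.\ with the training sample, the training data informs only the model parameters (which we fix within the sub-family) and carries no extra information about the new $\bbf$, so the right-hand side is an irreducible error. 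The two-point/Le Cam argument sketched after Lemma \ref{lemma:error-factor-lb} (with $r=1$, $f_1\in\{\pm\delta\}$, $\delta\asymp\lambda^{-1/2}$) then bounds this quantity below by $c/\lambda$.

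For the nonparametric term $n^{-2\gamma^*/(2\gamma^*+1)}$, I fix any $\bB$ with $\|\bB\|_{\max}\le 1$ and $\lambda_{\min}(\bB^\top\bB)\ge\lambda$ (feasible since $p\ge r\vee\lambda$) and take $(\bbf,u_1)\sim\mathrm{Unif}([-1,1]^{d^*})$ with the remaining coordinates of $\bu$ independent $\mathrm{Unif}[-1,1]$. The sub-family is indexed by $(\beta^*,C)$-smooth $d^*$-variate functions placed at the top level of $\mathcal{H}(r+1,l,\mathcal{P})$, admissible because $d^*\le r+1$. Since the relevant $d^*$ design coordinates are uniform on $[-1,1]^{d^*}$ and the noise is $\mathcal{N}(0,1)$, the problem reduces to classical nonparametric regression, for which Stone's local-packing Fano construction yields $n^{-2\beta^*/(2\beta^*+d^*)}=n^{-2\gamma^*/(2\gamma^*+1)}$. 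For the sparse-selection term $\log p/n$, I take $\bbf\equiv 0$ and $\bu\sim\mathrm{Unif}([-1,1]^p)$ with independent components, so $\bx=\bu$, and consider the $p$ hypotheses $\mathcal{J}=\{j\}$, $m_j^*(\bbf,u_\mathcal{J})=\delta u_j$, $j\in[p]$; each is linear, 1-Lipschitz (for $\delta\le 1$), and embeds in $\mathcal{H}(r+1,l,\mathcal{P})$ via a top-level smooth function that depends on only one of its inputs. Under Gaussian noise, $\mathrm{KL}(H_j\|H_{j'})=\tfrac{n}{2}\mathbb{E}[(\delta u_j-\delta u_{j'})^2]\asymp n\delta^2$, while the $L^2$ separation between hypotheses is $\mathbb{E}_\mu[(\delta u_j-\delta u_{j'})^2]\asymp\delta^2$; Fano's inequality with $\delta^2\asymp\log p/n$ then delivers the desired bound.

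The most delicate step is Term 3: Lemma \ref{lemma:error-factor-lb} is stated for non-data-dependent estimators, and one must verify it transfers to the data-dependent minimax setup here. The transfer hinges on the fact that, for a fixed $\mu$, the test sample $(\bbf,\bu,\bx)$ is independent of the training data, so the Bayes-optimal prediction of $\bm{\theta}^\top\bbf$ given $\bx$ alone coincides with the Bayes-optimal prediction given $\bx$ together with the training data; this is why the sub-family for Term 3 must fix the distributional parameters rather than let them vary. A secondary technical point, relevant to both Terms 2 and 3, is the embedding of low-dimensional linear/smooth functions into $\mathcal{H}(r+1,l,\mathcal{P})$, which is achieved by taking the top-level $t$-variate smooth function to depend on only one of its $t$ inputs via a suitable projection $\pi:[t]\to[r+1]$.
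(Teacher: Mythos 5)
Your proposal follows the same decomposition strategy as the paper's proof: you establish each of the three summands separately by restricting the supremum to a sub-family of admissible $(\mu,\mathcal{J},m^*)$, then obtain the sum via a constant. For the $1/\lambda$ term you invoke the Le Cam two-point construction underlying Lemma~\ref{lemma:error-factor-lb}; for the $n^{-2\gamma^*/(2\gamma^*+1)}$ term you embed the $d^*$-variate $(\beta^*,C)$-smooth class into $\mathcal{H}(r+1,l,\mathcal{P})$ (exactly as the paper does by citing the classical minimax bound for smooth regression); and for the $\log p/n$ term you use the same $p$-hypothesis Fano construction with $m^*_j=\delta u_j$ and $\delta^2\asymp(\log p)/n$. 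The only point at which your write-up is more explicit than the paper's is the justification that Lemma~\ref{lemma:error-factor-lb}'s reconstruction bound survives conditioning on training data; the paper folds this into its ``reduction to the oracle case'' step, while you spell out the Bayes/conditional-variance argument directly — a useful clarification but not a different route. One minor bookkeeping slip: you present the $1/\lambda$ sub-family first but then refer to it as ``Term 3''; this is just an indexing artifact (it is the third summand in the theorem statement) and does not affect the argument.
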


\section{Simulation Studies}
\label{sec:experiments}

In this section, we use simulated data to illustrate the finite sample performance of our proposed estimators, supplemented by an in-depth comparative analysis that supports our theoretical discoveries. A real data analysis is presented in Supplemental Material Section \ref{sec:application}.

\subsection{Finite Sample Performance of the FAR-NN Estimator}
\label{sec:exp:simluation-far}

The target of the experiments in this section is to show that (1) the FAR-NN estimator can achieve a near-oracle finite sample performance in the high-dimensional regime; (2) it will have sub-optimal performance if we jointly train the diversified projection matrix and network weights; (3) we can use a tiny subset of unlabelled data $\{\bx_i\}_{i=1}^{n_1}$ to estimate the diversified projection matrix in practice.

\noindent \textbf{Data Generating Process. } The covariate vector $\bx$ admits a linear factor model with the number of factors $r=5$. The factor loading matrix $\bB$ has i.i.d. $\mathrm{Unif}[-\sqrt{3}, \sqrt{3}]$ entries. The latent factor $\bbf$ and the idiosyncratic component $\bu$ are independent and have i.i.d. $\mathrm{Unif}[-1,1]$ entries, respectively. The regression function is $m^*(\bbf)=\sum_{j=1}^r m^*_j(f_j)$, where $m_j^*$ are selected randomly from $\{\cos(\pi x), \sin(x), (1-|x|)^2, 1/(1+e^{-x}), 2\sqrt{|x|}-1\}$ in each trial. The response variable $y$ is assigned to be $m^*(\bbf) + \varepsilon$, where the noise $\varepsilon \sim \mathcal{N}(0,0.3)$ is independent of $(\bbf, \bu)$. Throughout this section, we might vary the ambient dimension $p$, but will keep using $n_{\mathrm{train}}=500$ i.i.d. samples $\{(\bbf_i, \bu_i, y_i)\}_{i=1}^{n_{\mathrm{train}}}$ from the above data generating process to train our neural network. We also use other $n_{\mathrm{valid}}=150$ i.i.d. observations as a validation data set for model selection.

\noindent \textbf{Implementation.} We use fully connected ReLU neural networks with depth $L=4$ and width $N=300$ for all the estimators. The neural network weights are optimized using the Adam optimizer \citep{kingma2014adam} with a learning rate of $10^{-4}$ and batch size $64$ for 200 epochs. The total training time is about 30s using a CPU-only laptop.z
We do not use other regularization techniques except early stopping, which selects the model with a minimum $L_2$ loss on the validation set for evaluation. 
We adopt a data-driven method for the fixed diversified projection matrix $\bW$ in the FAR-NN estimator as described in Section \ref{sec:theory:fa}. 
Specifically, we let $\overline{r}=10$ to allow over-estimating the number of factors and apply PCA to newly generated $n_1=10\% \times  n_{\mathrm{train}} = 50$ unlabelled samples to pre-train $\bW$. The performance of the estimator $\hat{m}(\bx)$ is evaluated via the empirical mean squared error computed using another $n_{\mathrm{test}}=10^5$ i.i.d. samples, i.e., we use 
\begin{align}
\label{eq:exp:mse-far-nn}
    \hat{\mathtt{MSE}} = \frac{1}{n_{\mathrm{test}}} \sum_{i=1}^{n_{\mathrm{test}}} \left\{\hat{m}(\bx_i) - m^*(\bbf_i)\right\}^2 
\end{align} as an estimate of $\int |\hat{m}(\bx) - m^*(\bbf)|^2 \mu(d\bbf, d\bu)$ to evaluate its finite sample performance.

\noindent \textbf{Exp \uppercase\expandafter{\romannumeral1}. Finite Sample Performance of the Estimators. } We compare the performances of the six estimators below. Figure \ref{fig:exp1}(d) illustrates how training/validation $L_2$ losses change across epochs.

\begin{itemize}[noitemsep, topsep=0pt]
    \item[1.] Oracle-NN estimator. This estimator takes the exact latent factor $\bbf$ as input and directly regresses the response variable $y$ on the latent factor $\bbf$. It uses $\{(\bbf_i, y_i)\}_{i=1}^{n_{\mathrm{train}}}$ to estimate $\hat{m}$. Its performance is evaluated by $\hat{\mathtt{MSE}} = \frac{1}{n_{\mathrm{test}}} \sum_{i=1}^{n_{\mathrm{test}}} \left\{\hat{m}(\bbf_i) - m^*(\bbf_i)\right\}^2$. This can be seen as the benchmark of the mean squared error the neural network estimators can achieve.
    
    \item[2.] FAR-NN estimator. This is the estimator we proposed in Section \ref{sec:method}. We use $\{(\bx_i, y_i)\}_{i=1}^{n_{\mathrm{train}}}$ to estimate $\hat{m}$, and the performance is evaluated using \eqref{eq:exp:mse-far-nn}.

    \item[3.] FAST-NN estimator. We also evaluate the performance of the FAST-NN estimator when $\mathcal{J}=\emptyset$ (but the estimator is unaware of this condition). The implementation details are presented in Section \ref{subsec:fast-nn-exp}, the training and evaluation protocols are the same as that for FAR-NN.

    \item[4.] PCA-A-NN estimator. We examine a naive PCA Augmented NN implementation widely used: performing PCA on $\bm{x}$ and training a neural network with input $\bm{z} = [\mathrm{PCA}(\bm{x}), \bm{x}]$.
    
    \item[5.] Vanilla-NN estimator. The estimator uses a deep ReLU network with input dimension $p$, depth $L$, and width $N$ to estimate $\hat{m}$ based on $\{(\bx_i, y_i)\}_{i=1}^{n_{\mathrm{train}}}$. The performance is evaluated using \eqref{eq:exp:mse-far-nn}.
    
    \item[6.] NN-Joint estimator. It is the same as our proposed FAR-NN estimator, except that the matrix $\bW$ is jointed trained with the neural network weights using the training data set $\{(\bx_i, y_i)\}_{i=1}^{n_{\mathrm{train}}}$ rather than being fixed. It has two advantages over the Vanilla-NN. Firstly, it has some inductive bias towards the low-dimension structure because the first 
    layer only has $\overline{r}=10$ hidden units. Secondly, the weights in the first layer have a good initialization.
\end{itemize}

\begin{figure*}[t!]
\centering
\begin{tabular}{cc}
\subfigure[]{
    \includegraphics[scale=0.4]{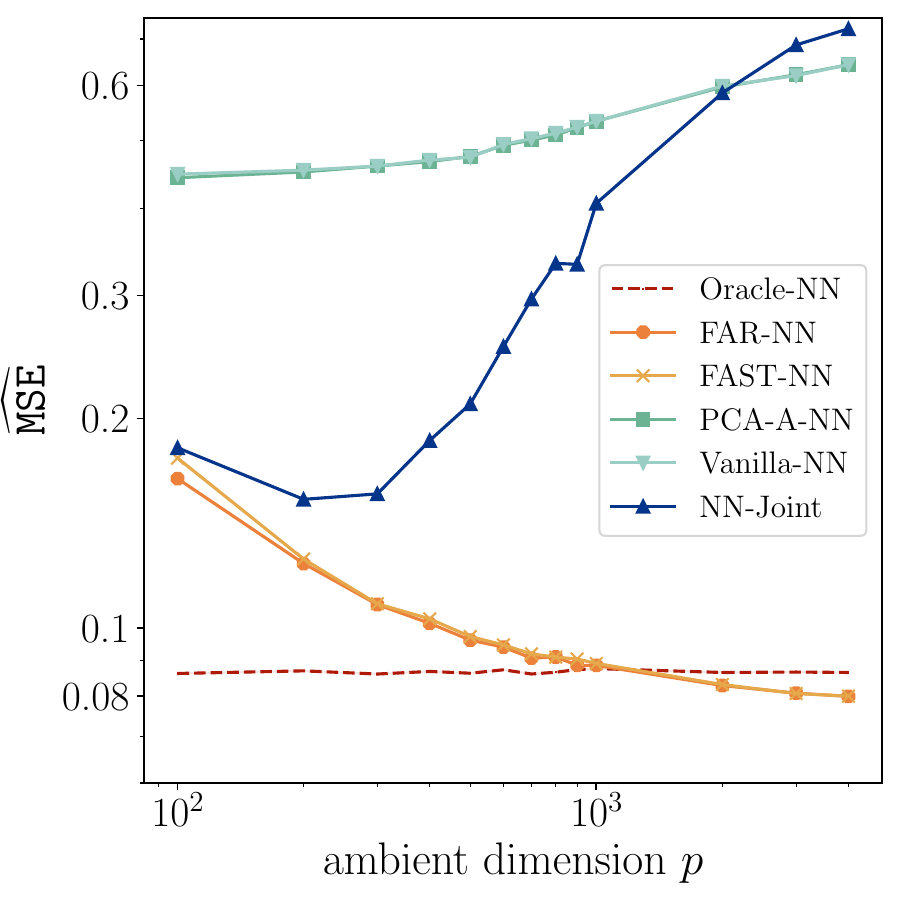}
} &
\subfigure[]{
    \includegraphics[scale=0.4]{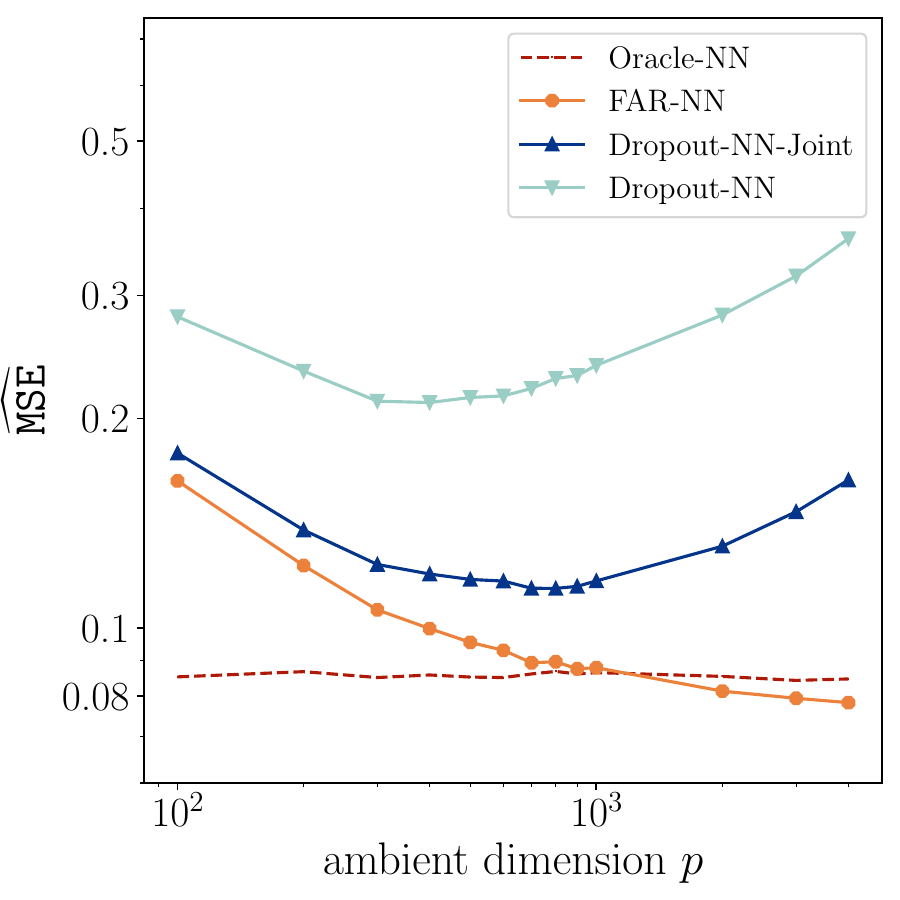}
} \\
\subfigure[]{
    \includegraphics[scale=0.4]{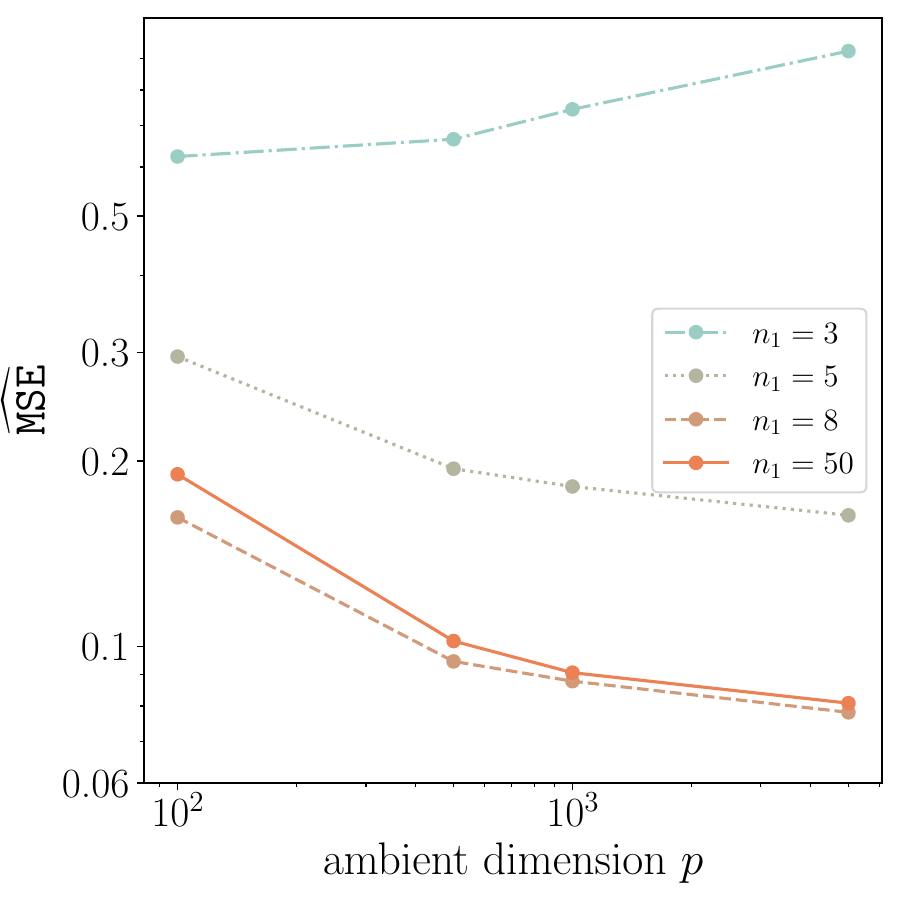}
} &
\subfigure[]{
    \includegraphics[scale=0.4]{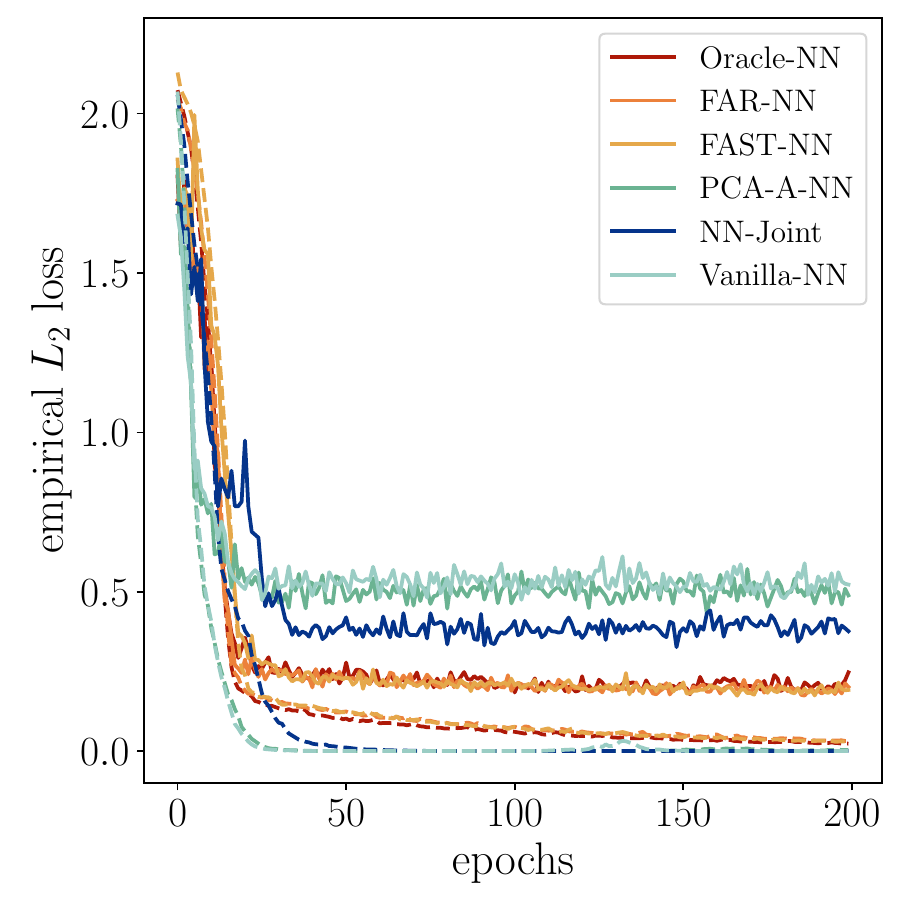}
}
\end{tabular}
\caption{(a)-(c) depict finite sample mean squared errors of the estimators for different ambient dimension $p$ in Exp \uppercase\expandafter{\romannumeral1}--\uppercase\expandafter{\romannumeral3}, respectively. For (a) and (b), the curves with different colors represent the performance of different estimators. We use dashed lines to emphasize the estimators with an `Oracle' prefix. For (c), the curves with different colors (shapes) represent the performance of FAR-NN estimators whose diversified projection matrix is estimated using PCA with other $n_1$ samples. (d) displays the changes in empirical $L_2$ losses $\hat{\mathsf{R}}(f)=|\mathcal{D}|^{-1}\sum_{(\bx,y)\in \mathcal{D}} \{f(\bx)-y\}^2$ on the training set (represented by dashed lines) and validation set (represented by solid lines) across epochs for various methods (represented by different colors) in a single trial with $p=1000$. 
}
\label{fig:exp1}
\end{figure*}

For each $p=\{100k: k\in [10]\} \cup \{2000, 3000, 4000\}$, we generate the data $200$ times and calculate the average of the empirical mean squared errors over the $200$ trials for all the estimators. The result is presented in Fig \ref{fig:exp1}(a). Firstly, we can see that as the ambient dimension $p$ grows, the FAR-NN estimator's performance improves. It is almost the same as the Oracle-NN estimator when $p=1000$. Interestingly, it is even better than the Oracle-NN estimator when $p\ge 2000$. We guess this might be attributed to some implicit regularization effects the diversified projection matrix introduces. Moreover, the negligible performance difference between the FAST-NN and FAR-NN estimators suggests that the FAST-NN estimator performs nearly identically to the FAR-NN estimator when $\mathcal{J}=\emptyset$. Consequently, users need not concern themselves with choosing between the FAR-NN and FAST-NN estimators and can confidently opt for the FAST-NN estimator in practice.

As a comparison, the Vanilla-NN estimator behaves uniformly badly, and the performance of the PCA-A-NN estimator is almost the same. The gap in the performance between the NN-Joint estimator and FAR-NN estimator is small when $p$ is small, but it becomes larger as $p$ grows. Such empirical findings show that without fixing the diversified projection matrix, the estimator might fail to recover the latent factor and suffer from the data's high dimensionality. Such empirical conclusion is consistent with our theoretical justification Proposition \ref{thm:sub-optimality-least-squares}.

\noindent \textbf{Exp \uppercase\expandafter{\romannumeral2}. Comparison with Dropout.} Dropout \citep{srivastava2014dropout} is a technique to prevent neural networks from over-fitting. Specifically, when the dropout with dropout rate $\rho \in [0,1]$ is applied to a particular neural network layer, it randomly sets $100\rho \%$ of the (hidden) units as zero in each iteration during training. Such a technique can also be applied to the input of the neural network, which is known as word dropout \citep{dai2015semi} in the natural language processing literature. In this case, it prevents a neural network from learning from a fixed subset of input features and encourages it to learn the common structure that is invariant among different choices of subsets of input features. We further compare our FAR-NN estimator with neural network estimators that uses the dropout in the input layer. To be specific, we consider applying the dropout technique to the input of the neural networks in the Vanilla-NN estimator and the NN-Joint estimator, resulting in the Dropout-NN estimator and the Dropout-NN-Joint estimator, respectively. For the two estimators, we try the dropout rate $\rho \in \{0, 0.3, 0.5,  0.6, 0.7, 0.8, 0.9\}$ in each trial and do model selection using the validation data set. 

The result is depicted in Fig \ref{fig:exp1}(b). Compared to Fig \ref{fig:exp1}(a), we can see that applying proper input dropout leads to significant improvements for both estimators. However, when $p\ge 1000$, both estimators' mean squared errors increase as $p$ grows, implying that the dropout regularization still fails to help these estimators consistently estimate the regression function in the high-dimensional regime.

\noindent \textbf{Exp \uppercase\expandafter{\romannumeral3}. When $n_1$ is large enough? } We also investigate how the choice of $n_1$ will affect the performance of the FAR-NN estimator. We repeat the same procedure for different choices of $n_1$ and $p\in \{100, 500, 1000, 5000\}$. The result is shown in Fig \ref{fig:exp1}(c). We can see that $n_1=8$ is enough for the FAR-NN estimator to make good predictions. The empirical finding supports our theoretical claim in Section \ref{sec:theory:fa} that the number of samples for the diversified projection matrix is negligible compared to the number of samples for estimating the regression function, i.e., $n_1 \ll (n - n_1)$.

\subsection{Finite Sample Performance of the FAST-NN estimator}
\label{subsec:fast-nn-exp}

In this section, we illustrate the finite sample performance of the FAST-NN estimator. 

\noindent \textbf{Data Generating Process. } The covariate vector $\bx$ admit a linear factor model with the number of factors $r=4$. The law of $(\bbf, \bu)$ together with the generation of the factor loading matrix is same with Section \ref{sec:exp:simluation-far}. The regression function only depends on $(\bbf, u_1,\ldots, u_5)$. Specifically, we consider the following two regression functions
\begin{align*}
    m^*_{\mathtt{fast},1}(\bbf, u_1,\ldots, u_5) &= \sum_{i=1}^4 (-1)^{i-1} f_i + \sum_{j=1}^5 (-1)^j u_j \\
    m^*_{\mathtt{fast},2}(\bbf, u_1,\ldots, u_5) &= f_1 f_2^2 - f_3 + \log\left(8 + f_4 + 4u_1 + e^{u_2 u_3 - 5u_1}\right) + \tan(u_4 + 0.1) + \sin(u_5)
\end{align*}
The first function is linear, and the second function is a nonlinear function admitting a hierarchical composition structure.  Similar to the simulation of the FAR-NN estimator, the response variable $y$ is set to be $m^*_{\mathtt{fast},k}(\bbf, u_1,\ldots, u_5)+\varepsilon$ for $k\in \{1,2\}$. We will vary the ambient dimension and use fixed $n_{\mathrm{train}}=1000$ i.i.d. samples from the above data generating process to train the neural network and the matrix $\bTheta$. We use other $n_{\mathrm{valid}}=300$ i.i.d. samples as a validation set for model selection.

\begin{figure*}[t!]
\centering
\begin{tabular}{cc}
\subfigure[]{
    \includegraphics[scale=0.4]{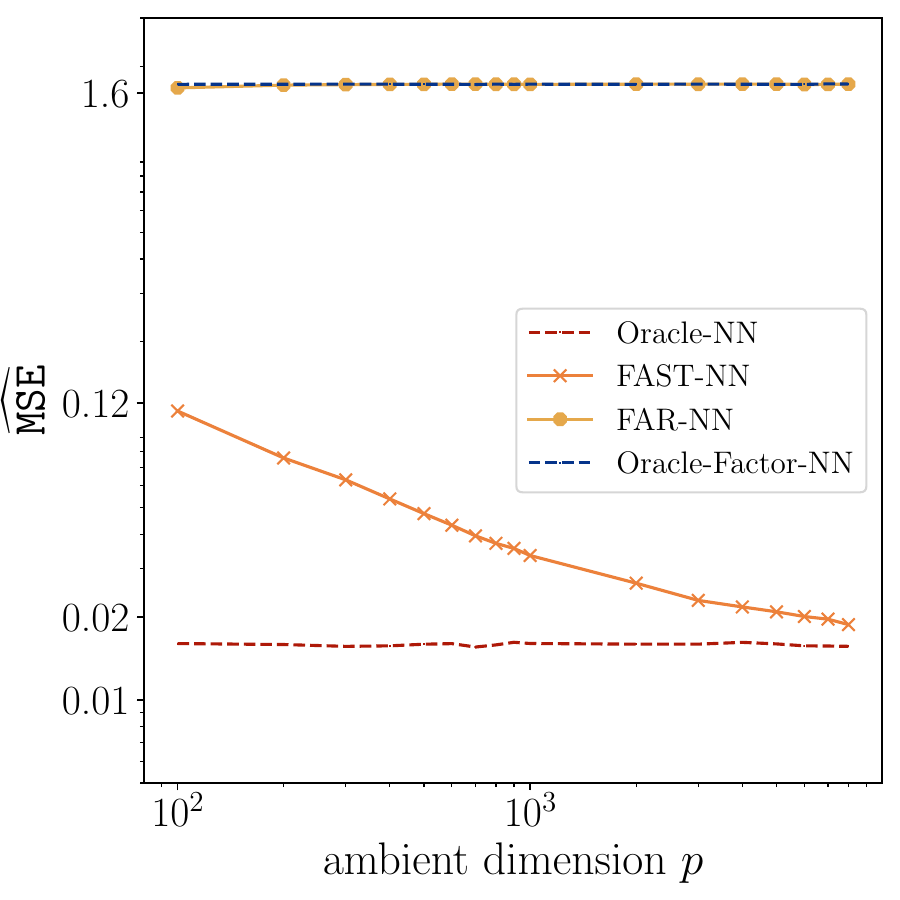}
} &
\subfigure[]{
    \includegraphics[scale=0.4]{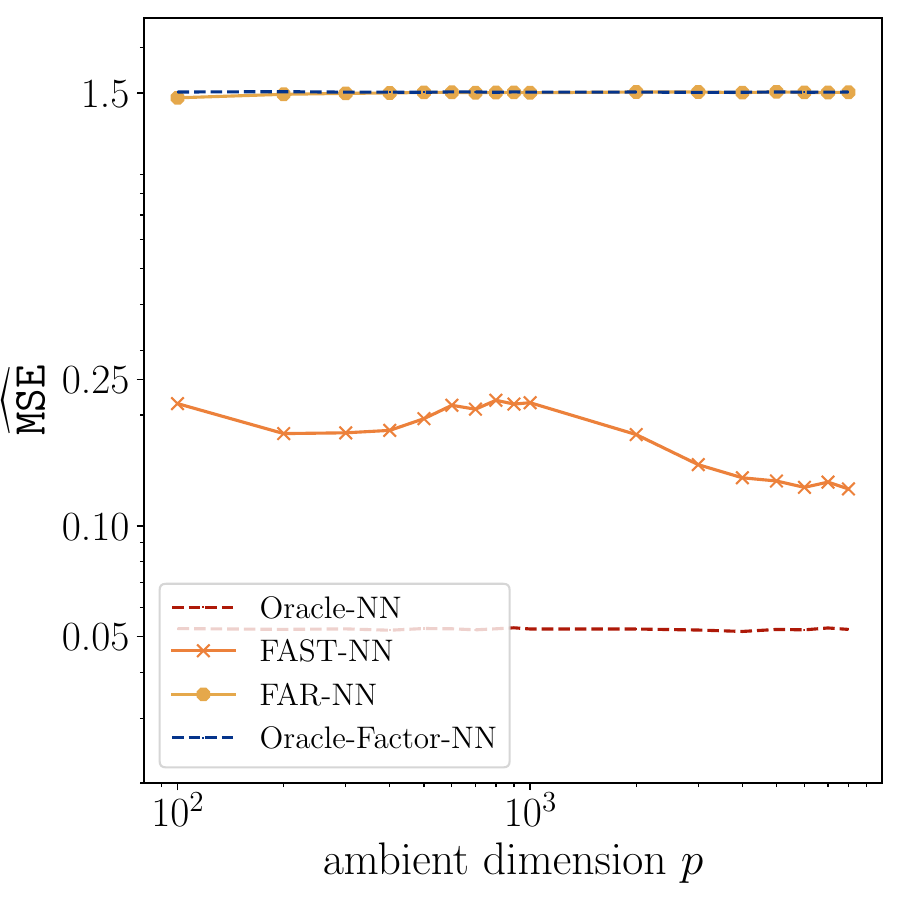}
}
\end{tabular}
\caption{Finite sample mean squared error of the estimators when $m^*$ is (a) $m^*_{\mathtt{fast}(1)}$ (b) $m^*_{\mathtt{fast}(2)}$ for different ambient dimension $p$. The curves with different colors represent the performance of different estimators. We use dashed lines to emphasize the estimators with an `Oracle' prefix in its name.}
\label{fig:exp4}
\end{figure*}

\begin{figure*}[t!]
\centering
\subfigure[]{
    \includegraphics[scale=0.45]{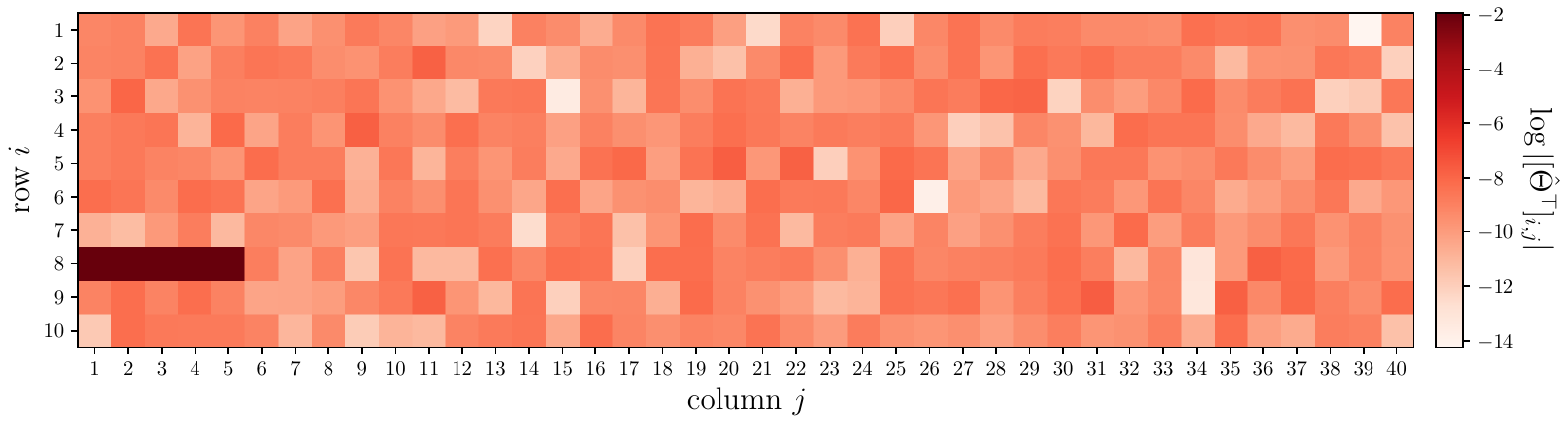}
} 
\subfigure[]{
    \includegraphics[scale=0.45]{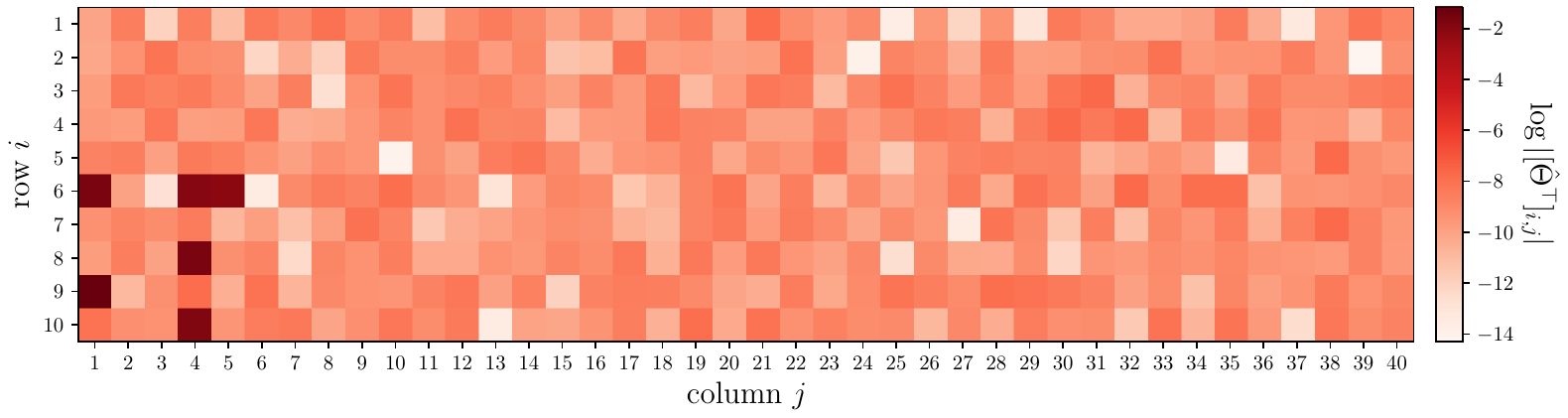}
}
\caption{The visualizations of the first $40$ columns of the transpose of the variable selection matrices $\hat{\Theta}^\top \in \mathbb{R}^{10\times p}$ in one trial when the regression function $m^*$ is (a) $m^*_{\mathtt{fast}(1)}$ (b) $m^*_{\mathtt{fast}(2)}$, respectively. The colors represent the logarithm of the magnitude of the entry, with darker colors implying greaterx magnitudes. We can see that all the dark colors appear in the first five columns, indicating that $\hat{\Theta}$ does not select variables $x_j$ with $j>5$ in both cases. }
\label{fig:exp5}
\end{figure*}

\noindent \textbf{Implementation. } We adopt the same neural network architecture and training configurations as that in Section \ref{sec:exp:simluation-far}, and set additional hyper-parameters to be $\tau=1.3(n_{\mathrm{train}})^{-1} \log p$ and $\tau=5\cdot 10^{-3}$. We reduce the number of columns of the variable selection matrix such that it is a $p\times 10$ matrix in practice. The performance of the estimator $\hat{m}(\bx)$ is evaluated using other $n_{\mathrm{test}}=10^5$ i.i.d. samples as
\begin{align*}
    \hat{\mathtt{MSE}} = \frac{1}{n_{\mathrm{test}}} \sum_{i=1}^{n_{\mathrm{test}}} \left\{\hat{m}(\bx_i) - m^*(\bbf_i, u_{i,1},\ldots, u_{i,5})\right\}^2.
\end{align*} 

We compare the performance of the FAST-NN estimator with the following three estimators:
\begin{itemize}[noitemsep, topsep=0pt]
    \item[1.] Oracle-NN estimator. The estimator takes all the important features $\bz=(\bbf,u_1,\ldots, u_5)$ as input and regresses $y$ on $\bz$. Its finite sample performance is the lower bound of the mean squared error the neural network estimators can attain.
    \item[2.] Oracle-Factor-NN estimator. The estimator takes the latent factor $\bbf$ as the input and regresses $y$ on $\bbf$. We compare its performance with that of the FAST-NN estimator to see whether the latter can learn some dependency of the response variable $y$ on the idiosyncratic component $\bu$. 
    \item[3.] FAR-NN estimator. The performance of the FAR-NN estimator is reported for completeness.
\end{itemize}

\noindent \textbf{Results. } For each function $m_{\mathtt{fast},k}^*$ with $k\in \{1,2\}$ and $p\in \{100k: k\in [10]\} \cup \{2000, 3000, 4000\}$, we generate the data $200$ times and calculate the average of the empirical mean squared error over the $200$ trials for the four estimators. The result is shown in Fig \ref{fig:exp4}. In the case of both regression functions, the Oracle-Factor-NN and FAR-NN exhibit comparable performances, while FAST-NN notably outperforms these two. Furthermore, the performance gaps between the FAST-NN and the Oracle-NN diminish as $p$ grows. These findings indicate that the FAST-NN can establish a non-trivial association between the response variable and some important idiosyncratic components. 

To see whether the FAST-NN learns a sparse representation, we consider visualizing the trained variable selection matrix $\hat{\bTheta}$ in one trial for the two regression functions. Figure \ref{fig:exp5} visualizes the first $40$ columns submatrix of the transpose of the variable selection matrix $\hat{\bTheta}^\top \in \mathbb{R}^{10 \times p}$ when $p=1000$ for two different regression functions. The rows are sorted according to on the maximum absolute value of the entries in each row. We can see that it indeed learns some sparse representations in a correct way. When the regression function is linear $m^*=m^*_{\mathtt{fast}(1)}$, it correctly selects all the important variables $u_1,\ldots, u_5$. It selects some of the variables that significantly influence the regression function when $m^*=m^*_{\mathtt{fast}(2)}$ while missing $u_2$ and $u_3$ due to weak signals.  This results in better prediction than FAR-NN but worse than Oracle-NN.

\newpage
\appendix
\begin{center}
\LARGE
\textbf{Supplemental Material for ``Factor Augmented Sparse Throughput Deep ReLU Neural Networks for High Dimensional Regression''}
\end{center}
\section{Organization of Supplemental Material}

The organization of the supplemental material is as follows
\begin{description}
    \item[Section \ref{sec:additive}] describes the Factor Augmented Neural Additive Model and presents the corresponding theoretical finding.
    \item[Section \ref{sec:application}] includes a real data analysis for the FAST-NN estimator.
    \item[Section \ref{sec:relatedworks}] provides a detailed related work.
    \item[Section \ref{sec:discussions}] collects some discussions omitted in the main text, including (1) the comparison and novelty of our developed neural network approximation theory and (2) an informal argument claiming that our method and theoretical analysis can be extended to more general setting.
    \item[Section \ref{sec:proof-far-nn-estimator}] contains all the proofs for the FAR-NN estimator in Section \ref{sec:theory:fa}.
    \item[Section \ref{sec:proof-fast-nn-estimator}] contains all the proofs for the FAST-NN estimator in Section \ref{sec:theory:fast}.
    \item[Section \ref{sec:proof-lb}] contains all the proofs of the lower bounds in Section \ref{sec:theory:lb}.
    \item[Section \ref{sec:proof-nn-approx}] contains the proof of our new ReLU neural network approximation result Theorem \ref{thm:approx-smooth}.
    \item[Section \ref{sec:proof-fanam}] contains the proof for the FANAM estimator in Section \ref{sec:additive}.
\end{description}

In the proof, we will also use the following notations in empirical process literature. We use $\mathcal{N}(\epsilon, \mathcal{H}, d)$ to denote the $\epsilon$-covering number of the function class $\mathcal{H}$ with respect to the metric $d$. For given $\bz_1^n=(\bz_1,\ldots, \bz_n)$, let the $\|\cdot\|_{L_p(\mu_n)}$ norm be that
\begin{align*}
    \|h\|_{L_p(\mu_n)} = \begin{cases}
        \left\{\frac{1}{n} \sum_{i=1}^n |h(\bz_i)|^p \right\}^{1/p} & ~~~~ p \in [1,\infty) \\
        \sup_{1\le i\le n} |h(\bz_i)| & ~~~~ p=\infty
    \end{cases}.
\end{align*}
We use $\mathcal{N}_p(\epsilon, \mathcal{H}, \bz_1^n)$ to denote the $\epsilon$-covering number of $\mathcal{H}$ with respect to $\|\cdot\|_{L_p(\mu_n)}$ norm.

\section{Factor Augmented Sparse Additive Neural Network Estimator}
\label{sec:additive}

When $\mathcal{J} \neq \emptyset$, the FAST-NN estimator proposed in Section \ref{sec:method:full} induces the clipped-$L_1$ function with small $\tau$, which will make the optimization hard to solve. Meanwhile, the theoretical analysis only focuses on the regime in which $|\mathcal{J}|$ is fixed and does not grow with $n$ and $p$, it is unclear how increasing $|\mathcal{J}|$ affects the rate of convergence for the proposed estimator. In this section, we propose Factor Augmented Neural Additive Model (FANAM), which partially resolves the above concerns in a special scenario where the regression function $m^*$ admits an additive structure.

The proposed model also use pre-defined fixed diversified projection matrix according to Defintion \ref{def:dpm}. Given the hyper-parameters $\overline{r}$, $L$, $N$ and $M$, let $g_0,\ldots, g_p$ be deep ReLU networks truncated by $M$ with depth $L$ and width $N$, particularly, 
\begin{align*}
    g_0 \in \mathcal{G}(L, \overline{r}, 1, N, M, \infty) ~~~~~~ \text{and} ~~~~~~ g_j \in \mathcal{G}(L, 1, 1, N, M, \infty) ~~ \text{for }  j\in [p],
\end{align*}
and $\bbeta = (\beta_1,\ldots, \beta_p) \in \mathbb{R}^p$, $\bV = [\bv_1,\ldots,\bv_p]^\top \in \mathbb{R}^{p\times \overline{r}}$ be other weights to be learned from data. Letting $\tilde{\bbf}(\bx) = p^{-1} \bW^\top \bx$, the prediction of our model is given by
\begin{align}
\label{eq:fanam-pred}
    m(\bx)=m\left(\bx;\bW, \bV, \{g_j\}_{j=0}^p, \bbeta\right) = g_0(\tilde{\bbf}(\bx)) + \sum_{j=1}^p \beta_j g_j(x_j - \bv_j^\top \tilde{\bbf}(\bx)).
\end{align} An architecture visualization of the proposed model is presented in Fig \ref{fig:fanam}. Given the i.i.d. sample $\{(\bx_i,y_i)\}_{i=1}^n$, the weights are determined via the following optimization problem
\begin{align}
\label{eq:fanam-est}
    \hat{m}_{\mathtt{FANAM}}(\bx) = \argmin_{m\left(\bx;\bW, \bV, \{g_j\}_{j=0}^p, \bbeta\right)} \frac{1}{n} \sum_{i=1}^n \left\{y_i - m(\bx_i)\right\}^2 + \lambda \|\bbeta\|_1.
\end{align}
with given $\bW$ and $\lambda$.  This results in the Factor-Augmented Neural  Additive Model Estimator (FANAM).

Suppose the regression function $m^*$ has the following additive structure.
\begin{condition}[Factor Augmented Sparse Additive Model]
\label{cond:faam}
    The regression function $m^*(\bbf, \bu)$ admits an sparse additive form as
    \begin{align}
        m^*(\bbf, \bu) = m^*_0(\bbf) + \sum_{j \in \mathcal{J}_u} m^*_j(u_j) + \sum_{j \in \mathcal{J}_x} m^*_j(x_j) ~~~~~~ \text{with}~~ \mathcal{J}_u \cap \mathcal{J}_x = \emptyset
    \end{align} where all the components $m^*_j$ satisfies $\|m^*_j\|_\infty \le M^*$, and $m^*_j$ is $c_1$-Lipschitz for some universal constants $M^*$ and $c_1$. Moreover, $m^*_0 \in \mathcal{H}(r, l, \mathcal{P})$ and $m^*_j \in \mathcal{H}(1, l, \mathcal{P})$. 
\end{condition}

\begin{figure}

\centering
\begin{tikzpicture}
\definecolor{myred}{HTML}{ae1908}
\definecolor{myblue}{HTML}{05348b}
\draw[black] (0 + 0.06 + 0.08, 0 - 0.3) node{$\bx$};
\draw[black] (0 - 0.06, 0 - 0.06) rectangle+(0.3+0.06, 0.3 * 13 +0.06);
\foreach \y in {0, 1, 2, 3, 4, 5, 6, 7, 8, 9, 10, 11, 12}
	\draw[gray] (0, \y * 0.3) rectangle+(0.24, 0.24);

\draw[myblue] (-0.3*5 - 0.06 + 0.48, 0 - 0.3) node{$\bW$};
\draw[myblue] (-0.3*5 - 0.06, 0 - 0.06) rectangle+(0.3*3+0.06, 0.3 * 13 +0.06);
\foreach \x in {-3, -4, -5}
	\foreach \y in {0, 1, 2, 3, 4, 5, 6, 7, 8, 9, 10, 11, 12}
	{
		\pgfmathrandominteger{\a}{10}{50}
		\filldraw[myblue!\a] (\x * 0.3, \y * 0.3) rectangle+ (0.24, 0.24);
	}
	
\draw[black] (-4*0.3+0.12, 13*0.3 + 0.15) -- (-4*0.3+0.12, 20*0.3+0.12) node [sloped, midway, above] {\footnotesize $\tilde{\bbf} = \frac{1}{p} {\color{myblue} \bW}^\top \bx$};
\draw[->, black] (-4*0.3+0.12, 20*0.3+0.12) -- (-0.15, 20*0.3+0.12);

\foreach \x in {-1, 0, 1}
	{
		\draw[black] (0, \x * 0.3 + 20*0.3) rectangle + (0.24, 0.24);
	}

\draw[->] (0.40, 20*0.3+0.12) -- (1.40, 20*0.3+0.12);
\foreach \x in {-1, 0, 1}
{
	\draw[black] (1.5, \x*0.3+ 20*0.3) rectangle +(0.24, 0.24);
}
\foreach \l in {1, 2, 3}
\foreach \x in {-2, -1, 0, 1, 2}
{
	\draw[black] (1.5+\l, \x*0.3+ 20*0.3) rectangle +(0.24, 0.24);
}
\draw[black] (1.5+4, 20*0.3) rectangle+(0.24, 0.24);
\foreach \x in {-1, 0, 1}
	\foreach \y in {-2, -1, 0, 1, 2}
{
	\draw[->, myred] (1.5+0.24, \x*0.3+0.12+20*0.3) -- (1.5+1, \y * 0.3+0.12+20*0.3);
}
\foreach \l in {2,3}
	\foreach \x in {-2, -1, 0, 1, 2}
		\foreach \y in {-2, -1, 0, 1, 2}
		{
			\draw[->, myred] (1.5+\l-1+0.24, \x*0.3+0.12+20*0.3) -- (1.5+\l, \y * 0.3+0.12+20*0.3);
		}
\foreach \x in {-2, -1, 0, 1, 2}
	{
		\draw[->, myred] (1.5+4-1+0.24, \x*0.3+0.12+20*0.3) -- (1.5+4, 0 * 0.3+0.12+20*0.3);
	}
\draw[black] (1.5+4+0.24+0.6, 0.4+20*0.3) node{\footnotesize $T_M(\cdot)$};
\draw[->] (1.5+4+0.24 + 0.1, 0.12+20*0.3) -- (1.5+4+0.24+1.1, 0.12+20*0.3);
\draw[->, myblue] (7, 0.12+20*0.3) -- (9, 8.5*0.3) node[sloped, midway, above] {\footnotesize $\color{myblue}{1}$};
\draw[black, rounded corners] (1.3, 17*0.3+0.15) -- (1.3, 24*0.3-0.15) -- (7, 24*0.3-0.15) -- (7, 17*0.3+0.15) -- cycle;
\draw[black] (6.2, 18.5*0.3) node{\small $g_0(\tilde{\bbf})$};
\draw[->, myblue] (0.40, 12*0.3+0.12) -- (1.40, 13*0.3+0.12);
\draw[myblue] (1.0, 12.5*0.3+0.12) node{\footnotesize $1$};
\draw[myred] (1.1, 17*0.3) node{$\bv_1$};
\foreach \x in {-1, 0, 1}
{
	\draw[->, myred] (0.40, 20*0.3+0.12+\x*0.3) -- (1.40, 13*0.3 + 0.12);
}

\draw[black] (1.5, 13*0.3) rectangle + (0.24, 0.24);
\foreach \l in {1, 2, 3}
\foreach \x in {-2, -1, 0, 1, 2}
{
	\draw[black] (1.5+\l, \x*0.3+ 13*0.3) rectangle +(0.24, 0.24);
}
\draw[black] (1.5+4, 13*0.3) rectangle+(0.24, 0.24);
\foreach \y in {-2, -1, 0, 1, 2}
{
	\draw[->, myred] (1.5+0.24, 0.12+13*0.3) -- (1.5+1, \y * 0.3+0.12+13*0.3);
}
\foreach \l in {2,3}
	\foreach \x in {-2, -1, 0, 1, 2}
		\foreach \y in {-2, -1, 0, 1, 2}
		{
			\draw[->, myred] (1.5+\l-1+0.24, \x*0.3+0.12+13*0.3) -- (1.5+\l, \y * 0.3+0.12+13*0.3);
		}
\foreach \x in {-2, -1, 0, 1, 2}
	{
		\draw[->, myred] (1.5+4-1+0.24, \x*0.3+0.12+13*0.3) -- (1.5+4, 0 * 0.3+0.12+13*0.3);
	}
\draw[black] (1.5+4+0.24+0.6, 0.4+13*0.3) node{\footnotesize $T_M(\cdot)$};
\draw[->] (1.5+4+0.24 + 0.1, 0.12+13*0.3) -- (1.5+4+0.24+1.1, 0.12+13*0.3);
\draw[->, myred] (7, 0.12+13*0.3) -- (9, 8.5*0.3) node[sloped, midway, above] {\footnotesize $\color{myred}{\beta_1}$};
\draw[black, rounded corners] (1.3, 10*0.3 + 0.15) -- (1.3, 17*0.3-0.15) -- (7, 17*0.3-0.15) -- (7, 10*0.3+0.15) -- cycle;
\draw[black] (6, 11.5*0.3) node{\small $g_1(x_1-\bv_1^\top \tilde{\bbf})$};

\draw[->, myblue] (0.40, 11*0.3+0.12) -- (1.40, 6*0.3+0.12);
\draw[myblue] (1.0, 7.5*0.3+0.12) node{\footnotesize $1$};
\draw[myred] (1.1, 12*0.3) node{$\bv_2$};
\foreach \x in {-1, 0, 1}
{
	\draw[->, myred] (0.40, 20*0.3+0.12+\x*0.3) -- (1.40, 6*0.3 + 0.12);
}
\draw[black] (1.5, 6*0.3) rectangle + (0.24, 0.24);
\foreach \l in {1, 2, 3}
\foreach \x in {-2, -1, 0, 1, 2}
{
	\draw[black] (1.5+\l, \x*0.3+ 6*0.3) rectangle +(0.24, 0.24);
}
\draw[black] (1.5+4, 6*0.3) rectangle+(0.24, 0.24);
\foreach \y in {-2, -1, 0, 1, 2}
{
	\draw[->, myred] (1.5+0.24, 0.12+6*0.3) -- (1.5+1, \y * 0.3+0.12+6*0.3);
}
\foreach \l in {2,3}
	\foreach \x in {-2, -1, 0, 1, 2}
		\foreach \y in {-2, -1, 0, 1, 2}
		{
			\draw[->, myred] (1.5+\l-1+0.24, \x*0.3+0.12+6*0.3) -- (1.5+\l, \y * 0.3+0.12+6*0.3);
		}
\foreach \x in {-2, -1, 0, 1, 2}
	{
		\draw[->, myred] (1.5+4-1+0.24, \x*0.3+0.12+6*0.3) -- (1.5+4, 0 * 0.3+0.12+6*0.3);
	}
\draw[black] (1.5+4+0.24+0.6, 0.4+6*0.3) node{\footnotesize $T_M(\cdot)$};
\draw[->] (1.5+4+0.24 + 0.1, 0.12+6*0.3) -- (1.5+4+0.24+1.1, 0.12+6*0.3);
\draw[->, myred] (7, 0.12+6*0.3) -- (9, 8.5*0.3) node[sloped, midway, above] {\footnotesize $\color{myred}{\beta_2}$};
\draw[black, rounded corners] (1.3, 3*0.3 + 0.15) -- (1.3, 10*0.3-0.15) -- (7, 10*0.3-0.15) -- (7, 3*0.3+0.15) -- cycle;
\draw[black] (6, 4.5*0.3) node{\small $g_2(x_2-\bv_2^\top \tilde{\bbf})$};

\draw (4, 0.3*1+0.12) node{$\vdots$};

\draw[->, myblue] (0.40, 0*0.3+0.12) -- (1.40, -5*0.3+0.12);
\draw[myblue] (1.0, -3.5*0.3+0.12) node{\footnotesize $1$};
\draw[myred] (1.1, -1*0.3) node{$\bv_p$};
\foreach \x in {-1, 0, 1}
{
	\draw[->, myred] (0.40, 20*0.3+0.12+\x*0.3) -- (1.40, -5*0.3 + 0.12);
}

\draw[black] (1.5, -5*0.3) rectangle + (0.24, 0.24);
\foreach \l in {1, 2, 3}
\foreach \x in {-2, -1, 0, 1, 2}
{
	\draw[black] (1.5+\l, \x*0.3+ -5*0.3) rectangle +(0.24, 0.24);
}
\draw[black] (1.5+4, -5*0.3) rectangle+(0.24, 0.24);
\foreach \y in {-2, -1, 0, 1, 2}
{
	\draw[->, myred] (1.5+0.24, 0.12-5*0.3) -- (1.5+1, \y * 0.3+0.12-5*0.3);
}
\foreach \l in {2,3}
	\foreach \x in {-2, -1, 0, 1, 2}
		\foreach \y in {-2, -1, 0, 1, 2}
		{
			\draw[->, myred] (1.5+\l-1+0.24, \x*0.3+0.12-5*0.3) -- (1.5+\l, \y * 0.3+0.12-5*0.3);
		}
\foreach \x in {-2, -1, 0, 1, 2}
	{
		\draw[->, myred] (1.5+4-1+0.24, \x*0.3+0.12-5*0.3) -- (1.5+4, 0 * 0.3+0.12-5*0.3);
	}
\draw[black] (1.5+4+0.24+0.6, 0.4-5*0.3) node{\footnotesize $T_M(\cdot)$};
\draw[->] (1.5+4+0.24 + 0.1, 0.12-5*0.3) -- (1.5+4+0.24+1.1, 0.12-5*0.3);
\draw[->, myred] (7, 0.12-5*0.3) -- (9, 8.5*0.3) node[sloped, midway, above] {\footnotesize $\color{myred}{\beta_p}$};
\draw[black, rounded corners] (1.3, -8*0.3 + 0.15) -- (1.3, -1*0.3-0.15) -- (7, -1*0.3-0.15) -- (7, -8*0.3+0.15) -- cycle;
\draw[black] (6, -6.5*0.3) node{\small $g_p(x_p-\bv_p^\top \tilde{\bbf})$};

\draw (9.3, 8.5*0.3) circle [radius=0.3];
\draw (9.3, 8.5*0.3) node{\small $\sum$};
\draw[->] (9.6, 8.5*0.3) -- (11, 8.5*0.3) node[sloped, midway, above] {\small $m(\bx)$};

\end{tikzpicture}

\caption{Visualization of a FANAM estimator. The red color represents weights to be learned from data, the blue color represents the pre-defined fixed weights. }
\label{fig:fanam}

\end{figure}
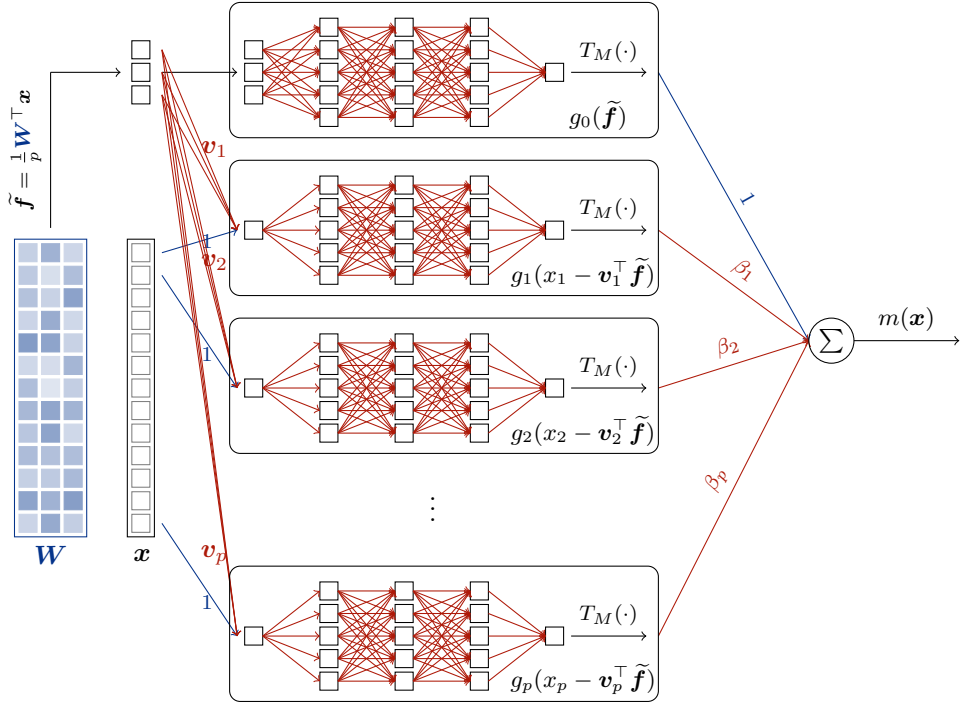

Let $s_u = |\mathcal{J}_u|$, $s_x = |\mathcal{J}_x|$ and $s=s_u + s_x + 1$. We are ready to present the error bound on the FANAM estimator in the regime that $s^4 \log p \ll n$.

\begin{theorem}
\label{thm:fanam}
   Suppose $(\bx_1,y_1), \ldots, (\bx_n, y_n)$ are i.i.d. samples from high-dimension nonparametric regression model that
    \begin{align*}
        \bx_i = \bB \bbf_i + \bu_i ~~~~ \text{and} ~~~~ y_i = m^*(\bbf_i, \bu_i) + \varepsilon_i
    \end{align*}with $m^*$ satisfies Condition \ref{cond:faam}. In addition, suppose  $\overline{r} \le NL$, $\log (NL) \lesssim \log n$, and $M^*\le M \lesssim 1$. Let  $c_1$--$c_3$ be some universal constants.
    If we choose 
    \begin{align*}
        \lambda \ge c_1\underbrace{\sqrt{\frac{(\log p) + (NL\log n)^2}{n}}}_{\delta_{\mathtt{s}}},
    \end{align*}
     then under Conditions \ref{cond1} -- \ref{cond4}, the FANAM estimator \eqref{eq:fanam-est} satisfies
    \begin{align*}
        \int |\hat{m}_{\mathtt{FANAM}}(\bx) - m^*(\bbf, \bu)|^2 \mu(d\bbf, d\bu) \le c_2 \left\{s\lambda + s\delta_{\mathtt{a}+\mathtt{f}} + s^2 \left(1+\lambda^{-1} \delta_{\mathtt{a}+\mathtt{f}} \right)^2 \delta_{\mathtt{s}} \right\}
    \end{align*} 
    with probability at least $1-e^{-t^*}$, where
    \begin{align*}
        \delta_{\mathtt{a}+\mathtt{f}} = s \left(\frac{NL}{\log^2 n}\right)^{-4\gamma^*} +  \frac{\overline{r} \cdot r(s_u + 1)}{\nu_{\min}^2(\bH) \cdot p} ~~~~\text{and}~~~~ t^* \asymp \sqrt{\frac{n}{s^4 \left(1+\lambda^{-1} \delta_{\mathtt{a}+\mathtt{f}}\right)^4}} \bigwedge \frac{n \delta_{\mathtt{a}+\mathtt{f}}}{s} \bigwedge (n\delta_{\mathtt{s}}^2).
    \end{align*} 
    In particular, with the optimal choice of $N$, $L$ and $\lambda$ such that
    \begin{align*}
        NL \asymp n^{\frac{1}{2(4\gamma^*+1)}} (\log n)^{\frac{8\gamma^*-1}{4\gamma^*+1}} ~~~~\text{and}~~~~ \lambda \asymp s \left\{\sqrt{\frac{\log p}{n}} + \left(\frac{\log^6 n}{n}\right)^{\frac{2\gamma^*}{4\gamma^*+1}}\right\},
    \end{align*} we have
    \begin{align}
    \label{eq:fanam-optimal-roc}
        \|\hat{m}_{\mathtt{FANAM}}(\bx) - m^*(\bbf, \bu)\|^2_2 \le c_3 s\left\{s \sqrt{\frac{\log p}{n}} + s\left(\frac{\log^6 n}{n}\right)^{\frac{2\gamma^*}{4\gamma^*+1}} + (s_u + 1) \frac{\overline{r} \cdot r}{\nu_{\min}^2(\bH)\cdot p}\right\}
    \end{align} with probability at least
    \begin{align*}
        1-\exp\left(-\frac{n}{s^4} \land \left(n^{\frac{1}{4\gamma^*+1}} (\log n)^{\frac{24\gamma^*}{4\gamma^*+1}} + \log p\right)\right).
    \end{align*}
\end{theorem}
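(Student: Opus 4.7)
\textbf{Proof plan for Theorem \ref{thm:fanam}.} The plan is to combine an oracle construction inside the FANAM function class, a penalized basic inequality, and empirical-process control tailored to the $\ell_1$ regularizer. Because the theorem does not assume a nonparametric RSC condition, the argument must avoid quadratic lower bounds on $\|\hat m-\tilde m\|_n^2$ and instead proceed through $\ell_1$--$\ell_2$ inequalities; this is precisely what produces the $s^2(1+\lambda^{-1}\delta_{\mathtt{a}+\mathtt{f}})^2$ factor in the bound.

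\emph{Step 1 (oracle).} First I would construct an oracle $\tilde m(\bx)=\tilde g_0(\tilde{\bbf})+\sum_{j\in\mathcal{J}_u\cup\mathcal{J}_x}\tilde\beta_j\tilde g_j(x_j-\tilde\bv_j^\top\tilde{\bbf})$ with $\tilde\beta_j=0$ off $\mathcal{J}_u\cup\mathcal{J}_x$. The key design is: for $j\in\mathcal{J}_u$ choose $\tilde\bv_j$ as $\bH^{\dagger\top}\bb_j$ (with $\bb_j$ the $j$-th row of $\bB$), so that by the decomposition $\tilde{\bbf}=\bH\bbf+\bxi$ one has $x_j-\tilde\bv_j^\top\tilde{\bbf}=u_j-\tilde\bv_j^\top\bxi$, i.e., $u_j$ plus a factor-estimation residual; for $j\in\mathcal{J}_x$ set $\tilde\bv_j=\bzero$. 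The univariate subnets $\tilde g_j$ approximating $m_j^*$ and the multivariate subnet $\tilde g_0$ approximating $m_0^*\circ\bH^{-1}$ (on a compact domain absorbing the $\bxi$-perturbation) are produced by Theorem~\ref{thm:approx-smooth}. Together with $\mathbb{E}\|\bxi\|_2^2\lesssim\overline{r}/p$ (via Condition~\ref{cond3}) and $\|\tilde\bv_j\|_2^2\lesssim r/\nu_{\min}^2(\bH)$, this delivers $\|\tilde m-m^*\|_{L^2(\mu)}^2\lesssim\delta_{\mathtt{a}+\mathtt{f}}$.

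\emph{Step 2 (basic inequality and empirical process).} The penalized optimality of $\hat m_{\mathtt{FANAM}}$ yields
\begin{align*}
\|\hat m-m^*\|_n^2-\|\tilde m-m^*\|_n^2\le 2\langle\varepsilon,\hat m-\tilde m\rangle_n+\lambda(\|\tilde\bbeta\|_1-\|\hat\bbeta\|_1).
\end{align*}
Because the FANAM class is a sum of $p$ univariate ReLU subnets plus a factor subnet, a covering of $\{m(\cdot;\bW,\bV,\{g_j\},\bbeta):\|\bbeta\|_1\le R\}$ at resolution $\epsilon$ scales like $Rp$ times the log-covering of a single ReLU network of width $N$, depth $L$, and bounded weights. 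A Dudley/Bernstein chaining bound combined with the sub-Gaussian noise (Condition~\ref{cond4}) and a peeling argument over $R=\|\hat\bbeta-\tilde\bbeta\|_1$ would give, with high probability,
\begin{align*}
|\langle\varepsilon,\hat m-\tilde m\rangle_n|\lesssim \delta_{\mathtt{s}}(1+\|\hat\bbeta-\tilde\bbeta\|_1)\bigl(\|\hat m-\tilde m\|_n+\delta_{\mathtt{s}}\bigr),
\end{align*}
with $\delta_{\mathtt{s}}=\sqrt{(\log p+(NL\log n)^2)/n}$ absorbing both the $\log p$ variable-selection cost and the entropy of one ReLU subnet.

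\emph{Step 3 (Lasso cone without RSC).} Letting $S=\mathcal{J}_u\cup\mathcal{J}_x$ and using $\|\tilde\bbeta\|_1-\|\hat\bbeta\|_1\le\|\hat\bbeta_S-\tilde\bbeta\|_1-\|\hat\bbeta_{S^c}\|_1$ with $\lambda$ chosen at least a constant multiple of $\delta_{\mathtt{s}}$, the noise contribution is absorbed into $\lambda\|\hat\bbeta-\tilde\bbeta\|_1$, and the cone condition $\|\hat\bbeta_{S^c}\|_1\le 3\|\hat\bbeta_S-\tilde\bbeta\|_1$ follows. Since no RSC is available, I would bound $\|\hat\bbeta-\tilde\bbeta\|_1\le\sqrt{s}\|\hat\bbeta-\tilde\bbeta\|_2$ and relate $\|\hat\bbeta-\tilde\bbeta\|_2$ to $\|\hat m-\tilde m\|_n$ only through a crude $\sup$-norm bound on the subnets (the $M\lesssim 1$ hypothesis enters here), paying a factor proportional to $s$ and to $(1+\lambda^{-1}\delta_{\mathtt{a}+\mathtt{f}})$ from the initial feasibility estimate $\|\hat\bbeta-\tilde\bbeta\|_1\le s+\lambda^{-1}(\text{approximation slack})$. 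Collecting terms yields the empirical-norm bound $s\lambda+s\delta_{\mathtt{a}+\mathtt{f}}+s^2(1+\lambda^{-1}\delta_{\mathtt{a}+\mathtt{f}})^2\delta_{\mathtt{s}}$.

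\emph{Step 4 (transfer).} Finally I would convert from $\|\cdot\|_n^2$ to $\|\cdot\|_{L^2(\mu)}^2$ by a uniform Bernstein bound over the same cover, which only contributes lower-order terms. The specialization to the stated optimal $(N,L,\lambda)$ is then a matter of balancing $\lambda\asymp\delta_{\mathtt{s}}$ and $(NL/\log^2 n)^{-4\gamma^*}\asymp (\log p/n)^{1/2}$. \emph{The main obstacle} is Step 3: without RSC, the conversion from the $\ell_1$ cone inequality to an $L_2$ bound on $\hat m-\tilde m$ is lossy, and careful bookkeeping of the factor-estimation bias $\tilde\bv_j^\top\bxi$ inside the argument of $\hat g_j$---while simultaneously controlling how $\hat\bv_j\ne\tilde\bv_j$ perturbs the additive components---is what forces the $s^2$ dependence and requires the most care.
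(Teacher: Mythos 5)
Your Step~1 matches the paper's oracle construction (up to a missing factor of $s$: the paper's Step~1 shows $\|\tilde m-m^*\|_2^2\lesssim s\,\delta_{\mathtt{a}+\mathtt{f}}$, not $\lesssim\delta_{\mathtt{a}+\mathtt{f}}$, coming from Cauchy--Schwarz over the $s$ summands). Steps~2 through 4, however, diverge from the paper's argument in a way that does not close.

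The core difficulty you flag in Step~3 is real, and the paper avoids it rather than overcomes it. You try to route the $s^2(1+\lambda^{-1}\delta_{\mathtt{a}+\mathtt{f}})^2\delta_{\mathtt{s}}$ factor through a Lasso cone condition $\|\hat\bbeta_{S^c}\|_1\le 3\|\hat\bbeta_S-\tilde\bbeta\|_1$ followed by $\|\hat\bbeta-\tilde\bbeta\|_1\le\sqrt{s}\|\hat\bbeta-\tilde\bbeta\|_2$ and a ``crude sup-norm bound'' linking $\|\hat\bbeta-\tilde\bbeta\|_2$ to $\|\hat m-\tilde m\|_n$. But the sup-norm bound only gives an \emph{upper} bound $\|\hat m-\tilde m\|_n\lesssim\|\hat\bbeta-\tilde\bbeta\|_1$, whereas the cone argument needs a \emph{lower} bound of the form $\|\hat m-\tilde m\|_n\gtrsim\|\hat\bbeta_S-\tilde\bbeta\|_2$ (this is exactly what RSC would provide, and here we have no RSC). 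Without it, the cone route does not produce a bound on $\|\hat m-\tilde m\|_n$, and your collected empirical-norm bound does not follow. The paper sidesteps the cone entirely: from the penalized basic inequality (with the weighted empirical process controlled \emph{component-wise} by $(\|\hat\bbeta\|_1+\|\tilde\bbeta\|_1+3)\sqrt{v_n+(\log p+t)/n}$ rather than via a mixed $\|\hat m-\tilde m\|_n$--$\|\hat\bbeta-\tilde\bbeta\|_1$ form), one reads off both $\|\hat m-\tilde m\|_n^2\lesssim s(\lambda+\delta_{\mathtt{a}+\mathtt{f},n})$ and $\|\hat\bbeta\|_1\lesssim s(1+\lambda^{-1}\delta_{\mathtt{a}+\mathtt{f},n})$ directly, with no $s^2$ term in the empirical norm.

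You also misattribute where the $s^2(1+\lambda^{-1}\delta_{\mathtt{a}+\mathtt{f}})^2$ factor arises. You treat the transfer from $\|\cdot\|_n^2$ to $\|\cdot\|_{L^2(\mu)}^2$ (your Step~4) as contributing only ``lower-order terms,'' but in the paper this transfer is precisely where the $s^2(1+\lambda^{-1}\delta_{\mathtt{a}+\mathtt{f}})^2\delta_{\mathtt{s}}$ term is generated: the $\ell_1$ constraint on $\hat\bbeta$ restricts $\hat m$ to a class $\mathcal G_{m,s}$ with $\|\bbeta\|_1\lesssim s(1+\lambda^{-1}\delta_{\mathtt{a}+\mathtt{f},n})$, and a symmetrization/contraction/Talagrand argument over this class bounds $\sup_{m\in\mathcal G_{m,s}}\big|\|m-\tilde m\|_n^2-\|m-\tilde m\|_2^2\big|$ by that very quantity. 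Replacing this with a ``uniform Bernstein bound'' over a cover would not produce the correct $\ell_1$-radius dependence. In short, the $\ell_1$ localization serves only to control the Rademacher complexity in the $L_2$-comparison step, not to drive a cone-type basic inequality; you would need to restructure Steps~3--4 accordingly.
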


As shown in \eqref{eq:fanam-optimal-roc}, the error bound consists of three terms as Theorem \ref{thm:fast-roc} do. The first term is related to the variable selection uncertainty, the second term is related to estimating a function with some low-dimension structure. The last term, which scales linearly with the number of additive functions of the idiosyncratic components $\bu$ and is exactly $0$ when $r=0$, is proportional to the cumulative error of estimating the latent factor structure from the observation $\bx$.

Compared to \cite{raskutti2012minimax}, our rate has a quadratic dependence on $s$ and suffers from a slower rate compared with $(\log p)/n$ and $n^{-\frac{2\gamma^*}{2\gamma^*+1}}$ as they do. This should be attributed to the lack of the nonparametric version of Restricted Strong Convexity condition \eqref{eq:high-dim-rsc}. It is still open whether such RSC condition holds in an estimated latent factor space, for example, whether the following condition holds
\begin{align*}
    \left\|f_0(\tilde{\bB}^\top \bx) + \sum_{j=1}^p f_j(x_j - \tilde{\bv}_j^\top \tilde{\bB}^\top \bx)\right\|_2^2 \asymp \left\|f_0(\tilde{\bB}^\top \bx)\right\|_2^2 + \sum_{j=1}^p \left\|f_j(x_j - \tilde{\bv}_j^\top \tilde{\bB}^\top \bx)\right\|_2^2,
\end{align*} for arbitrary $f_0,\cdots, f_p$ such that $\mathbb{E}[f_j]\equiv 0$, where $\tilde{\bB}\in \mathbb{R}^{p\times \overline{r}}$ and $\tilde{\bv}_j \in \mathbb{R}^{\overline{r}}$ are such that $\tilde{\bB}\bx$ is a fair estimate of $\bbf$ and $x_j - \tilde{\bv}_j^\top \tilde{\bB}^\top \bx$ is a fair estimate of $u_j$. If such condition is satisfied, we can take advantage of these fixed weights $\tilde{\bB}\in \mathbb{R}^{p\times r}$ and $\tilde{\bv}_j \in \mathbb{R}^p$ and modify slightly our proof to obtain a convergence rate of 
\begin{align*}
    s \left(\frac{\log p}{n} + n^{-\frac{2\gamma^*}{2\gamma^*+1}}\right) + (s_u + 1) \frac{\overline{r} \cdot r}{\nu_{\min}^2(\bH)\cdot p}
\end{align*} up to logarithmic factors of $n$ using a similar regularization term as that in \cite{raskutti2012minimax}.

\section{Empirical Applications}
\label{sec:application}

In this section, we compare our FAST-NN estimator with other high-dimension linear estimators using a macroeconomics dataset FRED-MD \citep{mccracken2016fred} to illustrate that our proposed estimator can find nonlinear associations in high-dimensional data. 

The FRED-MD dataset collects $p=134$ monthly U.S. macroeconomics variables starting from 1959/01 such as unemployment rate and real personal income. It is shown in \cite{mccracken2016fred} that the variables can be explained well by several latent factors. We consider predicting the variables $\mathtt{UEMP15T26}$, $\mathtt{TB3SMFFM}$, and $\mathtt{TB6SMFFM}$ using other variables. 
The variable $\mathtt{UEMP15T26}$ represents the civilians unemployed for 15-26 weeks. The variable $\mathtt{TB3SMFFM}$ ($\mathtt{TB6SMFFM}$) measures the 3-month (6-month) treasury bill rate minus the effective federal funds rate.
For each target response variable $y$ in $\{\mathtt{UEMP15T26}, \mathtt{TB3SMFFM}, \mathtt{TB6SMFFM}\}$, we use $\bx_{t}$ to predict $y_t$ for all the time index $t\in [T]$, where $\bx$ is the vector of all the other variables.

We slightly change the implementation of the FAST-NN estimator. We fix the hyper-parameter $\overline{r}=5$, $\tau=10^{-1}$, and use a neural network with depth $L=3$ and width $N=32$ because the sample size $n$ and the ambient dimension $p$ are not very large. The hyper-parameter $\lambda$ is determined via the validation set. We do not use other regularization techniques except early stopping, which is also determined by the validation set. For the competing estimators, we consider the following three high-dimensional linear models: Lasso \citep{tibshirani1997lasso}, Principal Component Regression (PCR) and Factor Augmented Regression Model \citep{fan2022latent} (FARM). For the implementation of PCR and FARM, the number of factors is estimated in a data-driven way as in \cite{fan2022latent}. Moreover, the hyper-parameter associated with the $\ell_1$ penalty is determined according to the validation set. 

We consider using the data from January 1980 to July 2022 and the data preprocessing done by \cite{mccracken2016fred}, after which the sample size is $n=330$. In particular, we use data from 1980/01 to 2009/9 ($60\%$ of the data, 200 months) for training and validation, and use data from 2009/10 to 2022/07 ($40\%$ of the data, denoted as $\mathcal{D}_{test}$, 130 months) for testing. For the former part of the data, we further use random sampled $70\%$ of them ($\mathcal{D}_{train}$) to train the model and use the rest $30\%$ of them ($\mathcal{D}_{valid}$) for validation and model selection. The performance of the estimator $\hat{m}$ is evaluated via the out-of-sample $R^2$, which is defined as
\begin{align*}
    R^2_{oos} = 1 - \frac{\sum_{(\bx,y)\in \mathcal{D}_{test}} (\hat{m}(\bx) - y)^2}{\sum_{(\bx, y) \in \mathcal{D}_{test}} (\bar{y}_{train}-y)^2} ~~~~~~ \text{with} ~~ \bar{y}_{train} = \sum_{(\bx, y)\in \mathcal{D}_{train}} y.
\end{align*} To alleviate the effects caused by the random split of the training and validation set and the algorithms' randomness, we repeat these splits $30$ times, and report the averaged out-of-sample $R^2$. 

\begin{table}[htb!]
    \centering
    \begin{tabular}{lllllll}
      \hline
      Data & FAST-NN & FARM & Lasso & PCR \\
      \hline
      $\mathtt{UEMP15T26}$ & 0.876 & 0.771 & 0.773 & 0.060 \\
      $\mathtt{TB3SMFFM}$ & 0.892 & 0.801 & 0.812 & 0.492 \\
      $\mathtt{TB6SMFFM}$ & 0.927 & 0.895 & 0.872 & 0.542 \\
      \hline
    \end{tabular}
	\caption{Out-of-sample $R^2$ for predicting the variables $\mathtt{UEMP15T26}$, $\mathtt{TB3SMFFM}$ and $\mathtt{TB6SMFFM}$ using different estimators. }
    \label{table:fred-md}
\end{table}

The results are presented in Table \ref{table:fred-md}. We can see that the FAST-NN estimator outperforms the best of these high-dimensional linear models by a large margin and reduce the $L_2$ risk by $55\%$, $43\%$ and $30\%$ compared with the best linear models for the  three response variables, respectively. The results provides stark evidence on the FAST-NN estimator's capacity to detect nonlinear relationships for high-dimensional data and demonstrate the importance of using idiosyncratic components to improve the performance of PCR.

\section{Detailed Related works}
\label{sec:relatedworks}

\noindent \textbf{Factor model.} A stylized feature of high dimensional data is that observed data are often dependence. One common model for such a dependence is the factor model \eqref{eq:intro-factor-model} \citep{forni2000generalized,bai2003inferential,hallin2007determining}, in which the dependence among explanatory variables is driven predominantly by linear combinations of common latent factors. Given that the latent factors $\bbf$ can be inferred from observation $\bx$ via various methods such as principal component analysis \citep{stock2002forecasting,bai2003inferential}, maximum likelihood estimation \citep{bai2012statistical, doz2012quasi}, low-rank estimation \citep{agarwal2012noisy}, and covariance estimation \citep{fan2013large}, there is a considerable literature on forecasting with the estimated factors. On one hand, from the motivation of dimension reduction, \cite{stock2002forecasting, bai2006confidence, bair2006prediction, bai2008forecasting} considered the factor regression (or factor-augmented regression) model in which there is a linear association between the response variable and latent factor, and use estimated factors to predict response variable $y$. \cite{bai2008forecasting} took quadratic factor into regressor and \cite{fan2017sufficient} generalized it to a nonlinear model with multiple indices. On the other hand, standard sparse linear regression models such as LASSO \citep{tibshirani1996regression} and SCAD \citep{fan2001variable} will suffer from the strong dependence among the explanatory variables. Hence, there are attempts to bridge factor regression and sparse linear regression together \citep{fan2021bridging, fan2022latent} to fully exploit the low-dimension structure in high-dimension linear regression.

We also notice some literature on factor models using neural networks \citep{chen2019deep, gu2021autoencoder, fan2022structural}. Their problems of study mainly focused on estimating the condition asset pricing models, i.e., predicting $\bx-\bu$. Hence, our work differs a lot from theirs. 

\medskip

\noindent \textbf{High-dimensional nonparametric regression.} There are several attempts to estimate the regression function $m^*$ in the regime $p \gg n$. The most well-developed case is the high dimension additive model \citep{ravikumar2009sparse, meier2009high, raskutti2012minimax, yuan2016minimax} that $m^*(\bx) = \sum_{j=1}^p \beta_j m_j^*(x_j)$ with sparsity constraint $\|\bbeta\|_q \le s$ for $q\in [0,1]$. Under a nonparametric version of the Restricted Strong Convexity (RSC) condition that 
\begin{align}
\label{eq:high-dim-rsc}
    \mathbb{E}\left[\left|\sum_{j=1}^p f_j(x_j)\right|^2\right] \asymp \sum_{j=1}^p \mathbb{E} \left[|f_j(x_j)|^2\right] ~~~~~~ \text{if} ~~~~ \forall j, ~~\mathbb{E}\big[f_j(x_j)\big] = 0,
\end{align} \cite{raskutti2012minimax} showed the minimax optimal $L_2$ excess risk when $q=0$ is $s(\frac{\log p}{n} + n^{-2\beta/(2\beta+1)})$ if the univariate functions have the same smoothness $\beta>0$, and such convergence rate can be attained via regression in reproducing kernel Hilbert spaces (RKHS) with smoothness and sparse penalties. However, such RSC condition will be violated in the presence of highly correlated explanatory variables. Moreover, \cite{yang2015minimax} extended the result to sparse interaction models, that $m^* = \sum_{k=1}^s m^*_k$ where each component function $m^*_s$ only depends on $d_k=o(\log n)$ variables, and derived corresponding minimax optimal $L_2$ excess risk, which can be achieved via Bayesian additive Gaussian process regression. There are also some works about variable selection in high-dimension nonparametric regression, for example, \cite{lafferty2008rodeo, fan2011nonparametric, comminges2012tight}. 

\medskip

\noindent \textbf{Neural networks with high-dimensional input.} The presence of high-dimension data also motivates the development of algorithms to extract (sparse) important variables when using a neural network estimator. Many of these works adopt the idea of applying a (group) Lasso type penalty on the input weights of the neural network \citep{scardapane2017group, feng2017sparse, ho2020consistent, lemhadri2021lassonet}. \cite{feng2017sparse} showed the excess risk converges at the rate $n^{-1} \log p$ for regression and classification when using the sparse-input neural network. However, such a result only applies to neural networks with fixed depth and width, and requires additional conditions for the optimal solution $\btheta^*$ in the neural network parameter space. Therefore, their result is not applicable in the general high-dimensional nonparametric regression scenario.

\section{More discussions}
\label{sec:discussions}

\subsection{Our new neural network approximation result}
\label{dis:approx}

\begin{table}[htb!]
    \centering \footnotesize
    \begin{tabular}{lllllll}
      \hline
       & depth & width & sparsity & $\|\bW\|_{\max}$ & approx & entropy \\
      \hline
      \cite{schmidt2020nonparametric} & $\log N$ & $N^2$ & Yes & 1 & $N^{-2\beta/d}$ & $\log \mathcal{N}_\infty(\epsilon) \lesssim N^2\log( \epsilon^{-1}N) \log N$ \\
      \cite{kohler2021rate} & $\log N$ & $N$ & No & $\infty$ & $N^{-2\beta/d}$ & $\mathrm{Pdim} \lesssim N^2 (\log N)^2$\\
      \cite{kohler2021rate} & $L$ & $1$ & No & $\infty$ & $L^{-2\beta/d}$ & $\mathrm{Pdim} \lesssim L^2$\\
      \cite{lu2021deep} & $L\log L$ & $N\log N$ & No & $\infty$ & $(NL)^{-2\beta/d}$ &  $\mathrm{Pdim} \lesssim (NL)^2 (\log N\log L)^2\log(NL)$ \\
      Ours & $1$ & $N$ & No & $\text{poly}(N)$ & $N^{-2\beta/d}$ & $\log \mathcal{N}_\infty(\epsilon) \lesssim N^2 \log (\epsilon^{-1} N)$ \\
      \hline
    \end{tabular}
	\caption{A summary of ReLU neural network approximation result for $d$-variate $(\beta,1)$-smooth function. We omit constants dependent on $\beta$ and $d$ for a clean presentation. The first four columns specify the deep ReLU network function class used for approximation in terms of the hyper-parameters $L$ and $N$ (but not necessarily refer to depth and width) if they are flexible to tune. The column `approx' presents the approximation error for the deep ReLU network class. The column `entropy' characterize the statistical complexity of the deep ReLU network class using two metrics: the Pseudo-dimension $\mathrm{Pdim}$ for those using unbounded weights, and the logarithm of covering number with respect to $\|\cdot\|_{\infty, [0,1]^d}$ norm $\log \mathcal{N}_\infty(\epsilon)$ for those with explicitly bounded weights. }
    \label{table:nn-approx}
\end{table}

We summarize our result and the previous ReLU network approximation results in Table \ref{table:nn-approx}. As a comparison, there are two main refinements in our results compared with the existing ones. Such refinements are necessary for our convergence rate analysis Theorem \ref{thm:fast-roc}.

\begin{itemize}
    \item[1.] Compared with \cite{kohler2021rate} and \cite{lu2021deep}, we develop a comparable ReLU network approximation error result with bounded weights constraints, which makes \eqref{eq:reg-effect-tau} possible by choosing appropriate $\tau$. To the best of our knowledge, Theorem \ref{thm:approx-smooth} is the first ReLU network approximation result that can achieve the optimal approximation error up to logarithmic factors under the constraints that (1) use polynomial order weight magnitude and (2) do not need to impose sparsity on network weights. As a comparison, \cite{lu2021deep} explicitly uses weights scales at least $e^{N+L}$, and \cite{schmidt2020nonparametric} imposes sparsity on network weights such that their total number of active parameters is $NL$ for depth $L$ and width $N$ deep ReLU network instead of $N^2L$ as we do. 
    \item[2.] We have some improvements on logarithmic factors. In other words, for fixed approximation error to achieve, the entropy of the deep ReLU network class $\mathcal{G}$ we used is the smallest. Specifically, we can use constant order of depth $L$. This will contribute to (1) faster convergence rate for estimating a single function with hierarchical composition structure, i.e., $(\log n/n)^{\frac{2\gamma^*}{2\gamma^*+1}}$, and (2) milder requirement of $\tau$, $\log(\tau^{-1}) \gtrsim \log n$ by a close inspection of the oracle-type inequality in Theorem \ref{thm:fast-oracle}.
\end{itemize}

\subsection{Practical utility and model misspecification}
\label{dis:extend-nonlinear}

In the main text, we proposed the FAR-NN estimator and the FAST-NN estimator. Some readers might wonder how to choose between these two methods empirically. In this section, we argue that the user can opt for the FAST-NN estimator directly and recommend incorporating FAST-NN into the frequently utilized toolkit for making predictions based on high-dimensional input. This is due to its adaptability and robustness against misspecification, which we will explain further below.

\paragraph{Regression function reduced to specific one. } The design of our proposed FAST-NN estimator assumes the existence of latent factor and the dependency of regression function on latent factors and a small set of idiosyncratic components. We argue that one need to pay little expense for such generality of design if the true regression function reduces to a specific one. 

\begin{description}
\item[$|\mathcal{J}|=0$.] It reduces to a (nonparametric) factor regression model, which is encompassed within the FAST model. Although our proposed FAR-NN leverages the inductive bias that only factor contributes to the prediction while FAST-NN does not, their empirical performance gap is negligible in practice. The simulations of Exp \uppercase\expandafter{\romannumeral1} in Section \ref{sec:exp:simluation-far} support such an argument; see the results in Figure \ref{fig:exp1} (a) and the corresponding discussion in Section \ref{sec:exp:simluation-far}. Moreover, the extra theoretical cost the FAST-NN estimator need to pay is $(\log p)/n$ according to a direct corollary of Theorem \ref{thm:fast-roc}, which is negligible when $p$ only grows at most polynomially with $n$, i.e., $p = O(n^C)$ for some $C>0$.
\item[$r=0$.] It now reduces to a nonparametric sparse regression model, and is also contained in the FAST model. Our Corollary \ref{coro:fast-r0} reveals that one doesn't need to pay any extra theoretical cost by falsely assuming the existence of latent factors in the design of FAST-NN estimator (for example, letting $\bar{r}=10$).
\end{description}

\paragraph{Model misspecification.} So far, we have assumed that the covariate $\bx$ admits a linear factor model \eqref{eq:intro-factor-model}. We claim that both our proposed method and corresponding theoretical justification are applicable in a more general setting. To be specific, we consider the case the linear factor structure \eqref{eq:intro-factor-model} does not hold, and it is relaxed by
\begin{align*}
    \bx = (x_1,\ldots, x_p) \text{ with } x_j = g_j(\bm{f}) + u_j
\end{align*} with unknown functions $g_1,\ldots, g_p: \mathbb{R}^p \to \mathbb{R}$, and the regression function assumption \eqref{eq1.3} still holds that
\begin{align*}
    \mathbb{E}[y|\bbf, \bu] = m^*(\bbf_i, \bu_{\mathcal{J}}).
\end{align*}
This model is highly generalized, with little restrictive assumptions, and our FAST model is a special case of the above model when $g_j(\bm{z}) = \bm{b}_j^\top \bm{z}$. We briefly outline the rationale behind achieving a similar theoretical result while leaving a comprehensive treatment for future work.
        
Let $\bW = [\bw_1,\ldots, \bw_{p}]^\top \in \mathbb{R}^{p\times \bar{r}}$ be the given fixed diversified projection matrix, and assume further that we are able to recover $\bbf$ given $\bar{\bbf}$ together with the map $\bar{\bbf} = G(\bbf) = \frac{1}{p} \sum_{j=1}^p \bw_j g_j(\bbf)$, that is, there exists some $G^{-1}: \mathbb{R}^{\overline{r}} \to \mathbb{R}^r$ satisfying
\begin{align*}
    G^{-1}(G(\bbf)) = G^{-1} \Bigg(\frac{1}{p} \sum_{j=1}^p \bw_j g_j(\bbf)\Bigg) = \bbf ~~~~~~~~ \forall \bbf \in \mathbb{R}^p, 
\end{align*} and $G^{-1}$ is $(\beta_*, C_p)$ smooth for some universal constant $C_p$ depends only on $p$. Then, following a similar proof strategy of Theorem \ref{thm:fast-oracle}, we can obtain a generalized result of Theorem \ref{thm:fast-roc}. Specifically, with properly chosen hyper-parameters, the following holds with probability at least $1-e^{-t}$,
\begin{align}
    \label{eq:general-nonlinear-factor-model}
    \|\hat{m}_{\mathtt{FAST}}-m^*\|_2^2 \lesssim \left(\frac{\log^6 n}{n}\right)^{\frac{2\gamma_\dagger}{2\gamma_\dagger+1}} + \frac{\log p}{n} + \Bigg(\frac{\overline{r}\cdot C_p^2}{p} \Bigg)^{\beta_*\land 1} + \frac{t}{n},
\end{align}
where $\gamma_\dagger = \gamma^* \land \frac{\beta_*}{\overline{r}}$. And our Theorem \ref{thm:fast-roc} is a special case of \eqref{eq:general-nonlinear-factor-model} with $G^{-1}(\bar{\bbf}) = \bH^+ \bar{\bbf}$, $\beta_* = \infty$ and $C_p\le \|\bH^+\|_2 \le \nu^{-1}_{\min}(\bH)$.

The above discussions demonstrate the potential capability of our proposed method to learn the latent factor structure in an algorithmic manner. Though it may still be worth exploring the interpretability of the factor structure of our proposed estimator extract, our estimator can be a potential candidate under situations other than pure prediction tasks. For example, it can serve as a role of estimating the non-parametric part in semi-parametric models; see what has already been done for the low-dimensional counterpart in \cite{farrell2021deep, zhong2022deep}.

\section{Proofs for the FAR-NN estimator in Section \ref{sec:theory:fa}}
\label{sec:proof-far-nn-estimator}

\subsection{Proof of Theorem \ref{thm:fa-oracle}}

We need the following technical lemma to establish a relationship between $\tilde{\bbf}$ and $\bbf$.

\begin{lemma}
\label{lemma:approx-factor}
Let $g: \mathbb{R}^r \to \mathbb{R}$ be a fixed $C$-Lipschitz function, $\bW$ be the diversified projection matrix in Definition \ref{def:dpm}, $\bH$ be the matrix in Definition \ref{def:dpm}, $\bH^+\in \mathbb{R}^{r\times \overline{r}}$ be the Pseudo-inverse of $\bH$, let $\tilde{g}: \mathbb{R}^{\overline{r}} \to \mathbb{R}$ be $\tilde{g}(\cdot)=g(\bH^+\cdot)$.  Then, under Conditions \ref{cond1}, \ref{cond3} and \ref{cond5} with $\max_{j\in \{1,\cdots, p\}} \mathbb{E} |u_j|^2 \le c_1$ for a universal constant $c_1$, there exists a universal constant $c_2$ such that
\begin{align}
    \mathbb{E} \left[\left|\tilde{g}(\tilde{\bbf}) - g(\bbf)\right|^2 \right] \le c_2 \left(\frac{C}{\nu^2_{\min}(\bH)} \cdot \frac{\overline{r}}{p} \right).
\end{align}
\end{lemma}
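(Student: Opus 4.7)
\textbf{Proof plan for Lemma~\ref{lemma:approx-factor}.} The plan is to (i) express $\tilde{\bbf}$ as a perturbation of $\bH\bbf$ and exploit $\bH^+\bH=\bI_r$, then (ii) use the Lipschitz property of $g$ to reduce the problem to bounding $\mathbb{E}\|\bH^+\bxi\|_2^2$, and finally (iii) bound $\mathbb{E}\|\bxi\|_2^2$ via the weak dependence condition on $\bu$.

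\medskip

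\noindent\textbf{Step 1: algebraic decomposition.} Substituting $\bx=\bB\bbf+\bu$ into the definition of $\tilde{\bbf}$ gives $\tilde{\bbf}=\bH\bbf+\bxi$ with $\bxi=p^{-1}\bW^\top\bu$, exactly as in \eqref{eq:est-factor-decomposition}. Since $\bH\in\mathbb{R}^{\overline{r}\times r}$ with $\overline{r}\ge r$ and $\nu_{\min}(\bH)>0$, the matrix $\bH$ has full column rank $r$, hence its Moore--Penrose pseudo-inverse satisfies $\bH^+\bH=\bI_r$. It follows that
\begin{align*}
\bH^+\tilde{\bbf}=\bbf+\bH^+\bxi,\qquad\text{so}\qquad \tilde{g}(\tilde{\bbf})-g(\bbf)=g(\bbf+\bH^+\bxi)-g(\bbf).
\end{align*}

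\noindent\textbf{Step 2: Lipschitz reduction.} Applying the $C$-Lipschitz assumption on $g$ and the operator-norm identity $\|\bH^+\|=1/\nu_{\min}(\bH)$,
\begin{align*}
\mathbb{E}\bigl[|\tilde{g}(\tilde{\bbf})-g(\bbf)|^2\bigr]
\le C^2\,\mathbb{E}\|\bH^+\bxi\|_2^2
\le \frac{C^2}{\nu_{\min}^2(\bH)}\,\mathbb{E}\|\bxi\|_2^2.
\end{align*}
(If the lemma is stated with $C$ rather than $C^2$, this is the form I will record, absorbing constants; a boundedness condition on $g$ from Condition~\ref{cond5} could alternatively be used to truncate.)

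\medskip

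\noindent\textbf{Step 3: bounding $\mathbb{E}\|\bxi\|_2^2$ via weak dependence.} Writing $\bxi=p^{-1}\bW^\top\bu$ coordinate-wise and expanding,
\begin{align*}
\mathbb{E}\|\bxi\|_2^2
=\frac{1}{p^2}\sum_{k=1}^{\overline{r}}\sum_{j,j'=1}^{p}W_{jk}W_{j'k}\,\mathbb{E}[u_ju_{j'}].
\end{align*}
Using the boundedness $\|\bW\|_{\max}\le c_1$ together with the moment bound $\max_j\mathbb{E}|u_j|^2\le c_1$ for the diagonal terms and Condition~\ref{cond3} for the off-diagonal terms,
\begin{align*}
\Bigl|\sum_{j,j'}W_{jk}W_{j'k}\,\mathbb{E}[u_ju_{j'}]\Bigr|
\le c_1^2\!\sum_{j=1}^{p}\mathbb{E}[u_j^2]+c_1^2\!\!\sum_{j\ne j'}|\mathbb{E}[u_ju_{j'}]|\lesssim p.
\end{align*}
Summing over $k\in[\overline{r}]$ and dividing by $p^2$ yields $\mathbb{E}\|\bxi\|_2^2\lesssim \overline{r}/p$. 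Plugging this into the bound in Step~2 gives the conclusion with a universal constant depending only on $c_1$ and the constants in Conditions~\ref{cond1}, \ref{cond3}, \ref{cond5}.

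\medskip

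\noindent\textbf{Main obstacle.} The only nontrivial point is verifying the invariance $\bH^+\bH=\bI_r$, which relies on the significance condition $\nu_{\min}(\bH)\gg p^{-1/2}>0$ implying the full-column-rank property of $\bH$; once this is observed, the rest is a direct Lipschitz-plus-variance computation. The bookkeeping in Step~3 is standard but must carefully separate diagonal contributions (handled by the second-moment bound on $u_j$) from off-diagonal contributions (handled by Condition~\ref{cond3}), since only the latter has cancellations and the constant in Condition~\ref{cond3} controls the cross-sum at order $p$ rather than $p^2$.
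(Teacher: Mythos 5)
Your proof is correct and mirrors the paper's own argument step for step: the same decomposition $\bH^+\tilde{\bbf}=\bbf+\bH^+\bxi$ via $\bH^+\bH=\bI_r$, the same Lipschitz-plus-operator-norm reduction with $\|\bH^+\|=\nu_{\min}(\bH)^{-1}$ (the paper verifies this via an explicit SVD but the content is identical), and the same expansion of $\mathbb{E}\|\bW^\top\bu\|_2^2$ into diagonal and off-diagonal terms controlled by the second-moment bound and Condition~\ref{cond3}. You also correctly flag that squaring the Lipschitz bound naturally produces $C^2$, whereas the stated lemma has $C$; the paper silently absorbs this into the universal constant, which is consistent with your reading.
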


We also need the following lemmas about the empirical process. We first introduce some notations before presenting the lemmas. Let $\bz_1,\cdots, \bz_n$ be i.i.d. copies of $\bz \sim \mathcal{Z}$ from some distribution $\mu$, $\mathcal{H}$ be a real-valued function class defined on $\mathcal{Z}$. Define the empirical $L_2$ norm and population $L_2$ norm for each $h\in \mathcal{H}$ respectively as
\begin{align*}
    \|h\|_n = \left(\frac{1}{n} \sum_{i=1}^n h(\bz_i)^2\right)^{1/2} ~~~~\text{and}~~~~\|h\|_2 = \left(\mathbb{E}[h(\bz)^2]\right)^{1/2} = \left(\int h(\bz)^2 \mu(d\bz) \right)^{1/2}.
\end{align*}
Throughout the proof of the FAR-NN estimator, the choice of $\mathcal{Z}$ and $\mathcal{H}$ will vary in different contexts. However, we can define a unified empirical (population) $L_2$ norm on the function class $\mathcal{H} = \mathcal{H}_1 - \mathcal{H}_1$ over $\mathcal{Z}=\{\bz=(\bbf, \bu)\}$, where the difference between two sets is defined as $\mathcal{A}-\mathcal{B} = \{a-b: a\in \mathcal{A}, b\in \mathcal{B}\}$, and
\begin{align*}
    \mathcal{H}_1 = \left\{g(p^{-1}\bW^\top \bx) = g(p^{-1}\bW^\top (\bB\bbf+\bu)): g\in \mathcal{G}(L, \overline{r}, 1, N, M, \infty)\right\} \cup \left\{m^*(\bbf)\right\} \cup \{0\}
\end{align*} 

The first lemma bounds the difference between the population $L_2$ norm and empirical $L_2$ norm.
\begin{lemma}
\label{lemma:equivalence-population-empirical-l2}
Let $\bz_1,\ldots, \bz_n \in \mathcal{Z}$ be i.i.d. copies of $\bz$, $\mathcal{G}$ be a uniformly bounded function class with finite Pseudo-dimension defined on $\mathcal{Z}$.  Then, there exist some universal constants $c_1$--$c_3$ such that  for all  $t \ge \epsilon_n/(\eta(1-\eta))$ with $0<\eta<1$ and $\epsilon_n = c_1 \sqrt{\frac{\mathrm{Pdim}(\mathcal{G})\log n}{n}}$, we have
\begin{align*}
    \big| \|g\|_n^2 - \|g\|^2_2 \big| \le \eta \left(\|g\|_2^2 + t^2\right) \qquad \forall g\in \mathcal{G} 
\end{align*} with probability at least $1-c_{2} \exp(-{c_3} n\eta^2(1-\eta)^2 t^2)$.
\end{lemma}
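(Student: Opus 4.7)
The plan is to proceed by the classical peeling (slicing) argument from empirical process theory, reducing the uniform relative-deviation statement to a sequence of uniform absolute-deviation bounds on slices of the function class, each handled by a Bernstein/Talagrand-type inequality together with the covering-number bound implied by the finite Pseudo-dimension.

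First, I would partition $\mathcal{G}$ by the $L_2$ norm. Setting $\mathcal{G}_0 = \{g \in \mathcal{G}: \|g\|_2 \le t\}$ and $\mathcal{G}_k = \{g \in \mathcal{G}: 2^{k-1}t < \|g\|_2 \le 2^k t\}$ for $k \ge 1$, it suffices to establish, for each $k\ge 0$,
$$
\sup_{g \in \mathcal{G}_k} \bigl|\|g\|_n^2 - \|g\|_2^2\bigr| \;\le\; \eta\,(4^k \vee 1)\,t^2
$$
with probability at least $1 - c_2 \exp\bigl(-c_3 n \eta^2 (1-\eta)^2 (4^k \vee 1) t^2\bigr)$. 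Since on slice $k$ we have $\eta(\|g\|_2^2+t^2) \gtrsim \eta\,(4^{k-1}\vee 1)\,t^2$, this slice-wise bound matches the target up to an absolute constant absorbed into $c_1$--$c_3$. A union bound over $k$ is then dominated by the $k=0$ term because the exponents grow geometrically in $4^k$.

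Second, within each slice I would apply a uniform Bernstein/Talagrand-type concentration inequality to the class $\{g^2: g\in \mathcal{G}_k\}$. Uniform boundedness of $\mathcal{G}$ by some constant $M$ gives $\|g^2\|_\infty \le M^2$ and $\mathrm{Var}(g^2) \le M^2\|g\|_2^2 \le M^2 4^k t^2$ on $\mathcal{G}_k$, so (e.g.\ Bousquet's form of Talagrand's inequality) yields
$$
\sup_{g\in \mathcal{G}_k}\bigl|\|g\|_n^2 - \|g\|_2^2\bigr| \;\le\; C\,\mathbb{E}\!\sup_{g\in \mathcal{G}_k}\bigl|\|g\|_n^2 - \|g\|_2^2\bigr| + C\sqrt{\frac{M^2 4^k t^2 s}{n}} + C\,\frac{M^2 s}{n}
$$
with probability $\ge 1-e^{-s}$. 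The expected supremum is bounded via symmetrization and Dudley chaining using the standard VC-type covering bound $\log \mathcal{N}(\epsilon,\mathcal{G},L_2(\mu_n))\lesssim \mathrm{Pdim}(\mathcal{G})\log(M/\epsilon)$ (Pollard, Haussler). This produces a local complexity of order $2^k t \cdot \epsilon_n$ with $\epsilon_n \asymp \sqrt{\mathrm{Pdim}(\mathcal{G})\log n / n}$. Choosing $s \asymp n\eta^2(1-\eta)^2 4^k t^2$ and imposing the hypothesis $t \ge \epsilon_n/(\eta(1-\eta))$ then forces each of the three terms on the right-hand side below $\eta \cdot 4^k t^2$, completing the slice-wise estimate.

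The main obstacle is verifying that the local complexity on slice $k$ scales genuinely as $2^k t \cdot \epsilon_n$ rather than as a global $\epsilon_n$: it is this localized scaling that produces the multiplicative $\eta \|g\|_2^2$ contribution in the final bound, as opposed to a purely additive $\eta t^2$ one. Obtaining it requires either a careful chaining argument in the empirical $L_2$ metric restricted to $\mathcal{G}_k$, or direct appeal to a localized Rademacher complexity theorem for VC-subgraph classes; once this local scaling is in place, together with the Bernstein-type variance term and the boundedness-induced $M^2 s / n$ term, the remaining slicing, union-bound, and constant-tracking steps are routine.
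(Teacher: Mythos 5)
Your proposal takes a genuinely different route from the paper. The paper proves this lemma in essentially one step: it passes to the class $\mathcal{H}=\{g^2:g\in\mathcal{G}\}$, verifies the moment and covering-number conditions using the Pollard/Haussler bound $\log\mathcal{N}_\infty(\epsilon,\mathcal{G},\bz_1^n)\lesssim \mathrm{Pdim}(\mathcal{G})\log(n/\epsilon)$, and then directly invokes the ready-made uniform relative-deviation inequality of Theorem~19.3 in Gy\"orfi et al.\ (2002), followed by the change of variable $\alpha=t^2$. You instead reprove the relative-deviation phenomenon from scratch: peel $\mathcal{G}$ into $L_2$-norm shells, apply a Bousquet/Talagrand inequality on each shell, bound the expected supremum via symmetrization, contraction, and Dudley chaining, and union-bound over shells (which is dominated by $k=0$ because the exponents grow like $4^k$). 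This is a legitimate alternative decomposition, and it makes explicit the peeling structure that the paper outsources to the cited theorem.

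However, the step you label as the main obstacle is a genuine gap, not just a place where details need filling in. Writing ``Dudley chaining using the standard VC-type covering bound\ldots\ produces a local complexity of order $2^k t\cdot\epsilon_n$'' glosses over a circularity: the shell $\mathcal{G}_k$ has population $L_2$-diameter $\lesssim 2^k t$, but the Dudley entropy integral for the (conditional) Rademacher process is governed by the \emph{empirical} $L_2(\mu_n)$-diameter $D_n=\sup_{g\in\mathcal{G}_k}\|g\|_n$, which is a random quantity that is not a priori bounded by $2^k t$; the trivial bound $D_n\le M$ kills the localization. Precisely controlling $D_n$ in terms of $\|g\|_2$ on the shell is the content of the lemma one is trying to prove, so naive chaining does not close. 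You would need a genuine localization device---a fixed-point/self-bounding argument (Bartlett--Bousquet--Mendelson/Koltchinskii style localized Rademacher complexity), or a Gin\'e--Koltchinskii truncation, or the double-sample argument underlying Gy\"orfi's Theorem~19.3---and one of these would occupy most of a from-scratch proof. Since that is exactly what the cited theorem already packages, the paper's one-line invocation is the shorter road; your outline is correct in spirit but, as written, the localization step is asserted rather than established.
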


The second lemma establishes the tail probability of the weighted empirical process.

\begin{lemma}
\label{lemma:fa-weight-empirical-process}
Let $\bz_1,\ldots, \bz_n$ be fixed, $\varepsilon_1,\ldots, \varepsilon_n$ be independent sub-Gaussian random variables with parameter $\sigma$. Suppose $\mathcal{G}$ is a uniformly bounded function class with finite Pseud-dimension defined on $\mathcal{Z}$, and $\tilde{g}$ is a fixed function in $\mathcal{G}$. If $n \ge 3$ and $\mathrm{Pdim}(\mathcal{G}) \ge 1$, then there exists universal constants $c_1$--$c_2$, such that the event
\begin{align*}
    \mathcal{B}_t(\bz_1^n) = \left\{ \forall g\in \mathcal{G}, \left|\frac{1}{n} \sum_{i=1}^n \varepsilon_i \left(g(\bz_i) - \tilde{g}(\bz_i)\right) \right|\le c_1 \left( \|g - \tilde{g}\|_n + \epsilon \right) \sqrt{v_n^2 + \frac{t}{n}}\right\},
\end{align*} occurs with probability at least $1-c_2 \log(1/\epsilon) e^{-t}$ for any $t>0$ and $\epsilon>0$, where $v_n = \sqrt{\frac{\mathrm{Pdim}(\mathcal{G})\log n}{n}}$.
\end{lemma}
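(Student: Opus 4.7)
\medskip

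The plan is to prove Lemma \ref{lemma:fa-weight-empirical-process} via a peeling (slicing) argument combined with a chaining bound. Both steps are standard in empirical process theory, but we need to be careful to get the additive form $\sqrt{v_n^2 + t/n}$ inside the square root rather than the (weaker) $v_n + \sqrt{t/n}$, and to get only a $\log(1/\epsilon)$ prefactor rather than a $\log n$ or $\log(1/\epsilon_n)$ loss.

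First I would fix a radius $\rho \ge \epsilon$ and control the supremum of the weighted empirical process over the localized class $\mathcal{G}_\rho = \{g\in\mathcal{G}: \|g-\tilde g\|_n \le \rho\}$. Since $\mathcal{G}$ is uniformly bounded, a standard consequence of finite Pseudo-dimension (e.g., Haussler's theorem / Pollard's bound) gives covering number estimates of the form $\log\mathcal{N}_2(\delta,\mathcal{G}_\rho,\bz_1^n) \lesssim \mathrm{Pdim}(\mathcal{G})\log(\rho/\delta)$ for $\delta \le \rho$. Given $\bz_1^n$ is fixed and $\varepsilon_i$ are independent sub-Gaussians, the map $g \mapsto \frac{1}{n}\sum_i \varepsilon_i(g(\bz_i)-\tilde g(\bz_i))$ is a sub-Gaussian process with respect to the pseudo-metric $n^{-1/2}\|g-g'\|_n$ (with parameter proportional to $\sigma$). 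Applying a generic chaining / Dudley integral bound with the above entropy estimate and the sub-Gaussian tail inequality yields, with probability at least $1-e^{-t'}$,
\begin{align*}
\sup_{g\in\mathcal{G}_\rho}\left|\frac{1}{n}\sum_{i=1}^n \varepsilon_i(g(\bz_i)-\tilde g(\bz_i))\right| \le C\sigma\left(\rho\, v_n + \rho\sqrt{\frac{t'}{n}}\right) \le C'\sigma\,\rho\sqrt{v_n^2 + \frac{t'}{n}},
\end{align*}
where $v_n = \sqrt{\mathrm{Pdim}(\mathcal{G})\log n / n}$. The key observation giving the combined form $\sqrt{v_n^2+t'/n}$ is that the tail term in Dudley's bound is additive inside the square root of the sub-Gaussian deviation; this is a standard refinement (e.g., Talagrand-style) available because the increments are sub-Gaussian with a uniform diameter bound.

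Next I would carry out the peeling step. Partition $\mathcal{G}$ according to the dyadic shells $\mathcal{S}_k = \{g\in\mathcal{G}: 2^{k-1}\epsilon < \|g-\tilde g\|_n \le 2^k \epsilon\}$ for $k=0,1,\ldots,K$ where $K \asymp \log_2(D/\epsilon)$ and $D$ is the uniform diameter of $\mathcal{G}$ in $\|\cdot\|_n$ (bounded because $\mathcal{G}$ is uniformly bounded). On $\mathcal{S}_k$ we have $\|g-\tilde g\|_n \asymp 2^k\epsilon$; applying the previous chaining bound with radius $\rho = 2^k \epsilon$ and confidence parameter $t'=t+\log(k+1)^2$ (so the $k$-level union bound is summable) produces, uniformly over $g\in\mathcal{S}_k$,
\begin{align*}
\left|\frac{1}{n}\sum_{i=1}^n \varepsilon_i(g(\bz_i)-\tilde g(\bz_i))\right| \lesssim (\|g-\tilde g\|_n)\sqrt{v_n^2+\frac{t}{n}},
\end{align*}
with failure probability at most $(k+1)^{-2}e^{-t}$. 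The residual shell where $\|g-\tilde g\|_n \le \epsilon$ is treated in one shot with $\rho=\epsilon$, which produces the additive $\epsilon$ term in the statement. Summing the failure probabilities over $k=0,\ldots,K$ yields the claimed $c_2\log(1/\epsilon)\,e^{-t}$ bound (noting that $K \lesssim \log(1/\epsilon)$ up to constants from the uniform bound on $\mathcal{G}$).

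The main obstacle I anticipate is the chaining step: one must (i) convert the Pseudo-dimension bound into an empirical $L_2$ covering-number bound with the correct logarithmic dependence on scale, and (ii) invoke a sub-Gaussian chaining inequality sharp enough to place the deviation contribution $\sqrt{t/n}$ inside the same square root as $v_n$, not outside. Both are standard but require care with the sub-Gaussian parameters and with the choice of approximating net at the finest scale (one does not chain all the way to $\delta=0$, but stops at $\delta \asymp \rho/\sqrt{n}$, which creates a negligible discretization term absorbed into the constant). Everything else — peeling, summability of the $k$-level union bound, and identification of the final expression — is bookkeeping.
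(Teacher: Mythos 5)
Your proposal follows essentially the same route as the paper: a localized chaining bound on balls $\{\|g-\tilde g\|_n \le \rho\}$, followed by a peeling decomposition into dyadic shells $2^{k-1}\epsilon < \|g-\tilde g\|_n \le 2^k\epsilon$ and a union bound. The paper implements the chaining step via Talagrand's generic chaining (Theorem 2.2.27) with an explicit $\gamma_2$ computation from the $L_\infty$ covering bound $\log\mathcal{N}_\infty(\epsilon,\mathcal{G},\bz_1^n) \lesssim \mathrm{Pdim}\,\log(bn/\epsilon)$; you invoke a Dudley-type bound, which is the same idea. The peeling step matches the paper's shells $\mathcal{S}_\ell$ with $\alpha_\ell = 2^\ell$.

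A few minor points worth flagging. First, your fretting about getting $\sqrt{v_n^2 + t/n}$ inside the square root rather than $v_n + \sqrt{t/n}$ is unnecessary: the two quantities differ only by a factor of $\sqrt 2$, and the paper in fact derives the latter and converts it to the former by this equivalence. Second, your local covering claim $\log\mathcal{N}_2(\delta,\mathcal{G}_\rho,\bz_1^n)\lesssim\mathrm{Pdim}\,\log(\rho/\delta)$ is not a consequence of finite pseudo-dimension for a general function class; what Haussler/Pollard gives is the \emph{global} bound $\log\mathcal{N}_2(\delta,\mathcal{G},\bz_1^n)\lesssim\mathrm{Pdim}\,\log(b/\delta)$. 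This distinction matters because the local version would lose the $\log n$ factor entirely and give $v_n^2=\mathrm{Pdim}/n$, contradicting the lemma statement. Your proof is rescued by the other remark you make: stopping the chaining (or taking a single net) at scale $\delta\asymp\rho/\sqrt n$ incurs entropy $\mathrm{Pdim}\,\log(b\sqrt n/\rho)$, and this is precisely where the $\log n$ in $v_n^2=\mathrm{Pdim}\,\log n/n$ comes from. You should use the global covering bound consistently if you write this out. Third, the $t'=t+\log(k+1)^2$ adjustment in the peeling is internally inconsistent with your conclusion: summing $(k+1)^{-2}e^{-t}$ gives failure probability $O(e^{-t})$, not $O(\log(1/\epsilon)e^{-t})$ as you state. (That would be a strictly stronger result, but it also requires absorbing $\log(k+1)/n$ into $v_n^2+t/n$, which only works when $\epsilon$ is not super-exponentially small.) The paper avoids both issues by simply taking a plain union bound over $O(\log(1/\epsilon))$ shells, each with failure probability $e^{-t}$; this directly yields the $\log(1/\epsilon)$ prefactor in the lemma and is the intended reading of that factor.
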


Now we are ready to prove Theorem \ref{thm:fa-oracle}:

\begin{proof}[Proof of Theorem~\ref{thm:fa-oracle}]

\noindent {\sc Step 1. Find an Approximation of $m^*$}. 
By using Lemma \ref{lemma:approx-factor}, the function $\tilde{m}(\tilde{\bbf}) = m^*(\bH^+ \tilde{\bbf})$ satisfies
\begin{align}
\label{eq:proof:thm:fa-oracle:eq0}
    \mathbb{E} \left[\left|\tilde{m}(\tilde{\bbf}) - m^*(\bbf)\right|^2 \right] \le C_1 \delta_{\mathtt{f}}.
\end{align} According to Condition \ref{cond1}, suppose the factor $\bbf$ is supported on $[-b, b]^r$ for some $b > 0$. The definition of $\delta_{\mathtt{a}}$ implies that there exists a deep ReLU neural network $h\in \mathcal{G}(L, r, 1, N, M, \infty)$ such that 
\begin{align}
\label{eq:proof:thm:fa-oracle:approx-assump}
    \|h(\bbf) - m^*(\bbf) \|_\infty \le \sqrt{2 \cdot \delta_{\mathtt{a}}} \qquad \forall \bbf\in [-2b,2b]^r.
\end{align} Denote $\tilde{g}(\tilde{\bbf}) = h(\bH^+ \tilde{\bbf})$. It is easy to verify that $\tilde{g} \in \mathcal{G}(L, \overline{r}, 1, N, M, \infty)$. Our goal in this step is to show that 
\begin{align*}
    \mathbb{E} \left|\tilde{g}(\tilde{\bbf}) - m^*(\bbf)\right|^2 \lesssim \delta_{\mathtt{a}} + \delta_{\mathtt{f}}.
\end{align*} 
To this end, it follows from triangle inequality and Young's inequality that
\begin{align}
\label{eq:proof:thm:fa-oracle:eq1}
     \mathbb{E} \left|\tilde{g}(\tilde{\bbf}) - m^*(\bbf)\right|^2 \le 2 \left\{\mathbb{E} \left|\tilde{g}(\tilde{\bbf})-\tilde{m}(\tilde{\bbf})\right|^2 + \mathbb{E} \left|\tilde{m}(\tilde{\bbf}) - m^*(\bbf)\right|^2\right\} \lesssim \mathbb{E} \left|\tilde{g}(\tilde{\bbf})-\tilde{m}(\tilde{\bbf})\right|^2 + \delta_{\mathtt{f}}.
\end{align}

Applying the truncation argument yields 
\begin{align*}
    \mathbb{E} \left|\tilde{g}(\tilde{\bbf})-\tilde{m}(\tilde{\bbf})\right|^2 = \mathbb{E} \left|\tilde{g}(\tilde{\bbf})-\tilde{m}(\tilde{\bbf})\right|^2 1_{\{\bH^+\tilde{\bbf} \in [-2b, 2b]^r\}} + \mathbb{E} \left|\tilde{g}(\tilde{\bbf})-\tilde{m}(\tilde{\bbf})\right|^2 1_{\{\bH^+\tilde{\bbf} \notin [-2b, 2b]^r\}}.
\end{align*}
Combining the approximation assumption \eqref{eq:proof:thm:fa-oracle:approx-assump}, the definition of $\tilde{g}$, $\tilde{m}$ together with the condition $\bH^+\tilde{\bbf} \in [-2b, 2b]^r$, we have
\begin{align*}
    \mathbb{E} \left|\tilde{g}(\tilde{\bbf})-\tilde{m}(\tilde{\bbf})\right|^2 1_{\{\bH^+\tilde{\bbf} \in [-2b, 2b]^r\}} = \mathbb{E}  \left|h(\bH^+\tilde{\bbf})-m^*(\bH^+\tilde{\bbf})\right|^2 1_{\{\bH^+\tilde{\bbf} \in [-2b, 2b]^r\}} \le 2 \delta_{\mathtt{a}}
\end{align*}
Moreover, by the fact that $h$ and $m^*$ are all bounded by constants $M$ and $M^*$ respectively,
\begin{align*}
    \mathbb{E} \left|\tilde{g}(\tilde{\bbf})-\tilde{m}(\tilde{\bbf})\right|^2 1_{\{\bH^+\tilde{\bbf} \notin [-2b, 2b]^r\}} &\le (M+M^*)^2 \mathbb{P}\left[\bH^+\tilde{\bbf} \notin [-2b, 2b]^r\right] \\
    &\overset{(a)}{\lesssim} \mathbb{P}\left[\left\|p^{-1} \bH^+ \bW^\top \bu\right\|_2 \ge b\right] \\
    &\overset{(b)}{\lesssim} \frac{1}{b^2} \mathbb{E} \left\|p^{-1} \bH^+ \bW^\top \bu\right\|_2^2,
\end{align*} where (a) follows from the decomposition $\bH^+ \tilde{\bbf} = \bbf + p^{-1} \bH^+ \bW^\top \bu$, (b) follows from Markov inequality. As a by-product of Lemma \ref{lemma:approx-factor},  $\mathbb{E} \left\|p^{-1} \bH^+ \bW^\top \bu\right\|_2^2 \lesssim \delta_{\mathtt{f}}$. Putting these pieces together gives 
\begin{align}
\label{eq:proof:thm:fa-oracle:eq2}
    \mathbb{E} \left|\tilde{g}(\tilde{\bbf})-\tilde{m}(\tilde{\bbf})\right|^2 \le C_2(\delta_{\mathtt{f}} + \delta_{\mathtt{a}})
\end{align}
Plugging \eqref{eq:proof:thm:fa-oracle:eq0} and \eqref{eq:proof:thm:fa-oracle:eq2} into \eqref{eq:proof:thm:fa-oracle:eq1}, we can conclude that there exists a deep ReLU neural network $\tilde{g} \in \mathcal{G}(L,\overline{r}, 1, N, M, \infty)$ such that
\begin{align}
\label{eq:proof:thm-fa:approx-error-final}
    \mathbb{E} \left|\tilde{g}(\tilde{\bbf}) - m^*(\bbf) \right|^2 \le C_3(\delta_{\mathtt{f}} + \delta_{\mathtt{a}})
\end{align} for some universal constant $C_3>0$ as claimed.

\noindent {\sc Step 2. Derive Basic Inequality. } 
The empirical risk minimization objective \eqref{eq:thm-fa:opt-error} and the construction of $\tilde{g}$ in {\sc Step 1} implies
\begin{align*}
    \frac{1}{n} \sum_{i=1}^n \left(\hat{g}(\tilde{\bbf}_i) - y_i\right)^2 \le \frac{1}{n} \sum_{i=1}^n \left(\tilde{g}(\tilde{\bbf}_i) - y_i\right)^2 + \delta_{\mathtt{opt}}.
\end{align*}
Plugging in the data generating process of $y$ in \eqref{eq:dgp-samples}, and recall the definition of empirical $L_2$ norm $\|\cdot\|_n$, using simple algebra gives
\begin{align}
\label{eq:proof:thm-fa:basic-ineq0}
    \|\hat{g} - m^*\|_n^2 \le \|\tilde{g} - m^*\|^2_n + \frac{2}{n} \sum_{i=1}^n \varepsilon_i \left(\hat{g}(\tilde{\bbf}_i) - \tilde{g}(\tilde{\bbf}_i)\right) + \delta_{\mathtt{opt}}
\end{align} Moreover, note that $\|\cdot\|_n$ is a norm, this implies
\begin{align*}
    \|\hat{g} - \tilde{g}\|_n^2 \le 2\left(\|\hat{g} - m^*\|_n^2 + \|m^* - \tilde{g}\|_n^2\right)
\end{align*}
Combining with \eqref{eq:proof:thm-fa:basic-ineq0} gives the following inequality,
\begin{align}
\label{eq:thm-fa:basic-ineq}
    \|\hat{g} - \tilde{g}\|_n^2 \le 4 \|\tilde{g} - m^*\|_n^2 + \frac{4}{n} \sum_{i=1}^n \varepsilon_i \left(\hat{g}(\tilde{\bbf}_i) - \tilde{g}(\tilde{\bbf}_i)\right) + 2\delta_{\mathtt{opt}}.
\end{align} We refer to \eqref{eq:thm-fa:basic-ineq} as the basic inequality.

\noindent {\sc Step 3. Concentration for the Fixed Function. } Note that the choice of $m^*$ and $\tilde{g}$ is fixed, which implies the samples $z_i = \left|\tilde{g}(\tilde{\bbf}_i) - m^*(\bbf_i)\right|^2$ are i.i.d. because $\bW$ is independent of $(\bx_i)_{i=1}^n$. We use Bernstein inequality to establish a tail bound for $\left\|\tilde{g} - m^*\right\|_n^2$. To this end, the boundedness of $\tilde{g}$ and $m^*$ implies $|z_i| \le (M+M^*)^2$ and
\begin{align*}
    \text{Var}(z_i) \le \mathbb{E} [|z_i|^2] \le (M+M^*)^2 \mathbb{E}[|z_i|] \le C_4 \left(\delta_{\mathtt{f}} + \delta_{\mathtt{a}} \right),
\end{align*} where the last inequality follows from the result in {\sc Step 1}. It follows from Bernstein inequality (Proposition 2.10 \cite{wainwright2019high}) that 
\begin{align*}
    \frac{1}{n} \sum_{i=1}^n \left|\tilde{g}(\tilde{\bbf}_i) - m^*(\bbf_i)\right|^2 = \frac{1}{n} \sum_{i=1}^n z_i &\le \mathbb{E}[z_1] + C_5\left(\sqrt{\delta_{\mathtt{f}} + \delta_{\mathtt{a}}} \sqrt{\frac{u}{n}} + \frac{u}{n}\right) \\
    &\le C_6 \left(\delta_{\mathtt{f}} + \delta_{\mathtt{a}} + \frac{u}{n}\right)
\end{align*} 
for arbitrary $u>0$. Define the event 
\begin{align}
\label{eq:far-oracle-inequality:approx-l2-error}
    \mathcal{A}_t = \left\{\|\tilde{g} - m^*\|_n^2 \le C_6  \left(\delta_{\mathtt{f}} + \delta_{\mathtt{a}} + \frac{t}{n}\right)\right\}.
\end{align} We can conclude that $\mathbb{P}\left[\mathcal{A}_t\right] \ge 1-e^{-t}$.

\noindent {\sc Step 4. Concentration for Weighted Empirical Process.} Define $v_n = \sqrt{\frac{\mathrm{Pdim}(\mathcal{G}) \log n}{n}}$. Consider the event with $u$ to be determined
\begin{align*}
    \mathcal{B}_t = \left\{ \forall g\in \mathcal{G}, \left|\frac{1}{n} \sum_{i=1}^n \varepsilon_i \left(g(\tilde{\bbf}_i) - \tilde{g}(\tilde{\bbf}_i)\right) \right|\le C_7\left( \|g - \tilde{g}\|_n + v_n \right) \sqrt{v_n^2 + \frac{u}{n}}\right\},
\end{align*} 
where $C_7$ is the universal constant $c_1$ in the statement of Lemma \ref{lemma:fa-weight-empirical-process}. Applying Lemma \ref{lemma:fa-weight-empirical-process} with $\epsilon=v_n$ and $(\bz_1,\cdots, \bz_n) = (\tilde{\bbf}_1,\cdots, \tilde{\bbf}_n)$, we obtain
\begin{align*}
    \mathbb{P}(\mathcal{B}_t^c) = \mathbb{E}\left[\mathbb{P}\left(\mathcal{B}_t^c(\tilde{\bbf}_1^n)\big|\tilde{\bbf}_1^n\right)\right] \le e^{-u+C \log (\log n)},
\end{align*} provided $\varepsilon_1,\cdots, \varepsilon_n$ is sub-Gaussian with some constant parameter $C$ conditioned on fixed $\tilde{\bbf}_1,\cdots, \tilde{\bbf}_n$, which is validated by Condition \ref{cond4}. Letting $u=t + C\log (\log n)$. Under $\mathcal{B}_t$, we have
\begin{align*}
    \forall g\in \mathcal{G} ~~~~~~ \left|\frac{1}{n} \sum_{i=1}^n \varepsilon_i \left(g(\tilde{\bbf}_i) - \tilde{g}(\tilde{\bbf}_i)\right) \right|\lesssim \left( \|g - \tilde{g}\|_n + v_n \right) \sqrt{v_n^2 + \frac{t}{n}}
\end{align*} by the fact that $v_n^2 \gtrsim \frac{\log (\log n)}{n}$.

\noindent {\sc Step 5. Conclusion of the Proof.} Recall the basic inequality \eqref{eq:thm-fa:basic-ineq}. It follows from the definition of event $\mathcal{A}_t$ and $\mathcal{B}_t$ that we can write down the basic inequality as
\begin{align*}
    \|\hat{g} - \tilde{g}\|_n^2 \lesssim \delta_{\mathtt{opt}} + \delta_{\mathtt{a}} + \frac{t}{n} + \left(v_n + \|\hat{g} - \tilde{g}\|_n \right) \sqrt{v_n^2 + \frac{t}{n}},
\end{align*} under $\mathcal{A}_t \cap \mathcal{B}_t$, which implies
\begin{align}
\label{eq:proof:thm-fa:basic-ineq2}
    \|\hat{g} - \tilde{g} \|_n^2 \le C_{8} \left(\delta_{\mathtt{opt}} + \delta_{\mathtt{a}} + v_n^2 + \frac{t}{n}\right)
\end{align} for some universal constant $C_{8}$.

We next derive upper bound for $\|\hat{g} - \tilde{g} \|_2^2$ using Lemma \ref{lemma:equivalence-population-empirical-l2} with $\eta=\frac{1}{2}$. Because $\tilde{\bbf}_1,\cdots, \tilde{\bbf}_n$ are i.i.d. copies of $\tilde{\bbf}$, and $\bar{\mathcal{G}} = \{g-\tilde{g}: g\in \mathcal{G}\}$ is a function class with Pseudo-dimension $\mathrm{Pdim}(\mathcal{G})+1$, and for all the $g\in \bar{\mathcal{G}}$, $\|g\|_\infty \le 2M$, Define the event
\begin{align}
    \mathcal{C}_t = \left\{ \forall g\in \mathcal{G}, \text{ } \left|\left\|g - \tilde{g} \right\|_2^2 - \left\|g - \tilde{g} \right\|_n^2\right| \le \frac{1}{2} \left\|g - \tilde{g} \right\|_2^2 + \frac{C_{9}}{2} \left(v_n^2 + \frac{t}{n} \right)\right\}
\end{align} for some universal constant $C_{9}$, it follows from Lemma \ref{lemma:equivalence-population-empirical-l2} that $\mathbb{P}(\mathcal{C}_t) \ge 1-e^{-t}$. Therefore, under $\mathcal{C}_t$, we have
\begin{align*}
    \|\hat{g} - \tilde{g} \|_2^2 \le 2\|\hat{g} - \tilde{g} \|_n^2 + C_{9} \left(v_n^2 + \frac{t}{n}\right).
\end{align*} Combining with the inequality \eqref{eq:proof:thm-fa:basic-ineq2} under $\mathcal{A}_t \cap \mathcal{B}_t$, we can conclude that under $\mathcal{A}_t \cap \mathcal{B}_t \cap \mathcal{C}_t$, 
\begin{align}
    \|\hat{g} - \tilde{g} \|_2^2 \le C_{10} \left(\delta_{\mathtt{opt}} + \delta_{\mathtt{a}} + v_n^2 + \frac{t}{n}\right).
\end{align}

Moreover, note from \eqref{eq:proof:thm-fa:approx-error-final}, our construction of $\tilde{g}$ satisfies $\left\|\tilde{g} - m^*\right\|_2^2 \lesssim \delta_{\mathtt{opt}} + \delta_{\mathtt{a}}$, by triangle inequality and Young's inequality, 
\begin{align}
    \|\hat{g} - m^* \|_2^2 \le 2\|\hat{g} - \tilde{g} \|_2^2 + \|\tilde{g} - m^*\|_2^2 \le C_{11} \left(\delta_{\mathtt{opt}} + \delta_{\mathtt{a}} + v_n^2 + \frac{t}{n}\right) ~~~~ \text{under} ~~~~ \mathcal{A}_t \cap \mathcal{B}_t \cap \mathcal{C}_t,
\end{align} with $ v_n^2 = \frac{\mathrm{Pdim}(\mathcal{G}) \log n}{n}$ and $\mathbb{P}(\mathcal{A}_t \cap \mathcal{B}_t \cap \mathcal{C}_t) \ge 1-3e^{-t}$. The upper bound of the empirical $L_2$ error $\|\hat{g} - m^*\|_n^2$ follows a similar way by combining \eqref{eq:proof:thm-fa:basic-ineq2} and \eqref{eq:far-oracle-inequality:approx-l2-error} with triangle equality.

We conclude that proof via specifying the Pseudo-dimension of the deep ReLU network class used $\mathcal{G}(L,\overline{r}, 1, N, M, \infty)$. Applying further Theorem~7 of \cite{BHLM2019} yields the bound ${\rm Pdim}(\mathcal{G})  \lesssim W L \log(W)$, where $W$ is the number of parameters of the network $\mathcal{G}$. The input dimension of the neural network we used in $\mathcal{G}(L,\overline{r}, 1, N, M, \infty)$ is $\overline{r}$ rather than $p$, hence
\begin{align*}
    W = (L-1) (N^2 + N) + (N + 1) + (\overline{r} N + N) \lesssim LN^2 + \overline{r} N.
\end{align*} This completes the proof.

\end{proof}

\subsection{Proof of Technical Lemmas for Theorem \ref{thm:fa-oracle}}
\begin{proof}[Proof of Lemma~\ref{lemma:approx-factor}]
Recall the decomposition of $\tilde{\bbf}$, 
\begin{align*}
    \tilde{g}(\tilde{\bbf}) - g(\bbf) &= g(\bH^+ \tilde{\bbf}) - g(\bbf) \\
    &= g(\bH^+ \bH \bbf + p^{-1} \bH^+ \bW^\top \bu) - g(\bbf).
\end{align*}
By the definition of diversified projection matrix, $\text{rank}(\bH)=r$, this implies $\bH^+ \bH=\bI_r$. It follows from the Lipschitz property of $g$ that
\begin{align}
\label{eq:lemma:approx-factor-eq1}
    \left|\tilde{g}(\tilde{\bbf}) - g(\bbf)\right| \le C p^{-1} \left\|\bH^+ \bW^\top \bu\right\|_2 \le C p^{-1} \|\bH^+\| \left\|\bW^\top \bu\right\|_2
\end{align} 
Recall that $\bH^+=(\bH^\top \bH)^{-1} H^\top$. Let $\bH=\bU \bSigma \bV^\top$ be its singular value decomposition, where $\bU\in \mathbb{R}^{\overline{r}\times \overline{r}}$, $\bV\in \mathbb{R}^{r\times r}$ are both orthogonal matrices, $\bSigma \in \mathbb{R}^{\overline{r} \times r}$ is a rectangular diagonal matrix with diagonal entry $\Sigma_{i,i}=\nu_i(\bH)$, then 
\begin{align*}
    \bH^+ = (\bV \bSigma^\top \bU^\top \bU \Sigma \bV^\top)^{-1} \bV\bSigma^\top \bU^\top = \bV (\bSigma^\top \bSigma)^{-1} \bSigma^\top \bU^\top = \bV \bD \bU^\top .
\end{align*} where $\bD$ is also a rectangular diagonal matrix with $D_{i,i}=[\nu_i(\bH)]^{-1}$ for all the $i\le r$. Consequently,
\begin{align}
\label{eq:lemma:approx-factor-eq2}
    \|\bH^+\| \le \|\bD\| \le [\nu_{\min}(\bH)]^{-1}
\end{align}
Moreover, it follows from the linearity of expectation that
\begin{align*}
    \mathbb{E}\left\|\bW^\top \bu\right\|^2 &= \mathbb{E} \left[\sum_{k=1}^{\overline{r}} \left(\sum_{j=1}^p W_{j,k} u_j\right)^2 \right] \\
    &= \sum_{k=1}^{\overline{r}} \sum_{j=1}^p W_{j,k}^2 \mathbb{E} [u_j^2] + \sum_{j\neq j'} W_{j,k} W_{j',k} \mathbb{E}[u_ju_{j'}] \\
    &\le \overline{r} p \max_{j,k} |W_{j,k}| \max_{j} \mathbb{E} [u_j^2] + \overline{r} \max_{j,k} |W_{j,k}|^2 \sum_{j\neq j'} \left|\mathbb{E}[u_j u_{j'}]\right|.
\end{align*}
Applying Condition \ref{cond1} and \ref{cond3} gives
\begin{align}
\label{eq:lemma:approx-factor-eq3}
    \mathbb{E} \left\|\bW^\top \bu\right\|^2 \lesssim \overline{r}{p}.
\end{align} It concludes by applying square followed by taking expectation to both sides of \eqref{eq:lemma:approx-factor-eq1} and substituting \eqref{eq:lemma:approx-factor-eq2} and \eqref{eq:lemma:approx-factor-eq3} in.
\end{proof}

\begin{proof}[Proof of Lemma \ref{lemma:equivalence-population-empirical-l2}]
This is a direct consequence of Theorem 19.3 in \cite{gyorfi2002distribution}. Consider the function class $\mathcal{H} = \{h=g^2: g\in \mathcal{G}\}$. The uniform boundedness of $\mathcal{G}$ yields
\begin{align*}
    h(\bz) \le C_1^2 \qquad \text{and} \qquad \mathbb{E} [h(\bz)^2] \le C_1^2 \mathbb{E} [h(\bz)].
\end{align*} 
provided $\|g\|_\infty \le C_1$ for all the $g\in \mathcal{G}$.

Following the same notations as those in Theorem 19.3, we choose $\epsilon = \eta$ and $\alpha$ satisfying
\begin{align*}
    \alpha \ge \frac{288 \cdot \left(2C_1^2 \lor \sqrt{2} C_1\right)}{n\eta^2 (1-\eta)} \lor \frac{384^2 \times \mathrm{Pdim}(\mathcal{G}) }{n\eta^2(1-\eta)^2} \log \left(\frac{n^2\eta C_1}{288 \cdot 2C_1^2 \lor \sqrt{2}C_1}\right).
\end{align*} 

The uniform boundedness condition also implies that any $\epsilon$-net of $\mathcal{G}$ is also an $(C_1 \epsilon)$-net of $\mathcal{H}$. Therefore, for any $u \in (0, C_1^2ne)$, 
\begin{align*}
    \log \mathcal{N}_2 \left(u, \left\{h\in \mathcal{H}, \frac{1}{n}\sum_{i=1}^n h^2(z_i) \le 16\delta \right\}, z_{1}^n\right) \le \log \mathcal{N}_\infty \left(\frac{u}{C_1}, \mathcal{G}, z_{1}^n\right) \le \mathrm{Pdim}(\mathcal{G}) \log \left(\frac{C_1^2ne}{u}\right).
\end{align*} Then our choice of $\alpha$ guarantees
\begin{align*}
    \int_{\delta(1-\eta)\eta/(32C_1^2)}^{\sqrt{\delta}} \sqrt{\log \mathcal{N}_2 \left(u, \left\{h\in \mathcal{H}, \frac{1}{n}\sum_{i=1}^n h^2(\bz_i) \le 16\delta \right\}, \bz_{1}^n\right)} \mathrm{d}u &\le \sqrt{\delta \mathrm{Pdim}(\mathcal{G}) \log \frac{n^2\eta C_1}{288 \cdot 2C_1^2 \lor \sqrt{2}C_1}} \\ &\le \frac{\sqrt{n}\eta(1-\eta)\delta}{96\sqrt{2} 2C_1^2},
\end{align*} for all the $\delta \ge \alpha/8$ and $\bz_1,\cdots, \bz_n \in \mathcal{Z}$. Applying Theorem 19.3 in \cite{gyorfi2002distribution}, we have
\begin{align*}
    \mathbb{P}\left[\sup_{h\in \mathcal{H}} \frac{|\mathbb{E}[h(Z)] - \frac{1}{n} \sum_{i=1}^n h(Z_i)|}{\alpha + \mathbb{E}[h(Z)]} > \eta \right] \le 60 \exp\left(-\frac{n\alpha \eta^2(1-\eta)}{128 \cdot 2304 (C_1^4 \lor C_1^2)}\right).
\end{align*}

With a change of variable $\alpha = t^2$, we can conclude that the following event
\begin{align*}
    \forall g\in \mathcal{G}, \qquad \left|\|g\|_2^2 - \|g\|_n^2\right| \le \eta (\alpha + \|g\|_2^2)
\end{align*} occurs with probability at least $1-C \exp(-\frac{nt^2\eta^2(1-\eta)}{C b^2\lor b^4})$ as long as $t \ge C\sqrt{\frac{\mathrm{Pdim}(\mathcal{G})\log n}{n\eta^2(1-\eta)^2}}$ for some large enough constant $C>0$.
\end{proof}

\begin{proof}[Proof of Lemma~\ref{lemma:fa-weight-empirical-process}]
For any fixed $u>0$ and $\delta>0$, denote
\begin{align}
\label{eq:proof:thm-fa:b-delta-event}
    \mathcal{B}_u(\delta, \bz_1^n) = \left\{ \sup_{g\in \mathcal{G}, \|g - \tilde{g}\|_n\le\delta } \left|\frac{1}{n} \sum_{i=1}^n \varepsilon_i \left(g({\bz}_i) - \tilde{g}({\bz}_i)\right) \right|\le C_1 \delta \left(v_n + \sqrt{\frac{u}{n}}\right)\right\}
\end{align} for some universal constant $C_1$ to be specified, we first show $\mathbb{P}\left[\mathcal{B}_u(\delta, \bz_1^n)^c\right] \lesssim e^{-u}$ using the generic chaining result (Theorem 2.2.27 in \cite{talagrand2014upper}). 

Following the same notations as Theorem 2.2.27 in \cite{talagrand2014upper}, let $X_g=\frac{1}{n}\sum_{i=1}^n \varepsilon_i \left(g(\bz_i) - \tilde{g}(\bz_i)\right)$. The uniform sub-Gaussian assumption on $\varepsilon_1,\ldots, \varepsilon_n$ implies,
\begin{align*}
    \forall g, g'\in \mathcal{G}~~~~~~\mathbb{P}\left[|X_g - X_{g'}| \ge u\big| \bz_1,\ldots \bz_n \right] &\le 2\exp\left(-\frac{u^2}{(\frac{\sigma}{n})^2 \sum_{i=1}^n (g({\bz}_i)-g'({\bz}_i))^2}\right) \\
    &= 2\exp\left(-\frac{u^2}{d(g,g')^2}\right),
\end{align*} where $d(g, g') = {\sigma} n^{-1/2}\|g - g'\|_n$. Denote $T=\{g\in \mathcal{G}: \|g - \tilde{g}\|_n \le \delta\}$. It is easy to see that $\Delta(T)$, the diameter of $T$, is bounded by $2\sigma\delta n^{-1/2}$. 

It remains to bound $\gamma_2(T,d)$ before applying Theorem 2.2.27. Recall that 
\begin{align*}
    \gamma_2(T, d) = \inf_{(T_k)_{k=0}^\infty: T_0=\{t_0\}, |T_k| \le 2^{2^k}} \sup_{t\in T} \sum_{k=1}^\infty 2^{k/2} \inf_{s\in T_k} d(s, t).
\end{align*} 

It follows from Theorem 12.2 of \cite{AB1999} that
\begin{align*}
    \log \mathcal{N}_\infty(\varepsilon, \mathcal{G}, \bz_1^n) \le \mathrm{Pdim}(\mathcal{G})\log\left(\frac{ebn}{\epsilon}\right).
\end{align*} By definition, for any $0<\epsilon \le bn$, there exists $g^{(1)},\cdots g^{(N)} \in \mathcal{G}$ with $N\le \left(\frac{ebn}{\epsilon}\right)^{\mathrm{Pdim}(\mathcal{G})}$ such that
\begin{align*}
    \sup_{g\in \mathcal{G}} \inf_{1\le k\le N} \sup_{1\le i\le n} |g^{{(k)}}(\bz_i) - g(\bz_i)| \le \epsilon.
\end{align*} Because $T\subset \mathcal{G}$, letting $T_k$ be the above set with $N \le 2^{2^k}$ and smallest $\varepsilon$, we have
\begin{align*}
    \sup_{g\in T} \inf_{g' \in T_k} d(g,g') \le \sup_{g\in T} \inf_{g' \in T_k} \frac{\sigma}{\sqrt{n}} \|g-g'\|_n \lesssim \frac{\sigma}{\sqrt{n}} (bne) e^{-2^k/\mathrm{Pdim}(\mathcal{G})} := \epsilon_k.
\end{align*} 
Then it follows from the fact $\sup_{g\in T} \inf_{g' \in T_k} d(g,g') \le \Delta(T)$ that, 
\begin{align}
\label{eq:lemma:local-weighted:eq1}
    \gamma_2(T,d) \le \sum_{k=1}^\infty 2^{k/2} \left(\epsilon_k \land \frac{2\sigma \delta}{\sqrt{n}}\right).
\end{align} 

Let $k_0$ be the largest $k$ satisfying $\epsilon_k > 2\sigma \delta/\sqrt{n}$. Then there exists some constant $C_2\ge 2$ such that $2^{k_0} < C_2 (\log n) \mathrm{Pdim}(\mathcal{G})$  and  $2^{k_0 + 1} \ge C_2 (\log n) \mathrm{Pdim}(\mathcal{G})$, thus implying
\begin{align}
\label{eq:lemma:local-weighted:eq2}
    \sum_{k=1}^{k_0} 2^{k/2} \left(\epsilon_k \land \frac{2\sigma \delta}{\sqrt{n}}\right) \le \frac{\sqrt{2}^{k_0+1}-1}{\sqrt{2}-1} \frac{2\sigma \delta}{\sqrt{n}} \le C_{3} \sigma \delta \sqrt{\frac{\mathrm{Pdim}(\mathcal{G})\log n}{n}}.
\end{align} At the same time, for any $k \ge k_0+1$, 
\begin{align*}
    \frac{2^{(k+1)/2} \epsilon_{k+1}}{2^{k/2} \epsilon_{k}} = \sqrt{2} \exp\left(-\frac{2^k}{\mathrm{Pdim}(\mathcal{G})}\right) \le \sqrt{2} \exp\left(-\frac{2^{k_0+1}}{\mathrm{Pdim}(\mathcal{G})}\right) \le \sqrt{2} \exp(-2) \le \frac{1}{2},
\end{align*} this further implies that
\begin{align}
\label{eq:lemma:local-weighted:eq3}
    2^{k/2} \epsilon_{k} \le \left(\frac{1}{2}\right)^{k-(k_0+1)} 2^{(k_0+1)/2} \epsilon_{k_0+1} \le 2^{-k+k_0+1} C_{4} \sigma \delta \sqrt{\frac{d\log n}{n}}
\end{align} Plug \eqref{eq:lemma:local-weighted:eq2} and \eqref{eq:lemma:local-weighted:eq3} back into \eqref{eq:lemma:local-weighted:eq1}, we find
\begin{align*}
    \gamma_2(T,d) &\le \sum_{k=1}^{k_0} \left(\epsilon_k \land \frac{2\sigma \delta}{\sqrt{n}}\right) + \sum_{k=k_0+1}^\infty \left(\epsilon_k \land \frac{2\sigma \delta}{\sqrt{n}}\right) \\
    &\le C_{3} \sigma \delta \sqrt{\frac{\mathrm{Pdim}(\mathcal{G})\log n}{n}} + \sum_{k=k_0+1}^\infty 2^{-k+k_0+1} C_{4} \sigma \delta \sqrt{\frac{\mathrm{Pdim}(\mathcal{G})\log n}{n}} \\
    &= (C_{3} + 2C_{4}) \sigma \delta \sqrt{\frac{\mathrm{Pdim}(\mathcal{G})\log n}{n}}.
\end{align*} Applying the generic chaining bound gives,
\begin{align*}
\mathbb{P}\left[\sup_{g, g'\in T} |X_g - X_{g'}| \ge C_{5}(\gamma_2(T,d) + \sqrt{u}\Delta(T))\Big| \bz_1,\cdots, \bz_n\right] \le \exp(-u),
\end{align*} for any $u>0$ provided $n\ge 3$. Therefore, the following event
\begin{align*}
    \sup_{g\in \mathcal{G}, \|g-\tilde{g}\|_n\le \delta} \left|\frac{1}{n}\sum_{i=1}^n \varepsilon_i \left(g(\bz_i) - \tilde{g}(\bz_i)\right)\right| &\le \sup_{g, g'\in T} |X_g - X_{g'}| + \left|\frac{1}{n} \sum_{i=1}^n \varepsilon_i \left(\tilde{g}(\bz_i) - \tilde{g}(\bz_i)\right) \right| \\
    &\le C_{5}\sigma \delta  \left(\sqrt{\frac{\mathrm{Pdim}(\mathcal{G}) \log n}{n}} + \sqrt{\frac{u}{{n}}} \right) 
\end{align*} occurs with probability at least $1-e^{-u}$. This completes the proof of the claim that the event $B_u(\delta)$ defined \eqref{eq:proof:thm-fa:b-delta-event} occurs with probability at least $1- e^{-u}$ with constant $C_1=C_{5}\sigma$.

Now we are ready to establish tail probability for the event $\mathcal{B}_t$ with $c_1=4C_1$ using the peeling device. To this end, define the sets $\mathcal{S}_{\ell} = \{g \in \mathcal{G}: \alpha_{\ell-1} \epsilon < \|g-\tilde{g}\|_n \le \alpha_\ell \epsilon\}$ for $\ell\ge 0$, where $\alpha_\ell = 2^\ell$ for $\ell\ge 0$ and $\alpha_{-1} = 0$. We have $\alpha_{\ell} \le 2 \alpha_{\ell - 1} + 1$.

For each $\ell \ge 0$, define the event $\mathcal{E}_\ell$ as
\begin{align*}
    \mathcal{E}_\ell = \left\{ \forall g\in \mathcal{S}_{\ell}, ~~\left|\frac{1}{n}\sum_{i=1}^n \varepsilon_i \left(g(\bz_i) - \tilde{g}(\bz_i)\right)\right| \le 4C_1 \left(\epsilon + \|g - \tilde{g}\|_n \right) \sqrt{v_n^2 + \frac{t}{n}} \right\}.
\end{align*} 

It follows from the uniform boundedness of the function class that $\|g-\tilde{g}\|_n \le C_6$ for some universal constant $C_6$. This further implies
\begin{align*}
    \mathcal{B}_t^c \subset \bigcup_{\ell=0}^{\lceil\log_2(C_6/\epsilon)\rceil} \mathcal{E}_\ell^c.
\end{align*}

We claim that $\mathcal{B}_{t} (\alpha_\ell \epsilon) \subset \mathcal{E}_\ell$. To see this, if the event $\mathcal{B}_{t}(\alpha_\ell \epsilon)$ occurs, then for any $g\in \mathcal{S}_\ell$, we have
\begin{align*}
    \left|\frac{1}{n}\sum_{i=1}^n \varepsilon_i \left(g(\bz_i) - \tilde{g}(\bz_i)\right)\right| &\overset{(a)}{\le} C_1 \alpha_\ell \epsilon \left(v_n + \sqrt{\frac{t}{n}}\right) \\
    &\overset{(b)}{\le} C_1 (2\alpha_{\ell-1} \epsilon + \epsilon) \left(v_n + \sqrt{\frac{t}{n}}\right) \\
    &\overset{(c)}{\le} 2C_1 \left(\|g - \tilde{g}\|_n + \epsilon \right) \left(v_n + \sqrt{\frac{t}{n}}\right).
\end{align*} where (a) follows from the definition of $\mathcal{B}_{t}(\alpha_\ell \epsilon)$ and $\mathcal{S}_\ell$, (b) follows from the relationship between $\alpha_{\ell-1}$ and $\alpha_\ell$ above, (c) follows from the definition of $\mathcal{S}_\ell$ that $\|g-\tilde{g}\|_n \ge \alpha_{\ell-1} \epsilon$. 

Putting these pieces together, we can conclude that
\begin{align*}
    \mathbb{P}\left[\mathcal{B}_t^c\right] \le \sum_{\ell=0}^{\lceil\log_2(C_6/\epsilon)\rceil} \mathbb{P}\left[\mathcal{E}_\ell^c\right] \le \sum_{\ell=0}^{\lceil\log_2(C_6/\epsilon)\rceil} \mathbb{P}\left[\left(\mathcal{B}_{t+nv_n^2}(\alpha_\ell v_n)\right)^c\right] \le \log (1/\epsilon) e^{-t}.
\end{align*} This completes the proof.
\end{proof}

\subsection{Proof of Corollary \ref{coro:roc-fa-nn}}

\begin{proof}[Proof of Corollary \ref{coro:roc-fa-nn}]
It follows directly from Theorem \ref{thm:fa-oracle} and Proposition 3.4 in \cite{fan2022noise}.
\end{proof}

\subsection{Proof of Proposition \ref{prop:choice-dpm}}

We need the following technical lemma to bound the Frobenius norm of the matrix $\hat{\bSigma}-\bB\bB^\top = \hat{\bSigma} - \bS$.

\begin{lemma}
\label{lemma:frobenius-norm}
There exists a universal constant $c_1$ such that under Condition \ref{cond1}, \ref{cond3} and \ref{cond:dist-fu}, we have
\begin{align}
    \|\hat{\bSigma} - \bS\|_F \le c_1 p \underbrace{\left(r \sqrt{\frac{\log p + t}{n}} + r^2 \sqrt{\frac{(\log r + t)}{n}} + \frac{1}{\sqrt{p}}\right)}_{\delta}
\end{align} with probability at least $1-3e^{-t}$ for any $t>0$.
\end{lemma}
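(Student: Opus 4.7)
}

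The plan is to decompose $\hat{\bSigma}-\bB\bB^{\top}$ into three pieces tied to the factor, cross, and idiosyncratic parts of the model, then bound each piece in Frobenius norm using entry-wise concentration together with the pervasiveness bound $\|\bB\|^{2}\lesssim p$. Writing $\hat{\bSigma}_{f}=\frac1n\sum_i\bbf_i\bbf_i^{\top}$, $\hat{\bSigma}_{fu}=\frac1n\sum_i\bbf_i\bu_i^{\top}$, $\hat{\bSigma}_{u}=\frac1n\sum_i\bu_i\bu_i^{\top}$, and using $\bx_i=\bB\bbf_i+\bu_i$, the identity
\begin{align*}
\hat{\bSigma}-\bB\bB^{\top}=\bB(\hat{\bSigma}_{f}-\bI_{r})\bB^{\top}+\bB\hat{\bSigma}_{fu}+\hat{\bSigma}_{fu}^{\top}\bB^{\top}+\hat{\bSigma}_{u}
\end{align*}
(under the standard normalization $\mathbb{E}[\bbf\bbf^{\top}]=\bI_{r}$) reduces the problem to bounding three terms. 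By sub-multiplicativity $\|\bB M\bB^{\top}\|_{F}\le\|\bB\|^{2}\|M\|_{F}$ and $\|\bB N\|_{F}\le\|\bB\|\,\|N\|_{F}$, together with $\|\bB\|\lesssim\sqrt{p}$ from Condition~\ref{cond:pervasiveness}.

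For the factor-factor term, each entry of $\hat{\bSigma}_{f}-\bI_{r}$ is a centred average of i.i.d.\ bounded products $f_{i,j}f_{i,k}$, so Hoeffding's inequality gives $|(\hat{\bSigma}_{f}-\bI_{r})_{jk}|\lesssim\sqrt{(\log r+t)/n}$ uniformly in $j,k\in[r]$ with probability at least $1-e^{-t}$, producing $\|\hat{\bSigma}_{f}-\bI_{r}\|_{F}\lesssim r\sqrt{(\log r+t)/n}$ and hence a contribution of order $p\cdot r\sqrt{(\log r+t)/n}$ that fits inside the claimed $\delta$ bound. For the cross term I would split $\bB\hat{\bSigma}_{fu}=\bB\bSigma_{fu}+\bB(\hat{\bSigma}_{fu}-\bSigma_{fu})$; the deterministic piece is controlled by Condition~\ref{cond:dist-fu}, $\|\bB\bSigma_{fu}\|_{F}\lesssim\sqrt{p}=p\cdot p^{-1/2}$, and the stochastic piece is handled by a Hoeffding bound entry-wise and a union bound over $rp$ entries to give $\|\hat{\bSigma}_{fu}-\bSigma_{fu}\|_{F}\lesssim\sqrt{rp}\sqrt{(\log p+t)/n}$, multiplied by $\|\bB\|\lesssim\sqrt p$.

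The delicate piece is $\hat{\bSigma}_{u}$, which is $p\times p$ and contributes directly without a $\bB$ buffer. I would write $\hat{\bSigma}_{u}=\bSigma_{u}+(\hat{\bSigma}_{u}-\bSigma_{u})$; the deterministic $\|\bSigma_{u}\|_{F}\lesssim\sqrt{p}$ follows from Condition~\ref{cond1} (bounded $u_j$ controls the diagonal) and Condition~\ref{cond3} (sum of $|\mathbb{E}[u_j u_k]|$ is $O(p)$, and each off-diagonal entry is bounded, so $\sum_{j\neq k}(\bSigma_{u})_{jk}^{2}\lesssim\max|\cdot|\cdot\sum|\cdot|\lesssim p$). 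For the stochastic part, apply Hoeffding entry-wise to $(\hat{\bSigma}_{u}-\bSigma_{u})_{jk}=\frac1n\sum_i(u_{ij}u_{ik}-\mathbb{E}[u_ju_k])$ and union bound over $p^{2}$ entries, yielding $\|\hat{\bSigma}_{u}-\bSigma_{u}\|_{F}\lesssim p\sqrt{(\log p+t)/n}$ with probability $1-e^{-t}$.

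Assembling via triangle inequality and reallocating the failure probability $3e^{-t}$ across the three events delivers $\|\hat{\bSigma}-\bB\bB^{\top}\|_{F}\lesssim p\,\delta$ with the stated $\delta$. The main obstacle is the idiosyncratic term: a naive entry-wise union bound over $p^{2}$ entries is what forces the $p\sqrt{(\log p+t)/n}$ contribution and uses Condition~\ref{cond3} in a slightly non-trivial way to control $\|\bSigma_{u}\|_{F}$ (the weak-dependence assumption only bounds an $\ell_{1}$ aggregate of off-diagonal entries, so one must combine it with the uniform entry-wise bound from Condition~\ref{cond1} to pass to an $\ell_{2}$ aggregate). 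The factor-factor bound also implicitly uses the identifiability convention $\mathbb{E}[\bbf\bbf^{\top}]=\bI_r$; without it, the deterministic bias $\|\bB(\bSigma_f-\bI_r)\bB^{\top}\|_{F}$ would be $O(p)$ and the claim would fail, so this normalization should be flagged at the outset.
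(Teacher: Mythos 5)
Your proposal is essentially the paper's argument: the same decomposition of $\hat{\bSigma}-\bB\bB^{\top}$ into the factor--factor term $\bB(\hat{\bSigma}_f-\bI_r)\bB^{\top}$, the cross term $\bB\hat{\bSigma}_{fu}$ plus its transpose, and the idiosyncratic term $\hat{\bSigma}_u$; the same entry-wise Hoeffding bounds with union bounds over $r^2$, $rp$, and $p^2$ entries (the paper's events $\mathcal{A}_{1,t}$, $\mathcal{A}_{3,t}$, $\mathcal{A}_{2,t}$); and the same use of Conditions~\ref{cond1}, \ref{cond3}, \ref{cond:dist-fu} to control the deterministic residuals $\|\bSigma_{\bu}\|_F\lesssim\sqrt p$ and $\|\bB\bSigma_{\bbf,\bu}\|_F\lesssim\sqrt p$. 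The only stylistic difference is that you route the factor--factor and cross terms through the operator-norm estimate $\|\bB M\|_F\le\|\bB\|\,\|M\|_F$, whereas the paper works entry-by-entry through the double sum $\sum_{k,\ell}B_{j,k}B_{\ell,j'}(\cdot)$; the resulting bounds agree (yours are even a bit tighter in the power of $r$) and both fall within the stated $\delta$.

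There is one hypothesis slip you should repair: you invoke Condition~\ref{cond:pervasiveness} to get $\|\bB\|\lesssim\sqrt p$, but the lemma is stated only under Conditions~\ref{cond1}, \ref{cond3}, \ref{cond:dist-fu}; pervasiveness is not among its hypotheses (the paper deliberately keeps this lemma free of it, and only brings it in later in Proposition~\ref{prop:choice-dpm}). The fix is cheap: Condition~\ref{cond1} gives $\|\bB\|_{\max}\le c_1$, hence $\|\bB\|\le\|\bB\|_F\le c_1\sqrt{pr}$. Substituting this weaker bound into your two operator-norm estimates yields $\|\bB(\hat{\bSigma}_f-\bI_r)\bB^{\top}\|_F\lesssim pr\cdot r\sqrt{(\log r+t)/n}=pr^2\sqrt{(\log r+t)/n}$ and $\|\bB(\hat{\bSigma}_{fu}-\bSigma_{\bbf,\bu})\|_F\lesssim\sqrt{pr}\cdot\sqrt{rp}\sqrt{(\log p+t)/n}=pr\sqrt{(\log p+t)/n}$, which matches the paper's $\|\bS^{(1)}\|_F$ and $\|\bS^{(3)}\|_F$ estimates exactly and stays inside the claimed $\delta$. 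Your observation that the decomposition silently uses the identifiability normalization $\mathbb{E}[\bbf\bbf^{\top}]=\bI_r$ is a fair one; the paper also relies on it implicitly when it writes $\bS=\bB\bB^{\top}$ and subtracts $\bI$ inside $\bS^{(1)}$.
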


\begin{proof}[Proof of Proposition~\ref{prop:choice-dpm}]

Let $\hat{\bV}  = \left[\hat{\bv}_1,\cdots, \hat{\bv}_{\overline{r}}\right]\in \mathbb{R}^{p\times \overline{r}}$ and thus $\tilde{\bW}=\sqrt{p} \hat{\bV}$. Let $\bS = \bB\bB^\top$. Since $\bS$ is a symmetric matrix, it has the following eigen-decomposition $\bS = \bV \bLambda \bV^\top$, where $\bLambda$ is an $r\times r$ diagonal matrix, $\bV\in \mathbb{R}^{p\times r}$ satisfies $\bV^\top \bV=I_r$. It follows from the identification condition that $\bB=\bV\bLambda^{1/2}$. 

The pervasiveness condition (Condition \ref{cond:pervasiveness}) implies
\begin{align}
\label{eq:prop:choice-dpm:proof:pervasiveness}
    \Lambda_{i,i} \asymp p \qquad \text{ for all }\qquad i\in\{1,\cdots, r\}.
\end{align}

We first prove the upper bound. Note
\begin{align*}
    \nu_{\max}(p^{-1} \tilde{\bW}^\top \bB) = \nu_{\max}(p^{-1} \bB^\top \tilde{\bW}) = \sup_{\bu \in \mathbb{S}^{\overline{r}-1}} \|p^{-1} \bB^\top \tilde{\bW} \bu\|_2 \le p^{-1/2} \|\bLambda^{1/2}\|_2 \sup_{u \in \mathbb{S}^{\overline{r}-1}} \|\bV^\top \hat{\bV} \bu\|_2.
\end{align*}
Hence it follows from \eqref{eq:prop:choice-dpm:proof:pervasiveness} that,
\begin{align*}
    \nu_{\max}(p^{-1} \tilde{\bW}^\top \bB) \le C_3  \sup_{u \in \mathbb{S}^{\overline{r}-1}} \|V^\top \hat{V} u\|_2 \le C_3  \sup_{u \in \mathbb{S}^{\overline{r}-1}} \|u\|_2 \le C_3,
\end{align*} for some universal constant $C_3>0$.

It remains to prove the lower bound. Let $\hat{\bV}_r = \left[\hat{\bv}_1,\cdots, \hat{\bv}_r\right] \in \mathbb{R}^{p\times r}$, we have
\begin{align}
\label{eq:prop:choice-dpm:proof-lb}
    \nu_{\min}(p^{-1} \tilde{\bW}^\top \bB) \ge \nu_{\min}(p^{-1/2} \hat{\bV}_r^\top \bB) \ge p^{-1/2} \nu_{\min}(\hat{\bV}_r^\top \bV) \nu_{\min}(\bSigma^{1/2}) \ge C_4 \nu_{\min}(\hat{\bV}_r^\top \bV).
\end{align} where the last inequality follow from \eqref{eq:prop:choice-dpm:proof:pervasiveness}. Therefore, it suffices to bound $\nu_{\min}(\hat{\bV}_r^\top \bV)$ from below.

It follows from Lemma \ref{lemma:frobenius-norm} that the event \begin{align}
\label{eq:prop:choice-dpm:proof:perturbation-claim}
    \mathcal{A}_t = \left\{\|\hat{\bSigma} - \bS\|_F \lesssim p \left(r \sqrt{\frac{\log p + t}{n}} + r^2 \sqrt{\frac{(\log r + t)}{n}} + \frac{1}{\sqrt{p}}\right) = p\delta\right\}
\end{align} occurs with probability at least $1-3e^{-t}$. The remaining proof proceeds conditioned on $\mathcal{A}_t$.

Let $\lambda_1,\cdots, \lambda_p$ denote the eigenvalues of the symmetric matrix $\bS$ in a non-increasing order, and $\hat{\lambda}_1,\cdots, \hat{\lambda}_p$ denote the eigenvalues of the symmetric matrix $\hat{\bSigma}$ in a non-increasing order. It is easy to see that
\begin{align}
\label{eq:prop:choice-dpm:proof:popu-lambda}
    \lambda_r \ge C_4 p \qquad \text{and} \qquad \lambda_{r+1} = 0.
\end{align} It follows from the Weyl's Theorem \cite{von1909beschrankte} that
\begin{align}
\label{eq:prop:choice-dpm:proof:emp-lambda}
    |\hat{\lambda}_{i} - \lambda_i| \le \|\hat{\bSigma} - S\|_2 \le \|\hat{\bSigma} - S\|_F \le C_5 p\delta. \qquad \text{ for all } i\in \{1,\cdots, p\}
\end{align} for some constant $C_5$.

We argue that
\begin{align}
\label{eq:prop:choice-dpm:proof-lb-inter}
    \nu_{\min}(\hat{\bV}_r^\top \bV) \ge 1 - \frac{2C_5}{C_4} \delta.
\end{align}

It suffices to prove the inequality \eqref{eq:prop:choice-dpm:proof-lb-inter} when $\delta \le \frac{C_4}{2C_5}$, otherwise the inequality is trivial since $\nu_{\min} \ge 0$. In this case, combining the above two claims \eqref{eq:prop:choice-dpm:proof:popu-lambda} and \eqref{eq:prop:choice-dpm:proof:emp-lambda} about the eigenvalues of $\hat{\bSigma}$ and $\bS$, we can bound the eigen-gap from below as
\begin{align*}
    \tilde{\delta} &= \inf \left\{|\lambda - \hat{\lambda}|: \lambda \in [\lambda_1,\lambda_{r}], \hat{\lambda} \in (-\infty, \hat{\lambda}_{r+1}]\right\} = \lambda_r - \hat{\lambda}_{r+1} = \lambda_r - \lambda_{r+1} + \lambda_{r+1} - \hat{\lambda}_{r+1} \\
    &\ge C_4 p - |\lambda_{r+1} - \hat{\lambda}_{r+1}| \ge C_4 p - C_5\delta p \ge \frac{C_4}{2} p.
\end{align*} Recall that $\hat{\bV}_r$ and $\bV$ are the top-$r$ eigenvectors of $\hat{\bSigma}$ and $\bS$ respectively, it follows from the $\sin\Theta$ Theorem (Theorem V.3.6 in \cite{stewart1990matrix}) that,
\begin{align*}
    \|\sin \Theta (\hat{\bV}_r, \bV) \|_F \le \frac{\|\hat{\bSigma} - \bS\|_F}{\tilde{\delta}} \le \frac{2C_5}{C_4} \delta.
\end{align*} Here $\sin \Theta (\hat{\bV}_r, \bV)$ is a $r\times r$ diagonal matrix satisfying
\begin{align*}
    [\sin \Theta (\hat{\bV}_r, \bV)]^2 + [\cos \Theta (\hat{\bV}_r, \bV)]^2 = \bI_r
\end{align*} where $\cos \Theta (\hat{\bV}_r, \bV)$ is also a $r\times r$ diagonal matrix of singular values of $\hat{\bV}_r^\top \bV$. Consequently,
\begin{align*}
    \nu_{\min}^2(\hat{\bV}_r^\top \bV) \ge 1 - \|\sin \Theta(\hat{\bV}_r, \bV)\|_2^2 \ge 1 - \|\sin \Theta(\hat{\bV}_r, \bV)\|_F^2 \ge 1 - \left( \frac{2C_5}{C_4}\delta\right)^2.
\end{align*}
This completes the proof of the claim \eqref{eq:prop:choice-dpm:proof-lb-inter} by the fact $\sqrt{1-x^2}\ge 1-x$ for $x\in [0,1]$. Plugging \eqref{eq:prop:choice-dpm:proof-lb-inter} back into \eqref{eq:prop:choice-dpm:proof-lb} completes the proof of the lower bound in \eqref{eq:prop:choice-dpm:bd}.

\end{proof}

\begin{proof}[Proof of Lemma~\ref{lemma:frobenius-norm}] Using the decomposition representation of the observation $\bx$ in \eqref{eq:dgp-lfm}, we can decompose the difference between $\hat{\bSigma}$ and $\bS$ as
\begin{align}
\label{eq:prop:choice-dpm:proof:delta-decomposition}
\begin{split}
    \hat{\bSigma} - \bS &= \frac{1}{n} \sum_{i=1}^n(B \bbf_i + \bu_i) (\bbf_i^\top \bB^\top + \bu_i^\top) - \bB \bB^\top \\
                     &= \underbrace{\bB \left(\frac{1}{n} \sum_{i=1}^n \bbf_i \bbf_i^\top - \bI\right) \bB^\top}_{\bS^{(1)}} + \underbrace{\frac{1}{n} \sum_{i=1}^n \bu_i \bu_i^\top - \bSigma_{\bu}}_{\bS^{(2)}} + \underbrace{\frac{1}{n} \sum_{i=1}^n \bB\bbf_i \bu_i^\top}_{\bS^{(3)}} + (\bS^{(3)})^\top + \bSigma_{\bu}
\end{split}
\end{align} where $\bSigma_{\bu} \in \mathbb{R}^{p\times p}$ is the covariance matrix of the idiosyncratic component $\bu \in \mathbb{R}^p$. We establish the upper bound on the Frobenius norm of each matrix respectively. 

Let $\bbf_i = (f_{i,1},\cdots f_{i,r})$ and $\bu_i = (u_{i,1},\cdots,u_{i,p})$. Define the event $\mathcal{A}_t = \cap_{i=1}^3 \mathcal{A}_{i,t}$, where
\begin{align*}
    \mathcal{A}_{1,t} &= \left\{\sup_{\ell, k\in \{1,\cdots, r\}} \left|\frac{1}{n} \sum_{i=1}^n f_{i,\ell} f_{i,k} - I_{\ell,k} \right|\le C_1 \sqrt{\frac{\log r + t}{n}}\right\}, \\
    \mathcal{A}_{2,t} &= \left\{\sup_{j,j'\in \{1,\cdots, p\}} \left|\frac{1}{n} \sum_{i=1}^n u_{i,j} u_{i,j'} - [\bSigma_{\bu}]_{j,j'} \right|\le C_1 \sqrt{\frac{\log p + t}{n}}\right\}, \\
    \mathcal{A}_{3,t} &= \left\{\sup_{\ell, k\in \{1,\cdots, r\}} \left|\frac{1}{n} \sum_{i=1}^n f_{i,k} u_{i,j} - \mathbb{E} [f_k u_j]\right| \le C_1 \sqrt{\frac{\log p + t}{n}}\right\}.
\end{align*} 

Because each entry of the factor $\bbf$ and the idiosyncratic component $\bu$ is a bounded random variable, it follows from the maximal inequality for the sub-Gaussian random variable that
\begin{align*}
    \mathbb{P}\left[\mathcal{A}_{i,t}\right] \ge 1-e^{-t} \qquad  \text{ for all } \qquad t > 0, i\in\{1,2,3\}.
\end{align*} 
Hence using union bound gives $\mathcal{P}(\mathcal{A}_t) \ge 1-3e^{-t}$. The remaining proof proceeds conditioned on $\mathcal{A}_t$.

For $\bS^{(1)}$, it follows from the boundedness of $\|\bB\|_{\max}$ and $\mathcal{A}_{1,t}$ that
\begin{align*}
    \left|S^{(1)}_{j,j'}\right| = \left|\sum_{k=1}^r\sum_{\ell=1}^r B_{j,k} B_{\ell,j'} \left(\frac{1}{n} \sum_{i=1}^n f_{i,k} f_{i,\ell} - (\Sigma_F)_{k,\ell}\right)\right| \le C_2 r^2 \sqrt{\frac{\log r + t}{n}},
\end{align*} which implies
\begin{align}
\label{eq:prop:choice-dpm:proof:s1f}
    \|\bS^{(1)}\|_F = \sqrt{\sum_{j,j'\in \{1,\cdots, p\}} |S^{(1)}_{j,j'}|^2} \le C_2 pr^2 \sqrt{\frac{\log r + t}{n}}.
\end{align}
Moreover, for $\bS^{(2)}$, we find that
\begin{align}
\label{eq:prop:choice-dpm:proof:s2f}
    \| \bS^{(2)} \|_F = \sqrt{\sum_{j,j'\in \{1,\cdots, p\}} |S^{(2)}_{j,j}|^2} \le \sqrt{p^2 \sup_{j,j' \in \{1,\cdots, p\}} \left|\frac{1}{n} \sum_{i=1}^n u_{i,j} u_{i,j'} - [\bSigma_{\bu}]_{j,j'} \right|^2} \le C_1 p \sqrt{\frac{\log p + t}{n}}.
\end{align}
For $\bS^{(3)}$, the weak dependence between $\bbf$ and $\bu$ Condition \ref{cond:dist-fu} implies 
\begin{align}
\begin{split}
\label{eq:prop:choice-dpm:proof:s3f}
    \|\bS^{(3)}\|_F &\le \|\bB \bSigma_{\bbf, \bu}\|_F + \|\bS^{(3)} - \bB \bSigma_{\bbf, \bu}\|_F \\
    &\lesssim \sqrt{p} + \sqrt{\sum_{j,j'} \left|\sum_{k=1}^r B_{j,k} \left(\frac{1}{n} \sum_{i=1}^n f_{i,k} u_{i,j} - \mathbb{E} [f_k u_j]\right)\right|^2}
    \lesssim \sqrt{p} + r p\sqrt{\frac{\log p + t}{n}}
\end{split}
\end{align} For $\bSigma_U$, it follows from the boundedness of idiosyncratic error $U$ in Condition \ref{cond1} and weak dependency Condition \ref{cond3} that,
\begin{align}
\label{eq:prop:choice-dpm:proof:sigmau}
\begin{split}
    \|\bSigma_U\|_F &= \sqrt{\sum_{j=1}^p (\mathbb{E}|u_j|^2)^2 + \sum_{j\neq j} \mathbb{E}[|u_j u_j'|] \times \mathbb{E}[|u_j u_j'|]} \\
                   &\lesssim \sqrt{\sum_{j=1}^p \mathbb{E}|u_j|^2 + \sum_{j\neq j} \mathbb{E}[|u_j u_j'|]} \lesssim \sqrt{p}
\end{split}
\end{align}
Plugging \eqref{eq:prop:choice-dpm:proof:s1f}, \eqref{eq:prop:choice-dpm:proof:s2f}, \eqref{eq:prop:choice-dpm:proof:s3f} and \eqref{eq:prop:choice-dpm:proof:sigmau} into \eqref{eq:prop:choice-dpm:proof:delta-decomposition} completes the proof.

\end{proof}

\subsection{Proof of Proposition \ref{thm:sub-optimality-least-squares}}

We will use the following technical lemma from \cite{bartlett2020benign}.
\begin{lemma}[Restatement of Lemma 14 in \cite{bartlett2020benign}]
\label{lemma:concentration:benign-overfitting}
Suppose $\bz_1,\cdots, \bz_p$ are independent $n$-dimension sub-Gaussian random vectors, i.e., $\mathbb{E}[e^{\xi^\top \bz_i}] \le e^{\frac{1}{2} c_1\|\xi\|_2^2}$ for some universal constant $c_1>0$, and $\lambda_1 \ge \cdots \ge \lambda_p>0$ are positive constants. There exists an universal constant $c_2>0$ such that for any $j \in \{1,\cdots, p\}$, $k\le n/c_2$, with probability at least $1-5e^{-n/c_2}$, 
\begin{align*}
    \frac{\lambda_j^2 \bz_j \bA_{-j}^{-2} \bz_j}{(1+\lambda_j \bz_j^\top \bA_{-j}^{-1} \bz_j)^2} \ge \frac{1}{c_2 n }\left(1+\frac{\sum_{j>k} \lambda_j + n\lambda_{k+1}}{n\lambda_i}\right)^{-2}.
\end{align*} where $\bA_{-j} = \sum_{\ell \neq j} \bz_\ell \bz_\ell^\top$.
\end{lemma}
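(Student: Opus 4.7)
The result is a restatement of Lemma 14 of \cite{bartlett2020benign}, so my plan mirrors that proof. The key structural observation is that $\bz_j$ is independent of the rest of the random vectors, hence of $\bA_{-j}$. Conditioning on $\bA_{-j}$ reduces both $\bz_j^\top \bA_{-j}^{-1} \bz_j$ and $\bz_j^\top \bA_{-j}^{-2} \bz_j$ to quadratic forms in a sub-Gaussian vector with a fixed PSD matrix, to be handled by Hanson--Wright concentration. The remaining work is to deterministically control the relevant spectral functionals of $\bA_{-j}$ on a high-probability event.

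First, conditional on $\{\bz_\ell\}_{\ell \neq j}$, the Hanson--Wright inequality applied to the sub-Gaussian vector $\bz_j$ and the matrices $\bA_{-j}^{-1}$, $\bA_{-j}^{-2}$ yields, with conditional probability at least $1 - 2e^{-n/c_2}$ for a suitable $c_2$,
$$\bz_j^\top \bA_{-j}^{-1} \bz_j \;\le\; c' \operatorname{tr}(\bA_{-j}^{-1}), \qquad \bz_j^\top \bA_{-j}^{-2} \bz_j \;\ge\; c'' \operatorname{tr}(\bA_{-j}^{-2}),$$
since the deviation is controlled by the operator and Frobenius norms of these matrices, each of which is dominated by the relevant trace when $\bA_{-j}^{-1}$ is well conditioned on the event to be established next. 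Combining these bounds, the quantity of interest is at least
$$\frac{c''\,\lambda_j^2 \operatorname{tr}(\bA_{-j}^{-2})}{\bigl(1 + c'\lambda_j \operatorname{tr}(\bA_{-j}^{-1})\bigr)^2}.$$

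Second, I control the spectrum of $\bA_{-j}$ by a spike-and-bulk decomposition. Writing $\bA_{-j} = \sum_{\ell \le k, \ell \neq j} \bz_\ell \bz_\ell^\top + \sum_{\ell > k} \bz_\ell \bz_\ell^\top$ and applying Koltchinskii--Lounici / matrix Bernstein concentration to the bulk term (with weights $\lambda_\ell$ in the implied covariance structure), one obtains on an event of probability at least $1 - 3e^{-n/c_2}$ the two-sided eigenvalue bounds
$$\mu_n(\bA_{-j}) \;\asymp\; \mu_1(\bA_{-j}) \;\asymp\; \sum_{\ell > k} \lambda_\ell + n\lambda_{k+1},$$
valid precisely in the regime $k \le n/c_2$. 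These then give $\operatorname{tr}(\bA_{-j}^{-1}) \lesssim n / (\sum_{\ell > k}\lambda_\ell + n\lambda_{k+1})$ and $\operatorname{tr}(\bA_{-j}^{-2}) \gtrsim n / (\sum_{\ell > k}\lambda_\ell + n\lambda_{k+1})^2$.

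Substituting these trace bounds into the ratio above, union-bounding the two events (total failure probability $\le 5e^{-n/c_2}$), and simplifying yields
$$\frac{\lambda_j^2 \bz_j^\top \bA_{-j}^{-2} \bz_j}{(1 + \lambda_j \bz_j^\top \bA_{-j}^{-1}\bz_j)^2} \;\gtrsim\; \frac{1}{n}\left(1 + \frac{\sum_{\ell > k}\lambda_\ell + n\lambda_{k+1}}{n\lambda_j}\right)^{-2},$$
which is the claimed inequality. The main obstacle is the uniform spectral control of $\bA_{-j}$: one needs the bulk eigenvalue concentration to hold with rate $\exp(-n/c_2)$ simultaneously for all $k \le n/c_2$, which forces careful tracking of the universal constants in the matrix concentration step and dictates the precise value of $c_2$. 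The Hanson--Wright step is, by comparison, routine once this spectral event is secured.
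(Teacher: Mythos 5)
You should first note that the paper itself gives no proof of this lemma: it is quoted as a black box from Bartlett, Long, Lugosi and Tsigler (2020), so the relevant benchmark is their proof of Lemma 14, and your sketch is not that argument. The place where you depart from it is exactly where your plan breaks. The asserted two-sided spectral control $\mu_n(\bA_{-j})\asymp\mu_1(\bA_{-j})\asymp\sum_{\ell>k}\lambda_\ell+n\lambda_{k+1}$ is false: $\bA_{-j}$ still contains the spiked components $\ell\le k$, so $\mu_1(\bA_{-j})$ is of order $\max_{\ell\le k}\lambda_\ell\, n$ (in this paper's own application $\lambda_\ell\asymp p$ for $\ell\le r$, giving $\mu_1\asymp pn$, not $p+n$); and, more damagingly, no lower bound $\mu_n\gtrsim\sum_{\ell>k}\lambda_\ell+n\lambda_{k+1}$ is available — what matrix concentration yields for the bulk is only $\mu_n\ge c^{-1}\sum_{\ell>k}\lambda_\ell-c\,n\lambda_{k+1}$, which is vacuous whenever $n\lambda_{k+1}$ dominates (fast-decaying $\lambda_\ell$ beyond $k+1$) and also fails in the hard-edge regime where the number of bulk components barely exceeds $n$; the lemma allows both. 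Without that lower bound your denominator control $\bz_j^\top\bA_{-j}^{-1}\bz_j\lesssim \mathrm{tr}(\bA_{-j}^{-1})\lesssim n/(\sum_{\ell>k}\lambda_\ell+n\lambda_{k+1})$ collapses, and the Hanson--Wright lower tail for the numerator against $\mathrm{tr}(\bA_{-j}^{-2})$ cannot hold at confidence $1-e^{-n/c_2}$ either: when $\mathrm{tr}(\bA_{-j}^{-2})$ is dominated by one small eigenvalue, the effective rank of $\bA_{-j}^{-2}$ is $O(1)$ and the event $\bz_j^\top\bA_{-j}^{-2}\bz_j\ge c\,\mathrm{tr}(\bA_{-j}^{-2})$ fails with constant probability. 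Note the lemma is nevertheless true in these regimes, because then the numerator and denominator blow up together — a coupling your separate trace comparisons cannot see.

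The missing idea is that the correct proof never lower-bounds any eigenvalue of $\bA_{-j}$ and in fact needs a \emph{lower} (not upper) bound on $\bz_j^\top\bA_{-j}^{-1}\bz_j$. By Cauchy--Schwarz, $\bz_j^\top\bA_{-j}^{-2}\bz_j\ge(\bz_j^\top\bA_{-j}^{-1}\bz_j)^2/\|\bz_j\|_2^2$, so the ratio is at least $\|\bz_j\|_2^{-2}\bigl(1+(\lambda_j\bz_j^\top\bA_{-j}^{-1}\bz_j)^{-1}\bigr)^{-2}$, which is increasing in $\bz_j^\top\bA_{-j}^{-1}\bz_j$. Taking $\Pi$ to be the projection onto the orthogonal complement of $\mathrm{span}\{\bz_\ell:\ell\le k,\ell\neq j\}$ (independent of $\bz_j$, corank at most $k\le n/c$), a second Cauchy--Schwarz gives $\bz_j^\top\bA_{-j}^{-1}\bz_j\ge\|\Pi\bz_j\|_2^4/\bigl((\Pi\bz_j)^\top\bA_{-j}\Pi\bz_j\bigr)\ge\|\Pi\bz_j\|_2^2/\mu_1\bigl(\sum_{\ell>k,\ell\neq j}\lambda_\ell\bz_\ell\bz_\ell^\top\bigr)$, since $\Pi$ annihilates the spiked $\bz_\ell$. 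The only concentration inputs are then $\|\bz_j\|_2^2\lesssim n$, $\|\Pi\bz_j\|_2^2\gtrsim n$, and the one-sided bulk operator-norm bound $\lesssim\sum_{\ell>k}\lambda_\ell+n\lambda_{k+1}$ — a handful of events yielding the stated $1-5e^{-n/c_2}$. Your conditioning-on-$\bA_{-j}$ framing is fine, but the trace-comparison route cannot be repaired without this projection/Cauchy--Schwarz step. (Minor: the paper's restatement omits the weights $\lambda_\ell$ in $\bA_{-j}$, a typo relative to both the original lemma and the paper's use of it, which you silently and correctly reinstated.)
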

Now we are ready to prove Proposition \ref{thm:sub-optimality-least-squares},
\begin{proof}[Proof of Proposition~\ref{thm:sub-optimality-least-squares}]
Denote
\begin{align*}
    \bX = \begin{bmatrix}
        -\bx_1^\top- \\
        -\bx_2^\top- \\
        \vdots \\
        -\bx_n^\top-
    \end{bmatrix} \in \mathbb{R}^{n\times p} \qquad \text{and} \qquad \by = \begin{bmatrix}
    y_1\\
    y_2\\
    \vdots\\
    y_n
    \end{bmatrix}\in \mathbb{R}^n.
\end{align*} The minimum $\ell_2$ norm estimator is
\begin{align*}
    \hat{\bbeta} = \bX^\top (\bX\bX^\top)^{-1} \by,
\end{align*} provided $p>n$, and the associated $L_2$ risk can be written as
\begin{align}
\label{eq:benign-overfitting-l2-risk}
    \int |\bx^\top \hat{\bbeta}|^2 \mu(d\bbf, d\bu) = (\hat{\bbeta})^\top \bSigma \hat{\bbeta},
\end{align} where $\bSigma$ is the covariance matrix of the covariate $\bx$. Under Condition \ref{cond:pervasiveness}, the matrix $\bB\bB^\top$ has the following eigen-decomposition
\begin{align*}
    \bB\bB^\top = \sum_{j=1}^r \lambda_i \bv_i \bv_i^\top ~~~~ \text{with} ~~~~ \lambda_i \asymp p ~~\text{and}~~ \text{orthogonal vectors } \bv_1,\ldots, \bv_r.
\end{align*} Let $\lambda_{r+1}=\lambda_{r+2} = \cdots = \lambda_p = 0$, $\bLambda$ be a $p\times p$ diagonal matrix with $\Lambda_{j,j}=\lambda_j$ for $j \in \{1,\cdots, p\}$, $\bv_{r+1}, \ldots, \bv_p$ be orthogonal component of $\bv_1,\cdots, \bv_r$. Therefore, the covariance matrix $\bSigma$ can be written as
\begin{align*}
    \bSigma = \bV (\bLambda + \bI_p) \bV^\top.
\end{align*} Denote
\begin{align*}
    [\bz_1,\ldots, \bz_p] = \bZ = \bX \bV (\bLambda + \bI_p)^{-1/2} = \begin{bmatrix}
        \bx_1^\top \bv_1/\sqrt{\lambda_1+1} &\cdots& \bx_1^\top \bv_p/\sqrt{\lambda_p+1}\\
        \vdots & \ddots & \vdots\\
        \bx_n^\top \bv_1/\sqrt{\lambda_1+1}&\cdots& \bx_n^\top \bv_p/\sqrt{\lambda_p+1}\\
    \end{bmatrix}
\end{align*}
Note each vector $\bz_i$ has independent entries because of the fact that $\bx_1,\ldots, \bx_n$ are i.i.d. copies. Moreover, we can write $i$-th entry of the vector $\bz_j$ as
\begin{align*}
    z_{j,i} = \begin{cases}
        f_{i,j} \sqrt{\lambda_j} + \bu_i^\top \bv_j & j\le r\\
        \bu_i^\top \bv_j & j> r\\
    \end{cases}
\end{align*} Because $\bu_i$ has i.i.d. standard normal entries, it is easy to verify that $\bz_1,\cdots, \bz_p$ are independent because (1) the factor $\bbf$ has independent entries (2) $\bbf$ and $\bu$ are independent (3) $\bv_j^\top \bv_{j'}=0$ for arbitrary $j\neq j'$, and uncorrelatedness is equivalent to independence for random variables with joint normal distribution.

Taking expectation on both sides of \eqref{eq:benign-overfitting-l2-risk},
\begin{align*}
    \mathbb{E}_{(\bx_1,y_1)\cdots, (\bx_n,y_n)} \left[\int |\bx^\top \hat{\bbeta}|^2 \mu(d\bbf, d\bu)\right] &= \mathbb{E}_{\bx_1,\cdots,\bx_n} \mathbb{E}_{\by} \left[\by^\top (\bX\bX^\top)^{-1} \bX \bSigma \bX^\top (\bX\bX^\top)^{-1} \by\right]\\
    &= \sigma^2 \mathbb{E}_{\bx_1,\cdots,\bx_n}\left[\tr\left((\bX\bX^\top)^{-1} \bX \bSigma \bX^\top (\bX\bX^\top)^{-1}\right)\right] \\
    &= \sigma^2 \mathbb{E}_{\bz_1,\cdots,\bz_p}\left[\tr\left(\sum_{j=1}^p (\lambda_j+1)^2 \bz_j^\top \left(\sum_{\ell=1}^p (\lambda_\ell+1) \bz_\ell \bz_\ell^\top \right)^{-2} \bz_j\right)\right]\\
    &= \sigma^2 \sum_{j=1}^p \mathbb{E} \left[(\lambda_j+1)^2 \bz_j^\top \left(\sum_{\ell=1}^p (\lambda_\ell+1) \bz_\ell \bz_\ell^\top \right)^{-2} \bz_j\right].
\end{align*}
When $p>n+r$, it is easy to verify that the matrix $\bA_{-j} = \sum_{\ell \neq j} (\lambda_\ell+1) \bz_\ell \bz_\ell^\top$ is invertible almost surely. Applying Lemma 20 in \cite{bartlett2020benign} together with Lemma \ref{lemma:concentration:benign-overfitting} with $k=r$, we have
\begin{align*}
    \mathbb{E}\left[(\lambda_j+1)^2 \bz_j^\top \left(\sum_{\ell=1}^p (\lambda_\ell+1) \bz_\ell \bz_\ell^\top \right)^{-2} \bz_j\right] &= \mathbb{E}\left[ \frac{(\lambda_j+1)^2 \bz_j \bA_{-j}^{-2} \bz_j}{(1+(\lambda_j+1) \bz_j^\top \bA_{-j}^{-1} \bz_j)^2}\right] \\
    &\ge  \frac{1-5e^{-n/c_2}}{c_2 n }\left(1+\frac{\sum_{\ell>k} (\lambda_\ell+1) + n(\lambda_{k+1}+1)}{n(\lambda_j+1)}\right)^{-2} \\
    &\overset{(a)}{\gtrsim} 1_{\{j \le r\}}  n^{-1} +1_{\{j > r\}} np^{-2} ,
\end{align*} provided $r<n/c_2$ for large enough $n$, where (a) follow from the previous conditions of eigenvalues, i.e., $\lambda_i \asymp p$ for $i\le r$ and $\lambda_i=0$ for $i>r$. Therefore, we can conclude that
\begin{align*}
     \mathbb{E}_{(\bx_1,y_1)\cdots, (\bx_n,y_n)} \left[\int |\bx^\top \hat{\bbeta}|^2 \mu(d\bbf, d\bu)\right] \gtrsim \frac{r}{n} + \frac{n}{p}.
\end{align*}

\end{proof}

\section{Proofs for the FAST-NN estimator in Section \ref{sec:theory:fast}}
\label{sec:proof-fast-nn-estimator}

We first introduce some additional notations used throughout the proof. Define the function class $\mathcal{G}_m$,
\begin{align*}
    \mathcal{G}_m = \left\{ m(x;\bW,g, \bTheta): g\in \mathcal{G}(L, \overline{r}+N, 1, N, M, B), \bTheta\in \mathbb{R}^{p\times N}, \|\bTheta\|_{\max} \le B\right\}
\end{align*} and
\begin{align*}
    \mathcal{G}_{m,s} = \left\{m \in \mathcal{G}_m, \sum_{i,j}\psi_\tau(\Theta_{i,j}) \le s\right\}.
\end{align*} Similar to the representation of deep ReLU networks, we can write
\begin{align*}
    m(x) = \mathcal{L}_{L+1} \circ \bar{\sigma} \circ \mathcal{L}_{L} \circ \bar{\sigma} \circ \cdots \circ \mathcal{L}_2 \circ \bar{\sigma} \circ \mathcal{L}_1 \circ \phi \circ \mathcal{L}_0(x)
\end{align*} for each $m\in \mathcal{G}_m$, where the definition of $\mathcal{L}_{\ell}$ with $\ell\in \{1,\cdots, L+1\}$ and $\bar{\sigma}$ is same as that in Definition \ref{def:nn}, $\phi: \mathbb{R}^{\overline{r} + N} \to \mathbb{R}^{\overline{r}+N}$ satisfies
\begin{align*}
    [\phi(\bv)]_i = \begin{cases}
        v_i & \qquad i \le \overline{r} \\
        T_M(v_i) & \qquad i > \overline{r}
    \end{cases},
\end{align*} and $\mathcal{L}_0(\bx) = (p^{-1} \bx^\top \bW, \bx^\top \bTheta)^\top$. 

Let $\bz_1,\cdots, \bz_n$ be i.i.d. copies of $\bz \sim \mathcal{Z}$ from some distribution $\mu$, $\mathcal{H}$ be a real-valued function class defined on $\mathcal{Z}$. Define the empirical $L_2$ norm and population $L_2$ norm for each $h\in \mathcal{H}$ respectively as
\begin{align}
\label{eq:l2-ln-norm-fast-def}
    \|h\|_n = \left(\frac{1}{n} \sum_{i=1}^n h(\bz_i)^2\right)^{1/2} ~~~~\text{and}~~~~\|h\|_2 = \left(\mathbb{E}[h(\bz)^2]\right)^{1/2} = \left(\int h(\bz)^2 \mu(d\bz) \right)^{1/2}.
\end{align}
Throughout the proof of the FAST-NN estimator, the choice of $\mathcal{Z}$ and $\mathcal{H}$ will vary in different contexts. However, we can also define a unified empirical (population) $L_2$ norm on the function class $\mathcal{H} = \mathcal{H}_2 - \mathcal{H}_2$ over $\mathcal{Z}=\{\bz=(\bbf, \bu)\}$, where the difference between two sets is defined as $\mathcal{A}-\mathcal{B} = \{a-b: a\in \mathcal{A}, b\in \mathcal{B}\}$, and
\begin{align*}
    \mathcal{H}_2 = \left\{m(\bx) = m(\bB\bbf+\bu): m\in \mathcal{G}_{m,s} \right\} \cup \left\{m^*(\bbf, \bu_{\mathcal{J}})\right\} \cup \{0\}.
\end{align*} Therefore, we can use triangle inequality of such unified empirical (population) $L_2$ norm. 

Finally, we define the following two quantities of interest,
\begin{align}
\label{eq:fast-rate-v_n}
    v_n &= \left(N^2L + N\overline{r}\right) \frac{L\log (BNn)}{n}, \\
\label{eq:fast-rate-varrho_n}
\varrho_n &= \frac{\log (np(N+\overline{r})) + L\log (BN)}{n}.
\end{align} 

\subsection{Covering number of the class $\mathcal{G}_{m,s}$}

We need the following fact about the $\epsilon$-net covering number of the function class $\mathcal{G}_{m,s}$.

\begin{lemma}
\label{lemma:g-infty-cover-number}
    There exists a universal constant $c_1$ such that for any $\delta > 2\tau KB^{L+1}N^{L+1}(N+\overline{r})p$ and $N, L \ge 2$, 
    \begin{align*}
        \log \mathcal{N}(\delta, \mathcal{G}_{m,s}, \|\cdot\|_{\infty, [-K,K]^p}) \le c_1 \Bigg\{ &\left(N^2L + N\overline{r} \right) \left[L \log BN + \log \left(\frac{M\lor K\|W\|_{\max}}{\delta} \lor 1\right)\right] \\
        &~~~~~ + s \left[L \log (BN) + \log p + \log\left(\frac{K(N+\overline{r})}{\delta} \lor 1\right)\right] \Bigg\}.
    \end{align*}
\end{lemma}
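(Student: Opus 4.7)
The plan is to build a product $\delta$-cover of $\mathcal{G}_{m,s}$ in $\|\cdot\|_{\infty,[-K,K]^p}$ by splitting the error budget among three independent sources of flexibility: (a) hard-thresholding small entries of the variable-selection matrix $\bTheta$ to zero, (b) covering the ReLU-network weights of $g$, and (c) covering the resulting sparse $\bTheta$. The hypothesis $\delta>2\tau K B^{L+1}N^{L+1}(N+\overline{r})p$ is tailored exactly so that (a) costs at most $\delta/2$, leaving $\delta/4$ each for (b) and (c).

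First I would perform the thresholding step. Given $m(\cdot;\bW,g,\bTheta)\in \mathcal{G}_{m,s}$, set $\tilde\bTheta$ to be $\bTheta$ with every entry of magnitude $<\tau$ replaced by $0$. The constraint $\sum_{ij}\psi_\tau(\Theta_{ij})\le s$ forces $\tilde\bTheta$ to have at most $s$ nonzero entries and $\|\tilde\bTheta\|_{\max}\le B$. Uniformly in $\bx\in[-K,K]^p$, the $\ell_\infty$-change in $\bTheta^\top \bx$ is at most $p\tau K$. Since $\bar T_M$ is $1$-Lipschitz and each layer of $g$ has $\ell_\infty\to\ell_\infty$ operator norm at most $B(\overline{r}+N)$ (first layer) or $BN$ (later layers), the composition is $B^{L+1}(N+\overline{r})N^L$-Lipschitz, so $\|m-\tilde m\|_\infty\le B^{L+1}(N+\overline{r})N^L \cdot p\tau K \le \delta/2$ under the stated condition on $\delta$.

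Second I would cover the network. Because $\|\bW\|_{\max}\le c_1$ and $\|\bTheta^\top \bx\|$ is truncated by $\bar T_M$, the effective input to $g$ lies in $[-R,R]^{\overline{r}+N}$ with $R=M\lor K\|\bW\|_{\max}$. The class $\mathcal{G}(L,\overline{r}+N,1,N,M,B)$ has $W\asymp LN^2+N\overline{r}$ free parameters and its output is Lipschitz in the parameters with constant $\lesssim L(BN)^L R$ on that input set. A standard coordinate-wise discretization at scale $\delta/(C L(BN)^L R)$ then gives an $\|\cdot\|_\infty$-cover of cardinality
\begin{align*}
\log\mathcal{N}\le c\,(LN^2+N\overline{r})\!\left[L\log(BN)+\log\!\left(\tfrac{M\lor K\|\bW\|_{\max}}{\delta}\lor 1\right)\right],
\end{align*}
which is exactly the first block of the lemma. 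Third I would cover the sparse matrix by (i) selecting a support $S$ with $|S|\le s$, contributing $\log\binom{pN}{s}\le s\log(epN)$ bits; (ii) quantizing the $\le s$ active values on a grid of step $\epsilon'\asymp \delta/(sK(N+\overline{r})B^{L+1}N^L)$, contributing $s\log(2B/\epsilon')$ bits. Bounding the induced $\ell_\infty$-change in $\tilde\bTheta^\top\bx$ by $sK\epsilon'$ and propagating through the same $B^{L+1}(N+\overline{r})N^L$-Lipschitz map gives error $\le\delta/4$. After absorbing $\log s\le\log(pN)\lesssim \log p+L\log(BN)$, the total is $\lesssim s[L\log(BN)+\log p+\log(K(N+\overline{r})/\delta\lor 1)]$, the second block of the bound. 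Summing the two log-entropies and taking the product cover completes the proof.

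The main obstacle will be executing the Lipschitz bookkeeping cleanly enough that the numerical factors line up: the per-layer $\ell_\infty\to\ell_\infty$ operator norms of $\bW_\ell$, the transparent action of $\bar T_M$, and the combined effect of $\bW$ and $\bTheta$ at the input must all be tracked so that (i) the $B^{L+1}N^{L+1}(N+\overline{r})p$ prefactor in the thresholding hypothesis is actually achieved rather than a worse constant, and (ii) the sparse-matrix covering produces $\log p$ rather than $\log(pN)$ or $\log s$ in the final display. No genuinely novel idea is needed beyond the familiar ``discretize parameters, bound output perturbation by network Lipschitz constant'' template; the subtlety is purely combinatorial-analytic care.
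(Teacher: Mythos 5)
Your proposal matches the paper's proof essentially step for step: hard-thresholding $\bTheta$ entries below $\tau$ (costing $\le\delta/2$ via the network's $KB^{L+1}N^L(N+\overline{r})p$-Lipschitz dependence on $\|\bTheta\|_{\max}$, exactly the paper's Lemma~\ref{lemma:m-lip-weight}), then a product cover with a grid on the $O(LN^2+N\overline{r})$ network parameters and a support-plus-grid cover on the at-most-$s$ active entries of $\bTheta$, with the binomial count $\sum_{k\le s}\binom{pN}{k}(\cdot)^k$ giving the $s[\log p+\cdots]$ block. The only cosmetic difference is that you exploit sparsity to get an $s$ (rather than $p$) prefactor in the $\bTheta$-perturbation bound before absorbing $\log s$ into $\log p + L\log(BN)$, whereas the paper applies the uniform $p$-prefactor Lipschitz bound directly; both yield the same final display.
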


Our proof of Lemma \ref{lemma:g-infty-cover-number} is based on the following Lemma,
\begin{lemma}
\label{lemma:m-lip-weight}
Let $\btheta(m) = \left\{\bTheta, \left(\bW_\ell, \bb_\ell\right)_{\ell=1}^{L+1}\right\}$ be the set of parameters for function $m\in \mathcal{G}_{m,s}$, $\btheta(\breve{m}) = \left\{\breve{\bTheta}, (\breve{\bW}_\ell, \breve{\bb}_\ell)_{\ell=1}^{L+1} \right\}$ be the set of parameters for function $\breve{m} \in \mathcal{G}_{m,s}$. Define
\begin{align*}
    \|\btheta_g(m) - \btheta_g(\breve{m})\|_\infty = \max_{1\le \ell\le L+1} \left( \|\bb_\ell - \breve{\bb}_{\ell}\|_\infty \lor \|\bW_\ell - \breve{\bW}_\ell \|_{\max}\right).
\end{align*}
If $M\ge 1$, then the following holds
\begin{align}
\label{eq:m-lip-weight}
\begin{split}
    \| m(\bx) - \tilde{m}(\bx) \|_{\infty, [-K,K]^p} \le &~ (M \lor K\|W\|_{\max}) (L+1)B^{L}(N+1)^{L+1} \|\btheta_g(m) - \btheta_g(\breve{m})\|_\infty \\
    &~~~~~ + KB^{L+1}N^L(N+\overline{r})p \|\bTheta-\breve{\bTheta}\|_{\max}.
\end{split}
\end{align}
\end{lemma}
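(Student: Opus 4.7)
The plan is to treat the two types of weight perturbation separately and then combine by triangle inequality. Writing $\bar{m}$ for the network that uses the deep-network parameters of $m$ but the variable-selection matrix $\breve{\bTheta}$ of $\breve{m}$, I would decompose $\|m-\breve{m}\|_{\infty,[-K,K]^p} \le \|m-\bar{m}\|_{\infty,[-K,K]^p} + \|\bar{m}-\breve{m}\|_{\infty,[-K,K]^p}$. The first summand isolates the $\bTheta$-perturbation and should match the second term on the right of \eqref{eq:m-lip-weight}; the second isolates the perturbation of $\{(\bW_\ell,\bb_\ell)\}_{\ell=1}^{L+1}$ and produces the first term.

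For $\|\bar{m}-\breve{m}\|$, I would first establish a forward bound on the activations. Let $\bz_0 = \phi\circ\mathcal{L}_0(\bx)$ and $\bz_\ell = \mathcal{L}_\ell\circ \bar\sigma(\bz_{\ell-1})$. On $\bx\in[-K,K]^p$ the truncation $\phi$ and the bound $\|\bW\|_{\max} \le c_1$ give $\|\bz_0\|_\infty \le M \lor K\|\bW\|_{\max}$; the recursion $\|\mathcal{L}_\ell(\bv)\|_\infty \le B\,(\text{in-dim})\,\|\bv\|_\infty + B$ combined with the assumption $M\ge 1$ then yields by induction $\|\bz_\ell\|_\infty \lesssim (M\lor K\|\bW\|_{\max})\,B^\ell(N+1)^\ell$, where the layer-1 in-dimension $\overline r+N$ is absorbed into one extra factor of $(N+1)$ (using that $\overline r$ is small compared to $N$ in the regime of interest). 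Then I would use a telescoping decomposition: let $D_\ell$ be the network with $\breve{\bW}_{\ell'},\breve{\bb}_{\ell'}$ in layers $\ell' \le \ell$ and the original weights afterwards, so $D_0=\bar m$ and $D_{L+1}=\breve m$. Each difference $D_\ell-D_{\ell-1}$ captures perturbing layer $\ell$ alone, so $\|D_\ell-D_{\ell-1}\|_\infty$ is bounded by $\{(N+1)\|\bz_{\ell-1}\|_\infty + 1\}\|\btheta_g(m)-\btheta_g(\breve m)\|_\infty$ times the propagation factor $(BN)^{L+1-\ell}$ from subsequent affine-ReLU layers (each of operator norm at most $BN$ in $\ell_\infty\to\ell_\infty$, with ReLU being 1-Lipschitz). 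Inserting the forward bound and summing over $\ell=1,\dots,L+1$ produces the prefactor $(L+1)B^L(N+1)^{L+1}(M\lor K\|\bW\|_{\max})$.

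For the $\bTheta$-perturbation $\|m-\bar m\|_\infty$, the two networks differ only in $\mathcal{L}_0$. Since $p^{-1}\bW^\top\bx$ is unchanged, only the last $N$ coordinates of $\mathcal{L}_0(\bx)$ are perturbed, and each is perturbed by at most $pK\|\bTheta-\breve{\bTheta}\|_{\max}$; as $\phi$ is 1-Lipschitz, this bound persists after $\phi$. Applying $\mathcal{L}_1$ (whose input has dimension $\overline r+N$) inflates by $B(\overline r+N)$, and each of the $L$ subsequent affine-ReLU layers inflates by $BN$, giving the second term $KB^{L+1}N^L(N+\overline r)p\,\|\bTheta-\breve{\bTheta}\|_{\max}$. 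The main obstacle, and the only nontrivial bookkeeping, is the asymmetry between layer 1 (input dimension $\overline r+N$) and the later layers (input dimension $N$), which is what produces the $(N+\overline r)$ factor in the $\bTheta$-term and forces the slightly enlarged $(N+1)^{L+1}$ factor in the weight term; everything else is routine Lipschitz propagation through a depth-$(L+1)$ truncated ReLU network.
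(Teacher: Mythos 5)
Your proof follows the same strategy as the paper's---a one-layer-at-a-time telescope combined with a forward induction bounding the hidden activations and an $\ell_\infty\to\ell_\infty$ operator-norm bound ($\le BN$) on the downstream layers---differing only cosmetically in that you split the $\bTheta$-perturbation off as a separate summand and telescope innermost-to-outermost, while the paper runs a single telescope outermost-to-innermost and leaves the $\mathcal{L}_0$/$\bTheta$ term last. The caveat you flag about absorbing the layer-1 input dimension $\overline{r}+N$ into a single $(N+1)$ factor is legitimate and is in fact the same implicit simplification the paper's own induction makes (it replaces $d_0=\overline{r}+N$ by $N$ without comment), so you have inherited rather than introduced that imprecision.
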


We are ready to prove Lemma \ref{lemma:g-infty-cover-number}.
\begin{proof}[Proof of Lemma~\ref{lemma:g-infty-cover-number}]
    For any $m\in \mathcal{G}_{m,s}$ with parameter $\btheta(m) = \left\{\bTheta, \left(\bW_\ell, \bb_\ell\right)_{\ell=1}^{L+1}\right\}$, let $\bTheta^{(\tau)} \in \mathbb{R}^{p\times N}$ be that $[\bTheta^{(\tau)}]_{i,j} = \Theta_{i,j} 1_{\{|\Theta_{i,j}|\ge \tau\}}$, and 
    \begin{align*}
        m^{(\tau)}(\bx) = m(\bx; \bW, \bTheta^{(\tau)}, g).
    \end{align*}
    \noindent {\sc Step 1. Construct an $\delta$-Set.} With the help of the set $\mathcal{G}_{m,s}^{(\tau)}=\{m^{(\tau)}: m \in \mathcal{G}_{m,s}\}$, we first construct a $\delta$-cover of the set $\mathcal{G}_{m,s}$ with respect to $\|\cdot\|_{\infty, [-K,K]^p}$ norm. For any $S\subset \{1,\cdots, p\} \times \{1,\cdots, N\}$ with $|S|\le \lfloor s \rfloor$, let 
    \begin{align*}
        \mathcal{G}_{m,s}^{(\tau)}(\delta, S) = \Bigg\{ m(\bx;\bW, \bTheta, g): &[\bW_\ell]_{i,j}, [\bb_{\ell}]_{j} \in \left\{-B, -B+\epsilon_1,\cdots, -B+\epsilon_1 \cdot \left\lceil \frac{2B}{\epsilon_1} \right\rceil \right\},  \\
        &~~~~~ \bTheta_{S} \in \left\{-B, -B+\epsilon_2,\cdots, -B+\epsilon_2 \cdot \left\lceil \frac{2B}{\epsilon_2} \right\rceil \right\}^{|S|}, \Theta_{S^c} = 0 \Bigg\}
    \end{align*}
    with
    \begin{align}
    \label{eq:proof:cover-number:def-eps1-eps2}
        \epsilon_1 = \frac{\delta}{4 (M \lor K\|W\|_{\max}) (L+1)B^{L}(N+1)^{L+1}} \qquad \text{and} \qquad \epsilon_2 = \frac{\delta}{4 KB^{L+1}N^L(N+\overline{r})p}.
    \end{align}  Now define 
    \begin{align*}
        \mathcal{G}^{(\tau)}_{m,s}(\delta) = \bigcup_{|S| \le s} \mathcal{G}_{m,s}^{(\tau)}(\delta, S).
    \end{align*} 
    
    We first claim that $\mathcal{G}^{(\tau)}_{m,s}(\delta)$ is a $\delta$-cover of $\mathcal{G}_{m,s}$ with respect to $\|\cdot\|_{\infty, [-K,K]^p}$ norm as long as $\delta>2\tau KB^{L+1}N^L(N+\overline{r})p$. For any $m\in \mathcal{G}_{m,s}$, it follows from the construction of $m^{(\tau)}$ and Lemma \ref{lemma:m-lip-weight} that
    \begin{align*}
        \|m^{(\tau)} - m \|_{\infty, [-K,K]^p} \le KB^{L+1}N^L(N+\overline{r})p \|\Theta-\Theta^{(\tau)}\|_{\max} \le KB^{L+1}N^L(N+\overline{r})p \tau \le \frac{\delta}{2}.
    \end{align*}
    Denote $S_m=\{(i,j), \Theta^{(\tau)}_{i,j}\neq 0\}$. Using the definition of clipped $L_1$ norm and $\mathcal{G}_{m,s}$ set yields
    \begin{align*}
        \|\Theta^{(\tau)}\|_{0} = \sum_{i,j} 1_{\{|\Theta^{(\tau)}_{i,j}| \ge \tau \}} = \sum_{i,j} \psi_{\tau}(\Theta_{i,j}^{(\tau)}) \le \sum_{i,j} \psi_{\tau}(\Theta_{i,j}) \le s,
    \end{align*}
    which implies $|S_m| \le s$. By the construction of $\mathcal{G}_{m,s}^{(\tau)}(\delta, S)$, there exists some $\breve{m}^{(\tau)} \in \mathcal{G}_{m,s}^{(\tau)}(\delta, S_m) \subset \mathcal{G}_{m,s}^{(\tau)}(\delta)$ such that
    \begin{align*}
        \|\btheta_g(m^{(\tau)}) - \btheta_g(\breve{m}^{(\tau)})\|_\infty \le \epsilon_1 \qquad \text{and} \qquad \|\bTheta^{(\tau)} - \breve{\bTheta}^{(\tau)}\|_{\max} \le \epsilon_2.
    \end{align*} 
    
    Recall the definition of $\epsilon_1$ and $\epsilon_2$ in \eqref{eq:proof:cover-number:def-eps1-eps2}, applying Lemma \ref{lemma:m-lip-weight} again yields
    \begin{align*}
        \|m^{(\tau)} - \breve{m}^{(\tau)} \|_{\infty, [-K,K]^p} \le \epsilon_1 (M \lor K\|W\|_{\max}) (L+1)B^{L}(N+1)^{L+1} + \epsilon_2 KB^{L+1}N^L(N+\overline{r})p \le \frac{\delta}{2}.
    \end{align*}
    Therefore we can conclude that $\mathcal{G}^{(\tau)}_{m,s}(\delta)$ is a $\delta$-cover of $\mathcal{G}_{m,s}$ with respect to $\|\cdot\|_{\infty, [-K,K]^p}$ norm since for any $m\in \mathcal{G}_{m,s}$, we can find a corresponding $\breve{m}^{(\tau)}\in \mathcal{G}_{m,s}^{(\tau)}(\delta)$ such that
    \begin{align*}
        \|m - \breve{m}^{(\tau)} \|_{\infty, [-K,K]^p} \le \|m - m^{(\tau)} \|_{\infty, [-K,K]^p} + \|m^{(\tau)} - \breve{m}^{(\tau)} \|_{\infty, [-K,K]^p} \le \delta.
    \end{align*}
    
    \noindent {\sc Step 2. Bound the Cardinality of the Constructed Set.} In this step, we establish a bound on the cardinality of the function class $\mathcal{G}_{m,s}^{(\tau)}(\delta)$. We begin by bounding the cardinality of each set $\mathcal{G}_{m,s}^{(\tau)}(\delta, S)$. The total number of parameters in ReLU network $g$ can be bounded by
    \begin{align*}
        W = \sum_{\ell=1}^{L+1} d_\ell \times (d_{\ell-1} + 1) &= (N+1) + \sum_{\ell=2}^L N(N+1) + N(N+\overline{r}+1) \\
        &= LN^2+(L+1)N + N\overline{r} + 1 \le 2LN^2 + N\overline{r},
    \end{align*} hence it is easy to verify that
    \begin{align*}
        |\mathcal{G}_{m,s}^{(\tau)}(\delta, S)| \le \left\lceil \frac{2B}{\epsilon_1} \right\rceil^{W} \times \left\lceil \frac{2B}{\epsilon_2} \right\rceil^{|S|} \le \left(\frac{2B+\epsilon_1}{\epsilon_1}\right)^W \left(\frac{2B+\epsilon_2}{\epsilon_2}\right)^{|S|},
    \end{align*}
    which implies
    \begin{align}
    \label{eq:proof-cover-size1}
        |\mathcal{G}^{(\tau)}_{m,s}(\delta)| \le \sum_{k=0}^{\lfloor s \rfloor} \binom{Np}{k} \left(\frac{2B+\epsilon_1}{\epsilon_1}\right)^W \left(\frac{2B+\epsilon_2}{\epsilon_2}\right)^{k} \le \left(\frac{2B+\epsilon_1}{\epsilon_1}\right)^W \sum_{k=0}^{\lfloor s \rfloor} \binom{Np}{k}\left(\frac{2B+\epsilon_2}{\epsilon_2}\right)^{k}.
    \end{align}
    Note for any $K \le Np$, $\alpha\ge 1$, we have
    \begin{align*}
        \left(\frac{K}{Np\alpha}\right)^K \sum_{k=0}^K \binom{Np}{k} \alpha^k \le \sum_{k=0}^K \left(\frac{K}{Np}\right)^k \binom{Np}{k}\le\sum_{k=0}^p \left(\frac{K}{Np}\right)^k \binom{Np}{k} = \left(1+\frac{K}{Np}\right)^{Np} \le e^K,
    \end{align*} dividing both sides by $\left(\frac{K}{Np\alpha}\right)^K$ yields
    \begin{align}
    \label{eq:binomial-ineq1}
        \sum_{k=0}^K \binom{Np}{k} \alpha^k \le \left(\frac{eNp\alpha}{K}\right)^K.
    \end{align} Taking $\alpha=(2B+\epsilon_2)/\epsilon_2$ and $K=\lfloor s\rfloor$ in the inequality \eqref{eq:binomial-ineq1}, it follow from the above inequality \eqref{eq:binomial-ineq1} that \eqref{eq:proof-cover-size1} can be further bounded by
    \begin{align*}
        |\mathcal{G}^{(\tau)}_{m,s}(\delta)| \le \left(\frac{2B+\epsilon_1}{\epsilon_1}\right)^W \left(\frac{eNp(2B+\epsilon_2)}{s\epsilon_2}\right)^s.
    \end{align*} 
    
    \noindent {\sc Step 3. Conclusion of the Proof.} Combining Step 1 and Step 2 gives 
    \begin{align*}
        \log \mathcal{N}(\delta, \mathcal{G}_{m,s}, \|\cdot\|_{\infty, [-K,K]^p}) \le W \log\left(1 \lor \frac{4B}{\epsilon_1}\right) + s \log \left(\frac{4e NpB}{\epsilon_2} \lor 1\right)
    \end{align*} By the fact that $\log(xy \lor 1) \le \log x + \log (y\lor 1)$ as long as $x \ge 1$, we can bound the two terms as
    \begin{align*}
        W \log\left(1 \lor \frac{4B}{\epsilon_1}\right) &\le \left(2LN^2+N\overline{r}\right) \log \left(1 \lor \frac{4(M \lor K\|W\|_{\max}) (L+1)B^{L}(N+1)^{L+1}}{\delta} \right) \\
        &\lesssim \left(N^2L + N\overline{r} \right) \left[L \log BN + \log \left(\frac{M\lor K\|W\|_{\max}}{\delta} \lor 1\right)\right],
    \end{align*}and
    \begin{align*}
        s \log \left(\frac{4e NpB}{\epsilon_2} \lor 1\right) &\le s \log \left(\frac{16KB^{L+1}N^{L+1}(N+\overline{r})p^2}{\delta} \lor 1\right) \\
        &\lesssim s \left[L \log (BN) + \log p + \log\left(\frac{K(N+\overline{r})}{\delta} \lor 1\right)\right],
    \end{align*} which completes the proof.
\end{proof}

\begin{proof}[Proof of Lemma~\ref{lemma:m-lip-weight}]
For $m \in \mathcal{G}_{m,s}$, we first recursively define
\begin{align*}
    \bbm^{(\ell)}_+(\bx) = \begin{cases}
        \phi \circ \mathcal{L}_0(\bx) &\qquad \ell = 1 \\
        \mathcal{L}_1 \circ \bbm^{(\ell-1)}(\bx) &\qquad \ell = 2 \\
        \mathcal{L}_{\ell-1} \circ \bar{\sigma} \circ \bbm^{(\ell-1)}(\bx) & \qquad \ell \in \{3,\cdots, L+1\} 
    \end{cases},
\end{align*} and
\begin{align*}
    m^{(\ell)}_{-}(\bz) = \begin{cases}
        \mathcal{L}_{L+1} \circ \bar{\sigma}(\bz) &\qquad \ell = L+1\\
        m^{(\ell+1)} \circ \mathcal{L}_{\ell} \circ \bar{\sigma}(\bz) &\qquad \ell \in \{2,\cdots, L\} \\
        m^{(\ell+1)} \circ \mathcal{L}_{\ell}(\bz) & \qquad \ell =1 
    \end{cases}.
\end{align*}
Similarly, we can also defined corresponding $\breve{\bbm}^{(\ell)}_+(\bx): \mathbb{R}^p\to \mathbb{R}^{d_{\ell-1}}$ and $\breve{m}^{(\ell)}_-(\bz): \mathbb{R}^{d_{\ell-1}} \to \mathbb{R}$ for $\breve{m}$ that 
\begin{align*}
    \breve{m}(\bx) = \breve{\mathcal{L}}_{L+1} \circ \bar{\sigma} \circ\breve{\mathcal{L}}_{L} \circ \bar{\sigma} \circ \cdots \circ \breve{\mathcal{L}}_2 \circ \bar{\sigma} \circ\breve{\mathcal{L}}_1 \circ \phi \circ \breve{\mathcal{L}}_0(\bx).
\end{align*}

The above construction of $\bbm^{(\ell)}_+$ and $m^{(\ell)}_-$ implies
\begin{align*}
    m(\bx) = m^{(\ell)}_- \circ \bbm^{(\ell)}_+(\bx) =  m^{(\ell)}_- \circ \mathcal{L}_{\ell-1} \circ \bar{\sigma} \circ \bbm^{(\ell-1)}_+(\bx)
\end{align*}
Consequently, it follows from triangle inequality that
\begin{align}
\label{eq:proof:lemma:m-lip-weight:telescope}
\begin{split}
    |m(\bx) - \breve{m}(\bx)| = &\left|\mathcal{L}_{L+1} \circ \bar{\sigma}  \circ \bbm_+^{(L+1)}(\bx) - \breve{\mathcal{L}}_{L+1} \circ \bar{\sigma} \circ \bbm_+^{(L+1)}(\bx)\right| \\
    &~~~~~ + \sum_{\ell=1}^{L} \left|\breve{m}_-^{(\ell+1)} \circ \mathcal{L}_\ell \circ \bar{\sigma} \circ \bbm^{(\ell)}_+(\bx) - \breve{m}_-^{(\ell+1)} \circ \breve{\mathcal{L}}_\ell \circ \bar{\sigma} \circ \bbm^{(\ell)}_+(\bx)  \right|\\
    &~~~~~ + \left|\breve{m}_-^{(1)} \circ \phi \circ \mathcal{L}_0(\bx) - \breve{m}_-^{(1)} \circ \phi \circ \breve{\mathcal{L}}_0(\bx)\right|
\end{split}
\end{align}
We claim that
\begin{align}
\label{eq:m-lip-weight-claim1}
    \|m_-^{(\ell)}(\bu) - m_-^{(\ell)}(\bv)\|_\infty \le \begin{cases}
    (BN)^{L+2-\ell} \|\bu - \bv\|_\infty & \qquad \ell \in \{2,\cdots, L+1\} \\
    B^{L+1} N^{L} (N+\overline{r}) \|\bu - \bv\|_\infty & \qquad \ell = 1
    \end{cases},
\end{align} and
\begin{align}
\label{eq:m-lip-weight-claim2}
    \|\bbm_+^{(\ell)}(\bx)\|_\infty \le (M \lor K) (B(N+1))^{\ell-1} \qquad \forall \ell \in \{1,\cdots, L+1\}.
\end{align}

We first prove the argument \eqref{eq:m-lip-weight} using the above two claims \eqref{eq:m-lip-weight-claim1} and \eqref{eq:m-lip-weight-claim2}. Using claim \eqref{eq:m-lip-weight-claim1} with $\ell=1$ gives
\begin{align*}
    \left|\breve{m}_-^{(1)} \circ \phi \circ \mathcal{L}_0(\bx) - \breve{m}_-^{(1)} \circ \phi \circ \breve{\mathcal{L}}_0(\bx) \right| &\le B^{L+1} N^{L} (N+\overline{r}) \|\phi \circ \mathcal{L}_0(\bx) - \phi \circ \breve{\mathcal{L}}_0(\bx)\|_\infty \\
    &\le B^{L+1} N^{L} (N+\overline{r}) \|\mathcal{L}_0(\bx) - \breve{\mathcal{L}}_0(\bx)\|_\infty.
\end{align*}
Recall $[\mathcal{L}_0(\bx)]_i = [\breve{\mathcal{L}}_0(\bx)]_i = p^{-1} [\bW]_{:,i}^\top \bx$ for $i \in \{1,\cdots, \overline{r}\}$, then for any $i\in \{1,\cdots, N\}$,
\begin{align}
\label{eq:proof:lemma:m-lip-weight:derivation1}
\begin{split}
    \left|[\mathcal{L}_0(\bx)]_{i+\overline{r}} - [\breve{\mathcal{L}}_0(\bx)]_{i+\overline{r}} \right| &= \left|[\bTheta]_{:,i}^\top \bx - [\breve{\bTheta}]_{:,i}^\top \bx\right| \\
    &\le \|\bx\|_\infty \left\|[\bTheta]_{:,i} - [\breve{\bTheta}]_{:,i}\right\|_1 \le K p \|\bTheta - \breve{\bTheta}\|_{\max},
\end{split}
\end{align} provided $\bx \in [-K,K]^p$. This implies
\begin{align}
\label{eq:proof:lemma:m-lip-weight:decomp1}
    \left|\breve{m}_-^{(1)} \circ \phi \circ \mathcal{L}_0(\bx) - \breve{m}_-^{(1)} \circ \phi \circ \breve{\mathcal{L}}_0(\bx) \right| \le K B^{L+1} N^{L} (N+\overline{r}) p\|\bTheta - \breve{\bTheta}\|_{\max}.
\end{align} 

For any $\ell \in \{1,\cdots, L+1\}$, it follows from a similar argument that
\begin{align*}
    \|\mathcal{L}_\ell \circ \bar{\sigma}(\bz) - \breve{\mathcal{L}}_\ell \circ \bar{\sigma}(\bz)\|_\infty &= \left\|\bW_\ell \bar{\sigma}(\bz) - \breve{\bW}_\ell \bar{\sigma}(\bz) + \bb_\ell - \breve{\bb}_\ell\right\|_\infty \\
    &\le \left\|\bW_\ell \bar{\sigma}(\bz) - \breve{\bW}_\ell \bar{\sigma}(\bz)\right\|_\infty + \|\bb_\ell - \breve{\bb}_\ell\|_\infty \\
    &= \max_{i\in \{1,\cdots,d_\ell\}} \left|[\bW_\ell]_{i,:} \bar{\sigma}(z) - [\breve{\bW}_\ell]_{i,:} \bar{\sigma}(\bz) \right| + \|\bb_\ell - \breve{\bb}_\ell\|_\infty \\
    &\le \max_{i\in \{1,\cdots,d_\ell\}} \|[\bW_\ell]_{i,:} - [\breve{\bW}_\ell]_{i,:} \|_1 \|\bar{\sigma}(\bz)\|_\infty + \|\bb_\ell - \breve{\bb}_\ell\|_\infty \\
    &\le \|\bW_\ell - \breve{\bW}_\ell\|_{\max} d_{\ell-1} \|\bz\|_\infty + \|\bb_\ell - \breve{\bb}_\ell\|_\infty,
\end{align*} which implies
\begin{align}
\label{eq:m-lip-weight-lell-sigma-bound}
    \|\mathcal{L}_\ell \circ \bar{\sigma}(\bz) - \breve{\mathcal{L}_\ell} \circ \bar{\sigma}(\bz)\|_\infty \le \|\btheta_g(m) - \btheta_g(\breve{m})\|_\infty (1 + d_{\ell-1} \|\bz\|_\infty).
\end{align}
It follows from \eqref{eq:m-lip-weight-lell-sigma-bound} and \eqref{eq:m-lip-weight-claim2} with $\ell=L+1$ that
\begin{align}
\label{eq:proof:lemma:m-lip-weight:decomp2}
\begin{split}
    &\left|\mathcal{L}_{L+1} \circ \bar{\sigma}  \circ \bbm_+^{(L+1)}(\bx) - \breve{\mathcal{L}}_{L+1} \circ \bar{\sigma} \circ \bbm_+^{(L+1)}(\bx)\right| \\
    &~~~~ \le \|\btheta_g(m) - \btheta_g(\breve{m})\|_\infty (1 + N \|m_+^{(L+1)}(x)\|_\infty) \\
    &~~~~ \le (M\lor K\|\bW\|_{\max}) B^L(N+1)^{L+1} \|\btheta_g(m) - \btheta_g(\breve{m})\|_\infty,
\end{split}
\end{align} provided $(B(N+1))^L (M\lor K\|W\|_{\max}) \ge 1$.

Moreover, for any $\ell\in \{1,\cdots, L\}$, Combing \eqref{eq:m-lip-weight-claim1}, \eqref{eq:m-lip-weight-lell-sigma-bound} and \eqref{eq:m-lip-weight-claim2} gives
\begin{align}
\label{eq:proof:lemma:m-lip-weight:decomp3}
\begin{split}
    \Big|\breve{m}_-^{(\ell+1)} \circ &\mathcal{L}_\ell \circ \bar{\sigma} \circ \bbm^{(\ell)}_+(\bx) - \breve{m}_-^{(\ell+1)} \circ \breve{\mathcal{L}}_\ell \circ \bar{\sigma} \circ \bbm^{(\ell)}_+(\bx) \Big| \\
    &\le (BN)^{L+1-\ell} \|\mathcal{L}_\ell \circ \bar{\sigma} \circ \bbm^{(\ell)}_+(\bx) - \breve{\mathcal{L}}_\ell \circ \bar{\sigma} \circ \bbm^{(\ell)}_+(\bx)\|_\infty \\
    &\le (BN)^{L+1-\ell} \|\btheta_g(m) - \btheta_g(\breve{m})\|_\infty (1 + d_{\ell-1} \|\bbm^{(\ell)}_+(\bx)\|_\infty) \\
    &\le (BN)^{L+1-\ell} \|\btheta_g(m) - \btheta_g(\breve{m})\|_\infty \left(1 + N(M \lor K\|\bW\|_{\max}) (B(N+1))^{\ell-1}\right) \\
    &\le (M\lor K\|\bW\|_{\max}) B^L (N+1)^L \|\btheta_g(m) - \btheta_g(\breve{m})\|_\infty.
\end{split}
\end{align} 
Plugging \eqref{eq:proof:lemma:m-lip-weight:decomp1}, \eqref{eq:proof:lemma:m-lip-weight:decomp2} and \eqref{eq:proof:lemma:m-lip-weight:decomp3} into \eqref{eq:proof:lemma:m-lip-weight:telescope} completes the proof of our main argument \eqref{eq:m-lip-weight}. 

It suffices to prove the two claims \eqref{eq:m-lip-weight-claim1} and \eqref{eq:m-lip-weight-claim2}.

\noindent {\sc Proof of Claim \eqref{eq:m-lip-weight-claim1}. } For any $\ell \in \{1,\cdots, L\}$,
\begin{align}
\label{eq:m-lip-weight-claim1:proof-induction-ineq}
    \|\mathcal{L}_{\ell}(\bu) - \mathcal{L}_{\ell}(\bv)\|_\infty \le \|\bW_\ell^\top (\bu - \bv)\|_\infty \le d_{\ell-1} B \|\bu - \bv\|_\infty,
\end{align} provided $\|\bW_\ell\|_{\max} \le B$. We prove the claim \eqref{eq:m-lip-weight-claim1} by induction. When $\ell=L+1$, this is a direct consequence of \eqref{eq:m-lip-weight-claim1:proof-induction-ineq} that
\begin{align*}
    \|m^{(\ell)}_{-}(\bu) - m^{(\ell)}_{-}(\bv) \|_\infty = \left\|\mathcal{L}_{L+1} \left(\bar{\sigma}(\bu) - \bar{\sigma}(\bv)\right)\right\|_\infty \le d_L B \|\bu-\bv\|_\infty = BN \|\bu-\bv\|_\infty.
\end{align*} 

If the claim \eqref{eq:m-lip-weight-claim1} holds for $\ell+1$ with $\ell \in \{2, 3,\cdots, L\}$, then we further have
\begin{align*}
    \|m^{(\ell)}_{-}(\bu) - m^{(\ell)}_{-}(\bv)\|_\infty &= \left\|m^{(\ell+1)}_{-}(\bu) \circ \mathcal{L}_{\ell} \circ \bar{\sigma}(\bu) - m^{(\ell+1)}_{-}(\bu) \circ \mathcal{L}_{\ell} \circ \bar{\sigma}(\bv)\right\|_\infty \\
    &\overset{(a)}{\le} (BN)^{L+1-\ell} \|\mathcal{L}_{\ell} \circ \bar{\sigma}(\bu) - \mathcal{L}_{\ell} \circ \bar{\sigma}(\bv)\|_\infty \\
    &\overset{(b)}{\le} (BN)^{L+1-\ell} (BN) \|\bar{\sigma}(\bu) - \bar{\sigma}(\bv)\|_\infty \\
    &\overset{(c)}{\le} (BN)^{L+2-\ell} \|\bu - \bv\|_\infty.
\end{align*} Here (a) follows from induction, (b) follows from \eqref{eq:m-lip-weight-claim1:proof-induction-ineq}, (c) follows from the fact that $|\sigma(x)| \le |x|$. The case where $\ell=1$ is very similar, so it concludes.

\noindent {\sc Proof of Claim \eqref{eq:m-lip-weight-claim2}.} Our proof will use the fact that
\begin{align}
\label{eq:m-lip-weight-claim2:proof-induction-ineq}
    \|\mathcal{L}(\bx)\|_\infty = \|\bW_\ell \bx + \bb_\ell\|_\infty \le B + d_{\ell-1}B \|\bx\|_\infty
\end{align} provided $\ell\in \{1,\cdots, L\}$ repeatedly. We also prove \eqref{eq:m-lip-weight-claim2} by induction. When $\ell=1$, it follows from the definition of $\mathcal{L}_0$ and $\phi$ that,
\begin{align*}
    \| \bbm^{(\ell)}_+(\bx) \|_\infty \le K \|\bW\|_{\max} \lor M.
\end{align*} 

If \eqref{eq:m-lip-weight-claim2} holds for $\ell-1$ with $\ell \in \{2, \cdots, L+1\}$, then combining \eqref{eq:m-lip-weight-claim2:proof-induction-ineq} and the fact $|\sigma(x)| \le |x|$ gives
\begin{align*}
    \|\bbm_+^{(\ell)}(\bx)\|_\infty \le B + d_{\ell-1} B \|\bbm^{(\ell-1)}_+(\bx)\|_\infty &\le B + (BN) (B(N+1))^{\ell-2} (K \|\bW\|_{\max} \lor M) \\&\le (K \|\bW\|_{\max} \lor M) (B(N+1))^{\ell-1}.
\end{align*} which completes the proof of claim \eqref{eq:m-lip-weight-claim2}.
\end{proof}

\subsection{Some Facts about the Empirical Process}

Similar to the proof of oracle-type inequality for the FAR-NN estimator, we also need two technical lemmas to (1) establish a bound on the empirical $L_2$ norm from below in terms of the population $L_2$ norm, and (2) derive a bound on the weighted empirical process via the empirical $L_2$ norm \emph{both in the presence of the regularizer term}. Note that the following results are only applicable to ReLU network classes with bounded weights instead of generic function classes with finite Pseudo-dimension.

\begin{lemma}
\label{lemma:equivalence-population-empirical-l2-regularized}
Let $\varrho_n$, $v_n$ be the quantity defined in \eqref{eq:fast-rate-v_n} and \eqref{eq:fast-rate-varrho_n} respectively. Suppose $\tilde{m}$ is a fixed function in $\mathcal{G}_m$. Under the conditions of Theorem \ref{thm:fast-oracle}, there exists some universal constants $c_1$--$c_3$ such the event \begin{align*}
    \mathcal{C}_t = \left\{\forall m\in \mathcal{G}_m, ~~ \frac{1}{2}\|m - \tilde{m}\|^2_2 \le \|m - \tilde{m}\|_n^2 + 2\lambda \sum_{i,j} \psi_\tau({\Theta}_{i,j}) + c_1 \left(v_n + \rho_n + \frac{t}{n}\right)\right\}
\end{align*} satisfies $\mathbb{P}\left[\mathcal{C}_{t}\right] \ge 1-e^{-t}$ for any $t>0$ as long as $\lambda \ge c_2\varrho_n$ and $\tau^{-1} \ge c_3 (r+1) b (BN)^{L+1} (N+\overline{r}) p n$.
\end{lemma}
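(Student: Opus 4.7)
The plan is a peeling argument over the clipped sparsity level $\hat s(m):=\sum_{i,j}\psi_\tau(\Theta_{i,j})$. For each integer $k\ge 0$, the sub-class $\mathcal{G}_{m,k}$ has log $L_\infty$-covering number controlled by Lemma~\ref{lemma:g-infty-cover-number}, which splits into a network-complexity part of size $(N^2L+N\overline{r})[L\log(BN)+\log(1/\delta)]$ and a sparsity-dependent part of size $k[L\log(BN)+\log p+\log(1/\delta)]$. At the resolution $\delta\asymp 1/n$ that I will use, these two pieces match $n\cdot v_n$ and $k\cdot n\varrho_n$ respectively, so the final bound should carry exactly one $\lambda\hat s(m)$-term and one $v_n+\varrho_n$-term.

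First I would apply a one-sided relative-deviation inequality (Theorem~19.3 of \cite{gyorfi2002distribution}, exactly as in the proof of Lemma~\ref{lemma:equivalence-population-empirical-l2}) to the uniformly bounded squared-difference class $\{(m-\tilde m)^2:m\in\mathcal{G}_{m,k}\}$. Combined with the covering bound above, this yields: for each fixed $k\ge 0$ and $t>0$, with probability at least $1-\exp(-t)$, every $m\in\mathcal{G}_{m,k}$ satisfies
\[
\tfrac12\|m-\tilde m\|_2^2 \le \|m-\tilde m\|_n^2 + C\bigl(v_n + k\varrho_n + t/n\bigr).
\]
I then peel: for any $m\in\mathcal{G}_m$ set $k_m=\lceil\hat s(m)\rceil\le pN$, apply the display at level $k_m$ with $t$ replaced by $t+2\log(k_m+1)$, and union-bound over $k_m\in\{0,1,\dots,pN\}$. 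The extra $\log(k_m+1)/n$ price is absorbed into $\varrho_n$. The remaining sparsity-dependent error $C\,k_m\varrho_n$ is then dominated by $2\lambda\hat s(m)$ whenever $\lambda\ge c_2\varrho_n$ with $c_2$ large enough, since $2\lambda\hat s(m)\ge 2\lambda(k_m-1)\ge 2c_2\varrho_n(k_m-1)$; the residual additive constant $2\lambda$ is swept into the $v_n+\varrho_n$ bucket.

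The last step is to handle the $\tau$-truncation so that every $m\in\mathcal{G}_m$ is actually comparable to an element of some $\mathcal{G}_{m,k}$. Replacing each $\Theta_{i,j}$ by $0$ when $|\Theta_{i,j}|<\tau$ produces $m^{(\tau)}\in\mathcal{G}_{m,\lceil\hat s(m)\rceil}$; by the weight-to-output Lipschitz bound of Lemma~\ref{lemma:m-lip-weight} and the choice $\tau^{-1}\ge c_3(r+1)b(BN)^{L+1}(N+\overline r)pn$ together with the covariate bound $\|\bx\|_\infty\lesssim b$ guaranteed by Condition~\ref{cond1}, one has $\|m-m^{(\tau)}\|_\infty\lesssim 1/n$. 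Hence $|\|m-\tilde m\|_n^2-\|m^{(\tau)}-\tilde m\|_n^2|$ and the analogous $L_2$ difference are $O(1/n)$, absorbable into $t/n+\varrho_n$.

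The main technical obstacle will be the peeling step, specifically verifying that the covering-number decomposition in Lemma~\ref{lemma:g-infty-cover-number} is \emph{additive} in the two complexity sources, so that $v_n$ carries no $\log p$ factor while $k\varrho_n$ absorbs all of it; once that separation is in place the rest is book-keeping to fold universal constants into $c_1$ and to check that the union-bound range $k\le pN$ introduces only an $O(\varrho_n)$ extra cost.
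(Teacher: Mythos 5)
Your approach is essentially the same as the paper's: bound the empirical process on the squared-difference class $\{(m-\tilde m)^2\}$ via Theorem~19.3 of \cite{gyorfi2002distribution}, feed in the covering-number bound of Lemma~\ref{lemma:g-infty-cover-number} at resolution $\delta\asymp 1/n$ (the $\tau$-constraint is exactly what makes that resolution admissible), peel over the clipped-sparsity level, and absorb the sparsity-dependent error into $2\lambda\sum_{i,j}\psi_\tau(\Theta_{i,j})$ using $\lambda\gtrsim\varrho_n$.

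Two small remarks. First, your peeling variable runs over integers $k_m=\lceil\hat s(m)\rceil\in\{0,\ldots,pN\}$ with a $2\log(k_m+1)$ tail weight, whereas the paper peels over dyadic buckets $\alpha_{k-1}<\hat s\le\alpha_k$, $\alpha_k=2^k$, so only $\mathcal{O}(\log(pN))$ levels. Both are standard and land on the same rates: your log-weighted union bound costs a constant factor in probability (reparametrize $t$) plus a $\log(k_m+1)/n\lesssim\varrho_n$ additive cost, while the paper pays a uniform $\log(\log(pN))\lesssim n\varrho_n$ surcharge and absorbs it by inflating the $t$ fed to each bucket. Second, your final $\tau$-truncation step ("replace small $\Theta_{i,j}$ by $0$ to produce $m^{(\tau)}$") is redundant: every $m\in\mathcal{G}_m$ already lies in $\mathcal{G}_{m,\lceil\hat s(m)\rceil}$ since $\psi_\tau\ge 0$, so no outer approximation by $m^{(\tau)}$ is needed. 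That zeroing-out trick lives inside the proof of Lemma~\ref{lemma:g-infty-cover-number}, where it is used once to build the $\delta$-net; repeating it at the peeling level does no harm but adds nothing. With that step dropped, your argument and the paper's coincide in all essentials.
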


\begin{lemma}
\label{lemma:weighted-empirical-process-regularized}
Let $\varrho_n$, $v_n$ be the quantity defined in \eqref{eq:fast-rate-v_n} and \eqref{eq:fast-rate-varrho_n} respectively. Suppose $\tilde{m}$ is a fixed function in $\mathcal{G}_m$. Under the conditions of Theorem \ref{thm:fast-oracle}, there exists some universal constants $c_1$--$c_2$ such that for any fixed $\epsilon \in (0,1)$, the event 
\begin{align*}
    \mathcal{B}_{t,\epsilon} = \left\{\forall m\in \mathcal{G}_m, \frac{4}{n} \sum_{i=1}^n \varepsilon_i (m(\bx_i) - \tilde{m}(\bx_i)) - \lambda \sum_{i,j} \psi_\tau({\Theta}_{i,j}) \le \epsilon \|m - \tilde{m}\|_n^2 + \frac{c_1}{\epsilon} \left(v_n + \varrho_n + \frac{t}{n} \right)\right\}
\end{align*} occurs with probability at least $1-e^{-t}$ for any $t>0$ as long as $\lambda \ge \frac{c_2 \varrho_n}{\epsilon}$ and $\tau^{-1} \ge 4(r+1)b (BN)^{L+1} (N+\overline{r}) p n$.
\end{lemma}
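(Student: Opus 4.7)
\textbf{Proof proposal for Lemma~\ref{lemma:weighted-empirical-process-regularized}.}
The plan is to run a double peeling argument, one index for the sparsity level $s$ of the clipped-$L_1$ penalty $\sum_{i,j}\psi_\tau(\Theta_{i,j})$ and one index for the empirical radius $\delta = \|m-\tilde m\|_n$, and then absorb the $s$-proportional complexity term into $\lambda\sum_{i,j}\psi_\tau(\Theta_{i,j})$ while absorbing the $\delta$-proportional term into $\epsilon\|m-\tilde m\|_n^2$ via AM--GM. Concretely, define shells $\mathcal{S}_{k,\ell} = \{m\in\mathcal{G}_m: s_{k-1}<\sum_{i,j}\psi_\tau(\Theta_{i,j})\le s_k,\ \delta_{\ell-1}<\|m-\tilde m\|_n\le \delta_\ell\}$ with $s_k=2^k$ and $\delta_\ell = 2^\ell(v_n+\varrho_n+t/n)^{1/2}/\sqrt{\epsilon}$, and work uniformly on each shell.

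On a fixed shell I would control $\sup_{m\in\mathcal{S}_{k,\ell}}|n^{-1}\sum_{i=1}^n\varepsilon_i(m(\bx_i)-\tilde m(\bx_i))|$ by generic chaining, exactly as in the proof of Lemma~\ref{lemma:fa-weight-empirical-process}, conditionally on $\bx_1,\dots,\bx_n$ so that $\varepsilon_i$'s are sub-Gaussian by Condition~\ref{cond4}. The $\|\cdot\|_{L_2(\mu_n)}$ diameter of a shell is at most $2\delta_\ell$, so the dominant term in Talagrand's $\gamma_2$ functional is $\delta_\ell\sqrt{H_{k,\ell}/n}$ where $H_{k,\ell}$ is the metric entropy of $\mathcal{G}_{m,s_k}$. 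Plugging in Lemma~\ref{lemma:g-infty-cover-number} at resolution $\delta_\ell/\sqrt{n}$ (replacing $\|\cdot\|_\infty$ entropy by empirical $L_2$ entropy via the standard inclusion) gives
\[
H_{k,\ell} \lesssim (N^2L+N\overline r)L\log(BNn) + s_k\bigl[L\log(BN)+\log(p(N+\overline r))\bigr] + \log\!\tfrac{1}{\delta_\ell} \asymp nv_n + s_k n\varrho_n.
\]
Applying the generic chaining tail bound then yields, for any $u>0$,
\[
\sup_{m\in\mathcal{S}_{k,\ell}}\Bigl|\tfrac{1}{n}\sum_i\varepsilon_i(m(\bx_i)-\tilde m(\bx_i))\Bigr|
\;\le\; C\delta_\ell\sqrt{v_n + s_k\varrho_n + u/n}
\]
with probability $\ge 1-e^{-u}$. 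The role of the assumption $\tau^{-1}\ge 4(r+1)b(BN)^{L+1}(N+\overline r)pn$ is precisely to make Lemma~\ref{lemma:g-infty-cover-number} applicable down to resolution $\delta\asymp 1/n$: the $2\tau\cdot KB^{L+1}N^{L+1}(N+\overline r)p$ lower bound on admissible $\delta$ in that lemma must be $\ll \delta_\ell/\sqrt n$ uniformly in $\ell$.

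The endgame is union bound over $k,\ell$ ranging over $O(\log n)$ dyadic values (trivial shells with $s_k$ or $\delta_\ell$ exceeding the a priori uniform bounds on $\mathcal G_m$ can be discarded). Taking $u = t + \log(k\ell) + $ lower-order corrections and then invoking $4AB\le \epsilon A^2 + 4B^2/\epsilon$ with $A=\|m-\tilde m\|_n$ and $B = C\sqrt{v_n+s\varrho_n+t/n}$, the event in question is implied by
\[
\tfrac{4}{n}\!\sum_i\!\varepsilon_i(m-\tilde m)(\bx_i) \le \epsilon\|m-\tilde m\|_n^2 + \tfrac{C'}{\epsilon}\bigl(v_n+s\varrho_n+t/n\bigr),
\]
so requiring $\lambda\ge C'\varrho_n/\epsilon$ lets the $s\varrho_n/\epsilon$ term be absorbed by $\lambda\sum_{i,j}\psi_\tau(\Theta_{i,j})\ge \lambda s_{k-1} \asymp \lambda s_k$, matching the claimed inequality with the claimed probability. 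The main obstacle is bookkeeping: making sure the two-index peeling does not lose more than $\log n$ factors, and verifying that the $s$-independent entropy term $nv_n$ in $H_{k,\ell}$ does not pollute the $s$-proportional bound --- this is why the covering-number decomposition in Lemma~\ref{lemma:g-infty-cover-number} splits cleanly into a weight-parameter piece and a sparsity piece, so that chaining delivers exactly one factor of $\sqrt{v_n}$ and one factor of $\sqrt{s\varrho_n}$ rather than their product.
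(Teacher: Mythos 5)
Your proposal matches the paper's proof in all essential respects: a per-shell generic-chaining bound (conditional on the covariates) driven by the covering-number estimate of Lemma~\ref{lemma:g-infty-cover-number}, with the $\tau^{-1}$ condition ensuring the entropy integral's lower cutoff is small enough, followed by a two-index peeling over sparsity level and empirical radius, a union bound costing only logarithmic factors, Young's inequality to move the $\delta$-proportional term into $\epsilon\|m-\tilde m\|_n^2$, and the condition $\lambda\gtrsim\varrho_n/\epsilon$ to absorb the $s$-proportional part into $\lambda\sum_{i,j}\psi_\tau(\Theta_{i,j})$. The only thing you gloss over (as ``lower-order corrections'') is the small additive $(u+1)/n$ term arising from the local part of the chaining bound near resolution $\epsilon\asymp n^{-3/2}$, which the paper handles explicitly via the $\alpha(\epsilon,u)$ term in Lemma~\ref{lemma:chaining}, but this is a bookkeeping detail rather than a gap.
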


We also need the following variant of standard chaining result to prove Lemma \ref{lemma:weighted-empirical-process-regularized}.
\begin{lemma}[Chaining]
\label{lemma:chaining}
Let $\{X_t, t\in \mathbb{T}\}$ be a zero-mean sub-Gaussian process with respect to the distance $d$ such that
\begin{align}
\label{eq:inc-cond}
\forall s, t\in \mathbb{T}, u>0, ~~~ \mathbb{P}\left[|X_t - X_s| \ge u\right] \le \exp\left(-\frac{u^2}{2d(s,t)^2}\right)
\end{align} Denote $\Delta(\mathbb{T}) = \sup_{s, t \in \mathbb{T}} d(s,t)$. There exists a universal constant $c_1$ such that for any $\epsilon \in [0, \Delta(\mathbb{T})]$ and $u>0$, if
\begin{align*}
    \mathbb{P} \left[ \sup_{s,t\in \mathbb{T}, d(s,t)\le \epsilon} |X_t - X_s| \le \alpha(\epsilon, u)\right] \ge 1-e^{-u},
\end{align*} then 
\begin{align*}
    \mathbb{P}\left[\sup_{s,t\in \mathbb{T}} |X_t - X_s| \le c_1\left\{\int_{\epsilon/4}^{\Delta(\mathbb{T})} \sqrt{\log \mathcal{N}(\omega, \mathbb{T}, d) } \mathrm{d}\omega + \left(u^{1/2}+1\right) \Delta(\mathbb{T}) + \alpha(\epsilon, u)\right\}\right] \ge 1-2e^{-u}.
\end{align*}
\end{lemma}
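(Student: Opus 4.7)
The plan is to run Dudley's chaining argument with an early truncation at scale $\epsilon$, using the hypothesis about local fluctuations to handle the residual terms below that scale. Fix an anchor $t_0 \in \mathbb{T}$, set $\epsilon_k = 2^{-k}\Delta(\mathbb{T})$, and let $k_0$ be the smallest integer such that $\epsilon_{k_0} \le \epsilon/2$. For each $k\in\{0,1,\dots,k_0\}$ choose a minimal $\epsilon_k$-net $\mathbb{T}_k\subset\mathbb{T}$ of cardinality $|\mathbb{T}_k|=\mathcal{N}(\epsilon_k,\mathbb{T},d)$, with $\mathbb{T}_0=\{t_0\}$. For each $t\in\mathbb{T}$ and $k$, let $\pi_k(t)\in\mathbb{T}_k$ be a nearest element so that $d(t,\pi_k(t))\le\epsilon_k$, and note $d(\pi_k(t),\pi_{k-1}(t))\le 3\epsilon_k$.

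The next step is the standard telescoping decomposition. For arbitrary $s,t\in\mathbb{T}$,
\begin{align*}
X_t - X_s = (X_t - X_{\pi_{k_0}(t)}) + \sum_{k=1}^{k_0}\bigl(X_{\pi_k(t)} - X_{\pi_{k-1}(t)}\bigr) - \sum_{k=1}^{k_0}\bigl(X_{\pi_k(s)} - X_{\pi_{k-1}(s)}\bigr) - (X_s - X_{\pi_{k_0}(s)}).
\end{align*}
For the link at level $k$, the increment condition \eqref{eq:inc-cond} plus a union bound over the at most $|\mathbb{T}_k|\cdot|\mathbb{T}_{k-1}|\le\mathcal{N}(\epsilon_k,\mathbb{T},d)^2$ pairs yields, with probability at least $1-e^{-u-k}$,
\begin{align*}
\sup_{t\in\mathbb{T}}\bigl|X_{\pi_k(t)}-X_{\pi_{k-1}(t)}\bigr| \le 3\epsilon_k\sqrt{2\bigl(2\log\mathcal{N}(\epsilon_k,\mathbb{T},d) + u + k\bigr)}.
\end{align*}
Intersecting these events over $k=1,\dots,k_0$ gives total failure probability at most $\sum_k e^{-u-k}\le e^{-u}$.

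Summing the chaining bound, I use the geometric decay $\epsilon_k=2(\epsilon_k-\epsilon_{k+1})$ to convert the sum into a Dudley-type integral:
\begin{align*}
\sum_{k=1}^{k_0}\epsilon_k\sqrt{\log\mathcal{N}(\epsilon_k,\mathbb{T},d)} \lesssim \sum_{k=1}^{k_0}(\epsilon_k-\epsilon_{k+1})\sqrt{\log\mathcal{N}(\epsilon_{k+1},\mathbb{T},d)} \lesssim \int_{\epsilon_{k_0+1}}^{\Delta(\mathbb{T})}\sqrt{\log\mathcal{N}(\omega,\mathbb{T},d)}\,d\omega,
\end{align*}
and since $\epsilon_{k_0+1}\ge\epsilon/4$ by the choice of $k_0$, this is bounded by $\int_{\epsilon/4}^{\Delta(\mathbb{T})}\sqrt{\log\mathcal{N}(\omega,\mathbb{T},d)}\,d\omega$. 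The remaining terms $\epsilon_k\sqrt{u+k}$ form a geometric series in $\epsilon_k$, so $\sum_k\epsilon_k\sqrt{u+k}\lesssim\Delta(\mathbb{T})(\sqrt{u}+1)$. For the two residual terms $X_t-X_{\pi_{k_0}(t)}$ and $X_s-X_{\pi_{k_0}(s)}$, note $d(t,\pi_{k_0}(t))\le\epsilon_{k_0}\le\epsilon$, so the hypothesis applied to the pairs $\{(t,\pi_{k_0}(t))\}$ (viewed as a supremum over pairs within distance $\epsilon$) gives a uniform bound of $\alpha(\epsilon,u)$ with probability at least $1-e^{-u}$.

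Combining all of the above via a union bound produces the claimed inequality with overall failure probability at most $2e^{-u}$. The main obstacle is the careful bookkeeping of the per-level failure probabilities $e^{-u-k}$, which is what allows the $\sqrt{u+k}$ penalty per link to telescope into only $(\sqrt{u}+1)\Delta(\mathbb{T})$ in the final bound rather than an extra $\sqrt{\log k_0}$ factor; the rest of the argument is just Dudley's chaining adapted to stop at scale $\epsilon$ instead of $0$, with the hypothesis absorbing what Dudley's original argument would have gotten from continuing the chain.
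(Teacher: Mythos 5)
Your argument is the paper's own argument: a two-sided truncated Dudley chain, per-level failure probability $e^{-u-k}$ so the union bound over levels costs only $e^{-u}$ and the $\sqrt{u+k}$ penalty telescopes to $(\sqrt{u}+1)\Delta(\mathbb{T})$, with the hypothesis event absorbing the residuals $X_t - X_{\pi_{k_0}(t)}$.

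There is, however, an off-by-one slip in the choice of truncation level. You take $k_0$ to be the \emph{smallest} integer with $\epsilon_{k_0}\le\epsilon/2$, and then assert ``$\epsilon_{k_0+1}\ge\epsilon/4$ by the choice of $k_0$.'' But $\epsilon_{k_0}\le\epsilon/2$ forces $\epsilon_{k_0+1}=\epsilon_{k_0}/2\le\epsilon/4$ --- the inequality goes the other way. Consequently $\int_{\epsilon_{k_0+1}}^{\Delta(\mathbb{T})}\sqrt{\log\mathcal N(\omega,\mathbb T,d)}\,d\omega \ge \int_{\epsilon/4}^{\Delta(\mathbb{T})}\sqrt{\log\mathcal N(\omega,\mathbb T,d)}\,d\omega$, which does not prove the stated bound. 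The fix is to stop the chain one level earlier, choosing the truncation level $m$ so that $\epsilon/2<\epsilon_m\le\epsilon$ (the paper's choice, which equals $k_0-1$ in your indexing). Then $d(t,\pi_m(t))\le\epsilon_m\le\epsilon$, so the residual is still covered by the hypothesis at scale $\epsilon$, while $\epsilon_{m+1}>\epsilon/4$, so the Dudley integral has lower limit above $\epsilon/4$ as required. Everything else in the proposal is correct.
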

\begin{proof}[Proof of Lemma~\ref{lemma:chaining}] We apply the standard chaining technique, but carefully deal with the tail probability term $u$. Let $m \in \mathbb{N^+}$ be such that \begin{align*}
    \epsilon/2 < 2^{-m} \Delta(\mathbb{T}) \le \epsilon.
\end{align*} Let $T_0, \cdots, T_m\subset \mathbb{T}$ such that $T_k$ is a $2^{-k} \Delta(\mathbb{T})$-cover of $\mathbb{T}$ with respect to the distance $d$, thus $|T_k| = \mathcal{N}(2^{-k}\Delta(\mathbb{T}), \mathbb{T}, d)$. By this construction, for any $t\in \mathbb{T}$, there exists some $\pi_k(t) \in T_k$ such that $d(\pi_k(t), t) \le 2^{-k} \Delta(\mathbb{T})$. In particular, $|T_0|=1$, which implies $\pi_0(t)=\pi_0(s)$ for any $s, t \in \mathbb{T}$. Therefore, 
\begin{align}
\label{eq:chaining:telescope}
    \sup_{s,t\in \mathbb{T}} |X_t - X_s| \le |X_t - X_{\pi_m(t)}| + \sum_{k=1}^m |X_{\pi_k(t)} - X_{\pi_{k-1}(t)}| + \sum_{k=1}^m |X_{\pi_{k-1}(s)} - X_{\pi_{k}(s)}| + |X_{\pi_m(s)} - X_s|.
\end{align} By the increment condition \eqref{eq:inc-cond}, for fixed $t \in \mathbb{T}$,
\begin{align*}
    \mathbb{P}\left[|X_{\pi_k(t)} - X_{\pi_{k-1}(t)}| \ge d(\pi_k(t), \pi_{k-1}(t)) \sqrt{2(u + \eta_k)} \right] \le \exp(-u-\eta_k),
\end{align*} for arbitrary $\eta_k>0$. Set $\eta_k=k+2\log\mathcal{N}(2^{-k}\Delta(\mathbb{T}), \mathbb{T}, d)$. Define the event $\Omega_k$ as
\begin{align*}
    \forall t\in \mathbb{T}, ~~ |X_{\pi_k(t)} - X_{\pi_{k-1}(t)}| \le d(\pi_k(t), \pi_{k-1}(t)) \sqrt{2\left(u+\eta_k\right)}.
\end{align*} 
It follows from the union bound that
\begin{align*}
    \mathbb{P}\left[\Omega^c_k\right] \le |T_k| |T_{k-1}| e^{-u-\eta_k} = \exp\left(-u-k+\log |T_k| + \log |T_{k-1}| - 2\mathcal{N}(2^{-k}\Delta(\mathbb{T}), \mathbb{T}, d) \right) \le e^{-u-k}.
\end{align*} Let
\begin{align*}
    \Omega = \bigcap_{k=1}^m \Omega_k \cap \left\{\sup_{s,t\in \mathbb{T}, d(s,t)\le \epsilon} |X_t - X_s| \le \alpha(\epsilon, u)\right\},
\end{align*} using the union bound gives
\begin{align*}
    \mathbb{P}(\Omega^c) &\le \sum_{k=1}^m \mathbb{P}(\Omega_k^c) + \mathbb{P}\left[\sup_{s,t\in \mathbb{T}, d(s,t)\le \epsilon} |X_t - X_s| > \alpha(\epsilon, u)\right]\\
    &\le \sum_{k=1}^m \exp(-u-k) + e^{-u} \le e^{-u}\sum_{k=1}^m 2^{-k} + e^{-u} \le 2e^{-u}.
\end{align*} Notice that $d(\pi_k(t), \pi_{k-1}(t)) \le d(\pi_k(t), t) + d(\pi_{k-1}(t), t) \le 4\cdot 2^{-k} \Delta(\mathbb{T})$. Hence, under $\Omega$, the telescope decomposition \eqref{eq:chaining:telescope} gives
\begin{align*}
    \sup_{s,t\in \mathbb{T}} |X_t - X_s| &\le 2\alpha(\epsilon, u) + 2\sum_{k=1}^m d(\pi_k(t), \pi_{k-1}(t)) \sqrt{2(u+\eta_k)} \\
    &\lesssim \alpha(\epsilon, u) + \sum_{k=1}^m 2^{-k} \Delta(\mathbb{T})\left(\sqrt{k} + \sqrt{u} + \sqrt{\log \mathcal{N}(2^{-k}\Delta(\mathbb{T}), \mathbb{T}, d)}\right)
\end{align*}
We have
\begin{align*}
    \sum_{k=1}^{m} \sqrt{k}2^{-k} \le \sum_{k=1}^\infty \sqrt{k} \sqrt{2}^{-k} \sqrt{2}^{-k} \le \sum_{k=1}^m \sqrt{2}^{-k} \lesssim 1,
\end{align*} and
\begin{align*}
    \sum_{k=1}^m 2^{-k} \Delta(\mathbb{T}) \sqrt{\log \mathcal{N}(2^{-k}\Delta(\mathbb{T}), \mathbb{T}, d)} &\le 2\sum_{k=1}^m \int_{2^{-(k+1)}\Delta(\mathbb{T})}^{2^{-k}\Delta(\mathbb{T})} \sqrt{\log \mathcal{N}(\omega, \mathbb{T}, d)} \mathrm{d}\omega \\
    &\le 2\int_{2^{-(k+1)} \Delta(\mathbb{T})}^{\Delta(\mathbb{T})} \sqrt{\log \mathcal{N}(\omega, \mathbb{T}, d)} \mathrm{d}\omega \\
    &\le \int_{\epsilon/4}^{\Delta(\mathbb{T})} \sqrt{\log \mathcal{N}(\omega, \mathbb{T}, d)} \mathrm{d}\omega.
\end{align*} Putting these pieces together, we can conclude that under $\Omega$, which occurs with probability at least $1-2e^{-u}$, the following inequality
\begin{align*}
\sup_{s,t\in \mathbb{T}} |X_t - X_s| \lesssim \alpha(\epsilon, u) + \Delta(\mathbb{T}) \left(1 + \sqrt{u}\right) + \int_{\epsilon/4}^{\Delta(\mathbb{T})} \sqrt{\log \mathcal{N}(\omega, \mathbb{T}, d)} \mathrm{d}\omega,
\end{align*} holds, which completes the proof.
\end{proof}

\begin{proof}[Proof of Lemma \ref{lemma:equivalence-population-empirical-l2-regularized}]
    Define the event
    \begin{align*}
        \mathcal{C}_{t}(s) = \left\{ \forall m\in \mathcal{G}_{m,s}, ~~\frac{1}{2} \|m - \tilde{m}\|_2^2 \le \|m - \tilde{m}\|_n^2 + C_1\left(s\varrho_n + v_n + \frac{t}{n} \right)\right\}
    \end{align*} for some constant $C_1>0$ to be determined. We first use Theorem 19.6 in \cite{gyorfi2002distribution} with $\epsilon=0.5$ to show that $\mathbb{P}(\mathcal{C}_t(s)) \ge 1-e^{-t}$. To this end, define the function class $\mathcal{H} = \{h = (m-\tilde{m})^2: g\in \mathcal{G}_{m,s}\}$, then
    \begin{align*}
        h(\bx) \le (2M)^2=K_1 \qquad \text{and} \qquad \mathbb{E}|h(X)|^2 \le (2M)^2 \mathbb{E}[h(\bx)]=K_2\mathbb{E}[h(\bx)].
    \end{align*} Suppose $u \in \mathbb{R}^+$ is fixed and satisfying
    \begin{align*}
        u \ge \left(576 \cdot (8M^2 \lor 2\sqrt{2} M)\right)^2 \cdot \frac{2}{n}.
    \end{align*} It follows from the uniform boundedness of $m, \tilde{m}$ that
    \begin{align*}
        \log \mathcal{N}_2(\omega, \mathcal{H}, \bx_1^n) &\le \log \mathcal{N}_\infty(\omega, \mathcal{H}, \bx_1^n) \\
        &\le \log \mathcal{N}_\infty(\omega/M, \mathcal{G}_{m,s}-\tilde{g}, \bx_1^n) \\
        &\overset{(a)}{\le} \log \left(\mathcal{N}(\omega/M, \mathcal{G}_{m,s}, \|\cdot\|_{\infty, [-(r+1)b,(r+1)b]^p}) + 1\right),
    \end{align*} for arbitrary $\bx_1^n$ provided $\omega \ge \frac{0.5^2 u}{16 \cdot 8\cdot 8M^2}$,
    where (a) is implied by the uniform boundedness in Condition \ref{cond1}. Therefore, if 
    \begin{align*}
        \tau^{-1} \ge 2 (r+1)b (BN)^{L+1} (N+\overline{r}) p \left(\frac{0.5^2 u}{16 \cdot 8\cdot 8M^2}\right)^{-1} \gtrsim (r+1) (BN)^{L+1} (N+\overline{r}) pn,
    \end{align*} then it follows from Lemma \ref{lemma:g-infty-cover-number} that
    \begin{align*}
        \log \mathcal{N}_2(\omega, \mathcal{H}, \bx_1^n) \lesssim v_n + s\varrho_n + \log(\omega^{-1} \lor 1) \lesssim nv_n + ns\varrho_n + \log n.
    \end{align*} 
    
    For any $\delta\ge u/8$, we have
    \begin{align*}
        \mathcal{J}(\delta)&=\sup_{x_1^n} \int^{\sqrt{\delta}}_{\delta/512M^2} \sqrt{\log \mathcal{N}_2 \left(\omega, \left\{h\in \mathcal{H}: \frac{1}{n}\sum_{i=1}^n h(\bx_i)^2 \le 16\delta\right\}, \bx_{1}^n\right)} \mathrm{d}\omega \\
        &\le C_2 \sqrt{\delta} \sqrt{nv_n + ns\varrho_n + \log n}.
    \end{align*} for some universal constant $C_2>0$. If we further impose $u \ge (C_2 12288 M^2)^2 (v_n + s\varrho_n + \frac{\log n}{n})=\underline{u}$, then
    \begin{align*}
        \mathcal{J}(\delta) \le \frac{\sqrt{\delta} \sqrt{n\cdot \underline{u}}}{12288 \times M^2} \le \frac{\sqrt{n} \delta}{3072\sqrt{2} M^2} = \frac{\epsilon(1-\epsilon)\sqrt{n} \delta}{96\sqrt{2} (K_1\lor 2K_2)}.
    \end{align*} 
    
    Putting these pieces together, applying Theorem 19.3 in \cite{gyorfi2002distribution} gives
    \begin{align*}
        \mathbb{P}\left[\sup_{h\in \mathcal{H}} \frac{|\mathbb{E}[h(X)] - \frac{1}{n} \sum_{i=1}^n h(X_i)|}{u + \mathbb{E} h(X)} \ge \frac{1}{2} \right] \le C_3 \exp(-C_4nu),
    \end{align*} for some constants $C_3, C_4$. This implies the event
    \begin{align*}
        \mathcal{C}^\dagger_u(s) = \left\{\forall m\in \mathcal{G}_{m,s}, ~~ \left|\|m-\tilde{m}\|_n^2 - \|m-\tilde{m}\|_2 \right| \le \frac{1}{2} \|m-\tilde{m}\|_2 + \frac{1}{2} \underline{u} + \frac{1}{2} (u - \underline{u}) \right\}
    \end{align*} occurs with probability at least $1-C_3\exp(-C_4 n(u - \underline{u}))$. This completes the proof of the claim $\mathbb{P}[\mathcal{C}_t(s)] \ge 1-e^{-t}$ because $\frac{\log n}{n} \lesssim v_n$.
    
    Now we are ready to show that $\mathbb{P}(\mathcal{C}_t) \ge 1-e^{-t}$ by peeling device. To this end, let 
    \begin{align*}
        \mathcal{G}_m(k) = \left\{m\in \mathcal{G}_m: \alpha_{k-1} \le \sum_{i,j}\psi_\tau(\Theta_{i,j}) \le \alpha_k \right\}
    \end{align*} for $k=0,\cdots, \lceil \log_2 (pN)\rceil$, where $\alpha_k = 2^k$ for $k=0,1\cdots$, and $\alpha_{-1}=0$. It's easy to verify that $\mathcal{G}_m = \bigcup_{k=0}^{\lceil \log_2 (pN) \rceil} \mathcal{G}_m(k)$ and $\alpha_k \le 2\alpha_{k-1} + 1$ for all the $k\ge 0$. If the event $\mathcal{C}_t(\alpha_k)$ occurs, then for any $m\in \mathcal{G}_m(k)$, 
    \begin{align*}
        \frac{1}{2} \|m - \tilde{m}\|_2^2 &\le \|m-\tilde{m}\|_n^2 + C_1 \left(\alpha_k \varrho_n + v_n + \frac{t}{n}\right) \\
                                          &\le \|m-\tilde{m}\|_n^2 + C_1 2\alpha_{k-1}\varrho_n + C_1 \left(\varrho_n + v_n + \frac{t}{n}\right)\\
                                          &\le \|m-\tilde{m}\|_n^2 + 2(C_1\varrho_n)\sum_{i,j}\psi_\tau(\Theta_{i,j}) + C_1 \left(\varrho_n + v_n + \frac{t}{n}\right).
    \end{align*} Therefore, if $\lambda\ge C_1\varrho_n$, letting $c_1=(C_5+1)C_1$ for some constant $C_5>0$, we have
    \begin{align*}
        \mathbb{P}(\mathcal{C}_t^c) &= \sum_{k=0}^{\lceil \log_2 (pN)\rceil} \mathbb{P}\left[ \exists m \in \mathcal{G}_m(k), \frac{1}{2}\|m - \tilde{m}\|^2_2 > \|m - \tilde{m}\|_n^2 + 2\lambda \sum_{i,j} \psi_\tau({\Theta}_{i,j}) + C_1(1+C_5)\left(v_n + \varrho_n + \frac{t}{n}\right)\right]\\
        &\le \sum_{k=0}^{\lceil \log_2 (pN)\rceil} \mathbb{P}\left[\left(\mathcal{C}_{t+C_5n(v_n+\varrho_n)}(\alpha_k)\right)^c\right] \le C_6 \log (pN) e^{-t - C_5 n(v_n+\varrho_n)} = e^{-t} e^{\log (C_6\log (pN)) - C_5 n(v_n + \varrho_n)}
    \end{align*} 
    Note that $\log (\log (pN)) \lesssim n\varrho_n$, we can choose some large $C_5$ such that $e^{\log (C_6\log (pN)) - C_5 n(v_n + \varrho_n)}\le 1$, which completes the proof.
    
\end{proof}

\begin{proof}[Proof of Lemma~\ref{lemma:weighted-empirical-process-regularized}]
\noindent {\sc Step 1. Application of Chaining.} Condition on fixed $\bx_1,\cdots, \bx_n$, define the random variable
\begin{align*}
    Z_{s, \delta} := \sup_{m\in \mathcal{G}_{m,s}, \|m - \tilde{m}\|_n \le \delta} \left|\frac{1}{n} \sum_{i=1}^n \varepsilon_i \left(m(\bx_i) - \tilde{m}(\bx_i)\right)\right|.
\end{align*} We claim that
\begin{align}
\label{eq:claim-step4.1}
\mathbb{P}\left(\mathcal{B}_{t}(s,\delta)\right) \ge 1-e^{-t}, ~~\text{where}~~ \mathcal{B}_{t}(s,\delta) = \left\{Z_{s, \delta} \le C_1 \left[\delta \left( \sqrt{\frac{u + 1}{n}} + \sqrt{v_n} + \sqrt{s \varrho_n} \right) + \frac{u+1}{n} \right]\right\},
\end{align} for some constant $C_1 > 0$. To this end, we use Lemma \ref{lemma:chaining}. Let 
\begin{align*}
    d(g, h) = n^{-1/2}\sqrt{\frac{\sigma^2}{n} \sum_{i=1}^n (g(\bx_i) - h(\bx_i))^2}, 
\end{align*} and $\mathbb{T} = \{m\in \mathcal{G}_{m,s}, d(m, \tilde{m}) \le \sigma n^{-1/2} \delta\}$, then it is easy to verify that $\sup_{\theta,\tilde{\theta}\in \mathbb{T}} d(\theta,\tilde{\theta}) \le 2\sigma n^{-1/2}\delta$. Let $Z_\theta = \frac{1}{n}\sum_{i=1}^n \varepsilon_i (\theta(\bx_i) - \tilde{m}(\bx_i))$, thus $Z_{s,\delta} = \sup_{\theta\in \mathbb{T}} Z_\theta$. For any $\theta, \tilde{\theta} \in \mathbb{T}$, the uniform sub-Gaussian condition and independence of $\varepsilon_1,\cdots, \varepsilon_n$ implies
\begin{align*}
    \mathbb{E} \left[\exp\left(t (Z_\theta - Z_{\tilde{\theta}})\right)\right] = \prod_{i=1}^n \mathbb{E} \left[\exp\left(t\varepsilon_i n^{-1} (\theta(\bx_i)-\tilde{\theta}(\bx_i))\right)\right] &\le \prod_{i=1}^n \exp\left(\frac{\sigma^2t^2}{2n^2}(\theta(\bx_i)-\tilde{\theta}(\bx_i))^2\right) \\
    &= \exp\left(\frac{t^2}{2} d^2(\theta, \tilde{\theta}) \right).
\end{align*}
The above moment generating function condition validates the increment condition \eqref{eq:inc-cond}, hence it follows from Lemma \ref{lemma:chaining} that
\begin{align}
\label{eq:general-bound-step4.1}
    \sup_{\theta, \tilde{\theta} \in \mathbb{T}} (Z_\theta - Z_{\tilde{\theta}}) &\lesssim \alpha(\epsilon, u) + \int_{\epsilon/4}^{2\sigma n^{-1/2}\delta} \sqrt{\log \mathcal{N}(\omega,\mathbb{T},d)} \mathrm{d}\omega + \left(\sqrt{u} + 1\right)\frac{2\sigma\delta}{\sqrt{n}},
\end{align} with probability at least $1-2e^{-u}$ for arbitrary $\epsilon>0$. Note that any $(\sqrt{n}\omega/\sigma)$-cover of $\mathcal{G}_{m,s}$ with respect to $\|\cdot\|_\infty$ norm is also a $u$-cover of $\mathbb{T}$ with respect to $d(\cdot, \cdot)$ distance. Because $\|\bx\|_\infty \le (r+1)b$ by Condition \ref{cond1}, applying Lemma \ref{lemma:g-infty-cover-number} with $K=(r+1)b$ gives 
\begin{align*}
    \log \mathcal{N}(\omega,\mathbb{T},d) &\le \log \mathcal{N}\left(\sqrt{n}\omega/\sigma,\mathcal{G}_{m,s},\|\cdot\|_\infty\right) \\
    &\le c_1 \Bigg\{ \left(N^2L + N\overline{r} \right) \left[L \log BN + \log \left(\frac{\sigma (M\lor (r+1) b\|\bW\|_{\max})}{\omega\sqrt{n}} \lor 1\right)\right] \\
    &    ~~~~~~~~~~ + s \left[L \log (BN) + \log p + \log\left(\frac{\sigma (r+1)b (N+\overline{r})}{\omega \sqrt{n}} \lor 1\right)\right] \Bigg\},
\end{align*} provided $\omega \ge 2\tau (r+1)b (BN)^{L+1} (N+\overline{r}) p$. 

Set $\epsilon = 8\sigma n^{-1.5}$. If $\tau$ satisfies $2\tau (r+1)b (BN)^{L+1} (N+\overline{r}) p \le \epsilon/4\cdot \sqrt{n}/\sigma$, i.e.,
\begin{align*}
    \tau^{-1} \ge 4(r+1)b (BN)^{L+1} (N+\overline{r}) p n,
\end{align*} then 
\begin{align}
\label{eq:dudley-integral}
    \int_{\epsilon/4}^{2\sigma n^{-1/2}\delta} \sqrt{\log \mathcal{N}(\omega,\mathbb{T},d)} \mathrm{d}\omega &\lesssim \delta \left( \sqrt{v_n} +  \sqrt{s \varrho_n}\right)
\end{align} provided $2\sigma\delta n^{-1/2}> \epsilon/4$.

To calculate $\alpha(\epsilon, u)$, it follows from the Cauchy-Schwarz inequality that
\begin{align*}
    \sup_{\theta, \tilde{\theta}\in \mathbb{T}, d(\theta,\tilde{\theta})\le \epsilon} |Z_\theta - Z_{\tilde{\theta}}| \le \sqrt{\frac{1}{n}\sum_{i=1}^n \varepsilon_i^2} \sqrt{\frac{1}{n} \sum_{i=1}^n (\theta(\bx_i) - \tilde{\theta}(\bx_i))^2} \le \epsilon \sqrt{n} \sqrt{\frac{1}{n} \sum_{i=1}^n\varepsilon_i^2}.
\end{align*} Because $\varepsilon$ is a sub-Gaussian random variable, $\varepsilon^2$ is a sub-exponential random variable, this implies the following holds
\begin{align*}
    \frac{1}{n} \sum_{i=1}^n\varepsilon_i^2 \lesssim 1 + \sqrt{\frac{u}{n}} + \frac{u}{n}.
\end{align*} with probability at least $1-e^{-u}$ for any $u>0$. Plugging the choice of $\epsilon$ yields
\begin{align}
\label{eq:small-local-tail}
    \alpha(8\sigma n^{-1.5}, u) \lesssim n^{-1/2} n^{-1/2} \sqrt{\frac{1}{n} \sum_{i=1}^n\varepsilon_i^2} \lesssim \frac{1}{n} + \frac{1}{n} \left( 1 + \sqrt{\frac{u}{n}} + \frac{u}{n} \right) \lesssim \frac{1}{n} + \frac{u}{n}.
\end{align} 

Moreover, for any fixed $\bar{\theta} \in \mathbb{T}$, standard sub-Gaussian concentration inequality gives
\begin{align}
\label{eq:step4.1-single-concentration}
    \mathbb{P}\left[ |Z_{\bar{\theta}}| \ge \sigma \sqrt{\frac{u}{n}} \delta\right] \le e^{-u},
\end{align} for all the $u > 0$.

Combining \eqref{eq:step4.1-single-concentration} and \eqref{eq:general-bound-step4.1} with the bound \eqref{eq:small-local-tail} and \eqref{eq:dudley-integral}, we have
\begin{align*}
    Z_{s,\delta} = \sup_{\theta \in \mathbb{T}} \left|Z_{\theta} \right| &\le \sup_{\theta,\tilde{\theta} \in \mathbb{T}} \left|Z_{\theta} - Z_{\tilde{\theta}}\right| + |Z_{\bar{\theta}}| \\
    &\lesssim \frac{u+1}{n} + \delta \left( \sqrt{\frac{u + 1}{n}} + \sqrt{v_n} + \sqrt{s \varrho_n} \right),
\end{align*} with probability at least $1-3e^{-u}$, which completes the proof of the claim \eqref{eq:claim-step4.1}.

\noindent {\sc Step 2. Application of Peeling. } We will peel the function class $\mathcal{G}_m$ twice. To be specific, define
\begin{align*}
    \mathcal{G}_{m,k,\ell} = \left\{m\in \mathcal{G}_m, \alpha_{k-1} \le \sum_{i,j} \psi_\tau(\Theta_{i,j}) \le \alpha_k, \alpha_{\ell-1} \sqrt{v_n} < \|m - \tilde{m}\|_n \le \alpha_{\ell} \sqrt{v_n} \right\}
\end{align*} with $\alpha_i = 2^i$ for $i\ge 0$ and $\alpha_{i} = 0$ for $i=-1$. Then it's obvious that
\begin{align*}
    \mathcal{G}_m = \bigcup_{k=0}^{\lceil \log_2 (pN)\rceil} \bigcup_{\ell=0}^{\lceil \log_2 (2M/\sqrt{v_n})\rceil} \mathcal{G}_{m,k,\ell}.
\end{align*} 

We first establish the bound on the probability of the event
\begin{align*}
    \mathcal{B}_{t,\epsilon}(k,\ell) = \left\{\forall m\in \mathcal{G}_{m,k,\ell}, \frac{4}{n} \sum_{i=1}^n \varepsilon_i (m(\bx_i) - \tilde{m}(\bx_i)) - \lambda \sum_{i,j} \psi_\tau({\Theta}_{i,j}) \le \epsilon \|m - \tilde{m}\|_n^2 + \frac{C_2}{\epsilon} \left(v_n + \varrho_n + \frac{t}{n} \right) \right\}
\end{align*} from below for some constant $C_2$.

We will use the following simple fact that
\begin{align}
\label{eq:relationship-peeling}
    \alpha_{i} \le 2\alpha_{i-1} + 1 ~~ \text{and} ~~ \alpha_{i}^2 \le 4\alpha_{i-1}^2 + 1\qquad \forall i=-1, 0, 1, \cdots.
\end{align}

Note $m\in \mathcal{G}_{m,k,\ell}$ implies $\|m-\tilde{m}\|_n > \alpha_{\ell-1} \sqrt{v_n}$, and $\sum_{i,j} \psi_\tau({\Theta}_{i,j}) \ge \alpha_{k-1}$ by the definition of the set $\mathcal{G}_{m,k,\ell}$. If the event $\mathcal{B}_{t}(\alpha_k, \alpha_\ell \sqrt{v_n})$ occurs, then for all the $m\in \mathcal{G}_{m,k,\ell}$, 
\begin{align*}
    &\frac{4}{n} \sum_{i=1}^n \varepsilon_i (m(\bx_i) - \tilde{m}(\bx_i)) - \lambda \sum_{i,j} \psi_\tau({\Theta}_{i,j}) \\
    \le& 4\sup_{m\in \mathcal{G}_{m,\alpha_k}, \|m-\tilde{m}\|_n  
    \le \alpha_\ell \sqrt{v_n}} \left|\frac{1}{n} \sum_{i=1}^n \varepsilon_i (m(\bx_i) - \tilde{m}(\bx_i)) \right| - \lambda \alpha_{k-1} \\
    \overset{(a)}{\le}& \frac{t+1}{n} + 4C_1 \alpha_\ell \sqrt{v_n} \left(\sqrt{v_n} + \sqrt{\alpha_k \varrho_n} + \sqrt{\frac{t+1}{n}}\right) - \lambda \alpha_{k-1} \\
    \overset{(b)}{\le}& \frac{t+1}{n} + \frac{\epsilon}{4} (\alpha_\ell^2 v_n) + \frac{64C_1^2}{\epsilon} \left( v_n + \alpha_k \varrho_n + \frac{t+1}{n} \right) - \lambda \alpha_{k-1} \\
    \overset{(c)}{\le}& \frac{t+1}{n} + \frac{\epsilon}{4}(4 \alpha_{\ell-1}^2 + 1) v_n + \frac{64C_1^2}{\epsilon} (v_n + \varrho_n  + \frac{t+1}{n}) + \left(\frac{128C_1^2}{\epsilon}\varrho_n -\lambda\right) \alpha_{k-1} \\
    \overset{(d)}{\le}& \epsilon \|m-\tilde{m}\|_n^2 + \frac{C_2}{\epsilon} \left(v_n + \varrho_n + \frac{t}{n}\right) + \left(\frac{128C_1^2}{\epsilon}\varrho_n -\lambda\right) \alpha_{k-1}.
\end{align*} Here (a) follows from the definition of the event $\mathcal{B}_{t}(\alpha_k, \alpha_\ell \sqrt{v_n})$, (b) follows from Young's inequality, (c) uses the relationship \eqref{eq:relationship-peeling} between $\alpha_{i-1}$ and $\alpha_i$, (d) follows from the fact that $\|m-\tilde{m}\|_n > \alpha_{\ell-1} \sqrt{v_n}$ for $m\in \mathcal{G}_{m,k,\ell}$. Hence we have
\begin{align*}
    \mathbb{P}\left[\big(\mathcal{B}_{t,\epsilon}(k,\ell)\big)^c\right] \le \mathbb{P}\left[ \big(\mathcal{B}_t(\alpha_k, \alpha_\ell \sqrt{v_n})\big)^c\right]\le e^{-t}. 
\end{align*} provided $\lambda \ge \frac{128C_1^2}{\epsilon} \varrho_n$.

It follows from the union bound that
\begin{align*}
    &\mathbb{P}\left[\exists m\in \mathcal{G}_m, \frac{4}{n} \sum_{i=1}^n \varepsilon_i (m(\bx_i) - \tilde{m}(\bx_i)) - \lambda \sum_{i,j} \psi_\tau({\Theta}_{i,j}) > \epsilon \|m - \tilde{m}\|_n^2 + \frac{C_2}{\epsilon} \left(v_n + \varrho_n + \frac{u}{n} \right) \right]\\
    \le &\sum_{k=0}^{\lceil \log_2 (pN)\rceil} \sum_{\ell=0}^{\lceil \log_2 (2M/\sqrt{v_n})\rceil} \mathbb{P}\left[\big(\mathcal{B}_{t,\epsilon}(k,\ell)\big)^c\right] \le C_3 \log(pN) \log (2Mn) e^{-u},  
\end{align*} for any $u>0$. Letting $u=t+\log \left(C_3 \log(pN) \log (2Mn)\right)$ and $c_1=C_2(1+C_4)$ for some large $C_4$ satisfying $\log \left(C_3 \log(pN) \log (2Mn)\right) - C_4 n\varrho_n \le 0$, we have
\begin{align*}
    \mathbb{P}\left[\forall m\in \mathcal{G}_m, \frac{4}{n} \sum_{i=1}^n \varepsilon_i (m(\bx_i) - \tilde{m}(\bx_i)) - \lambda \sum_{i,j} \psi_\tau({\Theta}_{i,j}) \le \epsilon \|m - \tilde{m}\|_n^2 + \frac{c_1}{\epsilon} \left(v_n + \varrho_n + \frac{t}{n} \right) \right] \ge 1- e^{-t}
\end{align*} for arbitrary fixed $\bx_1,\ldots, \bx_n$.
This completes the proof of our main argument by the law of total probability.

\end{proof}

\subsection{Proof of Theorem \ref{thm:fast-oracle}}

\begin{proof}[Proof of Theorem~\ref{thm:fast-oracle}] 

\noindent {\sc Step 1. Find An Approximation of $m^*$. } The goal of this step is to find some $\tilde{m}(\bx;\bW,\tilde{\bTheta},\tilde{g})\in \mathcal{G}_m$ with $\|\tilde{\bTheta}\|_0 \le |\mathcal{J}|$ such that
\begin{align*}
    \int |\tilde{m}(\bx) - m^*(\bbf, \bu_{\mathcal{J}})|^2\mu(d\bbf, d\bu) \lesssim \delta_{\mathtt{f}} + \delta_{\mathtt{a}}.
\end{align*} Note that $\delta_{\mathtt{f}}=0$ when $r=0$, so we divide it into two cases.

\noindent \emph{Case 1. $r\ge 1$.} We first consider the case where $r\ge 1$. Let \begin{align*}
    \tilde{m}^*(\bx) = m^*(\bH^+\tilde{\bbf}, \bx_{\mathcal{J}} - [\bB]_{\mathcal{J}, :} \bH^+ \tilde{\bbf}).
\end{align*} 

We first show $\mathbb{E}|\tilde{m}^*(\bx)-m^*(\bbf,\bu_\mathcal{J})|^2 \lesssim \delta_{\mathtt{f}}$. Denote
\begin{align*}
    \bxi = p^{-1} \bW^\top \bu,
\end{align*} For fixed $\bbf, \bu$, it follows from the decomposition of the estimated factor $\tilde{\bbf}$ in \eqref{eq:est-factor-decomposition} that
\begin{align*}
    \tilde{m}^*(\bx) = m^*\left(\bbf + \bH^+ \bxi, \bx_{\mathcal{J}} - [\bB]_{\mathcal{J},:} \bbf - [\bB]_{\mathcal{J},:} \bH^+ \bxi\right),
\end{align*} 
which implies
\begin{align*}
    |\tilde{m}^*(\bx) - m^*(\bbf,\bu_{\mathcal{J}})| \lesssim \|\bH^+ \bxi\|_2 + \|[\bB]_{\mathcal{J},:} \bH^+ \bxi\| \le \left(\|[\bB]_{\mathcal{J},:}\|_2 + 1\right) \|\bH^+\|_2 \|\bxi\|_2,
\end{align*} by the Lipschitz condition of $m^*$ in Condition \ref{cond5}.

The boundedness of $\|\bB\|_{\max}$ in Condition \ref{cond1} gives $\|[\bB]_{\mathcal{J},:}\|_2 \le \|[\bB]_{\mathcal{J},:}\|_F \lesssim \sqrt{|\mathcal{J}| \times r}$. As the by-products of Lemma \ref{lemma:approx-factor}, we have
\begin{align*}
    \|\bH^+\|_2 \le [\nu_{\min}(\bH)]^{-1} ~~~~ \text{and} ~~~~ \mathbb{E}[\|\bxi\|_2^2] \lesssim \overline{r}/p
\end{align*} as long as Condition \ref{cond3} holds. Putting these pieces together yields
\begin{align}
\label{eq:proof:fast-oracle:approxeq1}
    \mathbb{E}|\tilde{m}^*(\bx) - m^*(\bbf, \bu_{\mathcal{J}})|^2 \lesssim \frac{(|\mathcal{J}|r)\times \overline{r}}{(\nu_{\min}(\bH))^2p}.
\end{align}

So it remains to find some $\tilde{m} \in \mathcal{G}_{m}$ to approximate $\tilde{m}^*$ well. It follows from the definition of $\delta_{\mathtt{a}}$ that there exists a ReLU neural network $h \in \mathcal{G}(L - 1, r+|\mathcal{J}|, N, M, 1, B)$ such that 
\begin{align}
\label{eq:proof:thm:fast-oracle:approx-assump}
    |h(\bbf, \bu_{\mathcal{J}}) - m^*(\bbf, \bu_{\mathcal{J}})| \le \sqrt{2\delta_{\mathtt{a}}} \qquad \forall (\bbf^\top, \bu_{\mathcal{J}}^\top)^\top \in [-2b, 2b]^{r+|\mathcal{J}|}.
\end{align} 

According to the definition of ReLU networks (Definition \ref{def:nn}), we can write ${h}$ as
\begin{align*}
    h(\bbf, \bu_{\mathcal{J}}) = {\mathcal{L}}^h_{L+1} \circ \bar{\sigma} \circ {\mathcal{L}}^h_{L} \circ \bar{\sigma} \circ \cdots \circ \bar{\sigma} \circ {\mathcal{L}}^h_3 \circ \bar{\sigma} \circ {\mathcal{L}}^h_2 ((\bbf^\top, \bu_{\mathcal{J}}^\top)^\top)
\end{align*} where ${\mathcal{L}}^h_2 \in \mathbb{R}^{r+|J|} \to \mathbb{R}^N$, ${\mathcal{L}}^h_{L+1} \in \mathbb{R}^N \to \mathbb{R}$, ${\mathcal{L}}^h_{\ell}: \mathbb{R}^N \to \mathbb{R}^N$ with $\ell \in \{3,\cdots, L\}$ are all affine transformations. Denote $\mathcal{J}=\{l_1,\ldots, l_{|\mathcal{J}|}\} \subset \{1,\ldots, p\}$. Consider the new function
\begin{align*}
    \tilde{m}(\bx) = \tilde{\mathcal{L}}_{L+1} \circ \bar{\sigma} \circ \tilde{\mathcal{L}}_{L} \circ \bar{\sigma} \circ \cdots \circ \bar{\sigma} \circ \tilde{\mathcal{L}}_3 \circ \bar{\sigma} \circ \tilde{\mathcal{L}}_2 \circ \bar{\sigma} \circ \tilde{\mathcal{L}}_1 \circ \phi \circ \tilde{\mathcal{L}}_0 (\bx).
\end{align*} Here for $\tilde{\mathcal{L}}_{\ell}$ with $\ell \in \{3, \ldots, L+1\}$, we directly copy weights from $h$, i.e., $\tilde{\mathcal{L}}_{\ell} = {\mathcal{L}}_{\ell}^h$  when $\ell \in \{3, \ldots, L+1\}$. Moreover, we construct $\tilde{\mathcal{L}}_0$, $\tilde{\mathcal{L}}_1$ and $\tilde{\mathcal{L}}_2$ as follows:

(1) For $\tilde{\mathcal{L}}_0 \in \mathbb{R}^{p} \to \mathbb{R}^{\overline{r} + |J|}$, 
\begin{align*}
    \tilde{\mathcal{L}}_0(x) = (p^{-1} \bx^\top \bW, \bx^\top \tilde{\bTheta})^\top ~~~~ \text{with} ~~~~ \tilde{\Theta}_{i,j} = 1\{i\le |\mathcal{J}|, j=\ell_i\}. 
\end{align*} 

Notice that $\tilde{\mathcal{L}}_0(\bx) = (p^{-1} \bx^\top \bW, \bx_{\mathcal{J}}^\top)^\top = (\tilde{\bbf}^\top, \bx_{\mathcal{J}}^\top)^\top$. So we further have $\phi\circ \tilde{\mathcal{L}}_0(\bx) = (\tilde{\bbf}^\top, \bx_{\mathcal{J}}^\top)^\top$ provided $M \ge r(b+1) \ge \|\bx_{\mathcal{J}}\|_\infty$.

(2) For $\tilde{\mathcal{L}}_1 \in \mathbb{R}^{\overline{r}+|J|} \to \mathbb{R}^{2(r+|J|)}$, let
\begin{align*}
    \tilde{\mathcal{L}}_1 \begin{bmatrix}
        \tilde{\bbf} \\
        \bx_{\mathcal{J}}
    \end{bmatrix} = \underbrace{\begin{bmatrix}
        \bH^+ & 0\\
        -[\bB]_{J,:} \bH^+ & \bI \\
        -\bH^+ & 0\\
        [\bB]_{\mathcal{J},:} \bH^+ & -\bI
    \end{bmatrix}}_{\tilde{\bW}_1} \begin{bmatrix}
        \tilde{\bbf} \\
        \bx_{\mathcal{J}}
    \end{bmatrix} + \underbrace{0}_{\tilde{\bb}_1}= \begin{bmatrix}
        \bH^+ \tilde{\bbf} \\
        \bx_{\mathcal{J}} - [\bB]_{\mathcal{J},:}\bH^+ \tilde{\bbf} \\
        -\bH^+ \tilde{\bbf} \\
        -(\bx_{\mathcal{J}} - [\bB]_{\mathcal{J},:}\bH^+ \tilde{\bbf})
    \end{bmatrix}
\end{align*}

(3) For $\tilde{\mathcal{L}}_2 \in \mathbb{R}^{2(r+|J|)} \to \mathbb{R}^N$, given $\bu,\bv\in \mathbb{R}^{r+|J|}$, let
\begin{align*}
    \tilde{\mathcal{L}}_2 \begin{bmatrix}
        \bu \\
        \bv
    \end{bmatrix} = \underbrace{\begin{bmatrix}
        \bW^h_2 & -\bW^h_2
    \end{bmatrix}}_{\tilde{\bW}_2} \begin{bmatrix}
        \bu \\
        \bv
    \end{bmatrix} + \underbrace{\bb^h_2}_{\tilde{\bb}_2}.
\end{align*}

It follows from the above construction of $\tilde{\mathcal{L}}_1, \tilde{\mathcal{L}}_2$ that
\begin{align*}
    \tilde{m}(\bx) &= h\left(\sigma(\bH^+\tilde{\bbf}) - \sigma(-\bH^+\tilde{\bbf}), \sigma(\bx_{\mathcal{J}} - [\bB]_{\mathcal{J},:}\bH^+ \tilde{\bbf}) - \sigma(-(\bx_{\mathcal{J}} - [\bB]_{\mathcal{J},:}\bH^+ \tilde{\bbf}))\right)\\
                 &\overset{(a)}{=} h(\bH^+\tilde{\bbf}, \bx_{\mathcal{J}} - [\bB]_{\mathcal{J},:}\bH^+ \tilde{\bbf})
\end{align*} where (a) follows from the fact that $\sigma(x) - \sigma(-x) = x$. Moreover, all the weights of the ReLU network
\begin{align}
\label{eq:proof:fast-oracle:tilde-g-def}
    \tilde{g} = \tilde{\mathcal{L}}_{L+1} \circ \bar{\sigma} \circ \tilde{\mathcal{L}}_{L} \circ \bar{\sigma} \circ \cdots \circ \bar{\sigma} \circ \tilde{\mathcal{L}}_3 \circ \bar{\sigma} \circ \tilde{\mathcal{L}}_2 \circ \bar{\sigma} \circ \tilde{\mathcal{L}}_1
\end{align} is bounded by $B\lor C_1 (|J|r [\nu_{\min}(H)]^{-1})$ for some constant $C_1$ since $\|\cdot\|_{\max}\le \|\cdot\|_2$.

We apply the truncation argument to derive the bound on $\mathbb{E} \left|\tilde{m}(\bx)-\tilde{m}^*(\bx)\right|^2$. To be specific, define the event
\begin{align*}
    \mathcal{E} = \left\{\bH^+\tilde{\bbf} \in [-2b, 2b]^r, \bx_{\mathcal{J}} - [\bB]_{\mathcal{J},:}\bH^+ \tilde{\bbf} \in [-2b, 2b]^{|\mathcal{J}|}\right\},
\end{align*} thus
\begin{align*}
    \mathbb{E} \left|\tilde{m}(\bx)-\tilde{m}^*(\bx)\right|^2 = \mathbb{E} \left|\tilde{m}(\bx)-\tilde{m}^*(\bx)\right|^2 1_{\mathcal{E}} + \mathbb{E} \left|\tilde{m}(\bx)-\tilde{m}^*(\bx)\right|^2 1_{\mathcal{E}^c}.
\end{align*}

On one hand, it follows from the approximation assumption \eqref{eq:proof:thm:fast-oracle:approx-assump}, the definition of $m$, $h$, $\tilde{m}$ and the condition of event $\mathcal{E}$ that, 
\begin{align*}
    \mathbb{E} \left|\tilde{m}(\bx)-\tilde{m}^*(\bx)\right|^2 1_{\mathcal{E}} = \mathbb{E}  \left|h(\bH^+\tilde{\bbf}, \bx_{\mathcal{J}} - [\bB]_{\mathcal{J},:}\bH^+ \tilde{\bbf})-{m}^*(\bH^+\tilde{\bbf}, \bx_{\mathcal{J}} - [\bB]_{\mathcal{J},:}\bH^+ \tilde{\bbf})\right|^2 1_{\mathcal{E}} \le 2 \delta_{\mathtt{a}}
\end{align*}

On the other hand, note $h$ and $m^*$ are all bounded by constants $M$ and $M^*$ respectively, and each component of $\bbf, \bu$ is bounded by $b$. Then it follows from the Markov inequality that
\begin{align*}
    \mathbb{E} \left|\tilde{m}(\bx)-\tilde{m}^*(\bx)\right|^2 1_{\mathcal{E}^c} &\le (M+M^*)^2 \mathbb{P}\left[\mathcal{E}^c\right] \\
    &\lesssim \mathbb{P}\left[\sqrt{\left\|\bH^+ \bxi\right\|_2^2 + \left\|[\bB]_{\mathcal{J},:} \bH^+ \bxi\right\|_2^2} \ge b\right] \\
    &\lesssim \frac{1}{b^2} \mathbb{E} \left[\left\|\bH^+ \bxi\right\|_2^2 + \left\|[\bB]_{\mathcal{J},:} \bH^+ \bxi\right\|_2^2 \right] \lesssim \delta_{\mathtt{f}}.
\end{align*} Putting these pieces together, our constructed $\tilde{m} \in \mathcal{G}_m$ satisfies
\begin{align}
\label{eq:proof:fast-oracle:approxeq2}
    \mathbb{E} \left|\tilde{m}(\bx) - \tilde{m}^*(\bx) \right|^2 \lesssim \delta_{\mathtt{f}} + \delta_{\mathtt{a}}
\end{align} 

Therefore, combining \eqref{eq:proof:fast-oracle:approxeq1} and \eqref{eq:proof:fast-oracle:approxeq2} with triangle inequality and Young's inequality, our constructed $\tilde{m}$ satisfies
\begin{align}
\label{eq:proof:fast-oracle:approx}
\begin{split}
    \|\tilde{m} - m^*\|_2^2=\mathbb{E}|\tilde{m}(\bx)-m^*(\bbf,\bu_{\mathcal{J}})|^2 &\le 2\Big(\mathbb{E}|\tilde{m}(\bx)-\tilde{m}^*(\bx)|^2 + \mathbb{E}|\tilde{m}^*(\bx)-m^*(\bbf,\bu_{\mathcal{J}})|^2\Big) \\
    &\lesssim \delta_{\mathtt{f}} + \delta_{\mathtt{a}}
\end{split}
\end{align} 

Using the padding argument in Section \ref{sec:approx:notation}, when $2(|\mathcal{J}|+r)\le N$, and $B\ge C_1\{|\mathcal{J}|r [\nu_{\min}(\bH)]^{-1}\}$, the ReLU network $\tilde{g}$ in \eqref{eq:proof:fast-oracle:tilde-g-def} satisfies $\tilde{g} \in \mathcal{G}(L, \overline{r}+N, 1, N, M, B)$. Moreover, the variable selection matrix $\tilde{\bTheta}$ satisfies $\|\tilde{\bTheta}\|_0 = |\mathcal{J}|$. This completes the proof of Case 1.

\noindent \emph{Case 2. $r=0$. } In this case, we have $\delta_{\mathtt{f}}=0$ and $\bx=\bu$. It follows from the approximation result that there exists a ReLU network $h\in \mathcal{G}(L-1,|\mathcal{J}|, N, M, 1, B)$ such that
\begin{align*}
    |h(\bx_\mathcal{J}) - m^*(\bx_{\mathcal{J}})| \le \sqrt{2\delta_{\mathtt{a}}} ~~~~~~ \forall \bx_{\mathcal{J}} \in [-2b,2b]^{|\mathcal{J}|}.
\end{align*}

The construction of $\tilde{m}$ proceeds in a similar way. Let $\mathcal{J}=\{l_1,\ldots, l_{|\mathcal{J}|}\}$. If $|\mathcal{J}| \le N$, it follows from the padding argument that there exists a ReLU network $\tilde{g} \in \mathcal{G}(L, \overline{r} + N, 1, N, M, B)$ such that
\begin{align*}
    \tilde{g}(\tilde{\bbf}, \tilde{\bTheta}^\top \bx) = h(\bx_{\mathcal{J}}) ~~~~ \text{with} ~~~~ \tilde{\Theta}_{i,j} = 1\{i\le |\mathcal{J}|, j=l_i\}.
\end{align*} Therefore, we have $\tilde{m}(\bx;\bW,\tilde{\bTheta},\tilde{g})$ satisfies $\tilde{m}(\bx) = h(\bx_{\mathcal{J}})$, thus implies
\begin{align*}
    \int |\tilde{m}(\bx) - m^*(\bx_{\mathcal{J}})|^2 \mu(d\bu) \le 2\delta_{\mathtt{a}},
\end{align*} which completes the proof of Case 2.

\noindent {\sc Step 2. Derive Basic Inequality. } It follows from \eqref{eq:fast-nn-estimator} and our construction of $\tilde{g}$ in {\sc Step 1} that 
\begin{align*}
    \frac{1}{n} \sum_{i=1}^n \left(y_i - \hat{m}(\bx_i) \right)^2 + \lambda \sum_{i,j} \psi_\tau(\hat{\Theta}_{i,j}) \le \frac{1}{n} \sum_{i=1}^n \left(y_i - \tilde{m}(\bx_i) \right)^2 + \lambda |J| + \delta_{\mathtt{opt}}.
\end{align*} 
Plugging in the representation of $y_i$ in \eqref{eq:dgp-samples}, we find
\begin{align*}
    \|\hat{m}-m^*\|_n^2 + \lambda \sum_{i,j} \psi_\tau(\hat{\Theta}_{i,j}) \le \|\tilde{m}-m^*\|_n^2 + \frac{2}{n} \sum_{i=1}^n \varepsilon_i \left(\hat{m}(\bx_i) - \tilde{m}(\bx_i)\right) + \lambda |J| + \delta_{\mathtt{opt}}.
\end{align*} It follows from the triangle inequality and Young's inequality that
\begin{align*}
    \|\hat{m} - \tilde{m}\|_n^2 \le 2\|\hat{m} - m^*\|_n^2 + 2\|\tilde{m} - m^*\|_n^2.
\end{align*} Therefore, we have
\begin{align}
\label{eq:fast:proof:basic-ineq}
    \|\hat{m}-\tilde{m}\|_n^2 + 2\lambda \sum_{i,j} \psi_\tau(\hat{\Theta}_{i,j}) \le 4\|\tilde{m}-m^*\|_n^2 + \frac{4}{n} \sum_{i=1}^n \varepsilon_i \left(\hat{m}(\bx_i) - \tilde{m}(\bx_i)\right) + 2\lambda |J| + 2\delta_{\mathtt{opt}},
\end{align} which we refer to as the basic inequality.

\noindent {\sc Step 3. Concentration for Fixed Function. } Let $z_i = |\tilde{m}(\bx_i)-m^*(\bbf_i,\bu_{i,\mathcal{J}})|$. It is easy to see that $z_i$ are i.i.d. samples satisfying 
\begin{align*}
    |z_i| \le (M+M^*)^2 \qquad \text{and} \qquad \mathrm{Var}(z_i) \le \mathbb{E}[|z_i|^2] \le (M+M^*)^2 \|\tilde{m} - m^*\|_2^2
\end{align*} provided that $\bW$ is independent of $\bx_1,\ldots, \bx_n$ and thus $\tilde{m}(\bx_1),\cdots, \tilde{m}(\bx_n)$ are independent. It follows from Bernstein inequality and the conclusion \eqref{eq:proof:fast-oracle:approx} in {\sc Step 1} that
\begin{align*}
    \|\tilde{m} - m^*\|_n^2 = \frac{1}{n} \sum_{i=1}^n z_i &\le \frac{1}{n} \sum_{i=1}^n \mathbb{E}[z_i] + C_2 \left((M+M^*) \|\tilde{m} - m^*\|_2 \sqrt{\frac{u}{n}} + \frac{u}{n}\right)\\
    &\le C_3 \left(\delta_{\mathtt{f}} + \delta_{\mathtt{a}} + \frac{u}{n}\right)
\end{align*} for any $u>0$.
Define the event \begin{align}
\label{eq:fast-oracle-ineq:approx-l2-error}
    \mathcal{A}_t = \left\{\|\tilde{m} - m^*\|_n^2 \le C_3 \left(\delta_{\mathtt{f}} + \delta_{\mathtt{a}} + \frac{t}{n}\right)\right\},
\end{align} then $\mathbb{P}(\mathcal{A}_t) \ge 1-e^{-t}$.

\noindent {\sc Step 4. Conclusion of the Proof.} 
Define the event
\begin{align*}
    \mathcal{B}_{t,1/2} = \left\{\forall m\in \mathcal{G}_m, \frac{4}{n} \sum_{i=1}^n \varepsilon_i (m(\bx_i) - \tilde{m}(\bx_i)) - \lambda \sum_{i,j} \psi_\tau({\Theta}_{i,j}) \le \frac{1}{2} \|m - \tilde{m}\|_n^2 + C_4 \left(v_n + \varrho_n + \frac{t}{n} \right)\right\}
\end{align*} with $C_4=2c_1$, where $c_1$ is the universal constant in Lemma \ref{lemma:weighted-empirical-process-regularized}. It follows from Lemma \ref{lemma:weighted-empirical-process-regularized} that $\mathbb{P}(\mathcal{B}_{t,1/2})\ge 1-e^{-t}$ provided $\lambda \ge C_5 \varrho_n$ for some universal constant $C_5$.

Recall the basic inequality \eqref{eq:fast:proof:basic-ineq}, under the events $\mathcal{A}_t$ and $\mathcal{B}_{t,1/2}$, we have
\begin{align*}
    \|\hat{m} - \tilde{m}\|_n^2 + \lambda \sum_{i,j} \psi_\tau(\hat{\Theta}_{i,j}) \le 4C_3(\delta_{\mathtt{f}} + \delta_{\mathtt{a}} + \frac{t}{n}) + \frac{1}{2} \|\hat{m} - \tilde{m}\|_n^2 + C_4 \left(v_n + \varrho_n + \frac{t}{n}\right) + 2\lambda|J| + 2\delta_{\mathtt{opt}}
\end{align*} provided $\lambda \ge C_5 \varrho_n$, which implies
\begin{align}
\label{eq:fast:basic-ineq-final}
    \|\hat{m} - \tilde{m}\|_n^2 + 2\lambda \sum_{i,j} \psi_\tau(\hat{\Theta}_{i,j}) \le C_6 \left(\delta_{\mathtt{f}} +\delta_{\mathtt{a}} + v_n + \lambda |J| + \delta_{\mathtt{opt}} + \frac{t}{n} \right).
\end{align}

Combining \eqref{eq:fast:basic-ineq-final} and \eqref{eq:fast-oracle-ineq:approx-l2-error} yields an upper bound on the empirical $L_2$ error $\|\hat{m} - m^*\|_n^2$. So it suffices to derive an upper bound on the population $L_2$ error $\|\hat{m}-m^*\|_2^2$. To this end, define the event \begin{align*}
    \mathcal{C}_t = \left\{\forall m\in \mathcal{G}_m, ~~ \frac{1}{2}\|m - \tilde{m}\|^2_2 \le \|m - \tilde{m}\|_n^2 + 2\lambda \sum_{i,j} \psi_\tau({\Theta}_{i,j}) + C_{7} \left(v_n + \rho_n + \frac{t}{n}\right)\right\}
\end{align*} for some constant $C_{7}$. Applying Lemma \ref{lemma:equivalence-population-empirical-l2-regularized} yields that $\mathbb{P}\left[\mathcal{C}_{t}\right] \ge 1-e^{-t}$ as long as $\lambda \ge C_{8}\varrho_n$ for some universal constant $C_{8}$. 

Putting these pieces together, we have
\begin{align*}
    \|\hat{m} - \tilde{m}\|^2_2 \lesssim \delta_{\mathtt{f}} +\delta_{\mathtt{a}} + v_n + \lambda |J| + \delta_{\mathtt{opt}} + \frac{t}{n}
\end{align*} conditioned on the event $\mathcal{A}_t \cap \mathcal{B}_t \cap \mathcal{C}_t$, which occurs with probability at least $1-3e^{-t}$.

Combing with our approximation result in {\sc Step 1} that $\|\tilde{m} - m^*\|_2^2 \lesssim \delta_{\mathtt{f}} +\delta_{\mathtt{a}}$, we can conclude that
\begin{align*}
    \|\hat{m} - m^*\|_2^2 \lesssim \delta_{\mathtt{f}} +\delta_{\mathtt{a}} + v_n + \lambda |J| + \delta_{\mathtt{opt}} + \frac{t}{n}
\end{align*} provided $\lambda \ge (C_{5} \lor C_8) \varrho_n$, which completes the proof.

\end{proof}

\subsection{Proof of Theorem \ref{thm:fast-roc}}
\begin{proof}[Proof of Theorem~\ref{thm:fast-roc}]
It follows from Theorem \ref{thm:fast-oracle} and Theorem \ref{thm:approx-smooth} by plugging in our choice of ReLU network hyper-parameter $N$ and $L$.
\end{proof}
\section{Proof of the lower bound result in Section \ref{sec:theory:lb}}
\label{sec:proof-lb}

\subsection{Proof of Lemma \ref{lemma:error-factor-lb}}
\label{proof-lemma-factor-lb}

We first provide a sketch to gain some intuition why the lower bound scales linearly with the inverse of the minimum eigenvalue of $\bB^\top \bB$. For simplicity, let $r=1$, $m^*(\bbf) = f_1$, and $\bB$ be a matrix with all entries being equal to $1$. Then, $\lambda_{\min}(\bB^\top \bB)=p$. By using the property of conditional expectation,
\begin{align*}
    \int |m(\bx) - m^*(\bbf)|^2 \mu(d\bbf, d\bu) \ge \int |\mathbb{E}[f_1|\bx] - f_1|^2 \mu(df_1, d\bx) = \inf_{\theta(\bx)} \mathbb{E}_{x_1,\ldots, x_p, f_1} \left[|\theta(\bx) - f_1|^2\right],
\end{align*} where the last inequality follows from the variational representation of conditional expectation that $\mathbb{E}[f_1|\bx] = \argmin_h \mathbb{E} |h(\bx)-f_1|^2$. If we further let each component of $\bu$ be identically distributed zero-mean random variable, then we can relate the lower bound on $\int |m(\bx) - m^*(\bbf)|^2 \mu(d\bbf, d\bu)$ to the \emph{minimax optimal lower bound} on estimating the mean parameter of a uniformly bounded distribution family since $f_1$ and $u_1$ are supposed to be bounded in $[-1,1]$ and $x_j=f_1 + u_j$ are i.i.d. samples conditioned on $f_1$. To further obtain a lower bound on $\inf_{\theta(\bx)} \mathbb{E}_{x_1,\ldots, x_p, f_1} \left[|\theta(\bx) - f_1|^2\right]$, we let $f_1$ be a random variable taking values in $\{-\delta, \delta\}$ with equal probability, and $u_1,\ldots, u_p$ be i.i.d. random variables with probabilistic density distribution $p(u) \propto   (1-|u|)_+^2$. By some calculations, we can bound the total variation distance between $\mu(d\bx|f_1=\delta)$ and $\mu(d\bx|f_1=-\delta)$ by $C\sqrt{p} \delta$ for some constant $C$. Combining this with a similar argument to Le Cam's (two points) method \citep{lecam1973convergence}, we can conclude that
\begin{align*}
    \inf_{\hat \theta(\bx)} \mathbb{E}_{x_1,\ldots, x_p, f_1} \left[  |\hat \theta - f_1|^2\right] \ge \sup_{\delta} \frac{\delta^2}{2} \Big(1 - \|\mu(d\bx|f_1=\delta) - \mu(d\bx|f_1=-\delta)\|_{\mathrm{TV}}\Big) \gtrsim \frac{1}{p}
\end{align*} by choosing $\delta \asymp p^{-1/2}$.

\begin{proof}[Proof of Lemma~\ref{lemma:error-factor-lb}]

We first reduce the original lower bound problem to the problem of obtaining lower bounds for a testing problem by choosing a specific $m^*$, matrix $\bB$, and distribution of $\bbf$. For arbitrary $m$ and $m^*$, we have the following decomposition
\begin{align*}
    \int |m(\bx) - m^*(\bbf)|^2 \mu(d\bbf, d\bu) =& \int \left|m(\bx) - \mathbb{E}[m^*(\bbf)|\bx] + \mathbb{E}[m^*(\bbf)|\bx] - m^*(\bbf)\right|^2 \mu(d\bbf, d\bu) \\
    =& \int \left|m(\bx) - \mathbb{E}[m^*(\bbf)|\bx]\right|^2 \mu(d\bx) + \int \left|\mathbb{E}[m^*(\bbf)|\bx] - m^*(\bbf)\right|^2 \mu(d\bbf, d\bu) \\
    &~~~~~~ + \int \left(m(\bx) - \mathbb{E}[m^*(\bbf)|\bx]\right) \left(\mathbb{E}[m^*(\bbf)|\bx] - m^*(\bbf)\right) \mu(d\bbf, d\bu).
\end{align*} 

It follows from the tower rule of conditional expectation that
\begin{align*}
    &\int \left(m(\bx) - \mathbb{E}[m^*(\bbf)|\bx]\right) \left(\mathbb{E}[m^*(\bbf)|\bx] - m^*(\bbf)\right) \mu(d\bbf, d\bu) \\
    =& \int \left(m(\bx) - \mathbb{E}[m^*(\bbf)|\bx]\right) \left(\int \left(\mathbb{E}[m^*(\bbf)|\bx] - m^*(\bbf)\right) \mu(d\bbf|\bx)\right) \mu(d\bx) \\
    =& \int \left(m(\bx) - \mathbb{E}[m^*(\bbf)|\bx]\right) \Big(\mathbb{E}[m^*(\bbf)|\bx] - \mathbb{E}[m^*(\bbf)|\bx]\Big) \mu(d\bx) = 0.
\end{align*} Therefore, we have
\begin{align*}
    \int |m(\bx) - m^*(\bbf)|^2 \mu(d\bbf, d\bu) \ge \int \left|\mathbb{E}[m^*(\bbf)|\bx] - m^*(\bbf)\right|^2 \mu(d\bbf, d\bu)
\end{align*} for any $m$ and $m^*$.

Without loss of generality, suppose $\lambda$ is a positive integer, otherwise, let it be $\lceil \lambda \rceil$. Let $m^*(\bbf) = f_1$, $\bV = [\bv_1,\cdots, \bv_r]\in \mathbb{R}^{p\times r}$, where column vectors satisfy $\bv_1 = (1/\sqrt{\lambda}, \cdots, 1/\sqrt{\lambda}, 0, \cdots, 0)^\top$ and $\bv_1,\cdots, \bv_r$ are orthogonal unit vectors, $\bLambda \in \mathbb{R}^{r\times r}$ be a diagonal matrix with $\bLambda_{1,1}=\lambda$ and $\bLambda_{i,i}=p$ for $i>1$. It is easy to verify that the matrix $\bB = \bV \bLambda \in \mathbb{R}^p$ satisfies  
\begin{align*}
    \lambda_{\min}(\bB^\top \bB) \ge \lambda.
\end{align*} Denote $\bbf_{-1}=(f_2,\cdots, f_r)^\top$, plugging in our choice of $\bB$ and $m^*$ yields
\begin{align*}
    \int \left|\mathbb{E}[m^*(\bbf)|\bx] - m^*(\bbf)\right|^2 \mu(d\bbf, d\bu) =& \int \big|\mathbb{E}[f_1|\bx] - f_1\big|^2 \mu(d f_1) \mu(\bbf_{-1}, d\bu) \\
    =& \int \left(\int \big|\mathbb{E}[f_1|\bx] - f_1\big|^2 \mu(d f_1|\bbf_{-1}, \bx) \right) \mu(d \bbf_{-1}, d\bx) \\
    \ge& \int \left(\int \big|\mathbb{E}[f_1|\bx,\bbf_{-1}] - f_1\big|^2 \mu(d f_1|\bbf_{-1}, \bx) \right) \mu(d \bbf_{-1}, d\bx) \\
    =& \int \left(\int \big|\mathbb{E}[f_1|\bx,\bbf_{-1}] - f_1\big|^2 \mu(d\bbf_{-1}) \mu(d \bx|f_1) \right)\mu(df_1).
\end{align*} 

Recall our construction of $\bB$, we can write down each component of $\bx = (x_1,\cdots, x_p)$ as
\begin{align*}
    x_j = 1\{j \le \lambda\} f_1 + \sum_{k=2}^r B_{j,k} f_k + u_j.
\end{align*} When $\bbf_{-1}$ and $f_1$ are independent, letting $\bu$ have i.i.d. components that are independent of $\bbf$, i.e., $u_1,\cdots, u_p \overset{i.i.d.}{\sim} \mu_u$, we have
\begin{align*}
    \mathbb{E}[f_1|\bx, \bbf_{-1}] &= \mathbb{E}[f_1|x_1,\cdots, x_{\lambda}, \bbf_{-1}] \\
    &= \mathbb{E}\left[f_1\Big|x_1 - \sum_{k=2}^r B_{1,k} f_k,\cdots, x_{\lambda} - \sum_{k=2}^r B_{\lambda,k} f_k, \bbf_{-1}\right] \\
    &= \mathbb{E}\left[f_1\Big|x_1 - \sum_{k=2}^r B_{1,k} f_k,\cdots, x_{\lambda} - \sum_{k=2}^r B_{\lambda,k} f_k\right].
\end{align*} Denote $\tilde{\bx} = \{\tilde{x}_1,\cdots, \tilde{x}_\lambda\} \in \mathbb{R}^\lambda$ with $\tilde{x}_j = f_1 + u_j$ for $j\in \{1,\cdots, \lambda\}$, the above discussion implies 
\begin{align*}
    \mathbb{E}[f_1|\bx, \bbf_{-1}] = \mathbb{E}[f_1|\tilde{\bx}(\bx, \bbf_{-1})]
\end{align*}

Using the variational representation of conditional expectation, that
\begin{align*}
    \mathbb{E}[y|\bx] = \argmin_g \int |y-g(\bx)|^2 \mu(\bx),
\end{align*} we find
\begin{align*}
    \int \left|\mathbb{E}[m^*(\bbf)|\bx] - m^*(\bbf)\right|^2 \mu(d\bbf, d\bu) \ge& \int \left( \int \left|\mathbb{E}[f_1|\tilde{\bx}] - f_1\right|^2 \mu(d\tilde{\bx} | f_1)\right) \mu(df_1) \\
    \ge& \inf_{\theta(\tilde{\bx})} \int \left|\theta(\tilde{\bx}) - f_1\right|^2 \mu(df_1,\tilde{\bx}).
\end{align*}
where the infimum is over all measurable function $\theta: \mathbb{R}^\lambda \to \mathbb{R}$. Let $\delta \in (0,1/2)$ be arbitrary, and $f_1$ be a random variable that takes value in $\{-\delta, \delta\}$ with equal probability. For any function $\theta: \mathbb{R}^\lambda \to \mathbb{R}$, we can define a corresponding classifier
\begin{align*}
    \psi(\tilde{\bx}) = 1\{\theta(\tilde{\bx}) \ge 0\}.
\end{align*} Let $v$ be a random variable such that $v=1\{f_1>0\}$, it is clear that
\begin{align*}
   \{\psi(\tilde{\bx}) \neq v\} \subset \{|\theta(\tilde{\bx}) - f_1| \ge \delta\},
\end{align*} which implies
\begin{align*}
    \int \left|\mathbb{E}[m^*(\bbf)|\bx] - m^*(\bbf)\right|^2 \mu(d\bbf, d\bu) \ge& \inf_{\theta(\tilde{\bx})} \int \left|\theta(\tilde{\bx}) - f_1\right|^2 \mu(df_1,d\tilde{\bx})\\
    \ge& \delta^2 \inf_{\theta(\tilde{\bx})} \int 1\{|\theta(\tilde{\bx}) - f_1| \ge \delta\} \mu(df_1,d\tilde{\bx})\\
    \ge& \delta^2 \inf_{\psi} \int 1\{\psi(\tilde{\bx}) \neq v\} \mu(df_1,d\tilde{\bx})\\
    \ge& \delta^2 \inf_{\psi} \mathbb{P}_\mu(\psi(\tilde{\bx}) \neq v).
\end{align*}

So far, we have
\begin{align*}
    \int |m(\bx) - m^*(\bbf)|^2 \mu(d\bbf, d\bu) \ge \delta^2 \inf_{\psi} \mathbb{P}_\mu[\psi(\tilde{\bx}) \neq v]
\end{align*} for arbitrary $\delta \in (0,1/2)$ by specifying $\bB$, $m^*$ and distribution of $\bbf$ that depends on (an arbitrary) $\delta$. It remains to lower bound the right hand side by specifying a particular distribution of $\bu$. Note $\psi$ induces a set in $\mathbb{R}^\lambda$, let $\mu_\delta$ and $\mu_{-\delta}$ be the distribution of $\tilde{\bx}$ when $\bbf=\delta$ and $\bbf=-\delta$ respectively, we obtain
\begin{align*}
    \sup_\psi \mathbb{P}_\mu[\psi(\tilde{\bx}) = v] &= \frac{1}{2} \sup_{\mathcal{A} \subset \mathbb{R}^\lambda} \mathbb{P}_\mu[ \tilde{\bx} \in \mathcal{A}|f_1=-\delta] + \mathbb{P}_\mu[ \tilde{\bx} \in \mathcal{A}|f_1=\delta] \\
    &= \frac{1}{2} \left( 1 + \sup_{\mathcal{A} \subset \mathbb{R}^\lambda} \mathbb{P}_{\mu_\delta}(\mathcal{A}) - \mathbb{P}_{\mu_{-\delta}}(\mathcal{A}) \right)\\
    &= \frac{1}{2} \Big(1 + \|\mu_\delta - \mu_{-\delta}\|_{\mathrm{TV}}\Big),
\end{align*} where $\|\mu-\nu\|_{\mathrm{TV}}$ is the total variation distance between the two distribution $\mu$ and $\nu$, plugging it back to the previous inequality yields
\begin{align*}
     \int |m(\bx) - m^*(\bbf)|^2 \mu(d\bbf, d\bu) \ge \frac{\delta^2}{2} \Big(1-\|\mu_\delta - \mu_{-\delta}\|_{\mathrm{TV}}\Big).
\end{align*}

Now we choose the distribution of $\bu$ to be that $u_1,\cdots, u_p \overset{i.i.d.}{\sim} \mu_u$, and the probabilistic density function for the distribution $\mu_u$ is
\begin{align*}
    \mu_u(du) = \frac{3}{2} (1 - |u|)^2 1\{|u| \le 1\} du.
\end{align*} It follows from the Le Cam inequality (Lemma 15.3 in \cite{wainwright2019high}) and the property of squared Hellinger distance that
\begin{align*}
    \|\mu_\delta - \mu_{-\delta}\|_{\mathrm{TV}}^2 \le H^2(\mu_\delta \| \mu_{-\delta}) \le \lambda H^2(\mu_{\tilde{x}_1,\delta}\|\mu_{\tilde{x}_1,-\delta}),
\end{align*} provided $\tilde{x}_1,\cdots, \tilde{x}_\lambda$ are i.i.d. copies, where $\mu_{\tilde{x}_1,v}$ is the distribution of $\tilde{x}_1 = f_1 + u_1$ when $f_1=v$, and $H^2(\mu\|\nu)$ is the squared Hellinger distance. Our choice of distribution of $\bu$ and the fact that $\delta \in (0,\frac{1}{2})$ together implies
\begin{align*}
    \frac{2}{3} H^2(\mu_{\tilde{x}_1,\delta}\|\mu_{\tilde{x}_1,-\delta}) = &\int_{-\delta-1}^{\delta-1} \left(\sqrt{(1-|u+\delta|)^2} - 0\right)^2 du \\
    &~~~~~~~~+ \int_{\delta-1}^{1-\delta} \left(\sqrt{(1-|u+\delta|)^2} - \sqrt{(1-|u-\delta|)^2}\right)^2 du\\
    &~~~~~~~~+ \int_{1-\delta}^{1+\delta} \left(0 - \sqrt{(1-|u-\delta|)^2}\right)^2 du \\
    \le& 2 \int_{0}^{2\delta} \omega^2 d\omega + \int_{\delta-1}^{1-\delta} (2\delta)^2 du \\
    \le& \frac{16}{3} \delta^3 + 8\delta^2 \le \frac{32}{3} \delta^2, 
\end{align*}
Combining all the pieces together, we have
\begin{align*}
    \int |m(\bx) - m^*(\bbf)|^2 \mu(d\bbf, d\bu) \ge& \frac{\delta^2}{2} \Big(1-\|\mu_\delta - \mu_{-\delta}\|_{\mathrm{TV}}\Big) \\
    \ge&\frac{\delta^2}{2} \left(1 - \sqrt{\lambda H^2(\mu_{\tilde{x}_1,\delta}\|\mu_{\tilde{x}_1,-\delta})}\right) \\
    \ge&\frac{\delta^2}{2} \left(1 - \sqrt{\frac{32}{3} \lambda\delta^2}\right).
\end{align*} for arbitrary $\delta \in (0,\frac{1}{2})$. Letting $\delta = 4^{-1} \lambda^{-1/2}$, we have
\begin{align*}
    \int |m(\bx) - m^*(\bbf)|^2 \mu(d\bbf, d\bu) \ge \frac{1}{32 \lambda} \cdot \frac{1}{6} = \frac{1}{192} \cdot \frac{1}{\lambda},
\end{align*} this completes the proof.
\end{proof}

\subsection{Proof of Theorem \ref{thm:fast-lb}}

\begin{proof}[Proof of Theorem~\ref{thm:fast-lb}]
\noindent {\sc Step 1. Reduction to the Oracle Case.} By Lemma \ref{lemma:error-factor-lb}, we argue that it suffices to show that
\begin{align*}
    \inf_{\breve{m}} \sup_{m^* \in \mathcal{H}(l,r+|\mathcal{J}|,\mathcal{P}), \mu \in \mathcal{P}(p,r,\lambda)} \mathbb{E} \left[ \int \big|\breve{m}(\bbf, \bu) - m^*(\bbf, \bu_{\mathcal{J}})\big|^2 \mu(d\bbf, d\bu)\right] \gtrsim n^{-\frac{2\gamma^*}{2\gamma^*+1}} + \frac{\log p}{n},
\end{align*} where the infimum is over all the estimator $\breve{m}(\bbf, \bu)$ based on $\{(\bbf_i,\bu_i, y_i)\}_{i=1}^n$ and the matrix $\bB$. To see this, for any estimator $\hat{m}(\bx)$ based on $\{(\bx_i, y_i)\}_{i=1}^n$, with the access to the latent factor structure $(\bbf, \bu)$ and the factor loading matrix $\bB$, we can choose a corresponding estimator $\breve{m}(\bbf, \bu)$ that
\begin{align*}
    \breve{m}(\bbf, \bu) = \hat{m}(\bB \bbf + \bu),
\end{align*} where $\hat{m}$ is estimated via the samples $\{(\bB \bbf_i + \bu_i, y_i)\}_{i=1}$. It is easy to verify that $\breve{m}$ can achieve the same risk as $\hat{m}(\bx)$,
\begin{align*}
    \int \big|\breve{m}(\bbf, \bu) - m^*(\bbf, \bu_{\mathcal{J}})\big|^2 \mu(d\bbf, d\bu) = \int \big|\hat{m}(\bx) - m^*(\bbf, \bu_{\mathcal{J}})\big|^2 \mu(d\bbf, d\bu),
\end{align*} which implies
\begin{align*}
     &\inf_{\hat{m}} \sup_{\mu \in \mathcal{P}(p,r,\lambda), m^*} \mathbb{E} \left[ \int \big|\hat{m}(\bx) - m^*(\bbf, \bu_{\mathcal{J}})\big|^2 \mu(d\bbf, d\bu)\right] \\
     &~~~~~~ \ge \inf_{\breve{m}} \sup_{\mu \in \mathcal{P}(p,r,\lambda), m^*} \mathbb{E} \left[ \int \big|\breve{m}(\bbf, \bu) - m^*(\bbf, \bu_{\mathcal{J}})\big|^2 \mu(d\bbf, d\bu)\right].
\end{align*}

\noindent {\sc Step 2. Lower Bound on Estimating a Function.} Let $\beta^*, d^*$ be that in \eqref{eq:hcm:gamma-def} and $\gamma^*=\beta^*/d^*$. When $d^* \le r+1$, $\mathcal{F}_{d^*, \beta^*, 1} \subset \mathcal{H}(r+1,l,\mathcal{P})$, it follows from the well-known minimax optimal lower bound for estimating $d$-variate $(\beta, 1)$-smooth function (e.g. Theorem 3.2 in \cite{gyorfi2002distribution}) that
\begin{align*}
    &\inf_{\breve{m}} \sup_{m^* \in \mathcal{H}(l,r+|\mathcal{J}|,\mathcal{P}), \mu \in \mathcal{P}(p,r,\lambda)} \mathbb{E} \left[ \int \big|\breve{m}(\bbf, \bu) - m^*(\bbf, \bu_{\mathcal{J}})\big|^2 \mu(d\bbf, d\bu)\right]\\
    \ge & \inf_{\breve{m}} \sup_{m^* \in \mathcal{F}_{d^*, \beta^*, 1}, \bx \sim \mathrm{unif}[-1,1]^{d^*}} \mathbb{E} \left[ \int \big|\breve{m}(\bx) - m^*(\bx)\big|^2 \mu(d\bx) \right] \gtrsim n^{-\frac{2\beta^*}{2\beta^*+d^*}}.
\end{align*}

\noindent {\sc Step 3. Lower Bound on Variable Selection Uncertainty.} For given $\delta \in (0,1)$, let $\mathbb{P}_j$ ($0 \le j \le p$) be the law of the i.i.d. data $(\bbf_1, \bu_1, y_1),\cdots, (\bbf_n, \bu_n, y_n)$ such that each component of $\bbf$ and $\bu$ are independent uniform distribution on $[-1, 1]$, and
\begin{align*}
    y_i = m^{(j)}(\bu_i) + \varepsilon_i ~~~~ \text{with} ~~~~ m^{(j)}(\bu) = \begin{cases} \delta \cdot u_j & j\ge 1\\
    0 & j = 0\end{cases},
\end{align*} where $\varepsilon_1,\ldots,\varepsilon_n$ are i.i.d. standard Gaussian random variables that are also independent of $\{(\bbf_i, \bu_i)\}_{i=1}^n$. It follows from the KL-divergence of two Gaussian random variables that
\begin{align*}
    KL(\mathbb{P}_0 \| \mathbb{P}_j) = \frac{1}{2} n \mathbb{E} \left|m^{(0)}(\bu) - m^{(j)}(\bu) \right|^2 \le \frac{1}{6} n\delta^2.
\end{align*}  At the same time, we have
\begin{align*}
    \|m^{(j)} - m^{(k)}\|_2 = \sqrt{\mathbb{E}|m^{(j)} - m^{(k)}|^2} = \sqrt{\frac{2}{3}} \delta 
\end{align*} for arbitrary $j\neq k$. 
Letting $\delta = \sqrt{(\log p)/(2n)}$, we have $m^{(j)} \in \mathcal{H}(l, r+|\mathcal{J}|, \mathcal{P})$, and $\|m^{(j)} - m^{(k)}\|_2 \ge  \sqrt{\frac{\log p}{3n}} $ as long as $j\neq k$. Moreover,
\begin{align*}
    \frac{1}{p} \sum_{j=1}^p KL(\mathbb{P}_0 \| \mathbb{P}_j) \le \frac{1}{6} n \cdot \frac{\log p}{2n} < \frac{1}{8} \log p.
\end{align*} Then it follows from Theorem 2.7 in \cite{tsybakov2009nonparametric} with $M=p$ that
\begin{align*}
    \inf_{\breve{m}} \sup_{\mu \in \mathcal{P}(p,r,\lambda), m^*} \mathbb{E} \left[ \int \big|\breve{m}(\bbf, \bu) - m^*(\bbf, \bu_{\mathcal{J}})\big|^2 \mu(d\bbf, d\bu)\right] \gtrsim \delta^2 \gtrsim \frac{\log p}{n},
\end{align*} which completes the proof.

\end{proof}

\section{Proof of the neural network approximation result}
\label{sec:proof-nn-approx}

In this section, we prove the neural network approximation result Theorem \ref{thm:approx-smooth}.

\subsection{Notations about the construction of neural network}

\label{sec:approx:notation}

In this subsection, we introduce several notations and simple facts on the construction of neural networks that might be helpful if we want to make constructive proofs of the neural network approximation result. Throughout this section, we fix $M=\infty$, that is, remove the truncation operator at the output.

\medskip
\noindent \textbf{Neural network padding.} If $f$ is a neural network with depth between 1 and $L$, and at most $N$ hidden nodes at each layer, then there exists some neural network $g$ with depth $L$ and $N$ hidden nodes at each layer such that $f(x)=g(x)$ for all the input $x$. We refer to this construction as \emph{neural network padding}. The padding with respect to width is trivial. For the padding with respect to depth, assume that the neural network has $L'\ge 1$ hidden layers. We can apply the identity map together with the activation function $L-L'$ times between the first hidden layer and the layer next to it. This will not change $f(x)$, but will increase the number of layers by $L-L'$. Hence $\mathcal{G}(L, d, o, N, B, \infty)$ can also be seen as the set of all neural networks with input dimension $d$, output dimension $o$, depth $L$ and width $N$. From the above discussion, we also have that $\mathcal{G}(L, d, o, N, B, \infty) \subset \mathcal{G}(L', d, o, N', B', \infty)$ if $L'\ge L, N'\ge N$ and $B'\ge B$.

\medskip
\noindent \textbf{Network composition.} Suppose $f \in \mathcal{G}(L_1, d_1, d_2, N_1, B, \infty)$ and $g \in \mathcal{G}(L_2, d_2, d_3, N_2, B, \infty)$, we use $h = g \circ f$ to denote the neural network which uses the input of $g$ as the output of $f$. It should be noted that $h$ is a neural network with width $N_1\lor N_2 \lor d_2$, depth $L_1 + L_2$ (instead of depth $L_1+L_2+1$), and weights bounded by $B \lor \tilde{B}$, where
\begin{align*}
    \tilde{B} = (d_2+1) \left(\|\bW_1^g\|_{\max} \lor \|\bb_1^g\|_{\max}\right)\left(\|\bW_{L_1+1}^f\|_{\max}\lor \|\bb_{L_1+1}^f\|_{\max}\right).
\end{align*} This is because we can combine the weight connecting the final hidden layer and the output layer of $g$ and the weight connecting the input layer and the first hidden layer of $f$ as a single weight, i.e. $\bW_2 (\bW_1 x + \bb_1) + \bb_2 = \bW' x + \bb'$. 

\medskip
\noindent \textbf{Network parallelization.} Suppose $f_i\in \mathcal{F}(L_i, d_i, o_i, N_i, B,\infty)$ for $i\in \{1,\ldots, k\}$. We use $h=(f_1, \ldots, f_k)$ to denote the neural network that takes $x\in \mathbb{R}^{\sum_{i=1}^k d_i}$ as the input, feeds the entries $x^{(i)}=\big(x_{\sum_{j=1}^{i-1} d_j + 1}, \cdots, x_{\sum_{j=1}^{i} d_j}\big)$ to the $i$-th sub-network $f_i$ that returns $y^{(i)}$, and combines these $y^{(i)}$ as the output. Such an $h$ is a neural network with input dimension $\sum_{i=1}^k d_i$, output dimension $\sum_{i=1}^k o_i$, depth at most $\max_{1\le i\le d} L_i$ and width at most $\sum_{i=1}^d N_i$, i.e., $(f_1, \cdots, f_k) \in \mathcal{G}(\max L_i, \sum d_i, \sum o_i, \sum N_i, B,\infty)$.

Suppose $d_i \le d$, we also use the notation $h=(f_1(x^{(1)}), \ldots, f_k(x^{(k)}))$ to denote the neural network that takes $x\in \mathbb{R}^d$ as the input, and feeds some of its entries $x^{(i)} = ((x)_{j_1},\ldots, (x)_{j_{d_i}})$ as input to the $i$-th subnetwork $f_i$, followed by the same procedure as above. Similarly, we conclude that $h$ is a neural network with input dimension $d$, output dimension $\sum_{i=1}^k o_i$, depth at most $\max_{1\le i\le d} L_i$ and width at most $\sum_{i=1}^d N_i$, i.e., $(f_1(x^{(1)}), \ldots, f_k(x^{(k)})) \in \mathcal{G}(\max L_i, d, \sum o_i, \sum N_i, B,\infty)$.

\medskip
\noindent \textbf{Simple functions.} At last, we introduce some simple functions that can be parameterized using ReLU neural networks:

\begin{lemma}[Identity, Absolute value, Min, Max]
\label{lemma:nn-simple}
For any $x, y\in \mathbb{R}$, the following properties hold:
\begin{itemize}
	\item[(1)] $x \in \mathcal{F}(1, 1, 1, 2, 1, \infty)$;
	\item[(2)] $|x| \in \mathcal{F}(1, 1, 1, 2, 1, \infty)$;
	\item[(3)] $\min(x,y) \in \mathcal{F}(1, 2, 1, 4, 1, \infty)$;
	\item[(4)] $\max(x,y) \in \mathcal{F}(1, 2, 1, 4, 1, \infty)$.
\end{itemize}
\end{lemma}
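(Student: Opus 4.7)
The plan is to exhibit explicit one-hidden-layer ReLU networks for each function using the elementary identities
\[
x = \sigma(x) - \sigma(-x), \qquad |x| = \sigma(x) + \sigma(-x),
\]
together with
\[
\min(x,y) = \tfrac{1}{2}\bigl((x+y) - |x-y|\bigr), \qquad \max(x,y) = \tfrac{1}{2}\bigl((x+y) + |x-y|\bigr).
\]

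For parts (1) and (2), I would take a first layer with two hidden units computing $\sigma(x)$ and $\sigma(-x)$ (so $\bW_1 = (1,-1)^\top$, $\bb_1 = \bm{0}$), and an output layer that combines them linearly with coefficients $(1,-1)$ to recover $x$, or $(1,+1)$ to recover $|x|$. All entries of every weight matrix and bias vector have magnitude at most $1$, placing both networks in $\mathcal{G}(1,1,1,2,1,\infty)$.

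For parts (3) and (4), I would use a first layer with four hidden units computing the quantities $\sigma(x+y)$, $\sigma(-(x+y))$, $\sigma(x-y)$, $\sigma(y-x)$, whose input weights lie in $\{-1,0,1\}$ and whose biases are zero. Because $x+y = \sigma(x+y) - \sigma(-(x+y))$ and $|x-y| = \sigma(x-y) + \sigma(y-x)$, an output layer with coefficients $\bigl(\tfrac12,-\tfrac12,-\tfrac12,-\tfrac12\bigr)$ realizes $\min(x,y)$ via the identity above, while coefficients $\bigl(\tfrac12,-\tfrac12,\tfrac12,\tfrac12\bigr)$ realize $\max(x,y)$. All weights have magnitude at most $1$, so both networks lie in $\mathcal{G}(1,2,1,4,1,\infty)$.

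There is no genuine obstacle; this is a direct construction. The only bookkeeping to take care of is matching the construction to Definition \ref{def:nn}: depth $L=1$ means exactly one hidden layer, the width parameter counts hidden units, and the bound $B$ applies in the max-norm to both weight matrices $\bW_\ell$ and bias vectors $\bb_\ell$. Since every parameter above lies in $\{-1,-\tfrac12,0,\tfrac12,1\}$, the bound $B=1$ is trivially met, completing the verification.
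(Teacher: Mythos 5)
Your proposal is correct and matches the paper's proof almost verbatim: the same decompositions $x=\sigma(x)-\sigma(-x)$, $|x|=\sigma(x)+\sigma(-x)$, $\min(x,y)=\tfrac12\left((x+y)-|x-y|\right)$, $\max(x,y)=\tfrac12\left((x+y)+|x-y|\right)$, realized by the same one-hidden-layer networks with hidden units $\sigma(x+y),\sigma(-x-y),\sigma(x-y),\sigma(y-x)$. Nothing to add.
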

\begin{proof}[Proof of Lemma \ref{lemma:nn-simple}]
	For claims (1) and (2), recall that $\sigma(x) = (x)_+$, we thus have $x=\sigma(x)-\sigma(-x)$, $|x|=\sigma(x)+\sigma(-x)$. For claims (3) and (4), note that $\min(x,y)=\frac{1}{2}(x+y - |x-y|)$ and $\max(x,y)=\frac{1}{2}(x+y+|x-y|)$. It follows that \begin{align*}
		\min(x,y) &= \frac{1}{2} \big(x+y - |x-y|\big) \\
				  &= \frac{1}{2} \big(\sigma(x+y) - \sigma(-x-y) - \sigma(x-y) - \sigma(y-x)\big),
	\end{align*} 
	hence proving claim (3). Claim (4) can be similarly proved.
\end{proof}

\begin{lemma}[Piecewise linear function]
\label{lemma:nn-piecewise-linear}
	For fixed $N \in \mathbb{N}$ and $\delta>0$, let $\{(x_i, y_i)\}_{i=0}^N$ be $N+1$ points such that $x_i \in [0,1]$ with $x_i > x_{i-1}$. Then there exists a ReLU network $g^\dagger$ with depth $1$, width $N$, weights bounded by $2\max_{1\le i\le N} \frac{y_i - y_{i-1}}{x_i - x_{i-1}}$, and the linear map $\mathcal{L}_1$ only depends on $\{x_0,\ldots, x_N\}$, such that the following holds,
	\begin{itemize}
		\item[1.] $g^\dagger(x_i)=y_i$ holds for all $i=\{0,\ldots, N\}$.
		\item[2.] $g^\dagger(x)$ is linear on $[x_i, x_{i-1}]$ for all $i\in\{1,\ldots, N\}$.
	\end{itemize}
\end{lemma}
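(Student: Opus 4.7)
\textbf{Proof plan for Lemma~\ref{lemma:nn-piecewise-linear}.} The plan is to give a constructive proof using the classical slope-difference decomposition of a piecewise linear function into a sum of shifted ReLUs. Set the slopes $s_i = (y_i - y_{i-1})/(x_i - x_{i-1})$ for $i=1,\ldots,N$ and $s_0 = 0$, and define
\[
  g^\dagger(x) \;=\; y_0 \;+\; \sum_{i=1}^N (s_i - s_{i-1})\, \sigma(x - x_{i-1}).
\]
This is manifestly of the form $\mathcal{L}_2 \circ \bar\sigma \circ \mathcal{L}_1(x)$ with $\mathcal{L}_1(x) = (x - x_0, x - x_1, \ldots, x-x_{N-1})^\top \in \mathbb{R}^N$ and $\mathcal{L}_2(\mathbf{z}) = \sum_{i=1}^N (s_i - s_{i-1}) z_i + y_0$, so $g^\dagger \in \mathcal{G}(1,1,1,N,\infty,\cdot)$ and $\mathcal{L}_1$ depends only on the knot locations $\{x_0,\ldots,x_{N-1}\} \subset \{x_0,\ldots,x_N\}$, as required.

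First I would verify the two interpolation/linearity properties. By construction, on each subinterval $[x_{k-1},x_k]$ the ReLU terms with index $i \le k$ are active and contribute affinely while those with $i > k$ vanish, so $g^\dagger$ is affine on $[x_{k-1},x_k]$ (property~2). The derivative on that subinterval is the telescoping sum $\sum_{i=1}^k (s_i - s_{i-1}) = s_k$. A short induction then yields property~1: $g^\dagger(x_0) = y_0$ (all ReLUs vanish at $x_0$), and assuming $g^\dagger(x_{k-1}) = y_{k-1}$, the slope-$s_k$ piece gives $g^\dagger(x_k) = y_{k-1} + s_k(x_k - x_{k-1}) = y_k$.

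Next I would bound the weights. The first-layer weight matrix is the all-ones vector and its bias vector is $(-x_0,\ldots,-x_{N-1})^\top$, so $\|\mathbf{W}_1\|_{\max} = 1$ and $\|\mathbf{b}_1\|_{\max} \le \max_i |x_i| \le 1$. The output-layer weights are the slope differences $s_i - s_{i-1}$, each bounded by $|s_i| + |s_{i-1}| \le 2\max_{1\le j\le N}|s_j|$, which is exactly the prescribed bound $2\max_{1\le i\le N}|(y_i-y_{i-1})/(x_i-x_{i-1})|$. Thus all weights strictly inside the network respect the stated bound.

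The one subtle point, which I expect to be the main obstacle, is the output bias $y_0$: in the decomposition above it equals the fitted value at the leftmost knot and is not automatically controlled by the slope bound. In the intended application the lemma is invoked at a stage where $y_0$ is itself of the same order as the slopes (e.g., after a centering or truncation), so this does not matter; if needed, one can absorb $y_0$ into the construction by replacing the constant term with $y_0 \cdot \sigma(1)$ using an additional hidden unit fed a bias input, or by shifting so that $y_0 = 0$ and adding the constant back at the outer level. I would flag this point and then conclude that the displayed $g^\dagger$ meets all the requirements of the lemma.
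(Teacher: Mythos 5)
Your construction is exactly the one the paper uses: the slope-difference decomposition $g^\dagger(x) = y_0 + \sum_{j=1}^N (s_j - s_{j-1})\,\sigma(x - x_{j-1})$ with $s_0 = 0$, followed by the inductive verification of interpolation, the trivial piecewise-linearity observation, and the weight bound $|s_j - s_{j-1}| \le 2\max_i |s_i|$. Your proof is correct and essentially identical to the paper's.

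The point you flag about the output bias $y_0$ is a genuine and sharp one. The paper's proof also does not control $|y_0|$; it asserts without elaboration that ``the bound of magnitude of $|w_i|$ is also obvious,'' leaving the bias implicit. Strictly, the lemma as stated cannot be literally true (take $N=1$, $y_0 = y_1 = 1$: slope $= 0$, so the claimed weight bound is $0$, but the output bias is $1$). The lemma is only used inside Lemma~\ref{lemma:point-fitting-dim-1}, where all $y_i \in [0,1]$ and consecutive $x$'s are $\ge\delta$ apart with $\delta \le 1$, so $|y_0| \le 1 \le 2/\delta$ (and in the inner call, $\le 4/\delta^2$), which is exactly the bound they invoke on $\max_{j'} |w_{0,j'}| \lor |b_0|$. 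So in situ the bound is fine, but you are right that either the lemma statement should be phrased to include the bias bound $|y_0| \lor 2\max_i |s_i|$ or the paper should note it uses the extra fact $y_i \in [0,1]$. Your proposed fixes (absorbing $y_0$ via an extra hidden unit or shifting) would repair the stated lemma at the cost of one node or a trivially post-composed constant.
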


\begin{proof}
	We consider constructing our neural network as follows
	\begin{align*}
		g(x) = y_0 + \sum_{j=1}^N w_j \cdot \sigma(1 \cdot x-x_{j-1}),
	\end{align*} where the weights $w_i$ are defined as
	\begin{align*}
		w_j = \frac{y_j - y_{j-1}}{x_j - x_{j-1}} - 1\{j>1\} \cdot \frac{y_{j-1} - y_{j-2}}{x_{j-1} - x_{j-2}}.
	\end{align*} It is obvious that $f(x)$ is linear on each interval $[x_i, x_{i-1}]$, so it remains to show that $g(x_i)=y_i$ holds for all $i$, and to bound $|w_i|$. The bound of the magnitude of $|w_i|$ is also obvious. For the first part, we prove by induction. It is easy to verify that $g(x_0) = y_0$. If it holds for $i \in \{0,1,\ldots, N-1\}$, that is, $g(x_i) =y_i$, then
	\begin{align*}
		g(x_{i+1}) &= y_0 + \sum_{j=1}^{i+1} w_j \sigma(x_{i+1} - x_{j-1}) \\
		&= y_0 + \sum_{j=1}^{i} w_j \sigma(x_i - x_{j-1}) + \sum_{j=1}^{i+1} w_i (x_{i+1} - x_{i}) \\
	    &= y_i + (x_{i+1}-x_i) \sum_{j=1}^{i+1} w_j = y_{i+1}.
	\end{align*} so it concludes.
\end{proof}

We will also use the following notations. For given $d\in \mathbb{N}^+$, $K \in \mathbb{N}^+$ and $\Delta \in (0,1/3K]$, let $\mathcal{L}=\{0,\ldots, K-1\}^d$ be an index set. Define 
\begin{align}
\label{eq:approxmation-ql-delta}
    \mathcal{Q}_{\bl}(d, K, \Delta) = \left\{\bx=(x_1,\ldots, x_d): \frac{l_j}{K} \le x_j \le \frac{l_j+1}{K} - 1_{\{l_j+1<K\}} \Delta\right\}
\end{align} for any $\bl\in \mathcal{L}$ and 
\begin{align}
\label{eq:approxmation-q}
    \mathcal{Q}(d, K,\Delta) = \bigcup_{\bl \in \mathcal{L}} \mathcal{Q}_{\bl}(d, K, \Delta).
\end{align}

\subsection{Technical Lemmas}

In the proof of Theorem \ref{thm:approx-smooth}, we will use the following technical lemmas that build sub-network for different purposes. We provide detailed proofs of Lemma \ref{lemma:point-fitting-dim-1} and \ref{lemma:point-fitting-dim-d}, which play a key role in explicitly controlling the magnitude of the weights in Theorem \ref{thm:approx-smooth}. Lemma \ref{lemma:polynomial-fitting} -- \ref{lemma:midfunc} restate the results of \cite{lu2021deep} while explicitly presenting the weights' magnitude used in their construction.

\begin{lemma}[1-D Point fitting]
\label{lemma:point-fitting-dim-1}
	Let $N_1, N_2\in \mathbb{N}^+ \setminus \{1\}$ and $\delta>0$ be arbitrary. Then, for any set of points $\{(x_i, y_i)\}_{i=0}^{N_1N_2-1}$ with $x_i \ge x_{i-1} + \delta$ and $x_i, y_i\in [0,1]$, there exists a 3-hidden-layer ReLU network $g^\dagger$ with width parameter $\bd=(1, 2N_1-1, 4(N_2-2)+2, 8(N_2-2)+2, 1)$ and weights bounded by $4/\delta^2$ such that the following properties holds
	\begin{itemize}
	\item[1.] $g^\dagger(x_i) = y_i$ for all the $i\in \{0,\ldots, N_1N_2-1\}$.
	\item[2.] $g^\dagger(x)$ is linear on $x\in [x_{i-1}, x_i]$ if $i \notin \{jN_2, j=1,\ldots, N_1-1\}$.
	\end{itemize}
	Moreover, the weights in the first layer and the last layer are all bounded by $1$, i.e., $\|\bW_1\|_{\max} \lor \|\bb_1\|_{\max} \lor \|\bW_4\|_{\max} \lor \|\bb_4\|_{\max} \le 1$.
\end{lemma}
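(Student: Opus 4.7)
The proof is by explicit construction. I would begin by partitioning the $N_1 N_2$ interpolation points into $N_1$ consecutive groups $G_j = \{(x_{jN_2 + k}, y_{jN_2 + k})\}_{k=0}^{N_2-1}$ and writing $I_j := [x_{jN_2}, x_{(j+1)N_2 - 1}]$ for the $j$-th group interval. The target network $g^\dagger$ is built so that the first hidden layer produces features that (i) identify which group contains the input $x$, encoded via standard staircase-type ReLU combinations, and (ii) produce a local coordinate that parametrizes position within each $I_j$ by a common affine template; the second and third hidden layers then realize a single template interpolator that, combined linearly with the group features, produces the within-group piecewise linear interpolant for each $G_j$.

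For the first hidden layer I would choose thresholds $\tau_j \in (x_{jN_2 - 1}, x_{jN_2})$ between consecutive groups, satisfying $\tau_j - x_{jN_2-1} \ge \delta/2$ by the spacing assumption, and emit the neurons $\sigma(x - \tau_j)$ and $\sigma(\tau_j - x)$ for $j=1,\ldots,N_1-1$, together with $\sigma(x)$, totalling the prescribed $2N_1 - 1$ neurons. Because the first-layer affine map only shifts $x$ and $x_i, \tau_j \in [0,1]$, the entries of $\mathbf{W}_1$ and $\mathbf{b}_1$ have magnitude at most $1$, satisfying the boundary-layer constraint. A telescoping linear combination of these features recovers both (a) the local coordinate function that equals $x - x_{jN_2}$ on $I_j$ (a shared template across groups) and (b) an $N_1$-valued group-indicator that is piecewise constant on the $I_j$'s.

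In the second and third hidden layers I combine the layer-1 features to implement a piecewise-linear interpolator per group, following the construction in Lemma \ref{lemma:nn-piecewise-linear} but sharing neurons across groups using the gadgets in Lemma \ref{lemma:nn-simple} (specifically $\sigma(\cdot) + \sigma(-\cdot) = |\cdot|$ for sign-splitting and $\min/\max$ for splicing). For each group, fitting $N_2$ points requires $N_2 - 2$ interior breakpoints, the two endpoints of each $I_j$ being "free" once the layer-1 features are in place; each interior breakpoint expands to $4$ neurons in hidden layer $2$ and $8$ in hidden layer $3$ to accommodate per-group sign-splitting and group-identifying multiplicative interactions, plus a constant $2$ neurons that propagate the group-indicator signal through to the output, giving precisely the widths $4(N_2-2) + 2$ and $8(N_2-2) + 2$. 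Because each of layers 2--3 introduces a multiplicative slope of at most $1/\delta$ (by the minimum-spacing hypothesis applied in Lemma \ref{lemma:nn-piecewise-linear}) and the layer-1 slopes are at most $1$, the interior-layer weights are bounded by $(1/\delta)^2 \le 4/\delta^2$. The last-layer weights can be kept at most $1$ because target values lie in $[0,1]$ and no further rescaling is needed at the output.

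The principal obstacle will be the exact bookkeeping in the last step, to match the prescribed widths $4(N_2-2) + 2$ and $8(N_2-2) + 2$ and to verify that the constructed function is indeed linear on each intra-group subinterval $[x_{i-1}, x_i]$ with $i \notin \{jN_2: j=1,\ldots,N_1-1\}$, while exactly matching the data at every $x_i$. Handling the "seams" at the boundaries $x_{jN_2}$, where new breakpoints are permitted, requires careful alignment of the group-indicator features with the template interpolator's breakpoints, so that the activation pattern of the second and third layers changes precisely at a group transition and nowhere else. This alignment is delicate and drives the specific constant $2$ surplus in the widths of the two deeper hidden layers.
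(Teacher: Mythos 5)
There is a genuine gap: the mechanism you propose — a shared template interpolator evaluated at a per-group local coordinate and ``combined linearly'' with a group indicator via ``multiplicative interactions'' — cannot produce the required function. The intra-group values $\{y_{jN_2+k}\}_{k=0}^{N_2-1}$ are arbitrary and vary independently across groups $j$, so no single template of the local coordinate selects the right shape for every group, and the $N_1 N_2$ degrees of freedom cannot be encoded by a template plus a linear mixing with an $N_1$-valued indicator. Multiplying a piecewise-constant group indicator by a piecewise-linear template is also not a constant-width ReLU operation; the constructions that do realize such a bit-extraction/selector scheme (e.g.\ \cite{shen2019nonlinear,lu2021deep}) pay for it with weights growing exponentially in the number of points. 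That exponential blowup is precisely what this lemma is designed to avoid — the explicit $4/\delta^2$ bound is the whole point — so your approach loses the feature the result is supposed to deliver even if it could be made to interpolate correctly.

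The paper's mechanism is additive, not multiplicative, and you are missing that idea. The first hidden layer is a basis $h_j(x)=\sigma(x-x_{l_j})$ anchored at the \emph{group endpoints} $x_{jN_2}$ and $x_{(j+1)N_2-1}$ (not at seam thresholds $\tau_j$ strictly between groups as you propose); this choice is essential because it makes every span of the basis a piecewise-linear function that is exactly linear on each $I_j$ and whose value at group endpoints can be prescribed via Lemma~\ref{lemma:nn-piecewise-linear}. A first linear readout $g_0$ fits the group endpoints. Then, for each $k=1,\ldots,N_2-2$, a deviation $\Delta_k=\min(g_k^{+,l},g_k^{+,u})-\min(g_k^{-,l},g_k^{-,u})$ is added; each $g_k^{\cdot,\cdot}$ is one ReLU unit in layer~2 applied to a new linear combination of the layer-1 basis, chosen (again by Lemma~\ref{lemma:nn-piecewise-linear}) so that the resulting tent is zero outside $[x_{jN_2+k-1},x_{jN_2+k+1}]$ and peaks at $y_{jN_2+k}-g_0(x_{jN_2+k})$ at $x_{jN_2+k}$, \emph{with a different peak height for each group $j$} encoded entirely through the linear coefficients on the layer-1 basis — no multiplication. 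The mins are realized by Lemma~\ref{lemma:nn-simple} in layer~3, giving widths $4(N_2-2)+2$ and $8(N_2-2)+2$ (the $+2$ just carries $g_0$ forward via the identity), and the weight bound comes from the ratios $|\tilde y_{i+1}-\tilde y_i|/\delta\le 2/\delta^2$ entering the $g_k^{\cdot,\cdot}$ coefficients. You should reformulate Steps 3--4 of your argument around this iterative-deviation construction rather than the template-times-selector picture.
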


\begin{lemma}[Index creating in $d$-dimensional unit cube]
\label{lemma:point-fitting-dim-d}
Let $d, N\in \mathbb{N}^+$ be arbitrary. Define $K=\lfloor N^{1/d} \rfloor^2$, and $\mathcal{L}=\{0,\ldots, K-1\}^d$ as an index set. For any tolerance parameter $\Delta \in (0,1/(3K)]$, let $\mathcal{Q}_{\bl}(d, K, \Delta)$ be that in \eqref{eq:approxmation-ql-delta}. Then, there exists a deep ReLU network $\bg^\dagger: \mathbb{R}^d\to \mathbb{R}^d$ with depth $3$, width $16Nd$, and weights bounded by $4 \Delta^{-2}$ such that
\begin{align*}
    \bg^\dagger(\bx) = \bl/K ~~~~ \text{for any } \bx\in \mathcal{Q}_{\bl}(d, K, \Delta).
\end{align*} Moreover, the weights in the first layer and the last layer are all bounded by $1$.
\end{lemma}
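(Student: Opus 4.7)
\medskip

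\noindent\textbf{Proof proposal for Lemma~\ref{lemma:point-fitting-dim-d}.}
The plan is to reduce the $d$-dimensional construction to the one-dimensional result (Lemma~\ref{lemma:point-fitting-dim-1}) by noting that $\mathcal{Q}_{\bl}(d,K,\Delta)$ is a product of axis-aligned intervals, so the desired map $\bg^\dagger(\bx)=\bl/K$ can be produced coordinatewise by a single 1-D ``staircase'' $\phi:[0,1]\to[0,1]$ defined by
\[
\phi(x) \;=\; \tfrac{l}{K} \quad\text{for}\quad x\in\bigl[\tfrac{l}{K},\tfrac{l+1}{K}-\Delta\bigr],\ l=0,\dots,K-2,\qquad \phi(x)=\tfrac{K-1}{K}\text{ for }x\in[\tfrac{K-1}{K},1],
\]
and linearly interpolated on the $K-1$ transition intervals of width $\Delta$. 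Then $\bg^\dagger(\bx)=(\phi(x_1),\dots,\phi(x_d))$ satisfies the target identity on every $\mathcal{Q}_{\bl}(d,K,\Delta)$.

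\medskip

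\noindent To realize $\phi$ as a ReLU subnetwork, I would apply Lemma~\ref{lemma:point-fitting-dim-1} to the ordered list of $2K-1$ interpolation points $(l/K,l/K)$ for $l=0,\dots,K-1$ interleaved with $((l+1)/K-\Delta,l/K)$ for $l=0,\dots,K-2$ (padding with one dummy point at, say, $(1,\tfrac{K-1}{K})$ to reach exactly $2K$ points). Consecutive $x$-values differ by either $\tfrac{1}{K}-\Delta$ or $\Delta$, so the minimum spacing is $\delta=\Delta$ since $\Delta\le 1/(3K)$. Choosing $N_1=\lfloor N^{1/d}\rfloor$ and $N_2=2\lfloor N^{1/d}\rfloor$ gives $N_1N_2=2K$, which accommodates the point set. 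Lemma~\ref{lemma:point-fitting-dim-1} then yields a depth-3 subnetwork $\phi^\dagger$ with width parameter $(1,\,2N_1-1,\,4(N_2-2)+2,\,8(N_2-2)+2,\,1)$, weights bounded by $4/\delta^2=4\Delta^{-2}$, and crucially first- and last-layer weights bounded by $1$. The widest hidden layer has at most $8N_2=16\lfloor N^{1/d}\rfloor$ nodes.

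\medskip

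\noindent The final step is to parallelize $d$ identical copies of $\phi^\dagger$, feeding $x_j$ into the $j$-th copy and concatenating the $d$ scalar outputs. Since each $\phi^\dagger$ shares the same breakpoints, the first-layer affine map is a block-diagonal repetition of a single bounded map, and analogously for the output layer; hence both the first and last layers of $\bg^\dagger$ retain the bound $1$ on their weights, while all hidden weights remain bounded by $4\Delta^{-2}$. The depth of $\bg^\dagger$ is still $3$, and the total width is at most $d\cdot 16\lfloor N^{1/d}\rfloor\le 16Nd$, using $\lfloor N^{1/d}\rfloor\le N$. The degenerate case $\lfloor N^{1/d}\rfloor=1$ (so $K=1$, $\mathcal{L}=\{\mathbf 0\}$, and the target is the zero map) can be handled trivially by choosing $\bg^\dagger\equiv\bzero$.

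\medskip

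\noindent The main obstacle is bookkeeping: one needs Lemma~\ref{lemma:point-fitting-dim-1} to guarantee \emph{both} piecewise-linearity on the ``long'' plateau intervals (so that $\phi^\dagger$ is exactly constant on each $[l/K,(l+1)/K-\Delta]$) \emph{and} unit-magnitude weights at the input and output layers (so that parallelization across the $d$ coordinates does not inflate those layers). Verifying that the ordering of the $2K-1$ interpolation points places the transition intervals precisely at indices $i\in\{jN_2:j=1,\dots,N_1-1\}$ (i.e.\ that the non-linear ``block transitions'' of Lemma~\ref{lemma:point-fitting-dim-1} land exactly on the $\Delta$-gaps and not on the plateaus) is the delicate combinatorial step; the rest is a straightforward composition/parallelization bookkeeping argument using the notation developed in Section~\ref{sec:approx:notation}.
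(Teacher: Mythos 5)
Your proof is correct and takes essentially the same approach as the paper's: both realize a one‑dimensional plateau staircase via Lemma~\ref{lemma:point-fitting-dim-1} with $N_1=\lfloor N^{1/d}\rfloor$, $N_2=2\lfloor N^{1/d}\rfloor$ (so that the non‑linear block transitions at the even indices $jN_2$ fall exactly on the $\Delta$‑gaps while the plateau intervals, ending at odd indices, stay linear and hence constant) and then parallelize $d$ identical copies. Your write‑up is in fact slightly more careful than the paper's in matching the last plateau $[(K-1)/K,1]$ to the indicator $1_{\{l_j+1<K\}}$ in the definition of $\mathcal{Q}_{\bl}$, and in separately handling the degenerate case $\lfloor N^{1/d}\rfloor=1$ where Lemma~\ref{lemma:point-fitting-dim-1}'s hypothesis $N_1\ge 2$ fails.
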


\begin{lemma}[Polynomial fitting, restatement of Theorem 4.1 in \cite{lu2021deep}]
\label{lemma:polynomial-fitting}
Assume $P(\bx) = x_1^{\alpha_1} x_2^{\alpha_2}\cdots x_d^{\alpha_d}$ for $\alpha \in \mathbb{N}^d$ with $\|\alpha\|_1 \le k\in \mathbb{N}^+$. For any $N$, $L$, there exists a function $\phi$ implemented by a ReLU network with depth $7k^2L$, width $9(N+1) + k-1$, and weights bounded by $N^2$ such that
\begin{align*}
    |\phi(\bx) - P(\bx)| \le 9k(N+1)^{-7kL}  ~~~~ \text{for any } \bx\in [0,1]^d.
\end{align*}
\end{lemma}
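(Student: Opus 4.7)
The plan is to invoke the classical reduction of polynomial approximation to approximate multiplication, which in turn reduces to approximating the scalar squaring map $x\mapsto x^2$ via Yarotsky-style sawtooth constructions. Since the statement is labelled as a restatement of Theorem~4.1 in \cite{lu2021deep}, the strategy is to follow their construction while explicitly carrying through the bound $N^2$ on the weights.

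First, I would build a sub-network $\psi$ approximating $x\mapsto x^2$ on $[0,1]$ with uniform error at most $(N+1)^{-7L}$. The construction decomposes the standard sawtooth function $s(x)=2\min\{x,1-x\}$ into $L$ identical blocks of depth $7$, each implementing an $(N+1)$-fold composition of the basic sawtooth step; composing $L$ such blocks doubles the number of teeth multiplicatively but keeps the per-block weights of absolute constant order because each block is a fixed shallow network whose weights do not depend on $L$. Writing
\[
    x^2 \;=\; x \;-\; \sum_{k=1}^\infty 4^{-k} s^{(k)}(x)
\]
and truncating the sum at level $NL$ yields the squaring network with the claimed depth $7L$, width $O(N)$, and constant-order weights.

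Second, I would derive an approximation of multiplication via the polarization identity $xy=\tfrac{1}{4}\bigl((x+y)^2-(x-y)^2\bigr)$, parallelizing two copies of $\psi$ fed with suitable affine rescalings of $x,y$ so that the inputs lie in $[0,1]$. The rescaling layers are the only place weights of order $N^2$ enter the construction (these come from the Lu--Shen--Yang--Zhang trick for extending the domain beyond $[0,1]$ while preserving the $(N+1)^{-7L}$ error). Finally, for a monomial $P(\bx)=x_1^{\alpha_1}\cdots x_d^{\alpha_d}$ with $\|\alpha\|_1\le k$, compose $k-1$ copies of the multiplication sub-network in a balanced binary tree; using the padding device from Section~\ref{sec:approx:notation}, this yields depth $7k^2L$ and width $9(N+1)+k-1$. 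Error propagation is controlled by the elementary bound $|ab-\tilde a\tilde b|\le |a-\tilde a|+|b-\tilde b|$ valid for $|a|,|b|,|\tilde a|,|\tilde b|\le 1$ (ensured by the fact that each factor lies in $[0,1]$ and each intermediate truncation can be absorbed into $\psi$), giving total error at most $9k(N+1)^{-7kL}$ after $k-1$ compositions.

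The main obstacle is the weight bound $N^2$. A naive implementation of the sawtooth decomposition produces weights that blow up exponentially in $L$, which is why \cite{lu2021deep} insist on the block decomposition sketched above: the deep network is a composition of $O(L)$ identical shallow blocks with $O(1)$ weights, and only the $O(1)$ rescaling layers carry the $N^2$ contribution. Certifying the bound therefore reduces to a careful inspection of the linear maps $\mathcal L_\ell$ in each block together with the rescaling interfaces to verify that no entry exceeds $N^2$; this is essentially book-keeping once the block structure is fixed, but it is the step where care is required and is the only non-routine aspect given that the approximation-error analysis is standard.
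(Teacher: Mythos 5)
The paper itself offers no proof of this lemma: it is explicitly labelled ``restatement of Theorem~4.1 in \cite{lu2021deep}'' and is simply cited, so the only thing to check is whether your reconstruction is consistent with the stated architectural bounds.

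Your sketch of the squaring network, the polarization identity for multiplication, and the linear error-propagation bound all follow the Lu--Shen--Yang--Zhang blueprint and are sound. The problem is the step where you build the monomial: you propose to ``compose $k-1$ copies of the multiplication sub-network in a balanced binary tree'' and assert this yields depth $7k^{2}L$ and width $9(N+1)+k-1$. It does not. A balanced binary tree over $k$ factors has $\lceil\log_{2}k\rceil$ levels; with each multiplication block of depth $\approx 7kL$ this gives depth of order $kL\log k$ \emph{but} requires up to $\lceil k/2\rceil$ multiplications to be evaluated in parallel on the first level, i.e.\ width of order $k\cdot N$. That is incompatible with the stated width $9(N+1)+k-1$, and padding (which can only enlarge a network) cannot repair it. The quadratic-in-$k$ depth together with the additive $+(k-1)$ width is the signature of a \emph{sequential chain}: compute $p_{2}=\phi(x_{1},x_{2})$, then $p_{3}=\phi(p_{2},x_{3})$, and so on, with each multiplication block of depth $\approx 7kL$ (so that its intrinsic error is $\lesssim (N+1)^{-7kL}$), giving total depth $(k-1)\cdot 7kL\le 7k^{2}L$; the extra $k-1$ in the width is exactly what is needed to carry the not-yet-consumed coordinates $x_{i+1},\dots,x_{k}$ through the intermediate layers via identity channels. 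Your error-propagation bound $|ab-\tilde a\tilde b|\le|a-\tilde a|+|b-\tilde b|$ then gives linear accumulation of the per-block error across the $k-1$ stages, matching $9k(N+1)^{-7kL}$ up to the constant. So: replace the balanced tree by a sequential chain and the rest of your argument is a faithful reconstruction; as written, the tree construction overshoots the width by a factor of order $k$ and therefore does not prove the lemma in the form the paper needs (the downstream covering-number bound in Theorem~\ref{thm:fast-oracle} relies on the width being $O(N)$ for fixed smoothness, not $O(kN)$).
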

\begin{lemma}[Multiplication, restatement of Lemma 4.2 in \cite{lu2021deep}]
\label{lemma:poly2}
For any $N, L\in \mathbb{N}^+$, and $a,b\in \mathbb{R}$ with $a<b$, there exists a function $\phi$ implemented by a ReLU network with depth $L$, width $9N+1$, and weights bounded by $3N^2[(|a|+|b|)^2 \lor 1]$ such that
\begin{align*}
    |\phi(x,y) - xy| \le 6(b-a)^2 N^{-L}  ~~~~ \text{for any } x,y\in [a,b]
\end{align*}
\end{lemma}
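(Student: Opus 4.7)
The plan is to reduce multiplication on $[a,b]^2$ to scalar squaring via the polarisation identity $4uv=(u+v)^2-(u-v)^2$, and then to invoke a ReLU sub-network that approximates $s\mapsto s^2$ with depth $L$, width $9N+1$, and error $O(N^{-L})$ (essentially the same base-$N$ sawtooth construction underlying Lemma \ref{lemma:polynomial-fitting}).

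First, the rescaling: set $u=(x-a)/(b-a)\in[0,1]$ and $v=(y-a)/(b-a)\in[0,1]$, so
$$xy=(b-a)^2\,uv+a(b-a)(u+v)+a^2.$$
The polarisation identity then expresses $uv$ in terms of squaring at $u+v\in[0,2]$ and $u-v\in[-1,1]$, both of which can be affinely mapped into $[0,1]$. This reduces the problem to constructing a scalar squaring sub-network $\phi_{\mathrm{sq}}$; two parallel copies of $\phi_{\mathrm{sq}}$ handle $(u+v)^2$ and $(u-v)^2$, after which a single linear layer produces $uv$ and adds the affine correction $a(b-a)(u+v)+a^2$. All the rescalings (the factor $(b-a)^2$ and the affine shift by $a$) can be absorbed into the first and last affine layers, but they force the weight magnitude to pick up a factor of at most $(|a|+|b|)^2\vee 1$.

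Next, for the squaring sub-network, let $f_k$ be the continuous piecewise-linear interpolant of $s^2$ at the equispaced nodes $\{i/N^k:0\le i\le N^k\}$; by convexity $\|s^2-f_k\|_\infty\le (2N^k)^{-2}$, and $f_k$ obeys a recursion $f_k=f_{k-1}-N^{-2k}S_{k-1}$, where $S_j$ is the $j$-fold self-composition of a base-$N$ tent map $T_N:[0,1]\to[0,1]$ (a $2N$-piece sawtooth realisable by $2N$ ReLU units with weights bounded by $2N$). Stacking $L$ copies of $T_N$ in series while maintaining the running partial sum on a parallel ``carry'' track fits inside width $9N+1$ and depth $L$; fusing the $N^{-2k}$ scalings into subsequent affine matrices keeps individual weight entries at $O(N^2)$. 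Evaluating at $k=L$ gives $|\phi_{\mathrm{sq}}(s)-s^2|\lesssim N^{-L}$.

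Assembling the two copies of $\phi_{\mathrm{sq}}$ with the affine layers described above and propagating constants through the $(b-a)^2$ rescaling yields the claimed error $6(b-a)^2 N^{-L}$ and weight bound $3N^2[(|a|+|b|)^2\vee 1]$. The main obstacle is the width bookkeeping for $\phi_{\mathrm{sq}}$: implementing the tent $T_N$, updating the carry track, and threading the scalar input through each layer must all fit into width $9N+1$, and the $N^{-2k}$-weighted fusion must be executed without inflating $\|\bW\|_{\max}$ beyond $3N^2$. This tight packing—not the qualitative existence of a sawtooth approximator—is the technical heart of Lu--Shen--Yang--Zhang's argument, and any self-contained proof would spend the bulk of its effort there.
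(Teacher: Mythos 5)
First, note what the paper actually does with this statement: it does not prove it. Lemma \ref{lemma:poly2} is explicitly a restatement of Lemma 4.2 of \cite{lu2021deep} with the weight magnitude read off from their construction, and the appendix only supplies original proofs for the point-fitting lemmas (Lemmas \ref{lemma:point-fitting-dim-1} and \ref{lemma:point-fitting-dim-d}). Your plan is therefore essentially a reconstruction of the cited argument: reduce multiplication to squaring via a polarisation-type identity (Lu et al.\ use $xy=2\bigl(((x+y)/2)^2-(x/2)^2-(y/2)^2\bigr)$ with three width-$3N$ squaring blocks, which is exactly where the width $9N+1$ and the factor $6$ come from; your two-square polarisation is a harmless variant that if anything loosens the constants), and build the squaring block by composing base-$N$ sawtooths so that depth $L$ yields the piecewise-linear interpolant at $\sim N^L$ nodes.

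The genuine gap is that the quantitative bookkeeping — which is the entire content of this lemma as restated, since the error rate alone is classical — is not carried out, and as written it is internally inconsistent. You claim that a \emph{single} squaring sub-network $\phi_{\mathrm{sq}}$ (tent maps in series plus a carry track plus the threaded input) occupies width $9N+1$ and depth $L$, and then in the next paragraph place \emph{two} parallel copies of $\phi_{\mathrm{sq}}$, together with the affine corrections $a(b-a)(u+v)+a^2$, inside the same width budget $9N+1$; that cannot both be true. To close this you must show each squaring track fits in width roughly $2N+2$ (tent of $2N$ ReLU units plus a two-unit carry), budget the shared input channels, and verify the recursion (your $f_k=f_{k-1}-N^{-2k}S_{k-1}$ has the composition index off by one: the correction at level $k$ is a scaled $k$-fold composition of the base sawtooth) — and then track the weights through the layer fusions: the sawtooth slopes are of order $N$, so fusing consecutive affine maps and the $N^{-2k}$ rescalings produces entries of order $N^2$, and one must distribute the scalings carefully (and absorb the $(b-a)^2$ and shift-by-$a$ factors into the first and last layers) to land under the stated bound $3N^2[(|a|+|b|)^2\lor 1]$ rather than, say, $4N^2$ times a larger constant. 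As you acknowledge, this packing is the technical heart of the lemma; without it the proposal is a plausible outline of the Lu--Shen--Yang--Zhang proof rather than a proof.
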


\begin{lemma}[Mid function, restatement of Lemma 3.1 in \cite{lu2021deep}]
\label{lemma:midfunc}
The middle value function $\mathrm{mid}(x_1, x_2, x_3)$ can be implemented by a ReLU network with depth 2, width 14, and weights bounded by 1.
\end{lemma}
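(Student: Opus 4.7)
The plan is to implement the identity $\mathrm{mid}(x_1,x_2,x_3) = x_1 + x_2 + x_3 - \max(x_1,x_2,x_3) - \min(x_1,x_2,x_3)$ by building two parallel depth-$2$ sub-networks that compute $\max(x_1,x_2,x_3)$ and $\min(x_1,x_2,x_3)$ respectively, and then assembling the final affine combination in the output layer, which does not cost an additional hidden layer.

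For the three-way max I would use the bracketing $\max(x_1,x_2,x_3) = \max\bigl(x_1, \max(x_2,x_3)\bigr)$ together with the two-input formula from Lemma~\ref{lemma:nn-simple}, namely $\max(a,b) = \tfrac12\bigl(a + b + \sigma(a-b) + \sigma(b-a)\bigr)$ with weights in $\{\pm 1, \pm \tfrac12\}$. In the first hidden layer, I would allocate two ReLU units for $\sigma(x_2 - x_3)$ and $\sigma(x_3 - x_2)$, making $\max(x_2,x_3)$ accessible as an affine combination of layer-$1$ outputs, plus two identity-representation units $\sigma(x_1), \sigma(-x_1)$ so that $x_1$ is available at layer $2$. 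In the second hidden layer, place two more ReLU units $\sigma\bigl(x_1 - \max(x_2,x_3)\bigr)$ and $\sigma\bigl(\max(x_2,x_3) - x_1\bigr)$, along with identity-propagation units for $x_1$ and $\max(x_2,x_3)$, so that $\max(x_1,x_2,x_3)$ appears as an affine combination of layer-$2$ activations. The three-way $\min$ sub-network is constructed identically up to sign flips, and its layer-$1$ pairwise-difference units $\sigma(x_2-x_3), \sigma(x_3-x_2), \sigma(x_1), \sigma(-x_1)$ can be shared with the $\max$ sub-network. Tallying the shared pairwise-difference and identity units in layer $1$ with the two comparison plus four propagation units required by each of $\max$ and $\min$ in layer $2$ bounds the overall width by $14$.

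All the coefficients appearing in these formulas are in $\{\pm 1, \pm\tfrac12\}$, so the global bound of $1$ on the weights is automatic; similarly, the biases are $0$. The only real bookkeeping issue is ensuring that each of $x_1, x_2, x_3$ is propagated through both hidden layers via the identity $x = \sigma(x) - \sigma(-x)$, so that the output affine combination $x_1+x_2+x_3 - \max - \min$ has access to every summand. This is absorbed into the already-allocated propagation units. The claimed width $14$ is generous—not tight—and the slack is convenient because Lemma~\ref{lemma:midfunc} will be invoked inside the median-of-three trick that removes the trifling region in the proof of Theorem~\ref{thm:approx-smooth}, where keeping weights bounded by $1$ is the important feature.
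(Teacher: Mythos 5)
Your high-level plan---write $\mathrm{mid}=x_1+x_2+x_3-\max_3-\min_3$, build $\max_3$ and $\min_3$ in two hidden layers via $\max(a,b)=\tfrac12\bigl(a+b+\sigma(a-b)+\sigma(b-a)\bigr)$, and fold the final affine combination into the output layer---is the standard construction and in the right spirit (the paper itself offers no proof, only the citation to Lu et al.). But there is a concrete bookkeeping error in the first hidden layer that breaks the argument as written. With only the four layer-1 activations $\sigma(x_2-x_3),\sigma(x_3-x_2),\sigma(x_1),\sigma(-x_1)$, the space of affine combinations you can form contains $x_1$, $|x_1|$, $x_2-x_3$ and $|x_2-x_3|$, but \emph{not} $x_2+x_3$, $x_2$, or $x_3$ individually. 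Consequently $\max(x_2,x_3)=\tfrac12(x_2+x_3)+\tfrac12|x_2-x_3|$ is \emph{not} expressible as an affine combination of those layer-1 outputs, so the claimed layer-2 pre-activations $x_1-\max(x_2,x_3)$ and $\max(x_2,x_3)-x_1$ cannot be realized; the same problem hits the output affine combination, which needs $x_1+x_2+x_3$ but has no way to retrieve $x_2$ or $x_3$ from your layer-1 activations. Your remark that propagating $x_1,x_2,x_3$ is ``absorbed into the already-allocated propagation units'' is therefore wrong for $x_2$ and $x_3$.

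The fix is small and stays well inside the width-$14$ budget: add either $\sigma(x_2+x_3),\sigma(-x_2-x_3)$ (2 units) or $\sigma(x_2),\sigma(-x_2),\sigma(x_3),\sigma(-x_3)$ (4 units) to layer 1, so that $x_2+x_3$ (hence $\max(x_2,x_3)$ and $\min(x_2,x_3)$) becomes an affine function of layer-1 activations; then place in layer 2 the four comparison units $\sigma\bigl(\pm(x_1-\max(x_2,x_3))\bigr)$, $\sigma\bigl(\pm(x_1-\min(x_2,x_3))\bigr)$ together with propagation units for $x_1$ and $x_2+x_3$. This gives at most $8$ units per layer, all weights in $\{0,\pm\tfrac12,\pm1\}$, and the output is $\tfrac12(x_2+x_3)-\tfrac12\bigl[\sigma(x_1-\max)+\sigma(\max-x_1)-\sigma(x_1-\min)-\sigma(\min-x_1)\bigr]$, which one checks equals $\mathrm{mid}(x_1,x_2,x_3)$. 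So the idea is right, but as stated the layer-1 span claim is false and must be patched before the lemma follows.
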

\begin{lemma}[Lemma 3.4 in \cite{lu2021deep}]
\label{lemma:approx-extend} 
Given any $\epsilon>0$, $K\in \mathbb{N}^+$, and $\delta \in (0,1/3K]$, assume $f$ is continuous function on $[0,1]^d$ and $g$ is a general function with
\begin{align*}
    |f(\bx) - g(\bx)| \le \epsilon ~~~~\text{for any } x\in \mathcal{Q}(d, K,\Delta).
\end{align*}
Then 
\begin{align*}
    |\phi(\bx) - f(\bx)| \le \epsilon + d \sup_{\bx,\by \in [0,1]^d, \|\bx-\by\|_2\le \Delta} |f(\bx)-f(\by)|, ~~~~\text{for any } x\in [0,1]^d,
\end{align*} where $\phi:=\phi_d$ is defined through
\begin{align*}
    \phi_{j+1}(\bx) = \mathrm{mid}(\phi_j(\bx - \Delta \be_{j+1}), \phi_i(\bx), \phi_j(\bx + \Delta \be_{j+1})), ~~~~ \text{for } j=0,1,\ldots, d-1,
\end{align*} where $\phi_0=g$ and $\{\be_j\}_{j=1}^d$ is the standard basis in $\mathbb{R}^d$.
\end{lemma}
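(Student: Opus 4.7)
The plan is to prove, by induction on $j\in\{0,1,\ldots,d\}$, the stronger claim
\begin{align*}
|\phi_j(\bx)-f(\bx)|\le \epsilon + j\,\omega(\Delta) \qquad \text{for every } \bx\in E_j,
\end{align*}
where $\omega(\Delta):=\sup_{\bx,\by\in[0,1]^d,\,\|\bx-\by\|_2\le\Delta}|f(\bx)-f(\by)|$ is the modulus of continuity of $f$, and $E_j\subset[0,1]^d$ is the set of points whose last $d-j$ coordinates each lie in an ``in-box'' interval $[l_k/K,\,(l_k+1)/K-1_{\{l_k+1<K\}}\Delta]$ for some $l_k\in\{0,\ldots,K-1\}$. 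By construction $E_0=\mathcal{Q}(d,K,\Delta)$, $E_d=[0,1]^d$, and $E_j\subset E_{j+1}$; the $j=d$ case of the displayed inequality is exactly the conclusion of the lemma. The base case $j=0$ is immediate from the hypothesis $|g-f|\le\epsilon$ on $\mathcal{Q}(d,K,\Delta)$.

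The workhorse for the inductive step is the elementary bracketing property of the median: $\mathrm{mid}(a_1,a_2,a_3)$ lies between $\min\{a,b\}$ and $\max\{a,b\}$ for every pair $\{a,b\}\subset\{a_1,a_2,a_3\}$, so whenever any two of the three inputs belong to an interval $[f(\bx)-\rho,\,f(\bx)+\rho]$ the median does too. Fix $\bx\in E_{j+1}$ and consider the three arguments $\by\in\{\bx,\,\bx\pm\Delta\,\be_{j+1}\}$ entering $\phi_{j+1}(\bx)$. Each such $\by$ agrees with $\bx$ in every coordinate except possibly the $(j+1)$-th, so the in-box condition on coordinates $j+2,\ldots,d$ required by $E_j$ is inherited from $\bx\in E_{j+1}$; hence membership $\by\in E_j$ reduces to a one-dimensional condition on $y_{j+1}\in\{x_{j+1},\,x_{j+1}\pm\Delta\}$. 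Whenever $\by\in E_j$, the induction hypothesis together with $\|\by-\bx\|_2\le\Delta$ and the triangle inequality yields $|\phi_j(\by)-f(\bx)|\le \epsilon+(j+1)\omega(\Delta)$, so the median bracketing will close the step as soon as we verify that at least two of the three candidates land in $E_j$.

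This verification is a short case analysis on the location of $x_{j+1}$ within the one-dimensional grid $\{l/K\}_{l=0}^{K}$. The condition $\Delta\le 1/(3K)$ guarantees that the bad strip of width $\Delta$ at the right edge of each non-terminal box is separated from its neighbors by at least $1/K\ge 3\Delta$, so a $\pm\Delta$-shift cannot hop from one bad strip to another: if $x_{j+1}$ lies in a bad strip then both $x_{j+1}\pm\Delta$ fall into the good interiors of the two adjacent boxes, while if $x_{j+1}$ is itself good then at least the shift in the direction away from the nearest bad strip is also good. Near the cube boundary the single shift that would leave $[0,1]$ is simply discarded, and the two remaining arguments are verified to sit in good intervals using $\Delta\le 1/(3K)$ together with the no-strip convention $1_{\{l_{j+1}+1<K\}}$ that makes the terminal box fully good. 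Because the median only needs two of its three inputs to be controlled, the inductive step closes, and taking $j=d$ finishes the proof. The main obstacle is keeping the case analysis on $x_{j+1}$ and the boundary cells crisp; everything else is driven by the median-bracket identity.
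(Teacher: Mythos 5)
Your proposal is correct and is essentially the proof of the cited source: the paper itself gives no proof of this lemma (it is restated from Lemma 3.4 of \cite{lu2021deep}), and your induction on $j$ over the sets $E_j$, combined with the median-bracketing property and the $\Delta \le 1/(3K)$ case analysis showing that at least two of the three points $\bx$, $\bx \pm \Delta \be_{j+1}$ remain in $E_j$, is exactly that argument. The only cosmetic point is that near the boundary the out-of-range shift is not literally ``discarded'' --- the median is still evaluated on all three values --- but since two of them are within $\epsilon + (j+1)\,\omega(\Delta)$ of $f(\bx)$, bracketing makes the third value irrelevant, which is what you intend.
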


\subsection{Proof of Theorem \ref{thm:approx-smooth}}

We only prove the claim of the $d$-variate $(\beta,C)$-smooth function $g$. For the hierarchical composition model, it follows from the same idea of Proposition 3.4 in \cite{fan2022noise}. The key idea of the proof, which uses point-fitting sub-networks and polynomial fitting sub-networks to approximate the $r$-th order Taylor expansion of the target function $g$, is very similar to that of \cite{lu2021deep}. Yet, the proof differs from that in \cite{lu2021deep} in three aspects. (1) We use a point-fitting sub-network with carefully controlled bounded weights whose magnitude scales with $\mathcal{O}(N^c)$ for some constant $c$. Though not explicitly characterized in their construction, the point-fitting sub-network in \cite{lu2021deep, shen2019nonlinear} uses weights scales with $\Omega(e^N)$. This will lead to a huge difference when it is applied to statistical analyses. (2) In \cite{lu2021deep}, they allow the depth hyper-parameter to be tunable. We adopt a fixed depth instead to achieve a faster convergence rate in the $L_2$ norm when applying the approximation result to statistical analysis. (3) We allow the smoothness parameter $\beta$ to be in $\mathbb{R}^+$, and \cite{lu2021deep} requires $\beta \in \mathbb{N}^+$.

\begin{proof}[Proof of Theorem~\ref{thm:approx-smooth}]
\noindent {\sc Step 1. Establish Taylor Expansion Approximation for Function $g$. } Let $K=\lfloor N^{1/d} \rfloor^2$ and $0<\Delta<1/3K$ to be determined. For any $\bl \in \mathcal{L}=\{0,\ldots, K-1\}^d$, let $\mathcal{Q}_{\bl}(d, K, \Delta)$ be that in \eqref{eq:approxmation-ql-delta}, and $\bx_{\bl} = \bl/K$, we have $\bx_{\bl} \in \mathcal{Q}_{\bl}(d, K, \Delta)$ for any $\Delta\in [0,1/K)$. Define 
\begin{align}
    \bphi(\bx) = \bx_{\bl} ~~~~ \text{for} ~~~~ \bx \in \mathcal{Q}_{\bl}(\Delta),
\end{align} and $\bh = \bx - \bphi(\bx)$. Recall $\beta = r+s$ for $r\in \mathbb{N}$ and $s\in (0,1]$. Consider the following Taylor expansion approximation of $g$ as
\begin{align*}
    \bar{g}(\bx) = \sum_{\balpha \in \mathbb{N}^d, \|\balpha\|_1\le r} \frac{\partial f}{\partial {\bx}^{\balpha}}(\bphi(\bx)) \frac{{\bh}^{\balpha}}{\balpha!}
\end{align*} 

By Taylor's expansion at the point $\bphi(\bx)$ for $\bx$, we have for some $\xi \in (0,1)$ such that
\begin{align*}
    g(\bx) = \sum_{\balpha \in \mathbb{N}^d, \|\balpha\|_1 \le r-1} \frac{\partial f}{\partial \bx^{\balpha}}(\bphi(\bx)) \frac{{\bh}^{\balpha}}{\balpha!}+\sum_{\balpha \in \mathbb{N}^d, \|\balpha\|_1 = r} \frac{\partial f}{\partial {\bx}^{\balpha}}(\bphi(\bx) + \xi \bh) \frac{{\bh}^{\balpha}}{\balpha!},
\end{align*} then it follows from the definition of $(\beta, C)$-smooth function that
\begin{align*}
    |\bar{g}(\bx) - g(\bx)| &= \sum_{\balpha \in \mathbb{N}^d, \|\balpha\|_1 = r} \frac{\bh^{\balpha}}{\balpha!} \left|\frac{\partial f}{\partial {\bx}^{\balpha}}(\bphi(\bx) + \xi \bh) - \frac{\partial f}{\partial \bx^{\balpha}}(\bphi(\bx))\right| \\
    &\le \sum_{\balpha \in \mathbb{N}^d, \|\balpha\|_1 = r} \frac{\bh^{\balpha}}{\balpha!} \|\xi \bh\|_2^s \le \sum_{\balpha \in \mathbb{N}^d, \|\balpha\|_1 = r} \frac{\|\bh\|_\infty^{\|\balpha\|_1}}{\balpha!} \sqrt{d}^s \|\bh\|_\infty^s \\
    &\le C_{1} K^{-(r+s)} \le C_2 N^{-2\beta/d},
\end{align*} where $C_1, C_2$ are constants that only depend on $d$, $r$ and $s$. Moreover, the definition of $(\beta, C)$-smooth function also implies that 
\begin{align*}
    \left| \frac{1}{\balpha!} \cdot \frac{\partial f}{\partial {\bx}^{\balpha}} (\bx)\right| \le C_3 ~~~~ \text{for any } \bx \in [0,1]^d \text{ and } \balpha \in \mathbb{N}^d \text{ with } \|\balpha\|_1 \le r,
\end{align*} for some constant $C_3>9(r+1)+1$.

\noindent {\sc Step 2. Build Neural Network to Approximate $\bar{g}$.} We find some $g_{\Delta}^\dagger$ to approximate $\bar{g}$. Let $I(\bl) = \sum_{j=1}^d K^{-j} l_j$. It is easy to show that $I(\bl)$ is a bijective map between
\begin{align*}
    \Big\{\bl \in \mathcal{L}\Big\} ~~~~\text{and}~~~~ \left\{k/K^d: k\in \{0,\ldots, K^d-1\}\right\}.
\end{align*}

It follows from Lemma \ref{lemma:point-fitting-dim-d} that there exists a ReLU network $\bphi^\dagger: $ with depth $3$, width $16Nd$ and weights bounded by $8 \Delta^{-2}$ such that
\begin{align}
\label{eq:proof:nn-approx-indexing}
    \bphi^\dagger(\bx) = \bphi(\bx) ~~~~ \text{for any } \bx \in \bigcup_{\bl \in \mathcal{L}} \mathcal{Q}_{\bl}(\Delta) = \mathcal{Q}(\Delta).
\end{align}

For each $\balpha \in \mathbb{N}^d$ with $\|\balpha\|_1\le r$, we first apply Lemma \ref{lemma:point-fitting-dim-1} with $N_1 = N_2 = \lfloor N^{1/d} \rfloor^d$, and the point set
\begin{align*}
    \left\{\left(I(\bl), \frac{(\balpha!)^{-1}(\partial f/\partial {\bx}^{\balpha})(\bx_{\bl})+C_3}{2C_3}\right)\right\}_{\bl \in \mathcal{L}},
\end{align*} then there exists a ReLU network $v^\dagger_{\balpha}: \mathbb{R}\to \mathbb{R}$ with depth $3$, width less than $8N$, and weights bounded by $4/K^2 \le 4N^2$ such that
\begin{align}
\label{eq:proof:nn-approx-gradient-fitting}
    2C_3 v^\dagger_{\balpha}(I(\bl)) - C_3 = \frac{1}{\balpha!} \cdot \frac{\partial f}{\partial {\bx}^{\balpha}}(\bx_{\bl}) ~~~~ \text{for any } \bl \in \mathcal{L}.
\end{align} 

Applying Lemma \ref{lemma:polynomial-fitting} with $k=r+1$, we find that there also exists a ReLU network $p_{\balpha}^\dagger: \mathbb{R}^d\to \mathbb{R}$ with depth $7(r+1)^2$, width $9N+r$, and weights bounded by $N^2$ such that
\begin{align}
\label{eq:proof:nn-approx-polynomial-fitting}
    |p^\dagger_{\balpha}(\bx) - \bx^{\balpha}| \le (C_3-1) N^{-7(r+1)} ~~~~\text{for any } \bx\in [0,1]^d.
\end{align} 

Finally, we apply Lemma \ref{lemma:poly2} with $a=-C_3$ and $b=C_3$, then there exists a ReLU network $\varphi^\dagger: \mathbb{R}^2\to\mathbb{R}$ with depth $2(r+1)$, width $9N+1$, and weights bounded by $12(C_3)N^2$ such that
\begin{align}
\label{eq:proof:nn-approx-mult-fitting}
    |\varphi^\dagger(x, y) - xy| \le 24C_3^2 N^{-2(r+1)} ~~~~\text{for any }(x,y)\in [-C_3, C_3]^2.
\end{align}
Consider the function
\begin{align*}
    g^\dagger_\Delta(\bx) = \sum_{\alpha\in \mathbb{N}^d, \|\balpha\|_1 \le r} \varphi \Big(2C_2v^\dagger_{\balpha}\big(I(K\bphi^\dagger(\bx))\big)-C_2, p_{\balpha}^\dagger\big(\bx-\bphi^\dagger(\bx)\big)\Big).
\end{align*} 

In the remaining of {\sc Step 2}, we will show that it can be realized by a ReLU network with depth $\mathcal{O}(1)$, width $\mathcal{O}(N)$ and weights bounded by $\mathcal{O}(N^2 \lor \Delta^{-2})$, i.e., $g^\dagger_\Delta \in \mathcal{G}\Big(\mathcal{O}(1), d, 1, \mathcal{O}(N), \infty, \mathcal{O}(N^2 \lor \Delta^{-2})\Big)$ via repeatedly applying parallelization and composition argument. To this end, it follows from the parallelization argument that
\begin{align*}
    \bg_1 = (\text{Ident}(x_1),\cdots, \text{Ident}(x_d), \bphi^\dagger(\bx))
\end{align*} is a ReLU network $\mathbb{R}^d \to \mathbb{R}^{2d}$ with depth $3$, width $16dN+2d$ and weights bounded by $4\Delta^{-2}$. All the weights in the first layer and the last layer of $\bg_1$ are bounded by $1$. Secondly, note the mapping $(\bu, \bv) \to \bu - \bv$ and $\bu \to I(K\bu)$ are all linear transformations given $\bu, \bv\in \mathbb{R}^d$, applying the parallelization argument again for any $\balpha$ yields
\begin{align*}
    \bg_{2,\balpha}(\bu, \bv) = (2C_3 v_{\balpha}^\dagger(I(K \bv) - C_3, p^\dagger_{\balpha}(\bu - \bv)).
\end{align*} is a ReLU network $\mathbb{R}^{2d} \to \mathbb{R}^{2}$ with depth $3\lor 7(r+1)^2=7(r+1)^2$, width $(17N+r)$ and weights bounded by $4N^2 \lor d \lor (2C_3)$. Moreover, the weights in the first layer are bounded by $N^2 \lor d$ and the weights in the last layer is bounded by $N^2 \lor (2C_3)$. Then it follows from the composition argument that
\begin{align*}
    \bg_{3,\balpha}(\bu, \bv) = \varphi \circ \bg_{2,\balpha}
\end{align*} is a ReLU network $\mathbb{R}^d\to \mathbb{R}$ with depth $7(r+1)^2+2(r+1)$, width $17N+r$ and weights bounded by $12C_3N^4 \lor d$. All the weights in the first layer and last layer are bounded by $12C_2 N^2 \lor d$. 

Let \begin{align*}
    S = \sum_{\balpha \in \mathbb{N}^d, \|\balpha\|_1 \le r} 1 = \sum_{k=0}^r d^k \le d^{r+1}.
\end{align*} Note the function $h: (x_1,\cdots, x_S) \to \sum_{\ell=1}^S x_{\ell}$ is a linear function. Hence by composition and parallelization arguments, the function
\begin{align*}
    g_4(\bu, \bv) = h \circ \left((\bg_{3,\balpha}(\bu, \bv))_{\balpha \in \mathbb{N}^d, \|\balpha\|_1 \le r}\right)
\end{align*} can be realized as a ReLU network $\mathbb{R}^{2d}\to \mathbb{R}$ with depth $7(r+1)^2+2(r+1)$, width $S(17N+r)$ and weights bounded by $12C_3N^4 \lor d$. All the weights in the first layer are bounded by $12C_3N^2 \lor d$. 

Now we conclude the construction by letting $g^\dagger_\Delta(\bx) = \bg_4 \circ \bg_1$. It follows from the composition argument that $g^\dagger_\Delta(\bx)$ is a ReLU network $\mathbb{R}^d\to\mathbb{R}$ with depth $L^\dagger_\Delta = 7(r+1)^2 + 2(r+1)+3$, width $N^\dagger_\Delta = d^{r+1}(17N+r) \lor 16dN$ and weights bounded by $B^\dagger_\Delta = 12C_3N^4 \lor \Delta^{-2} \lor d$.

\noindent {\sc Step 3. Bound the Approximation Error in ``Good Region''.} The goal of this step is to show that
\begin{align}
\label{eq:proof:nn-approx-error-claim}
    |g^\dagger_\Delta(\bx) - g(\bx)| \lesssim N^{-2\beta/d} ~~~~ \text{for any } \bx \in \mathcal{Q}(d, K, \Delta).
\end{align} We have $|g(\bx) - \bar{g}(\bx)| \lesssim (NL)^{-2\beta/d}$ from {\sc Step 1}. Applying triangle inequality, it suffices to show that
\begin{align*}
    |g^\dagger_\Delta(\bx) - \bar{g}(\bx)| \lesssim N^{-2\beta/d} ~~~~ \text{for any } \bx \in \mathcal{Q}(d, K, \Delta).
\end{align*} 

Recall the definition of $\mathcal{Q}(d, K, \Delta)$, and the equivalence between the neural network indexing $\bphi^\dagger(\bx)$ and the index $\bphi(\bx)$ used to define $\bar{g}(\bx)$ in $\mathcal{Q}(d, K, \Delta)$ in \eqref{eq:proof:nn-approx-indexing}. When $\bx\in \mathcal{Q}(d, K, \Delta)$, we have
\begin{align*}
    |g^\dagger_\Delta(\bx) - \bar{g}(\bx)| &= \left|\sum_{\balpha \in \mathbb{N}^d, \|\balpha\|_1\le r} \frac{\partial f}{\partial {\bx}^{\balpha}}(\bphi(\bx)) \frac{{\bh}^{\balpha}}{\balpha!} - \varphi^\dagger \Big(2C_2v^\dagger_{\balpha}\big(I(K\bphi(\bx))\big)-C_2, p_{\balpha}^\dagger\big(\bx-\bphi(\bx)\big)\Big)\right| \\
    &\overset{(a)}{=} \left|\sum_{\balpha \in \mathbb{N}^d, \|\balpha\|_1\le r} \frac{\partial f}{\partial {\bx}^{\balpha}}(\bphi(\bx)) \frac{{\bh}^{\balpha}}{\balpha!} - \varphi^\dagger \left(\frac{1}{{\balpha!}} \cdot \frac{\partial f}{\partial {\bx}^{\balpha}}(\bphi(\bx)), p_{\balpha}^\dagger\big(\bx-\bphi(\bx)\big)\right)\right| \\
    &\overset{(b)}{\le} \sum_{\balpha \in \mathbb{N}^d, \|\balpha\|_1\le r} \left|\frac{1}{\balpha!}\cdot\frac{\partial f}{\partial {\bx}^{\balpha}}(\bphi(\bx))  \times {\bh}^{\balpha}- \frac{1}{\balpha!}\cdot \frac{\partial f}{\partial {\bx}^{\balpha}}(\bphi(\bx)) \times p_{\balpha}^\dagger(\bh)\right| \\
    &~~~~~~~~~~~~~~~~~~~~~~~~ + \left|\frac{1}{\balpha!}\cdot \frac{\partial f}{\partial {\bx}^{\balpha}}(\bphi(\bx)) \times p_{\balpha}^\dagger(\bh) - \varphi^\dagger \left(\frac{1}{{\balpha!}} \cdot \frac{\partial f}{\partial {\bx}^{\balpha}}(\bphi(\bx)), p_{\balpha}^\dagger\big(\bh\big)\right) \right| \\
    &\overset{(c)}{\le} \sum_{\balpha \in \mathbb{N}^d, \|\balpha\|_1\le r} \sup_{\bx} \left|\frac{1}{\balpha!}\cdot \frac{\partial f}{\partial {\bx}^{\balpha}}(\bphi(\bx))\right| \left|\bh^{\balpha} - p_{\balpha}^\dagger(\bh) \right| + 24C_3^2 N^{-2(r+1)} \\
    &\overset{(d)}{\le} C_3(C_3-1) N^{-7(r+1)} + 24C_3^2 N^{-2(r+1)}.
\end{align*} Here (a) follows from the point fitting result \eqref{eq:proof:nn-approx-gradient-fitting}, (b) follows from the triangle inequality and the fact that $\bh = \bx - \bphi(\bx)$, (c) follows from the bound on multiplication approximation \eqref{eq:proof:nn-approx-mult-fitting} and the fact that $\frac{1}{\balpha!}\cdot \frac{\partial f}{\partial {\bx}^{\balpha}}(\bphi(\bx)) \le C_3$ and $|p^\dagger_{\balpha}(\bh)| \le 9(r+1) N^{-7(r+1)} + |\bx^{\balpha}| \le 9(r+1) + 1 \le C_3$, (d) follows from the bound on polynomial approximation \eqref{eq:proof:nn-approx-polynomial-fitting}. This completes the proof of claim \eqref{eq:proof:nn-approx-error-claim} by the fact that $(r+1) \ge \beta \ge \beta/d$.

\noindent {\sc Step 4. Conclusion of the Proof by Generalizing to Unit Cube. } It follows from the above {\sc Step 1} -- {\sc Step 3} that for arbitrary $\Delta \in (0,1/(3K)]$, we can find a ReLU network $g^\dagger_{\Delta}$ with width $N^\dagger_\Delta \lesssim N$, depth $L^\dagger_\Delta \lesssim 1$ and weights bounded by $B^\dagger_\Delta \lesssim N^4 \lor \Delta^{-2}$ such that
\begin{align*}
    |g^\dagger_{\Delta}(\bx) - g(\bx)| \lesssim (NL)^{-2\beta/d} ~~~~\text{for any }\bx\in \mathcal{Q}(d, K, \Delta).
\end{align*}

In this step, we consider using Lemma \ref{lemma:approx-extend} and \ref{lemma:midfunc} to build $g^\dagger$ from $g^\dagger_\Delta$ with some suitable $\Delta$ such that the above approximation error bound holds for all the $\bx \in [0,1]^d$. It follows from the definition of $(\beta, C)$-smooth function that
\begin{align*}
    |g(\bx) - g(\by)| \le C_4 \|\bx - \by\|_2^{\beta \land 1}.
\end{align*} 

Setting $\Delta=N^{-2(\beta\lor 1)/d} \land (3K)^{-1}$, we have
\begin{align*}
    \sup_{\bx,\by\in [0,1]^d \|\bx-\by\|_2 \le \Delta} |g(\bx)-g(\by)| \le C_4 N^{-2\beta/d}.
\end{align*} This implies that we can use Lemma \ref{lemma:approx-extend} with $f=g$, $g=g^\dagger_\Delta$, $\Delta$ and $\epsilon\lesssim N^{-2\beta/d}$ to construct a function $\phi: \mathbb{R}^d \to \mathbb{R}$ based on $g^\dagger_\Delta$ satisfying
\begin{align*}
    |\phi(\bx) - g(\bx)| \le \epsilon + d\sup_{\bx,\by\in [0,1]^d \|\bx-\by\|_2 \le \Delta} |g(\bx)-g(\by)| \lesssim N^{-2\beta/d}.
\end{align*}

It remains to implement $\phi$ using the ReLU network and specify the parameter $L, N$, and $B$. We prove this by induction. To be specific, for fixed $\Delta$, we argue that $\phi_j(\bx)$ can be implemented via a ReLU network $\phi_j^\dagger(\bx)$ with depth $L^\dagger_\Delta+2d$, width $3^d N^\dagger_\Delta$ and weights bounded by $B^\dagger_\Delta$. To start with, when $j=1$, consider the construction that
\begin{align*}
    \phi^\dagger_1(\bx) = \mathrm{mid} \circ (g^\dagger_\Delta(\bx), g^\dagger_\Delta(\bx - \Delta\cdot \be_1), g^\dagger_\Delta(\bx + \Delta \cdot\be_1)).
\end{align*}

Note that all the weights of the ReLU network implementation of $\mathrm{mid}$ are bounded by $1$ in Lemma \ref{lemma:midfunc}. Hence it follows from the parallelization and composition arguments together with Lemma \ref{lemma:midfunc} that $\phi_1(\bx)$ can be implemented via a ReLU network with depth $L^\dagger_\Delta+2$, width $3N^\dagger_\Delta$, and weights bounded by $B^\dagger_\Delta$. 

If it holds for $j$, that is, we can implement $\phi_j(\bx)$ using a ReLU network $\phi^\dagger_j(\bx)$ with width depth $L^\dagger_\Delta+2j$, width $3^j N^\dagger_\Delta $ and weights bounded by $B^\dagger_\Delta$, then we can also implement $\phi_j(\bx-\Delta \be_{j+1})$ and $\phi_j(\bx+\Delta \be_{j+1})$ using ReLU network $\phi^\dagger_j(\bx-\Delta \be_{j+1})$ and $\phi^\dagger_j(\bx+\Delta \be_{j+1})$ respectively with the same network architecture configurations. Therefore, consider the construction that
\begin{align*}
    \phi^\dagger_{j+1}(\bx) = \mathrm{mid} \circ (\phi^\dagger_j(\bx), \phi^\dagger_j(\bx - \Delta\cdot \be_{j+1}), \phi^\dagger_j(\bx + \Delta \cdot\be_{j+1})).
\end{align*} It follows from the parallelization and composition arguments, and Lemma \ref{lemma:midfunc} that the claim holds for $j+1$.

Plugging in the choice of $\Delta$, we conclude that there exists a ReLU network $g^\dagger = \phi_d^\dagger(\bx)$ with depth $L^\dagger_\Delta +2d\lesssim 1$, width $3^d N^\dagger_\Delta \lesssim N$, and weights bounded by
\begin{align*}
    B^\dagger_\Delta \lesssim N^4 \lor \Delta^{-2} \lesssim N^4 \lor N^{4(\beta\lor 1)/d},
\end{align*} such that
\begin{align*}
    |g^\dagger(\bx) - g(\bx)| \lesssim N^{-2\beta/d} ~~~~ \text{for any } \bx\in [0,1]^d.
\end{align*}
\end{proof}

\subsection{Proof of Lemma \ref{lemma:point-fitting-dim-1}}

\begin{proof}[Proof of Lemma~\ref{lemma:point-fitting-dim-1}]

\noindent {\sc Step 1. Set the Basis.} We first set the parameters in the first hidden layer. Let $\mathcal{A}_L=\{jN_2: j\in \{0,\cdots, N_1-1\}\}$, $\mathcal{A}_U=\{(j+1)N_2-1: j\in\{0,\cdots, N_1-1\}\}$, and $\mathcal{A}=\mathcal{A}_L\cup \mathcal{A}_U$. Let $l_i$ with $i\in \{1,\cdots, 2N_1\}$ be the $i$-th smallest elements in $\mathcal{A}$, and define $\tilde{x}_i=x_{l_i}$ for $i\in \{1,\cdots, 2N_1\}$. In the remaining of the proof, we will use the piece-wise linear function result from Lemma \ref{lemma:nn-piecewise-linear} repeatedly with $\{(\tilde{x_i}, \tilde{y}_i)\}_{i=1}^{2N_1}$ where $(\tilde{y}_i)_{i=1}^{2N_1}$ are values to be specified by different hidden units. Therefore, we specify the first hidden layer as
\begin{align*}
	h_j(x) = \sigma(x - x_{l_j}) \qquad \text{ for } j = \{1,\ldots, 2N_1\}.
\end{align*}

\noindent {\sc Step 2. Setup of the First Unit in the Second Hidden Layer.} We will iteratively construct our neural network by induction. Firstly, applying Lemma \ref{lemma:nn-piecewise-linear} with point set $\{(x_i, y_i), i \in \mathcal{A}\}$, the following function $g_0(x)$
\begin{align*}
	g_0(x) = \sum_{j'=1}^{2N_1-1} w_{0,j'} h_{j'}(x) + b_0  	
\end{align*} satisfies
\begin{align*}
	g_0(x_{jN_2}) = y_{jN_2} ~~ \text{and} ~~g_0(x_{(j+1)N_2-1}) = y_{(j+1)N_2-1}\qquad \forall j\in \{0, \ldots, N_1-1\},
\end{align*} and $g_0(x)$ is linear on $[x_{jN_2}, x_{(j+1)N_2-1}]$ for all $j \in \{0, \ldots, N_1-1\}$. Moreover, the weight bound implies that
\begin{align*}
	\max_{j'}\{|w_{0,j'}|\} \lor |b_0| \le 2 \max_{j\in \{0,\ldots,N_1-1\}} \frac{|y_{(j+1)N_2-1}-y_{jN_2}|}{x_{(j+1)N_2-1}-x_{jN_2}} \lor \frac{|y_{(j+1)N_2}-y_{(j+1)N_2-1}|}{x_{(j+1)N_2}-x_{(j+1)N_2-1}} \le \frac{2}{\delta}
\end{align*} where the last inequality follows from the fact that all the pairwise distances between two different $x_i$'s are lower-bounded by $\delta$, and all the $y_i$'s are in $[0,1]$.

Moreover, $g_0(x) \in [0,1]$ for any $x\in [x_0,x_{m}]$ because $g_0(x)$ is a piecewise linear function that fits $\{(x_i, y_i)\}_{i\in \mathcal{A}}$, and all the points $y_i$ are in $[0,1]$.

\noindent {\sc Step 3. Construction of the Target Function.}
Now we set $s_0(x) = g_0(x)$, and consider to construct $s_k(x)$ for $k\in \{1,\ldots, N_2-1\}$ iteratively as 
\begin{align*}
	s_k(x) = s_{k-1}(x) + \Delta_k(x).
\end{align*} where $\Delta_k(x)$ is the function to be specified. We claim that the above construction of $s_k(x)$ satisfies
\begin{itemize}
\item[1.] (point fitting) For all $j\in \{0, \ldots, N_1-1\}$, \begin{align*}
	s_k(x_{jN_2+\ell}) &= y_{jN_2+\ell} \qquad &\text{ for any } \ell\in \{0,\ldots,k\}, \\
	s_k(x_{jN_2+\ell}) &= g_0(x_{jN_2+\ell}) \qquad &\text{ for any } \ell \in \{k+1, N_2-1\}.
\end{align*} 
\item[2.] (linearity) $s_k(x)$ is linear on the intervals
	\begin{align*}
		[x_{jN_2+\ell}, x_{jN_2+\ell+1}] \qquad \text{ and } \qquad [x_{jN_2+k+1}, x_{(j+1)N_2-1}]	
	\end{align*} for any $\ell \in \{0, \ldots, k\}$ and $j \in \{0,\ldots, N-1\}$.
\end{itemize}

Note $s_{N_2-2}(x)$ is the target function that we want. In this step, it remains to construct $\Delta_{k}(x)$ and show $s_k(x)$ satisfies the above condition by induction, we will show in the next steps that both $\Delta_k$ and $s_k$ can be implemented via ReLU networks. Firstly, the properties of $g_0(x)$ derived in Step 2 and the fact that $s_0(x)=g_0(x)$ yields that point fitting condition and linearity condition both hold in the case $k=0$. 

For $k\in \{1,\ldots, N_2-2\}$, we first try to specify our choice of $\Delta_k(x)$, and then prove by induction that the two conditions for $s_k(x)$ also hold if $s_{k-1}(x)$ satisfies the two conditions.

Intuitively, the role of $\Delta_k(x)$ is to let $s_k(x)$ fit one more point compared with $s_{k-1}(x)$, i.e., to fit the point $(x_{jN_2+k}, y_{jN_2+k})$, while still maintaining (1) other points unchanged, and (2) linearity condition between the interval $[x_{jN_2+k-1},x_{jN_2+k}]$, $[x_{jN_2+k}, x_{jN_2+k+1}]$, and $[x_{jN_2+k+1}, x_{(j+1)N_2-1}]$.

Let us formally implement the idea, let $\Delta_k(x)$ be
\begin{align*}
	\Delta_k(x) = \begin{cases}
 		0 & \qquad x \in [x_{jN_2}, x_{jN_2+k-1}] \\
 		\Big(\frac{y_{jN_2+k} - g_0(x_{jN_2+k})}{x_{jN_2+k}-x_{jN_2+k-1}} \Big) (x-x_{jN_2+k-1}) & \qquad x \in [x_{jN_2+k-1}, x_{jN_2+k}]\\
 		\Big(\frac{y_{jN_2+k} - g_0(x_{jN_2+k})}{x_{jN_2+k}-x_{jN_2+k+1}} \Big) (x-x_{jN_2+k+1})& \qquad x \in [x_{jN_2+k}, x_{jN_2+k+1}] \\
 		0 & \qquad x \in [x_{jT+k+1}, x_{j(T+1)-1}]
 	\end{cases},
\end{align*} on interval $[x_{jN_2}, x_{(j+1)N_2-1}]$ for $j\in \{0,\ldots, N_1-1\}$.

For the point fitting condition, we have $\Delta_k(x_{jN_2+\ell})=0$ as long as $\ell \le k-1$, this implies
\begin{align*}
	s_k(x_{jN_2+\ell}) = s_{k-1}(x_{jN_2+\ell}) = y_{jN_2+\ell}.
\end{align*} 

When $\ell=k$, 
\begin{align*}
	s_k(x_{jN_2+k}) &= s_{k-1}(x_{jN_2+k}) + \Delta_k(x_{jN_2+k})	\\
	&= g_0(x_{jN_2+k}) + \Big(\frac{y_{jN_2+k} - g_0(x_{jN_2+k})}{x_{jN_2+k}-x_{jN_2+k-1}} \Big) (x_{jN_2+k}-x_{jN_2+k-1}) \\
	&= y_{jN_2+k}.
\end{align*} 

When $\ell \ge k+1$, it follows from $\Delta_k(x_{jN_2+\ell})=0$ that 
\begin{align*}
	s_k(x_{jN_2+\ell}) = s_{k-1}(x_{jN_2+\ell}) = g_0(x_{jN_2+\ell}).
\end{align*} These pieces together verifies the point fitting condition.

For linearity, our construction implies that $\Delta_k(x)$ is a constant on $[x_{jN_2}, x_{jN_2+k-1}]$ and is linear on $[x_{jN_2+k-1}, x_{jN_2+k}]$, combined with the fact that $s_{k-1}(x)$ is linear on the interval $[x_{jN_2+\ell}, x_{jN_2+\ell+1}]$ for all $\ell \in \{0,\ldots,k-1\}$, the linearity property when $\ell \le k-1$ is preserved. Because $s_{k-1}(x)$ is linear on $[x_{jN_2+k}, x_{(j+1)N_2-1}]$, and our construction of $\Delta_k(x)$ satisfies $\Delta_k(x)$ is linear on the intervals $[x_{jN_2+k}, x_{jN_2+k+1}]$ and $[x_{jN_2+k+1}, x_{(j+1)N_2-1}]$, then $s_k(x)=s_{k-1}(x) + \Delta_k(x)$ is also linear on $[x_{jN_2+k}, x_{jN_2+k+1}]$ and $[x_{jN_2+k+1}, x_{(j+1)N_2-1}]$.

Therefore, by induction, our construction of $\Delta_k(x)$ and $s_k(x)$ satisfies the point fitting condition and linearity condition as claimed. It should be noted that in our construction of $s_k(x)$ and $\Delta_k(x)$, we are not interested in how these functions behave in the interval $[x_{(j+1)N_2-1},x_{(j+1)N_2}]$ for $j\in\{0,\ldots, N_1-1\}$.

\noindent {\sc Step 4. Implementing $\Delta_k(x)$ by Neural Network.} 

The function values of $\Delta_k(x)$ on the interval $[x_{jN_2+k-1}, x_{jN_2+k+1}]$ can be either non-negative (if $y_{jN_2+k}-g_0(x_{jN_2+k})\ge 0$) or non-positive (if $y_{jN_2+k}-g_0(x_{jN_2+k}) \le 0$), so we consider the two cases separately. That is, let $\Delta_k(x) = \Delta^+_k(x) - \Delta^-_k(x)$, where $\Delta^+_k(x)$, $\Delta^-_k(x)$ are all non-negative functions, and to implement the two separately by ReLU neural network based on our construction of first hidden layer. Without loss of generality, we consider the implementation of $\Delta^+_k(x)$, and the implementation of $\Delta^-_k(x)$ is the same if we alter the sign.

The key idea of implementing the function $\Delta^+_k(x)$ is to let $\Delta^+_k(x) = g^{+,l}_k(x) \land g^{+,u}_k(x)$. In order to simplify the notations, we omit the $+$ in the superscripts of $g$. 

We first try to implement $g^l_k(x)$ as a linear combination of the basis function in {\sc Step 1} followed by a ReLU activation. The linear combination is constructed via a point-fitting task using Lemma \ref{lemma:nn-piecewise-linear}. To be specific, we are going to fit the points
\begin{align}
\label{eq-tm41-1}
	\Bigg\{ \Big(x_{jN_2}, \tilde{y}_{jN_2}\Big), \Big(x_{(j+1)N_2-1},\tilde{y}_{(j+1)N_2-1} \Big) \Bigg\}_{j=0}^{N_1-1}.
\end{align} with 
\begin{align*}
	\tilde{y}_{jN_2} &= \frac{(y_{jN_2+k}-g_0(x_{jN_2+k}))_+}{x_{jN_2+k}-x_{jN_2+k-1}} (x_{jN_2}-x_{jN_2+k-1}) \\
	\tilde{y}_{(j+1)N_2-1} &= \frac{(y_{jN_2+k}-g_0(x_{jN_2+k}))_+}{x_{jN_2+k}-x_{jN_2+k-1}}(x_{(j+1)N_2-1}-x_{jN_2+k-1})
\end{align*}

Then it follows from Lemma \ref{lemma:nn-piecewise-linear} that there exists some $\tilde{g}^l_k(x)$ which is a linear combination of $h_j(x)$, i.e, 
\begin{align*}
	\tilde{g}^l_k(x) = \sum_{j'=1}^{2N_1-1} w_{k,j'}^l h_{j'}(x) + b_k^l,	
\end{align*} and it satisfies the point fitting condition defined in \eqref{eq-tm41-1}, and is pointwise linear.

Let us first see what such construction implies about $\tilde{g}_k^l(x)$. Given fixed $j$, if $y_{jT+k}-g_0(x_{jT+k}) \le 0$, then $g_k^l(x)$ is $0$ on the interval $[x_{jN_2}, x_{(j+1)N_2-1}]$. In the other case, that $y_{jN_2+k}-g_0(x_{jN_2+k})>0$, the constructed $\tilde{g}_k^l(x)$ is linear on $[x_{jN_2}, x_{(j+1)N_2-1}]$. Combining with its values at the points $x_{jN_2}$ and $x_{(j+1)N_2-1}$, we have
\begin{align*}
	\tilde{g}_k^l(x) &< 0 & \qquad \text{ if } x \in [x_{jN_2}, x_{jN_2+k-1}], \\
	\tilde{g}_k^l(x) &= 0 & \qquad \text{ if } x = x_{jN_2+k-1}, \\
	\tilde{g}_k^l(x) &= y_{jN_2+k}-g_0(x_{jN_2+k}) & \qquad \text{ if } x = x_{jN_2+k},
\end{align*} which implies,
\begin{align*}
	\sigma(\tilde{g}_k^l(x)) &= 0 & \qquad \text{ if } x \in [x_{jN_2}, x_{jN_2+k-1}], \\
	\sigma(\tilde{g}_k^l(x)) &= y_{jN_2+k}-g_0(x_{jN_2+k}) & \qquad \text{ if } x=x_{jN_2+k},
\end{align*} and $\sigma(\tilde{g}_k^l(x))$ is linear on $[x_{jN_2}, x_{jN_2+k-1}]$ and $[x_{jN_2+k-1}, x_{(j+1)N_2-1}]$

We then provide an upper bound on the weights $|w_{k,j'}^l|$ and $|b_k^l|$. Note that
\begin{align*}
	\frac{|\tilde{y}_{(j+1)N_2-1} - \tilde{y}_{jN_2}|}{x_{(j+1)N_2-1} - x_{jN_2}} = \frac{(y_{jN_2+k}-g_0(x_{jN_2+k}))_+}{x_{jN_2+k}-x_{jN_2+k-1}} \le \frac{1}{\delta},
\end{align*} and
\begin{align*}
	\frac{|\tilde{y}_{(j+1)T} - \tilde{y}_{(j+1)N_2-1}|}{x_{(j+1)N_2} - x_{(j+1)N_2-1}} \le \frac{1}{\delta} \Big(|\tilde{y}_{(j+1)N_2}| + |\tilde{y}_{(j+1)N_2-1}|\Big) \le \frac{1}{\delta}\Big(\frac{1}{\delta} + \frac{1}{\delta}\Big) = \frac{2}{\delta^2}.
\end{align*}
which implies
\begin{align*}
	\max_{j'}\{|w_{k,j'}^l|\} \lor |b_k^l| \le 2 \max_{j \in \{0,\ldots, N_1-1\}} \frac{|\tilde{y}_{(j+1)N_2-1} - \tilde{y}_{jN_2}|}{x_{(j+1)N_2-1} - x_{jN_2}} \lor \frac{|\tilde{y}_{(j+1)N_2} - \tilde{y}_{(j+1)N_2-1}|}{x_{(j+1)N_2} - x_{(j+1)N_2-1}} \le \frac{4}{\delta^2}.
\end{align*}

Let $g^l_k(x)=\sigma(\tilde{g}_k^l(x))$, the following facts about $g^l_k(x)$ holds,
\begin{itemize}
	\item[1.] All the weights are bounded by $4/\delta^2$.
	\item[2.] For any $j\in\{0,\ldots, N_1-1\}$, $g^l_k(x)$ is linear on the interval $[x_{jN_2}, x_{jN_2+k-1}]$ and $[x_{jN_2+k-1}, x_{(j+1)N_2-1}]$, and satisfies $g^l_k(x)=0$ on $[x_{jN_2+k-1}, x_{(j+1)N_2-1}]$ and $g^l_k(x_{jN_2+k}) = (y_{jN_2+k}-g_0(x_{jN_2+k}))_+$.
\end{itemize}

Following a similar way, we can also construct $g^u_k(x)=\sigma\left(\sum_{j'=1}^{2N_1-1} w_{k,j'}^u h_{j'}(x) + b_k^u\right)$ as a hidden node in the second layer satisfying
\begin{itemize}
	\item[1.] All the weights are bounded by $4/\delta^2$.
	\item[2.] For any $j\in\{0,\ldots, N-1\}$, $g^u_k(x)$ is linear on the interval $[x_{jN_2}, x_{jN_2+k+1}]$ and $[x_{jN_2+k+1}, x_{(j+1)N_2-1}]$, and satisfies $g^u_k(x)=0$ on $[x_{jN_2+k+1}, x_{(j+1)N_2-1}]$ and $g^u_k(x_{jN_2+k}) = (y_{jN_2+k}-g_0(x_{jN_2+k}))_+$.		
\end{itemize}

Combining the above claims about $g^{+,l}_k$ and $g^{+,u}_k$ together, we have
\begin{align*}
\Delta^+_k(x) = g^{+,l}_k(x) \land g^{+,u}_k(x) = \begin{cases}
 		0 & \qquad x \in [x_{jN_2}, x_{jN_2+k-1}] \\
 		\Big(\frac{(y_{jN_2+k} - g_0(x_{jN_2+k}))_+}{x_{jN_2+k}-x_{jN_2+k-1}} \Big) (x-x_{jN_2+k-1}) & \qquad x \in [x_{jN_2+k-1}, x_{jN_2+k}]\\
 		\Big(\frac{(y_{jN_2+k} - g_0(x_{jN_2+k}))_+}{x_{jN_2+k}-x_{jN_2+k+1}} \Big) (x-x_{jN_2+k+1})& \qquad x \in [x_{jN_2+k}, x_{jN_2+k+1}] \\
 		0 & \qquad x \in [x_{jN_2+k+1}, x_{(j+1)N_2-1}]
 	\end{cases}.
\end{align*}

It follows from a similar argument that we can implement $\Delta^-_k(x)$ with $\Delta^-_k(x) = g^{-,l}_k \land g^{-,u}_k$ satisfying
\begin{align*}
\Delta^-_k(x) = \begin{cases}
 		0 & \qquad x \in [x_{jN_2}, x_{jN_2+k-1}] \\
 		\Big(\frac{(g_0(x_{jN_2+k})-y_{jN_2+k})_+}{x_{jN_2+k}-x_{jN_2+k-1}} \Big) (x-x_{jN_2+k-1}) & \qquad x \in [x_{jN_2+k-1}, x_{jN_2+k}]\\
 		\Big(\frac{(g_0(x_{jN_2+k})-y_{jN_2+k})_+}{x_{jN_2+k}-x_{jN_2+k+1}} \Big) (x-x_{jN_2+k+1})& \qquad x \in [x_{jN_2+k}, x_{jN_2+k+1}] \\
 		0 & \qquad x \in [x_{jN_2+k+1}, x_{j(N_2+1)-1}]
 	\end{cases}
\end{align*} based on the basis in the first hidden layer.

Put these pieces together, we can conclude the constructed $\Delta_k(x) = \Delta^+_k(x) - \Delta^-_k(x)$ satisfies the condition we presented in {\sc Step 3}.

\noindent {\sc Step 5. Conclude the Neural Network Architecture and Weight Bound. }

In this step, we combine all the claims from the above steps together and conclude the proof, considering the function \begin{align*}
	g^\dagger(x) &= g_0(x) + \sum_{k=1}^{N_2-2} \Delta_k(x) \\
		 &= g_0(x) + \sum_{k=1}^{N_2-2} \Delta^+_k(x) - \Delta^-_k(x) \\
		 &= g_0(x) + \sum_{k=1}^{N_2-2} g^{+,l}_k(x) \land g^{+,u}_k(x) - g^{-,l}_k(x) \land g^{-,u}_k(x).
\end{align*} This is the function that satisfies conditions (1) and (2) by our arguments in {\sc Step 3} and {\sc Step 4}.

Recall that all the above $g$ with subscripts can be written as the form of $\sigma(\bw^\top \bh(x) + b)$ where the weights are bounded by $4/\delta^2$. Applying the neural network composition argument and Lemma \ref{lemma:nn-simple} with $\min(x,y)$ function and identity function, we can conclude that $g^\dagger$ can be realized as a 3-hidden layer ReLU neural network with hidden size $(2N_1-1,4(N_2-2)+2,8(N_2-2)+2)$. Through our construction, it is easy to verify that $\|\bW_1\|_{\max} \lor \|\bb_1\|_{\max} \lor \|\bW_4\|_{\max} \lor \|\bb_4\|_{\max} \le 1$.

\end{proof}

\subsection{Proof of Lemma \ref{lemma:point-fitting-dim-d}}

\begin{proof}[Proof of Lemma~\ref{lemma:point-fitting-dim-d}]

We first use Lemma \ref{lemma:point-fitting-dim-1} to approximate the step function. Particularly, we argue that for arbitrary $T \in \mathbb{N}^+$, there exists a deep ReLU network $\phi$ with depth $3$, width $16T$, weighted bounded by $4/\Delta^2$ such that 
\begin{align}
\label{eq:point-fitting-dim-d:step-func-approx}
    \phi(x) = \frac{t}{T^2} ~~~~ \text{if} ~~~~ \frac{t}{T^2} \le x\le \frac{t+1}{T^2} - 1{\{t\le T^2-1\}} \Delta
\end{align} for any $t\in \{0, \ldots, T^2-1\}$. It follows from Lemma \ref{lemma:point-fitting-dim-1} with $N_1 = T$ and $N_2 = 2T$, and the point set
\begin{align*}
    x_{2k} = \frac{k}{T^2}, x_{2k+1} = \frac{k+1}{T^2} - \Delta ~~~~ \text{and} ~~~~ y_{2k} = y_{2k+1} = \frac{k}{T^2} ~~~~~~~~ \text{for}~~k\in \{0,\ldots, T^2-1\}
\end{align*} that there exists a ReLU network $\phi$ with depth $3$ and width $8(2T-2)+2 \le 16T$, weights bounded by $4/\Delta^2$ such that $\phi(x_i) = y_i$ for $i \in \{0,\ldots, 2T^2-1\}$, and $\phi(x)$ is linear on $[x_i, x_{i+1}]$ for all the $i\in \{0,\ldots, 2T^2-1\} \setminus \{j\cdot 2T-1: j\in 1, \ldots, T-1\}$. This implies that $\phi(x)$ is linear on $[x_{2k}, x_{2k+1}]$ for all the $k\in \{0,\ldots, T^2-1\}$, which completes the proof of the claim \eqref{eq:point-fitting-dim-d:step-func-approx}.

With the help of the step function module, we are ready to construct the target function $\bg^\dagger(\bx)$. To be specific, for any $i\in \{1,\ldots, d\}$, we apply the above claim with $T=\tilde{N}$. Then there exists a ReLU network $\phi_i$ with $i\in \{1,\ldots, d\}$ with depth $3$, width $16\tilde{N}\le 16N$, and weights bounded by $4/\Delta^2$ such that
\begin{align*}
    \phi_i(x_i) = \frac{k}{K} ~~~~ \text{if} ~~~~ \frac{k}{K} \le x \le \frac{k+1}{K} - 1_{\{k\le K-1\}} \Delta
\end{align*} 

It follows from the parallelization argument that $\bg^\dagger = (\phi_1,\cdots, \phi_d) \in \mathcal{G}(3, d, d, 16dN, 4/\Delta^2,\infty)$ is the target function satisfying all the conditions in the statement of Lemam \ref{lemma:point-fitting-dim-d}.

\end{proof}

\section{Proof for the FANAM Estimator in Section \ref{sec:additive}}
\label{sec:proof-fanam}

We first introduce some notations. Define the function class
\begin{align*}
    \mathcal{G}_m = \left\{m(\bx) = m(\bx;\bW, \bV, \{{g}_j\}, \bbeta \in \mathbb{R}^p) \text{ of form in }\eqref{eq:fanam-pred} \right\},
\end{align*} 
and suppose that $g_j \in \mathcal{G}_j$.
Similar to that of the FAST-NN estimator, we adopt the notation of population $L_2$ norm and empirical $L_2$ norm \eqref{eq:l2-ln-norm-fast-def} that have unified definition over the function class $\mathcal{H} = \mathcal{H}_3 - \mathcal{H}_3$, where
\begin{align*}
    \mathcal{H}_3 = \left\{m(\bB\bbf + \bu): m\in \mathcal{G}_m\right\} \cup \{m^*(\bbf, \bu)\} \cup \{0\}
\end{align*}

\begin{proof}[Proof of Theorem \ref{thm:fanam}]

\noindent {\sc Step 1. Find An Approximation of $m^*$. } Let
\begin{align*}
    \delta_\mathtt{a} = \left(\frac{NL}{\log N \log L}\right)^{-4\gamma^*} ~~~~ \text{and} ~~~~ \delta_{\mathtt{f}} = \Bigg(\frac{\overline{r}\cdot r}{\nu_{\min}^2(\bH)\cdot p}\Bigg).
\end{align*}
The goal in this step is to find $\tilde{m} =\tilde{m}(\bx;\bW, \tilde{\bV}, \{\tilde{g}_j\}_{j=0}^p, \tilde{\bbeta}) \in \mathcal{G}_m$ with $\|\tilde{\bbeta}\|_1 = s-1$ such that
\begin{align}
\label{eq:fanam-proof-claim1}
    \|\tilde{m}(\bx) - m^*(\bbf, \bu)\|_2^2 = \int |\tilde{m}(\bx) - m^*(\bbf, \bu)|^2 \mu(d\bbf, d\bu) \lesssim s^2 \delta_{\mathtt{a}} + (s_u+1) s\delta_{\mathtt{f}}=s \delta_{\mathtt{a}+\mathtt{f}}.
\end{align} To this end, we will proceed in a similar way as the proof of Theorem \ref{thm:fast-oracle}. For each $m^*_j$ with $j \in \{0\}\cup \mathcal{J}_x \cup \mathcal{J}_u$, it follows from Proposition 3.4 of \cite{fan2022noise} such that there exists a ReLU network $h_j \in \mathcal{G}(L, 1+1_{\{j=0\}}(r-1), 1, N, M, \infty)$ such that
\begin{align*}
    \|h_j(\bx) - m^*_j(\bx)\|_\infty \lesssim \left(\frac{NL}{\log N \log L}\right)^{-2\gamma^*} = \sqrt{\delta_{\mathtt{a}}} ~~~~\text{for any}~ \bx\in [-2b, 2b]^{1+1_{\{j=0\}}(r-1)}.
\end{align*} 

\noindent \emph{Case 1. $r\ge 1$.} We first prove the claim in the case where $r\ge 1$. Recall the definition of the matrix $\bH$ in Definition \eqref{def:dpm}, we let $\tilde{m}$ be 
\begin{align*}
    \tilde{m}(\bx) = \underbrace{h_0\left(\bH^+ \tilde{\bbf}(\bx)\right)}_{\tilde{g}_0\left(\tilde{\bbf}(\bx)\right)} + \sum_{j\in \mathcal{J}_x} \underbrace{h_j(x_j)}_{\tilde{g}_j(x_j)} + \sum_{j\in \mathcal{J}_u} \underbrace{h_j\left(x_j - \bb_j^\top \bH^+ \tilde{\bbf}(\bx)\right)}_{\tilde{g}_j\left(x_j - \tilde{\bv}^\top \tilde{\bbf}(\bx)\right)}
\end{align*} 
then
\begin{align*}
    \|\tilde{m}(\bx) - m^*(\bx)\|_2^2 &= \mathbb{E} \Bigg|\left[h_0\left(\bH^+ \tilde{\bbf}(\bx)\right) - m^*_0(\bbf)\right] + \sum_{j\in \mathcal{J}_x} \left[h_j(x_j) - m^*_j(x_j) \right] \\
    &~~~~~~~~~~~~~ + \sum_{j\in \mathcal{J}_u} \left[h_j\left(x_j - \bb_j^\top \bH^+ \tilde{\bbf}(\bx)\right) - h_j(u_j)\right]\Bigg|^2 \\
    & \le s \left[\left\|h_0\left(\bH^+ \tilde{\bbf}(\bx)\right) - m^*_0(\bbf)\right\|_2^2 + \sum_{j\in \mathcal{J}_x} \|h_j - m^*_j\|_2^2 + \sum_{j\in \mathcal{J}_u} \left\|h_j\left(x_j - \bb_j^\top \bH^+ \tilde{\bbf}(\bx)\right) - m_j^*(u_j)\right\|_2^2\right]
\end{align*}
To derive upper bounds on $\left\|h_0\left(\bH^+ \tilde{\bbf}(\bx)\right) - m^*_0(\bbf)\right\|_2^2$ and $\left\|h_j\left(x_j - \bb_j^\top \bH^+ \tilde{\bbf}(\bx)\right) - h_j(u_j)\right\|_2^2$, we proceed in a similar way as the proof of Theorem \ref{thm:fast-oracle} via a truncation argument. In particular, we have
\begin{align*}
    \left\|h_0\left(\bH^+ \tilde{\bbf}(\bx)\right) - m^*_0(\bbf)\right\|_2^2 \lesssim \frac{\overline{r}}{\nu^2_{\min(\bH)}\cdot p} + \left(\frac{NL}{\log N \log L}\right)^{-4\gamma^*} = \delta_{\mathtt{a}} + \delta_{\mathtt{f}},
\end{align*} and
\begin{align*}
    \left\|h_j\left(x_j - \bb_j^\top \bH^+ \tilde{\bbf}(\bx)\right) - h_j(u_j)\right\|_2^2 \lesssim \frac{\overline{r} \cdot r}{\nu^2_{\min(\bH)}\cdot p} + \left(\frac{NL}{\log N \log L}\right)^{-4\gamma^*} = \delta_{\mathtt{a}} + \delta_{\mathtt{f}}.
\end{align*}
Putting these pieces together completes the proof of the claim \eqref{eq:fanam-proof-claim1}.

\noindent \emph{Case 2. $r=0$. } Notice that $\delta_{\mathtt{f}}=0$, and $\bx = \bu$. Letting
\begin{align*}
    \tilde{m}(\bx) = \sum_{j\in \mathcal{J}_x} \underbrace{h_j(x_j)}_{\tilde{g}_j(x_j)} + \sum_{j\in \mathcal{J}_u} \underbrace{h_j\left(x_j\right)}_{\tilde{g}_j(x_j)} = \sum_{j\in \mathcal{J}_x} h_j(x_j) + \sum_{j\in \mathcal{J}_u} h_j(u_j).
\end{align*}
It is easy to show that the bound \eqref{eq:fanam-proof-claim1} holds then.

\noindent {\sc Step 2. Derive Basic Inequality \& Concentration for Fixed Function.} Let $\hat{m}=\hat{m}(\bx;\bW, \hat{\bV}, \{\hat{g}_{j}\}_{j=1}^p, \hat{\bbeta})$ be the empirical risk minimizer of \eqref{eq:fanam-est}. By our construction of $\tilde{m}$,
\begin{align*}
    \frac{1}{n} \sum_{i=1}^n \left\{y_i - \hat{m}(\bx)\right\}^2 + \lambda \|\hat{\bbeta}\|_1 \le \frac{1}{n} \sum_{i=1}^n \left\{y_i - \tilde{m}(\bx)\right\}^2 + \lambda \|\tilde{\bbeta}\|_1.
\end{align*} Plugging $y=m^*(\bbf, \bu) + \varepsilon_i$ in and doing some simple algebra as that in the proof of Theorem \ref{thm:fast-oracle} gives
\begin{align*}
    \|\hat{m} - \tilde{m}\|_n^2 + 2\lambda \|\hat{\bbeta}\|_1 \le 4\|\tilde{m}-m^*\|_n^2 + \frac{4}{n} \sum_{i=1}^n \varepsilon_i \left(\hat{m}(\bx_i) - \tilde{m}(\bx_i)\right) + 2\lambda \|\tilde{\bbeta}\|_1.
\end{align*}
We next consider establishing a bound on $\|\tilde{m}-m^*\|_n^2$. Observe that $h_j - m^*_j$ is uniformly bounded by $M+M^*\lesssim 1$, this implies $|\tilde{m} - m^*|\lesssim s$. Hence
\begin{align*}
    \mathrm{Var} |\tilde{m} - m^*|^2 \le \mathbb{E} |\tilde{m} - m^*|^4 \le s^2 \mathbb{E} |\tilde{m} - m^*|^2 ~~~~\text{and}~~~~ |\tilde{m} - m^*|\lesssim s^2.
\end{align*} It then follows from the Bernstein inequality (Proposition 2.10 of \cite{wainwright2019high}) that the following event
\begin{align*}
    \|\tilde{m} - m^*\|_n^2 - \|\tilde{m} - m^*\|_2^2 \lesssim \sqrt{s^2 \|\tilde{m} - m^*\|_2^2\frac{t}{n}} + \frac{s^2 t}{n} \le \frac{1}{2} \|\tilde{m} - m^*\|_2^2 + \frac{3s^2 \cdot t}{2n}
    \lesssim s \left(\delta_{\mathtt{a}+\mathtt{f}} + \frac{s\cdot t}{n}\right)
\end{align*} occurs with probability $1-e^{-t}$ for arbitrary $t>0$. 
Define the event
\begin{align*}
    \mathcal{A}_t = \left\{\|\tilde{m} - m^*\|_n^2 \le C_1 s\left(\delta_{\mathtt{a}+\mathtt{f}} + \frac{s\cdot t}{n}\right)\right\}
\end{align*} for some universal constant $C_1>0$ and any $t>0$, we thus have $\mathbb{P}[\mathcal{A}_t] \ge 1-e^{-t}$. Moreover, under $\mathcal{A}_t$, the basic inequality can be written as
\begin{align}
\label{eq:proof:fanam:basic-ineq-step2}
    \|\hat{m} - \tilde{m}\|_n^2 + 2\lambda \|\hat{\bbeta}\|_1 \le 4C_1 s\left(\delta_{\mathtt{a}+\mathtt{f}} + \frac{s\cdot t}{n}\right) + \frac{4}{n} \sum_{i=1}^n \varepsilon_i \left(\hat{m}(\bx_i) - \tilde{m}(\bx_i)\right) + 2\lambda \|\tilde{\bbeta}\|_1.
\end{align}

\noindent {\sc Step 3. Concentration for Weighted Empirical Process.}
Denote $\hat{\beta}_0=\tilde{\beta}_0=1$. Observe that
\begin{align}
    \label{eq:fanam-weighted-decomp}
    \frac{1}{n} \sum_{i=1}^n \varepsilon_i \left(\hat{m}(\bx_i) - \tilde{m}(\bx_i)\right) = \sum_{j=0}^p \frac{1}{n} \sum_{i=1}^n (\hat{\beta}_j \hat{g}_j - \tilde{\beta}_j \tilde{g}_j) \varepsilon_i.
\end{align} 

Note that $\hat{g}_j$ is a single ReLU network with depth $L$ and the number of parameters $\lesssim LN^2 + \overline{r}N \lesssim LN^2$. Then it follows from Theorem 7 of \cite{bartlett2019nearly} that
\begin{align*}
   \frac{\mathrm{Pdim}(\mathcal{G}_j) \log n}{n} \asymp \frac{N^2L^2 \log NL \log n}{n} = v_n,
\end{align*}

Define the event $\mathcal{B}_t = \bigcap_{j=0}^p \mathcal{B}_{t,j}$, where 
\begin{align*}
    \mathcal{B}_{t,j} = \left\{\left|\frac{1}{n} \sum_{i=1}^n (\hat{\beta}_j \hat{g}_j - \tilde{\beta}_j \tilde{g}_j) \varepsilon_i \right| \le C_2 \left(\|\hat{\beta}_j \hat{g}_j - \tilde{\beta}_j \tilde{g}_j\|_n^2 + \frac{1}{p} \right)\sqrt{v_n + \frac{u}{n}}\right\}
\end{align*} with $u$ to be determined, where $C_2$ is the universal constant $c_1$ in Lemma \ref{lemma:fa-weight-empirical-process}.
Applying Lemma \ref{lemma:fa-weight-empirical-process} with $\epsilon=1/p$ gives $\mathbb{P}(\mathcal{B}_{t,j}^c) \le \log(p) e^{-u}$. Then it follows from the union bound that
\begin{align*}
    \mathbb{P}(\mathcal{B}_t^c) \le \sum_{j=0}^p \mathbb{P}(\mathcal{B}_{t,j}^c) \le p\log(np) e^{-u}
\end{align*} Let $u=\log [p\log (p)] + t$, we have $\mathbb{P}[B_t] \ge 1-e^{-t}$. Moreover, under $\mathcal{B}_t$, we have
\begin{align*}
    \forall j \in \{1,\ldots, p\} ~~~~ \left|\frac{1}{n} \sum_{i=1}^n (\hat{\beta}_j \hat{g}_j - \tilde{\beta}_j \tilde{g}_j) \varepsilon_i \right| \lesssim \left(\|\hat{\beta}_j \hat{g}_j - \tilde{\beta}_j \tilde{g}_j\|_n^2 + \frac{1}{p} \right)\sqrt{v_n + \frac{\log p}{n} + \frac{t}{n}}
\end{align*} by the fact that $\log [p(\log p)] \lesssim \log p$. Plugging it into \eqref{eq:fanam-weighted-decomp} yields
\begin{align}
\label{eq:fanam-bound-weighted}
    \left|\frac{1}{n} \sum_{i=1}^n \varepsilon_i \left(\hat{m}(\bx_i) - \tilde{m}(\bx_i)\right) \right|\lesssim \left(\| \hat{\bbeta}\|_1 + \| \hat{\bbeta}\|_1 + 3 \right) \sqrt{v_n + \frac{(\log p) + t}{n}}
\end{align} by triangle inequality. 

The remaining proof of Theorem \ref{thm:fanam} proceeds conditioned on $\mathcal{A}_t \cap \mathcal{B}_t$, in which the basic inequality in \eqref{eq:proof:fanam:basic-ineq-step2} turns to be
\begin{align}
\label{eq:proof:fanam:basic-ineq-step3}
\begin{split}
    &~\|\hat{m} - \tilde{m}\|_n^2 + 2\lambda \left(\|\hat{\bbeta}\|_1 - \|\tilde{\bbeta}\|_1\right) \\
    &~~~~~~~~\le C_3 \left\{ s\left(\delta_{\mathtt{a}+\mathtt{f}} + \frac{s\cdot t}{n}\right) + \left(\| \hat{\bbeta}\|_1 + \| \hat{\bbeta}\|_1 + 3 \right) \sqrt{v_n + \frac{(\log p) + t}{n}} \right\}
\end{split}
\end{align} for some universal constant $C_3>0$.

\noindent {\sc Step 4. Exploiting Low Complexity Structure.} If 
\begin{align}
\label{eq:fanam:proof:cond-lambda}
    \lambda \ge C_3 \sqrt{v_n + \frac{(\log p) + t}{n}},
\end{align} then it follows from the above inequality \eqref{eq:proof:fanam:basic-ineq-step3} that
\begin{align*}
   2\lambda \left(\|\hat{\bbeta}\|_1 - \|\tilde{\bbeta}\|_1\right) \le C_3 s\left(\delta_{\mathtt{a}+\mathtt{f}} + \frac{s\cdot t}{n}\right) + 3\lambda + \lambda(\|\hat{\bbeta}\|_1 + \|\tilde{\bbeta}\|_1).
\end{align*} which implies
\begin{align*}
    \|\hat{\bbeta}\|_1 \le 3\| \tilde{\bbeta} \|_1 + 3 + C_3 \frac{\delta_{\mathtt{a}+\mathtt{f},n}}{\lambda} = s \left(3+ C_3 \frac{\delta_{\mathtt{a}+\mathtt{f},n}}{\lambda}\right) ~~~~\text{for} ~~ \delta_{\mathtt{a}+\mathtt{f},n} = \delta_{\mathtt{a}+\mathtt{f}} + \frac{s\cdot t}{n} .
\end{align*} Therefore, we can conclude that \eqref{eq:proof:fanam:basic-ineq-step3} can be further written as
\begin{align}
\label{eq:proof:fanam:basic-ineq-step4}
    \|\hat{m} - \tilde{m}\|_n^2 + \lambda \|\hat{\bbeta}\|_1 \le C_3 \delta_{\mathtt{a}+\mathtt{f},n} + 3\lambda\|\tilde{\bbeta}\|_1 + 3\lambda \le s(3\lambda + C_3 \delta_{\mathtt{a}+\mathtt{f},n})
\end{align} and the estimator $\hat{m}$ lie in the function class
\begin{align*}
    \mathcal{G}_{m,s} = \left\{m(\bx; \bW, \bV, \{g_j\}_{j=0}^p, \bbeta) \in \mathcal{G}_m: \|\bbeta\|_1 \le s \left(3+ C_3 \lambda^{-1} \delta_{\mathtt{a}+\mathtt{f}, n}\right)\right\}.
\end{align*}

\noindent {\sc Step 5. Equivalence between $L_2$ norm and $L_n$ norm. } In this step, we aim to show that the event
\begin{align*}
    \mathcal{C}_t = \left\{\forall m\in \mathcal{G}_{m,s} ~~ \left|\|m - \tilde{m}\|_n^2 - \|m - \tilde{m}\|_2^2 \right|\le C_4 s^2\left(1 + \lambda^{-1} \delta_{\mathtt{a}+\mathtt{f},n} \right)^2 \sqrt{v_n + \frac{(\log p) + t}{n}} \right\}
\end{align*} occurs with probability at least $1-e^{-t}$ for any $0<t<s^{-2}(1 + \lambda^{-1} \delta_{\mathtt{a}+\mathtt{f},n})^{-2} \sqrt{n}$. 

To this end, we first establish an upper bound on $\mathbb{E} \left[\sup_{m\in \mathcal{G}_{m,s}} |\|m - \tilde{m}\|_n^2 - \|m-\tilde{m}\|_2^2|\right]$. Note that $\bx_1,\cdots, \bx_n$ are i.i.d. copies of $\bx \in \mu$, then it follows from the symmetrization and contraction arguments that
\begin{align*}
    \mathbb{E} \left[\sup_{m\in \mathcal{G}_{m,s}} |\|m - \tilde{m}\|_n^2 - \|m-\tilde{m}\|_2^2|\right] &= \mathbb{E} \left[ \sup_{m\in \mathcal{G}_{m,s}} \left| \frac{1}{n} \sum_{i=1}^n (m(\bx_i) - \tilde{m}(\bx_i))^2 - \|m-\tilde{m}\|_2^2\right| \right] \\
    &\le 2\mathbb{E} \left[ \sup_{m\in \mathcal{G}_{m,s}} \left| \frac{1}{n} \sum_{i=1}^n (m(\bx_i) - \tilde{m}(\bx_i))^2 r_i \right| \right]\\
    &\le 2\sup_{m\in \mathcal{G}_{m,s}} \|m - \tilde{m}\|_\infty \mathbb{E} \left[ \sup_{m\in \mathcal{G}_{m,s}} \left| \frac{1}{n} \sum_{i=1}^n (m(\bx_i) - \tilde{m}(\bx_i)) r_i \right| \right] \\
    &\lesssim s\left(1 + \lambda^{-1} \delta_{\mathtt{a}+\mathtt{f},n} \right) \mathbb{E} \left[ \sup_{m\in \mathcal{G}_{m,s}} \left| \frac{1}{n} \sum_{i=1}^n (m(\bx_i) - \tilde{m}(\bx_i)) r_i \right| \right],
\end{align*} for i.i.d. Rademacher random variables $r_1,\ldots, r_n$ that is also independent of $\bx_1,\ldots, \bx_n$. Following a similar argument as that of {\sc Step 3}, in which we derive a tail probability bound for the weighted empirical process, we obtain
\begin{align*}
    &\mathbb{E} \left[ \sup_{m\in \mathcal{G}_{m,s}} \left| \frac{1}{n} \sum_{i=1}^n (m(\bx_i) - \tilde{m}(\bx_i)) r_i \right| \right] \\
    &~~~~~~~~~~~~ \overset{(a)}{\le} \mathbb{E} \left[ \sup_{m\in \mathcal{G}_{m,s}} \left| \frac{1}{n} \sum_{i=1}^n m(\bx_i) r_i \right| \right] + \mathbb{E} \left[ \sup_{m\in \mathcal{G}_{m,s}} \left| \frac{1}{n} \sum_{i=1}^n \tilde{m}(\bx_i) r_i \right| \right] \\
    &~~~~~~~~~~~~ \overset{(b)}{\le} \sup_{m\in \mathcal{G}_{m,s}} \left(\|\bbeta\|_1 + 1\right) \mathbb{E} \left[\sup_{j\in \{0,1,\ldots,p\}, g_j\in \mathcal{G}_j} \left|\frac{1}{n} \sum_{i=1}^n  g_j r_i\right|\right] + \mathbb{E} \left[ \left| \frac{1}{n} \sum_{i=1}^n \tilde{m}(\bx_i) r_i \right| \right] \\
    &~~~~~~~~~~~~ \overset{(c)}{\le} s\left(1 + \lambda^{-1} \delta_{\mathtt{a}+\mathtt{f},n}\right) \sqrt{v_n + \frac{\log p}{n}} + s \sqrt{\frac{1}{n}} \\
    &~~~~~~~~~~~~ \lesssim s\left(1 + \lambda^{-1} \delta_{\mathtt{a}+\mathtt{f},n}\right) \sqrt{v_n + \frac{\log p}{n}}.
\end{align*} Here (a) follows from triangle inequality, (b) follows from the fact that $\tilde{m}$ is fixed and $|\bx^\top \by| \le \|\bx\|_1 \|\by\|_\infty$ for two vectors $\bx, \by \in \mathbb{R}^{p+1}$, (c) follows from the standard chaining result (e.g. Theorem 2.2.18 of \cite{talagrand2014upper}) to bound the expectation of the supremum of a sub-Gaussian process and the fact that $g_j r_i$ is $M$ sub-Gaussian. Substituting it back gives
\begin{align*}
    \mathbb{E} \left[\sup_{m\in \mathcal{G}_{m,s}} \left|\|m - \tilde{m}\|_n^2 - \|m-\tilde{m}\|_2^2\right|\right] \lesssim s^2 \left(1 + \lambda^{-1} \delta_{\mathtt{a}+\mathtt{f},n}\right)^2 \sqrt{v_n + \frac{\log p}{n}}.
\end{align*} 

Recall the uniform boundedness of $m - \tilde{m}$ in $m\in \mathcal{G}_{m,s}$ that $\|m - \tilde{m}\|_\infty \lesssim s(1 + \lambda^{-1} \delta_{\mathtt{a}+\mathtt{f}, n})$, applying the Talagrand concentration inequality for the empirical process (Eq (3.86) with $\epsilon=1$ in \cite{wainwright2019high}), we can conclude
\begin{align*}
    \mathbb{P}\left[\sup_{m\in \mathcal{G}_{m,s}}{\left|\|m - \tilde{m}\|_n^2 - \|m - \tilde{m}\|_2^2 \right|} \ge 2\mathbb{E}\left[\sup_{m\in \mathcal{G}_{m,s}}{\left|\|m - \tilde{m}\|_n^2 - \|m - \tilde{m}\|_2^2 \right|}\right] + \sqrt{R\frac{t}{n}} + R\frac{t}{n}\right] \le e^{-t}
\end{align*} with $R = s^4 (1 + \lambda^{-1} \delta_{\mathtt{a}+\mathtt{f},n})^4$. Combining with the upper bound on the expectation of the supremum of the empirical process, we conclude the proof of the claim that $\mathbb{P}[\mathcal{C}_t] \ge 1-e^{-t}$ as long as $0<t< \sqrt{n/R}$.

\noindent {\sc Step 6. Conclusion of the Proof.} In this step, we combine all the pieces together. Under the event $\mathcal{A}_t \cap \mathcal{B}_t \cap \mathcal{C}_t$, which occurs with probability $1-3e^{-t}$ by the union bound, we have
\begin{align*}
    \|\hat{m} - \tilde{m}\|_2^2 &\le \|\hat{m} - \tilde{m}\|_n^2 + C_4 \left(s + \lambda^{-1} \delta_{\mathtt{a}+\mathtt{f},n}\right)^2 \sqrt{v_n + \frac{(\log p)+t}{n}} \\
    &\le s (3\lambda + C_3 \delta_{\mathtt{a}+\mathtt{f},n}) + C_4 s^2 \left(1+\lambda^{-1} \delta_{\mathtt{a}+\mathtt{f},n}\right)^2 \sqrt{v_n + \frac{(\log p) + t}{n}},
\end{align*} as long as $t<s^{-2}(1 + \lambda^{-1} \delta_{\mathtt{a}+\mathtt{f},n})^{-2} \sqrt{n}$, where the last inequality follows from \eqref{eq:proof:fanam:basic-ineq-step4}. If we further choose
\begin{align*}
    t \asymp \sqrt{\frac{n}{s^4 \left(1+\lambda^{-1} \delta_{\mathtt{a}+\mathtt{f}}\right)^4}} \bigwedge \frac{n \delta_{\mathtt{a}+\mathtt{f}}}{s} \bigwedge (nv_n + \log p) = t^*
\end{align*} then $\delta_{\mathtt{a}+\mathtt{f}, n} \asymp \delta_{\mathtt{a}+\mathtt{f}}$, so we can conclude that
\begin{align*}
    \|\hat{m} - m^*\|_2 \lesssim \|\hat{m} - \tilde{m}\|_2^2 + \|\tilde{m} - m^*\|_2^2 \lesssim s\lambda + s\delta_{\mathtt{a}+\mathtt{f}} + s^2 \left(1+\lambda^{-1} \delta_{\mathtt{a}+\mathtt{f}} \right)^2 \delta_{\mathtt{s}}
\end{align*} with probability at least $1-3e^{-t^*}$. Finally, we optimize $\lambda$ and $NL$ to get the optimal rate of convergence. By some calculations, it is easy to check the optimal convergence rate is attained when
\begin{align*}
    NL \asymp n^{\frac{1}{2(4\gamma^*+1)}} (\log n)^{\frac{8\gamma^*-1}{4\gamma^*+1}} ~~~~\text{and}~~~~ \lambda \asymp s \left\{\sqrt{\frac{\log p}{n}} + \left(\frac{\log^6 n}{n}\right)^{\frac{2\gamma^*}{4\gamma^*+1}}\right\},
\end{align*} in which we have
\begin{align*}
    \|\hat{m} - m^*\|_2^2 \lesssim s^2 \left\{\sqrt{\frac{\log p}{n}} + \left(\frac{\log^6 n}{n}\right)^{\frac{2\gamma^*}{4\gamma^*+1}}\right\} + (s_u + 1) s \frac{\overline{r} \cdot r}{\nu_{\min}^2(\bH)\cdot p}.
\end{align*}
\end{proof}

\newpage
\bibliographystyle{apalike2}
\bibliography{main.bbl}

\end{document}